\providecommand{\customgenericname}{}
\newcommand{\newcustomtheorem}[2]{%
	\newenvironment{#1}[1]
	{%
		\renewcommand\customgenericname{#2}%
		\renewcommand\theinnercustomgeneric{##1}%
		\innercustomgeneric
	}
	{\endinnercustomgeneric}
}
\newtheorem{thm}{Theorem}[section]
\newtheorem{cor}[thm]{Corollary}
\newtheorem{lem}[thm]{Lemma}
\newtheorem{prop}[thm]{Proposition}
\theoremstyle{definition}
\newtheorem{definition}[thm]{Definition}
\newtheorem{remark}[thm]{Remark}
\renewcommand{\epsilon}{\varepsilon}
\renewcommand{\phi}{\varphi}
\newcommand{\defeq}{\mathrel{\mathop:}=}
\newcommand{\eqdef}{\mathrel{\mathopen={\mathclose:}}}
\renewcommand{\Re}{\operatorname{Re}}
\renewcommand{\Im}{\operatorname{Im}}
\DeclareMathOperator{\Cstar}{C^{\ast}}
\DeclareMathOperator{\id}{id}
\DeclareMathOperator{\C}{\mathbb{C}}
\DeclareMathOperator{\R}{\mathbb{R}}
\DeclareMathOperator{\N}{\mathbb{N}}
\DeclareMathOperator{\Pfin}{\mathcal{P}_{fin}}
\DeclareMathOperator{\Pow}{\mathcal{P}}
\DeclareMathOperator{\dist}{dist}
\DeclareMathOperator{\diam}{diam}
\DeclareMathOperator{\rk}{rk}
\DeclareMathOperator{\Lip}{Lip}
\DeclareMathOperator{\UCB}{UCB}
\DeclareMathOperator{\U}{U}
\DeclareMathOperator{\B}{B}
\DeclareMathOperator{\HS}{HS}
\DeclareMathOperator{\tr}{tr}
\DeclareMathOperator{\Pro}{P}
\DeclareMathOperator{\Sym}{Sym}
\DeclareMathOperator{\Iso}{Iso}
\DeclareMathOperator{\Sph}{\mathbb{S}}
\DeclareMathOperator{\Borel}{\mathcal{B}}
\DeclareMathOperator{\Mat}{M}
\def\moverlay{\mathpalette\mov@rlay}
\def\mov@rlay#1#2{\leavevmode\vtop{%
		\baselineskip\z@skip \lineskiplimit-\maxdimen
		\ialign{\hfil$\m@th#1##$\hfiincr#2\crcr}}}
\newcommand{\charfusion}[3][\mathord]{
	#1{\ifx#1\mathop\vphantom{#2}\fi
		\mathpalette\mov@rlay{#2\cr#3}
	}
	\ifx#1\mathop\expandafter\displaylimits\fi}
\def\smallunderbrace#1{\mathop{\vtop{\m@th\ialign{##\crcr
				$\hfil\displaystyle{#1}\hfil$\crcr
				\noalign{\kern3\p@\nointerlineskip}%
				\tiny\upbracefill\crcr\noalign{\kern3\p@}}}}\limits}
\begin{document}

\author{Paula Kahl}
\address{P.K., Institute of Discrete Mathematics and Algebra, TU Bergakademie Freiberg, 09596 Freiberg, Germany}
\email{paula.kahl@math.tu-freiberg.de}
\author{Friedrich Martin Schneider}
\address{F.M.S., Institute of Discrete Mathematics and Algebra, TU Bergakademie Freiberg, 09596 Freiberg, Germany}
\email{martin.schneider@math.tu-freiberg.de}
	
\title[Hyperlinearity via amenable near representations]{Hyperlinearity via amenable near representations}
\date{\today}
	
\begin{abstract} We note a characterization of the amenability of unitary representations (in the sense of Bekka) via the existence of an orthonormal basis supporting an invariant probability charge. Based on this, we explore several natural notions of amenable near representations on Hilbert spaces. Establishing an analogue of a theorem about sofic groups by Elek and Szabó, we prove that a group is hyperlinear if and only if it admits an essentially free amenable near representation. This answers a question raised by Pestov and Kwiatkowska. For comparison, we also provide characterizations of Kirchberg's factorization property, as well as Haagerup's property, along similar lines. \end{abstract}
	
\subjclass[2020]{22D10, 20P05, 47A20, 22D55}
	
\keywords{Hyperlinear group, amenability, unitary representation, concentration of measure, hypertrace, Haagerup property, Kirchberg's factorization property}
	
\maketitle
	

\tableofcontents

\newpage

\section{Introduction}

The study of sofic and hyperlinear groups constitutes a major theme in infinite group theory (see~\cite{Pestov08,PestovKwiatkowska,CapraroLupini} for excellent introductions). While the former originate in the works of Gromov~\cite{Gromov} and Weiss~\cite{Weiss} on Gottschalk's surjunctivity conjecture, the latter term was coined by R\u{a}dulescu~\cite{Radulescu} in the context of Connes' embedding problem~\cite[p.~105]{Connes}. The present manuscript establishes new connections between hyperlinearity of groups and amenability of unitary representations.

The starting point of our work is a result of Elek and Szabó~\cite{ElekSzabo} characterizing sofic groups by means of \emph{amenable near actions}. Recall that an action of a group $G$ on a set $X$ is \emph{amenable} if there exists a probability charge\footnote{A \emph{probability charge}~\cite[Definition~2.1.1(7), p.~35]{RaoRao} on a Boolean algebra $\mathcal{B}$ is a map $\mu \colon \mathcal{B} \to [0,1]$ such that $\mu(1) = 1$ and $\mu(A \vee B) = \mu(A)+\mu(B)$ for all $A,B \in \mathcal{B}$ with $A \wedge B = 0$.} $\mu$ on the power set $\Pow(X)$ such that $\mu(gB) = \mu(B)$ for all $g \in G$ and $B \subseteq X$. Now, given a set $X$ and a probability charge $\mu$ on $\Pow(X)$, a \emph{$\mu$-near action}~\cite{ElekSzabo,PestovKwiatkowska} of a group $G$ on $X$ is a map $\pi \colon G \to \Sym(X)$ such that $\mu$ is $\pi(G)$-invariant and, for all $g,h \in G$, \begin{displaymath}
	\mu(\{ x \in X \mid \pi(gh)x = \pi(g)\pi(h)x \}) \, = \, 1 .
\end{displaymath} By work of Elek and Szabó~\cite{ElekSzabo}, a group $G$ is sofic if and only if there exist a set $X$, a probability charge $\mu$ on $\Pow(X)$ and $\mu$-near action $\pi \colon G \to \Sym(X)$ such that, for every $g \in G \setminus \{ e \}$, \begin{displaymath}
	\mu(\{ x \in X \mid x \ne \pi(g)x \}) \, = \, 1.
\end{displaymath} This characterization elegantly parallels the fact that a group is amenable if and only if it admits a free amenable action on some set. In~\cite{PestovKwiatkowska}, Pestov and Kwiatkowska raised the question about an analogue of the Elek--Szabó criterion for hyperlinearity. The present paper resolves this problem.

Seeking a hyperlinear sibling to the Elek--Szabó theorem, we are naturally lead to study amenable unitary representations on Hilbert spaces. This notion was introduced by Bekka~\cite{bekka}: a unitary representation $\pi \colon G \to \U(H)$ of a group $G$ on a Hilbert space $H$ is called \emph{amenable} if the algebra $\B(H)$ of bounded linear operators on $H$ admits a state $\phi$ which is $\pi$-invariant, in the sense that $\phi(\pi(g)^{\ast}a\pi(g)) = \phi(a)$ for all $a \in \B(H)$ and $g \in G$. Linking Bekka's characterization of amenability in terms of the existence of approximately invariant finite-rank projections (Lemma~\ref{lemma:folner}) with concentration of measure in high-dimensional Euclidean spheres, Pestov proved that a unitary representation on a Hilbert space $H$ is amenable if and only if the induced action on the unit sphere $\Sph(H)$ admits an invariant state on the space $\UCB(\Sph(H))$ of uniformly continuous bounded complex-valued functions~\cite[Theorem~7.6]{Pestov00}. We revisit Pestov's work and extract from it the tools to prove a new characterization of amenability of unitary representations in terms of invariant probability charges on orthonormal bases. To be more precise, let us recall that, if $X$ is an orthonormal basis of a Hilbert space $H$ and $\mu$ is a probability charge on $\Pow(X)$, then \begin{displaymath}
	\mu_{\bullet} \colon \, \B(H) \, \longrightarrow \, \C, \quad a \, \longmapsto \, \int \langle x,ax \rangle \, \mathrm{d}\mu(x)
\end{displaymath} constitutes a state on $\B(H)$ (see Remark~\ref{remark:charges.induce.states}). The following is a consequence of the results of~\cite{bekka} and~\cite{Pestov00}.

\begin{customthm}{A}[Theorem~\ref{theorem:amenable.basis}]\label{theorem:a} A continuous\footnote{with respect to the strong operator topology on $\U(H)$} unitary representation $\pi$ of a $\sigma$-compact locally compact group $G$ on a Hilbert space $H$ is amenable if and only if there exist an orthonormal basis $X$ for $H$ and a probability charge $\mu$ on $\Pow(X)$ such that $\mu_{\bullet}$ is $\pi$-invariant. \end{customthm}

Based on Theorem~\ref{theorem:a}, we propose an analogue of amenable near actions, namely \emph{amenable near representations}. Given an orthonormal basis $X$ of a Hilbert space $H$ and a probability charge $\mu$ on $\Pow(X)$, we define a \emph{$\mu$-near representation} of a group $G$ on $H$ to be a map $\pi \colon G \to \U(H)$ such that $\mu_{\bullet}$ is $\pi(G)$-invariant and, for all $g,h \in G$ and $\epsilon \in \R_{>0}$, \begin{displaymath}
	\mu (\{ x \in X \mid \Vert \pi(gh)x - \pi(g)\pi(h)x \Vert \leq \epsilon \}) \, = \, 1
\end{displaymath} (see Definition~\ref{definition:near.representation}). Using this terminology, we establish the following result.

\begin{customthm}{B}[cf.~Theorem~\ref{theorem:elek.szabo}]\label{theorem:b} A group $G$ is hyperlinear if and only if there exist a set~$X$, a probability charge $\mu$ on $\Pow(X)$ and a $\mu$-near representation $\pi \colon G \to \U(\ell^{2}(X))$ such that, for all $g \in G\setminus \{ e \}$ and $\epsilon \in \R_{>0}$, \begin{displaymath}
	\mu (\{ x \in X \mid \lvert \langle x,\pi(g)x \rangle \rvert \leq \epsilon \}) \, = \, 1.
\end{displaymath} \end{customthm}

The proof of Theorem~\ref{theorem:b} is rather different from the strategy pursued by Elek and Szabó~\cite{ElekSzabo} characterizing sofic groups. While the latter involves paradoxical decompositions, the former makes use of concentration of measure in finite-dimensional spheres, subtle approximation techniques due to Kirchberg~\cite{Kirchberg} and Ozawa~\cite{Ozawa}, and Stinespring's dilation theorem~\cite{Stinespring}. Along the way, we develop additional notions of amenable near representations on a Hilbert space $H$ (Definition~\ref{definition:near.representation.variations}), namely relative to probability charges on the Borel $\sigma$-algebra $\Borel(\Sph(H))$, states on $\UCB(\Sph(H))$, and such on $\B(H)$. This study results in the following theorem, which serves as a major tool in the proof of Theorem~\ref{theorem:b}, but also appears to be of interest in its own right.

\begin{customthm}{C}[cf.~Theorem~\ref{theorem:amenable.trace}]\label{theorem:c} Let $G$ be a group. The following are equivalent. \begin{enumerate}
	\item\label{theorem:c.1}  $G$ is hyperlinear.
	\item\label{theorem:c.2}  There exist a Hilbert space $H$, a probability charge $\nu$ on $\Borel(\Sph(H))$ and a $\nu$-near representation $\pi \colon G \to \U(H)$ such that, for all $g \in G\setminus \{ e \}$ and $\epsilon \in \R_{>0}$, \begin{displaymath}
			\qquad \nu(\{ x \in \Sph(H) \mid \lvert \langle x,\pi(g)x \rangle \rvert \leq \epsilon \}) \, = \, 1 .
		\end{displaymath}
	\item\label{theorem:c.3}  There exist a Hilbert space $H$, a state $\mu$ on $\UCB(\Sph(H))$ and a $\mu$-near representation $\pi \colon G \to \U(H)$ such that \begin{displaymath}
			\qquad \forall g \in G\setminus \{ e\} \colon \quad  \mu(x \mapsto \vert\langle x,\pi(g)x \rangle\vert) = 0 .
		\end{displaymath}
	\item\label{theorem:c.4}  There exist a Hilbert space $H$, a state $\phi$ on $\B(H)$ and a $\phi$-near representation $\pi \colon G \to \U(H)$ such that \begin{displaymath}
			\qquad \forall g \in G\setminus \{ e\} \colon \quad \phi(\pi(g)) = 0 .
		\end{displaymath}
\end{enumerate} \end{customthm}

If $H$ is a Hilbert space, $\phi$ is a state on $\B(H)$ and $\pi$ is a $\phi$-near representation of a group $G$ on $H$, then $\phi\circ \pi$ is a character on $G$ (Proposition~\ref{proposition:near.rep}\ref{proposition:near.rep.character})---and the freeness condition in~\ref{theorem:c.4} of Theorem~\ref{theorem:c} means precisely that this character is the regular character on $G$. One may wonder which groups even admit a genuine unitary representation with such a free invariant state. We prove that this happens precisely for groups possessing \emph{Kirchberg's factorization property}~\cite{KirchbergInventiones}, thus drawing a connection between results by Elek and Szabó~\cite{ElekSzabo} and earlier work of Kirchberg~\cite{Kirchberg}, in particular concerning Kazhdan groups (Theorem~\ref{theorem:kazhdan}).

\begin{customthm}{D}[cf.~Theorem~\ref{theorem:kirchberg}]\label{theorem:d} Let $G$ be a group. The following are equivalent. \begin{enumerate}
	\item\label{theorem:d.1} $G$ has Kirchberg's factorization property.
	\item\label{theorem:d.2} There exist a unitary representation $\pi$ of $G$ on a Hilbert space $H$ and a $\pi$-invariant state $\mu$ on $\UCB(\Sph(H))$ such that \begin{displaymath}
			\qquad \forall g \in G\setminus \{ e\} \colon \quad  \mu(x \mapsto \vert\langle x,\pi(g)x \rangle\vert) = 0 .
		\end{displaymath}	
	\item\label{theorem:d.3} There exist a unitary representation $\pi$ of $G$ on a Hilbert space $H$ and a $\pi$-invariant state $\phi$ on $\B(H)$ such that \begin{displaymath}
			\qquad \forall g \in G \setminus \{ e \} \colon \quad \phi(\pi(g)) = 0 .
		\end{displaymath}
	\item\label{theorem:d.4} There exist a set $X$, a unitary representation $\pi \colon G \to \U(\ell^{2}(X))$ and a probability charge $\mu$ on $\Pow(X)$ such that $\mu_{\bullet}$ is $\pi$-invariant and \begin{displaymath}
			\qquad \forall g \in G\setminus \{ e\} \ \forall \epsilon \in \R_{>0} \colon \ \ \mu(\{ x \in X \mid \lvert \langle x,\pi(g)x \rangle \rvert \leq \epsilon \}) = 1.
		\end{displaymath}
\end{enumerate}\end{customthm}

By force of the concentration of measure in finite-dimensional Euclidean spheres, in condition~\ref{theorem:c.3} of Theorem~\ref{theorem:c} one may additionally arrange for $\mu$ to be a ring homomorphism from $\UCB(\Sph(H))$ to $\C$ (see Remark~\ref{remark:ring.homomorphism}). The same applies to condition~\ref{theorem:d.2} of Theorem~\ref{theorem:d} (see Remark~\ref{remark:ring.homomorphism.kirchberg}).

Our results provide a new perspective on the comparison between hyperlinearity and \emph{Haagerup's property} (which is also known as \emph{a-T-menability}). It is an open problem~\cite[Open question~9.5]{Pestov08} whether the Haagerup property would imply hyperlinearity. Based on Theorem~\ref{theorem:a}, we establish a characterization of countable groups with the Haagerup property (Corollary~\ref{corollary:haagerup}) in the spirit of Theorem~\ref{theorem:b}.

This article is organized as follows. Our Section~\ref{section:amenable.bases} is dedicated to the proof of Theorem~\ref{theorem:a}. Subsequently, we turn to spherical measures and their concentration properties in Section~\ref{section:spherical.measures} and use these phenomena to establish some useful characterizations of hyperlinearity in Section~\ref{section:hyperlinear.groups}. We proceed to developing several natural notions of amenable near representations in the subsequent Section~\ref{section:near.representations}, which also contains the proof of Theorem~\ref{theorem:c}. These findings are then used to establish Theorem~\ref{theorem:b}, the direct analogue of the Elek--Szabó theorem, in Section~\ref{section:elek.szabo}. Our final Section~\ref{section:kirchberg} discusses characterizations of Kirchberg's factorization property in terms of essentially free amenable unitary representations, establishing Theorem~\ref{theorem:d} along with some additional ramifications. 

\smallskip

\textbf{Disclaimer.} In this manuscript, to avoid repetition, the term \emph{Hilbert space} is reserved for complete inner product spaces over $\C$, and inner products are supposed to be linear in the \emph{second} argument.

\section{Amenable bases}\label{section:amenable.bases}

Recall that a \emph{state} on a unital $\Cstar$-algebra $A$ is a positive unital linear form on $A$. Let $H$ be a Hilbert space. The unitary group $\U(H)$ admits a natural action on the set of states on $\B(H)$, which is given by \begin{displaymath}
	u.\phi \colon \, \B(H) \, \longrightarrow \, \C, \quad a \, \longmapsto \, \phi(u^{\ast}au)
\end{displaymath} for every $u \in \U(H)$ and every state $\phi$ on $\B(H)$. A representation $\pi \colon G \to \U(H)$ of a group $G$ on $H$ is said to be \emph{amenable}~\cite{bekka} if there exists a state $\phi \colon \B(H) \to \C$ such that $\pi(g).\phi = \phi$ for all $g \in G$.
As is customary, \begin{displaymath}
	\Pro(H) \, \defeq \, \!\left\{ p \in \B(H) \left\vert \, p^{2} = p = p^{\ast} \right\} .\right.
\end{displaymath}

The following two results are the central ingredients in the proof of Theorem~\ref{theorem:amenable.basis}.

\begin{lem}[\cite{bekka}, Theorem~6.2]\label{lemma:folner} Let $\pi$ be a continuous unitary representation of a locally compact group $G$ on a Hilbert space $H$. Then $\pi$ is amenable if and only if, for every $\epsilon \in \R_{>0}$ and every compact subset $E \subseteq G$, there exists $p \in \Pro(H)$ such that $0 < \rk(p) < \infty$ and \begin{displaymath}
	\forall g \in E \colon \qquad \Vert \pi(g)p\pi(g)^{\ast}-p\Vert_{1} \, \leq \, \epsilon\Vert p\Vert_{1} .
\end{displaymath} \end{lem}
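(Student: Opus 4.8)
The plan is to prove both implications; the forward direction (amenability $\Rightarrow$ the F{\o}lner-type condition) carries the real content. For the converse, suppose such finite-rank projections exist, and for each $\epsilon\in\R_{>0}$ and each compact $E\subseteq G$ pick $p=p_{\epsilon,E}$ as in the statement. I would attach to each finite-rank projection $p$ the state $\phi_p\colon a\mapsto\tr(pap)/\tr(p)$ on $\B(H)$ and let $\phi$ be a weak$^{\ast}$ cluster point of the net $(\phi_{p_{\epsilon,E}})$, with the pairs $(\epsilon,E)$ directed by $\epsilon\downarrow 0$ and $E$ increasing. Writing $q\defeq\pi(g)p\pi(g)^{\ast}$, cyclicity of the trace gives $\phi_p(\pi(g)^{\ast}a\pi(g))=\phi_q(a)$, and since $\tr q=\tr p$ one estimates $\lvert\phi_q(a)-\phi_p(a)\rvert\leq 2\Vert a\Vert\,\Vert q-p\Vert_1/\tr(p)\leq 2\Vert a\Vert\epsilon$ for all $g\in E$; passing to the limit yields $\phi(\pi(g)^{\ast}a\pi(g))=\phi(a)$ for all $g$ and $a$, so $\phi$ witnesses amenability of $\pi$.

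For the forward direction, assume $\pi$ amenable with $\pi$-invariant state $\phi$ on $\B(H)$; I would argue in two stages. \emph{Stage 1} produces approximately invariant density matrices: for every finite $F\subseteq G$ and every $\epsilon\in\R_{>0}$ there is a density matrix $\rho$ (that is, $\rho\geq 0$, trace-class, $\tr\rho=1$) with $\sum_{g\in F}\Vert\pi(g)\rho\pi(g)^{\ast}-\rho\Vert_1<\epsilon$. This is a Day-style convexity argument: otherwise $0$ lies outside the norm-closure of the convex set $\{(\pi(g)\rho\pi(g)^{\ast}-\rho)_{g\in F}\mid\rho\text{ a density matrix}\}\subseteq\bigoplus_{g\in F}\mathcal{L}^{1}(H)$, so Hahn--Banach supplies $(a_g)_{g\in F}$ with $\Re\sum_{g}\tr\bigl((\pi(g)\rho\pi(g)^{\ast}-\rho)a_g\bigr)\geq\alpha>0$ for all density matrices $\rho$; rewriting the left-hand side as $\Re\tr(\rho b)$ with $b\defeq\sum_{g\in F}(\pi(g)^{\ast}a_g\pi(g)-a_g)$ and testing against rank-one projections gives $\Re b\geq\alpha I$, hence $\Re\phi(b)\geq\alpha$, contradicting $\phi(b)=0$ (which follows from $\pi$-invariance of $\phi$). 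For locally compact $G$ and compact $E$, one runs the same separation inside $C(E,\mathcal{L}^{1}(H))$ — using that $g\mapsto\pi(g)\rho\pi(g)^{\ast}$ is $\Vert\cdot\Vert_1$-continuous, which is clear for finite-rank $\rho$ since conjugation is $\Vert\cdot\Vert_1$-isometric — and represents the separating functional by a $\B(H)$-valued Borel measure on $E$.

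\emph{Stage 2} turns a density matrix into a finite-rank projection, and is the crux. Fix $\rho$ from Stage 1. Being compact, $\rho$ has finite-rank spectral truncations $p_t\defeq\mathbf{1}_{(t,\infty)}(\rho)$ for $t>0$, with $\rho=\int_0^{\infty}p_t\,\mathrm{d}t$ and $\int_0^{\infty}\tr(p_t)\,\mathrm{d}t=\tr\rho=1$. The key input is the isospectral layer-cake identity
\begin{displaymath}
	\Vert\pi(g)\rho\pi(g)^{\ast}-\rho\Vert_1\;=\;\int_0^{\infty}\Vert\pi(g)p_t\pi(g)^{\ast}-p_t\Vert_1\,\mathrm{d}t\qquad(g\in G),
\end{displaymath}
which holds because $\pi(g)\rho\pi(g)^{\ast}$ has the same spectrum as $\rho$, so that $\pi(g)p_t\pi(g)^{\ast}=\mathbf{1}_{(t,\infty)}(\pi(g)\rho\pi(g)^{\ast})$ and $p_t$ are truncations at a common level $t$ — nested families of projections of matching rank whose differences accumulate in trace norm without cancellation (as one sees by simultaneously block-diagonalising the two spectral flags). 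Granting it and taking $E$ finite, integrating $\sum_{g\in E}\Vert\pi(g)p_t\pi(g)^{\ast}-p_t\Vert_1-\epsilon\tr(p_t)$ in $t$ gives $\sum_{g\in E}\Vert\pi(g)\rho\pi(g)^{\ast}-\rho\Vert_1-\epsilon<0$ by Stage 1; so this integrand is negative on a positive-measure set, which lies in $(0,\Vert\rho\Vert)$ up to a null set, and any $t_0$ there furnishes a finite-rank projection $p\defeq p_{t_0}\neq 0$ with $\Vert\pi(g)p\pi(g)^{\ast}-p\Vert_1<\epsilon\Vert p\Vert_1$ for all $g\in E$. Rescaling $\epsilon$ finishes the discrete case.

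The main obstacle is Stage 2, and within it the isospectral layer-cake identity: the trace norm does not interact well with spectral truncation in general — there is no useful comparison between $\Vert A-B\Vert_1$ and $\int_0^{\infty}\Vert\mathbf{1}_{(t,\infty)}(A)-\mathbf{1}_{(t,\infty)}(B)\Vert_1\,\mathrm{d}t$ for arbitrary positive $A,B$ — and it is precisely the coincidence of the spectra of $\rho$ and its unitary conjugates that forces these two quantities to agree; passing through $\rho^{1/2}$ and the Hilbert--Schmidt norm via Powers--St{\o}rmer does not sidestep this. A secondary, purely technical, point is the passage from finite index sets $F$ to compact index sets $E$ in the locally compact setting, which in both stages relies on the continuity of $\pi$ — in Stage 2 via a covering argument on $E$ together with the continuity of $g\mapsto\Vert\pi(g)p\pi(g)^{\ast}-p\Vert_1$ for fixed finite-rank $p$.
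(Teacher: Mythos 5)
The paper does not prove this lemma at all --- it is quoted verbatim from Bekka's article (Theorem~6.2 there) --- so your argument has to stand on its own. Your converse direction and your Stage~1 (the Day-type convexity argument producing approximately invariant density matrices) are sound. The fatal problem sits exactly where you place ``the crux'': the isospectral layer-cake identity
\begin{displaymath}
	\Vert\pi(g)\rho\pi(g)^{\ast}-\rho\Vert_{1}\;=\;\int_{0}^{\infty}\Vert\pi(g)p_{t}\pi(g)^{\ast}-p_{t}\Vert_{1}\,\mathrm{d}t
\end{displaymath}
is false. Since $\rho-u\rho u^{\ast}=\int_{0}^{\infty}(p_{t}-up_{t}u^{\ast})\,\mathrm{d}t$, the triangle inequality gives ``$\leq$'' in general; but your averaging step needs ``$\geq$'', and that direction fails even for unitary conjugates. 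Concretely, in $\C^{3}$ let $u$ be the unitary sending $e_{1},e_{2},e_{3}$ to $\tfrac{1}{\sqrt{2}}(e_{1}+e_{2}),\,e_{3},\,\tfrac{1}{\sqrt{2}}(e_{1}-e_{2})$, and let $\rho=\tfrac{1}{3}\operatorname{diag}(2,1,0)$. Then $\rho-u\rho u^{\ast}=\tfrac{1}{3}\bigl(\begin{smallmatrix}1&-1&0\\-1&0&0\\0&0&-1\end{smallmatrix}\bigr)$ has trace norm $\tfrac{1}{3}(1+\sqrt{5})\approx 1.079$, whereas $\Vert p_{t}-up_{t}u^{\ast}\Vert_{1}$ equals $\sqrt{2}$ for $t\in(\tfrac{1}{3},\tfrac{2}{3})$ and $2$ for $t\in(0,\tfrac{1}{3})$, so the integral equals $\tfrac{1}{3}(2+\sqrt{2})\approx 1.138$. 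Two nested flags of matching ranks cannot in general be simultaneously block-diagonalised, and the sign structures of the successive differences of spectral projections do interfere: in the example the rank-one difference lives in the plane $\lin\{e_{1},e_{2}\}$ and the rank-two difference in $\lin\{e_{1}-e_{2},e_{3}\}$, and no single self-adjoint contraction can align with both, which is exactly the cancellation your identity denies.

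Your parenthetical dismissal of the Hilbert--Schmidt route is also exactly backwards: Connes' $2$-norm trick is the standard (and, to my knowledge, the only known) way to carry out your Stage~2. One first upgrades amenability to almost invariant \emph{positive} unit vectors $h$ in the Hilbert--Schmidt class (Bekka's Theorem~5.1 plus a standard positivity argument), then invokes Connes' inequality $\int_{0}^{\infty}\Vert e_{t}-f_{t}\Vert_{2}^{2}\,\mathrm{d}t\leq\Vert h-k\Vert_{2}\Vert h+k\Vert_{2}$ for $e_{t}=\mathbf{1}_{(t,\infty)}(h^{2})$, $f_{t}=\mathbf{1}_{(t,\infty)}(k^{2})$ --- a genuinely one-sided inequality with a nontrivial proof, not an identity --- with $k=\pi(g)h\pi(g)^{\ast}$, pigeonholes against $\int_{0}^{\infty}\tr(e_{t})\,\mathrm{d}t=\Vert h\Vert_{2}^{2}$, and only at the very end converts back to the trace norm via $\Vert p-q\Vert_{1}\leq\sqrt{\rk(p-q)}\,\Vert p-q\Vert_{2}$ for the resulting finite-rank projections. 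Without this inequality (or an equivalent substitute) your Stage~2 does not go through; the surrounding architecture, including the passage from finite to compact $E$, is fine.
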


\begin{lem}[\cite{Pestov00}, Lemma~6.3]\label{lemma:pestov} Let $\pi$ be a continuous unitary representation of a locally compact group $G$ on a Hilbert space $H$. If $\pi$ is not of the form $\pi_{1}\oplus\pi_{2}$, where $\pi_{1}$ is finite-dimensional and $\pi_{2}$ is non-amenable, then for every compact subset $E \subseteq G$ and all $\epsilon,\ell \in \R_{>0}$, there exists an orthogonal projection $p \in \Pro(H)$ such that $\ell \leq \rk(p) < \infty$ and \begin{displaymath}
	\forall g \in E \colon \qquad \Vert \pi(g)p\pi(g)^{\ast}-p\Vert_{1} \, \leq \, \epsilon\Vert p\Vert_{1} .
\end{displaymath} \end{lem}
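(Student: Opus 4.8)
The plan is first to observe that the hypothesis already forces $\pi$ itself to be amenable: writing $\pi$ as $\pi_{1}\oplus\pi_{2}$ with $\pi_{1}$ the (finite-dimensional) representation on the zero subspace and $\pi_{2}=\pi$, the hypothesis says precisely that $\pi_{2}=\pi$ is amenable. Hence Lemma~\ref{lemma:folner} applies and, for any compact $E\subseteq G$ and $\epsilon>0$, supplies a finite-rank projection $p$ with $\Vert\pi(g)p\pi(g)^{\ast}-p\Vert_{1}\le\epsilon\Vert p\Vert_{1}$ for all $g\in E$ --- call it a \emph{F\o lner projection for $(E,\epsilon)$} --- and the whole content of the statement is that its rank can additionally be pushed above a prescribed $\ell$.

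Next I would reduce to a situation with no finite-dimensional subrepresentations. Let $H_{\mathrm{fin}}$ be the closed linear span of all finite-dimensional $\pi$-invariant subspaces of $H$; it is $\pi$-invariant. If $H$ admits finite-dimensional invariant subspaces of arbitrarily large dimension, we are done at once: choose such a $V$ with $\ell\le\dim V<\infty$ and let $p$ be the orthogonal projection onto $V$; as every $\pi(g)$ is unitary and maps $V$ onto itself, $\pi(g)p\pi(g)^{\ast}=p$, so the left-hand side of the F\o lner inequality vanishes. Otherwise the dimensions of finite-dimensional invariant subspaces are bounded, say by $d$; since a finite sum of such subspaces is again one and hence also has dimension $\le d$, the space $H_{\mathrm{fin}}$ --- the increasing union of these finite sums --- is finite-dimensional. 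Write $\pi=\sigma\oplus\rho$ with $\sigma=\pi|_{H_{\mathrm{fin}}}$ finite-dimensional and $\rho=\pi|_{K}$, $K=H_{\mathrm{fin}}^{\perp}$. Then $\rho$ has no nonzero finite-dimensional invariant subspace (such a subspace would lie in $H_{\mathrm{fin}}\cap H_{\mathrm{fin}}^{\perp}=\{0\}$); the hypothesis forces $\rho$ to be amenable, since otherwise $\pi=\sigma\oplus\rho$ would be of the excluded form; and $K$ is infinite-dimensional, since otherwise $K$ itself would be a finite-dimensional invariant subspace, forcing $K=0$ and $\pi$ finite-dimensional, so that $\pi=\pi\oplus 0$ exhibits $\pi$ in the excluded form (the zero representation is not amenable). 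Since a projection supported on $K$ transforms identically under $\pi$ and under $\rho$ and has the same rank and trace norm there, it remains to prove the following: \emph{an amenable representation $\rho$ of $G$ on an infinite-dimensional Hilbert space $K$ with no nonzero finite-dimensional invariant subspace admits, for every compact $E\subseteq G$ and all $\epsilon,\ell>0$, a F\o lner projection of rank $\ge\ell$.}

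For this last step I would use amenability of $\rho$ --- passing from an invariant state on $\B(K)$ to approximately invariant normal states and then to their square roots in the spirit of Powers--St\o rmer --- to produce, for each $\delta>0$, a positive Hilbert--Schmidt operator $T$ on $K$ with $\Vert T\Vert_{\mathrm{HS}}=1$ and $\max_{g\in E}\Vert\rho(g)T\rho(g)^{\ast}-T\Vert_{\mathrm{HS}}\le\delta$, and then take a spectral cut-off $p=\mathbf 1_{(t,\infty)}(T)$. A layer-cake/averaging argument as in the proof of Lemma~\ref{lemma:folner} shows that for a suitable threshold $t$ this $p$ is a F\o lner projection for $(E,\epsilon)$ once $\delta$ is small enough. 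The crucial extra point is to arrange that $T$ can be chosen with large \emph{effective rank} $\Vert T\Vert_{1}^{2}$ --- equivalently, with all of its eigenvalues small --- so that the cut-off has rank $\ge\ell$. This is exactly where the absence of a finite-dimensional invariant subspace enters: if, for some $(E,\epsilon)$, every such approximately invariant $T$ had effective rank bounded by a constant, then its top eigenvalue would be bounded below, its top spectral projection would be a nonzero projection of uniformly bounded rank that becomes arbitrarily $\rho(E)$-invariant, and a limiting argument would then have to produce a nonzero finite-dimensional invariant subspace, a contradiction. (An alternative, more hands-on route is a ``doubling'': from a rank-$k$ F\o lner projection $p$, use the absence of invariant finite-dimensional subspaces to find $g\in G$ with $\Vert p\rho(g)p\Vert$ as small as one wishes, so that $p\vee\rho(g)p\rho(g)^{\ast}$ has rank $2k$ and --- provided $p$ was taken F\o lner for the compact set $E\cup g^{-1}Eg$, using $\Vert\rho(hg)p\rho(hg)^{\ast}-\rho(g)p\rho(g)^{\ast}\Vert_{1}=\Vert\rho(g^{-1}hg)p\rho(g^{-1}hg)^{\ast}-p\Vert_{1}$ --- is again a F\o lner projection with a controlled loss in the parameter; iterating a fixed number of times, starting from a projection with sufficiently small parameter furnished by Lemma~\ref{lemma:folner}, yields rank $\ge\ell$.)

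The hard part will be exactly the step just flagged: extracting a genuine finite-dimensional invariant subspace from a family of uniformly small, increasingly invariant finite-rank objects. This cannot be done softly --- the set of finite-rank projections is not weak$^{\ast}$ compact, and almost-invariant finite-dimensional subspaces may escape to infinity (as for the multiplication representation of $\Z$ on $L^{2}([0,1])$, which has no finite-dimensional invariant subspace yet abounds in almost-invariant lines) --- so it must genuinely exploit the amenability of $\rho$ together with the structure of $\overline{\langle E\rangle}$. The reduction and the spectral manipulations surrounding it are essentially bookkeeping; this rigidity statement is the analytic core of the lemma.
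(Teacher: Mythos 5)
Your reduction is exactly the paper's first step and is fine: amenability of $\pi$ follows from the hypothesis with $\pi_{1}=0$, and one passes to the orthogonal complement $K$ of the (finite-dimensional, by the boundedness argument you give) span of all finite-dimensional invariant subspaces, on which the restriction $\rho$ is amenable, infinite-dimensional, and has no non-trivial finite-dimensional subrepresentation. The gap is the core claim you flag and leave unproved, and the route you sketch for it cannot work as stated. You propose: if for some fixed $(E,\epsilon)$ every approximately invariant positive Hilbert--Schmidt operator had bounded effective rank, its top spectral projection would be a nonzero projection of uniformly bounded rank becoming arbitrarily $\rho(E)$-invariant, and ``a limiting argument would then have to produce a nonzero finite-dimensional invariant subspace.'' Your own example refutes that implication: the multiplication representation of $\Z$ on $L^{2}([0,1])$ admits, for every finite $E$ and every $\delta>0$, rank-one projections $p$ with $\Vert \rho(g)p\rho(g)^{\ast}-p\Vert\leq\delta$ for all $g\in E$, yet has no finite-dimensional invariant subspace. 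So the mere existence of bounded-rank, increasingly invariant projections is compatible with the absence of finite-dimensional subrepresentations, and no limiting argument can be built on that data alone.

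What the paper actually exploits is the \emph{extremality} of the rank bound, not just its existence. It sets $n$ to be the infimum, over pairs $\iota=(E,\epsilon)$, of the supremum of ranks of F\o lner projections for $\iota$, assumes $n<\infty$, fixes $\iota_{0}$ attaining this infimum, and restricts attention to the filter of \emph{rank-exactly-$n$} F\o lner projections inside the complete metric space of rank-$n$ projections. The dichotomy is then: either this filter is Cauchy, in which case its limit is an exactly invariant rank-$n$ projection, contradicting the absence of finite-dimensional subrepresentations; or there are, for arbitrarily fine $\iota$, two F\o lner projections $p_{\iota},q_{\iota}$ at operator-norm distance bounded away from $0$. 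In the second horn, the paper decomposes $p_{\iota}$ and $q_{\iota}$ into mutually compatible rank-one pieces (Lemma~\ref{lemma:wedin}), keeps the indices where the pieces stay uniformly apart, and shows that $p_{\iota}(H)$ together with those pieces of $q_{\iota}(H)$ spans an almost-invariant subspace of dimension strictly between $n$ and $2n$, whose orthogonal projection is then a F\o lner projection for $\iota_{0}$ of rank $>n$ --- contradicting the maximality of $n$. This manufacture of a strictly larger almost-invariant subspace from two far-apart maximal-rank F\o lner projections is the missing analytic core; neither your Powers--St\o rmer route nor the doubling alternative supplies it. (The doubling also has a circularity: each conjugating element $g_{i}$ is chosen only after the projection of the previous stage is known, so the compact set $E\cup g_{1}^{-1}Eg_{1}\cup\cdots$ for which the initial projection must be F\o lner cannot be fixed in advance, and the weak-mixing fact that $\Vert p\rho(g)p\Vert$ can be made small is itself a nontrivial input you do not justify.)
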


For the reader's convenience, we provide a detailed proof of Lemma~\ref{lemma:pestov} in the Appendix~\ref{section:proof.pestov.lemma}.

\begin{remark}\label{remark:charges.induce.states} If $X$ is an orthonormal basis of a Hilbert space $H$ and $\mu$ is a probability charge on $\Pow(X)$, then \begin{displaymath}
	\mu_{\bullet} \colon \, \B(H) \, \longrightarrow \, \C, \quad a \, \longmapsto \, \int \langle x,ax \rangle \, \mathrm{d}\mu(x)
\end{displaymath} constitutes a state on $\B(H)$. We refer to \cite[363L]{Fremlin3} for details about integration with respect to a finitely additive functional. \end{remark}

\begin{thm}\label{theorem:amenable.basis} A strongly continuous unitary representation $\pi$ of a $\sigma$-compact locally compact group $G$ on a Hilbert space $H$ is amenable if and only if there exist an orthonormal basis $X$ for $H$ and a probability charge $\mu$ on $\Pow(X)$ such that $\mu_{\bullet}$ is $\pi $-invariant.
\end{thm}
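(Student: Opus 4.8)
The plan is as follows. The implication ``$\Leftarrow$'' is immediate: by Remark~\ref{remark:charges.induce.states}, $\mu_{\bullet}$ is a state on $\B(H)$, so if $\mu_{\bullet}$ is $\pi$-invariant then $\pi$ is amenable by definition. For the converse, assume $\pi$ is amenable; I would distinguish two cases, according to whether or not $\pi$ admits a non-zero finite-dimensional subrepresentation.

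\emph{Case~1: $\pi$ has a non-zero finite-dimensional subrepresentation}, say on a closed invariant subspace $H_{0}\neq\{0\}$, with orthogonal projection $p_{0}$. (No assumption on $G$ or on continuity is used here.) Fix an orthonormal basis $X_{0}$ of $H_{0}$, extend it to an orthonormal basis $X$ of $H$ by Zorn's lemma, and set $\mu(A)\defeq\lvert A\cap X_{0}\rvert/\dim H_{0}$. Then $\mu_{\bullet}(a)=\tfrac{1}{\dim H_{0}}\tr(p_{0}ap_{0})$ for every $a\in\B(H)$, and since $H_{0}$ is $\pi$-invariant, $p_{0}$ commutes with each $\pi(g)$; the trace identity then gives $\mu_{\bullet}(\pi(g)^{\ast}a\pi(g))=\tfrac{1}{\dim H_{0}}\tr(\pi(g)^{\ast}p_{0}ap_{0}\pi(g))=\mu_{\bullet}(a)$, so $\mu_{\bullet}$ is $\pi$-invariant.

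\emph{Case~2: $\pi$ has no non-zero finite-dimensional subrepresentation.} Then $H$ is infinite-dimensional, and any decomposition $\pi=\pi_{1}\oplus\pi_{2}$ with $\pi_{1}$ finite-dimensional has $\pi_{1}=0$ and $\pi_{2}=\pi$, which is amenable; hence $\pi$ is not of the form excluded in Lemma~\ref{lemma:pestov}, so Lemma~\ref{lemma:pestov} applies to $\pi$. Using $\sigma$-compactness, fix compact sets $E_{1}\subseteq E_{2}\subseteq\cdots$ with $G=\bigcup_{n}E_{n}$. The core of the proof is to construct recursively a \emph{nested} sequence of finite-rank orthogonal projections $p_{1}\leq p_{2}\leq\cdots$ with $\rk p_{n}\to\infty$ and $\lVert\pi(g)p_{n}\pi(g)^{\ast}-p_{n}\rVert_{1}\leq\tfrac{1}{n}\lVert p_{n}\rVert_{1}$ for all $g\in E_{n}$ (the base step $p_{1}$ being a direct application of Lemma~\ref{lemma:pestov}). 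In the recursive step, given $p_{n-1}$ of rank $r$, I would apply Lemma~\ref{lemma:pestov} to $E_{n}$ with tolerance $\tfrac{1}{2n}$ and lower rank bound $\ell\geq 8nr$, obtaining a finite-rank projection $q$ of rank $\geq\ell$ with $\lVert\pi(g)q\pi(g)^{\ast}-q\rVert_{1}\leq\tfrac{1}{2n}\lVert q\rVert_{1}$ on $E_{n}$, and then set $p_{n}\defeq p_{n-1}\vee q$. That $p_{n}$ inherits the Følner estimate rests on the elementary remark that, for orthogonal projections $a,c$ with $\rk c<\infty$, one has $0\leq a\vee c-a$ with $\rk(a\vee c-a)\leq\rk c$, whence $\lVert a\vee c-a\rVert_{1}=\tr(a\vee c-a)\leq\rk c$; applying this a couple of times yields, for $g\in E_{n}$,
\begin{displaymath}
	\lVert\pi(g)p_{n}\pi(g)^{\ast}-p_{n}\rVert_{1}\ \leq\ \lVert\pi(g)q\pi(g)^{\ast}-q\rVert_{1}+4r\ \leq\ \tfrac{1}{2n}\rk q+4r\ \leq\ \tfrac{1}{n}\rk p_{n},
\end{displaymath}
since $\rk q\leq\rk p_{n}$ and $\rk p_{n}\geq 8nr$. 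I expect this recursion to be the only genuinely delicate point: \emph{a priori} Lemma~\ref{lemma:pestov} yields unrelated Følner projections and the join of two projections is badly discontinuous in trace norm, but this is harmless because we are free to take the rank of the new projection $q$ as large as we wish, so that joining it with the \emph{bounded}-rank $p_{n-1}$ disturbs $q$ by only $O(r)$ in trace norm.

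It then remains to extract the charge. I would pick orthonormal bases $B_{1}\subseteq B_{2}\subseteq\cdots$ with $B_{n}$ an orthonormal basis of $\im p_{n}$, put $X_{0}\defeq\bigcup_{n}B_{n}$ (an orthonormal subset of $H$), and extend $X_{0}$ to an orthonormal basis $X$ of $H$. Fixing a free ultrafilter $\mathcal{U}$ on $\N$, define $\mu(A)\defeq\lim_{n\to\mathcal{U}}\lvert A\cap B_{n}\rvert/\rk p_{n}$ for $A\subseteq X$; by linearity of ultralimits $\mu$ is a probability charge on $\Pow(X)$, concentrated on $X_{0}$. From the definition of integration with respect to a finitely additive measure (and boundedness of $x\mapsto\langle x,ax\rangle$ on $\Sph(H)$) one obtains $\mu_{\bullet}(a)=\lim_{n\to\mathcal{U}}\tfrac{1}{\rk p_{n}}\tr(p_{n}a)$ for all $a\in\B(H)$. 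Finally, given $g\in G$, choose $N$ with $g\in E_{n}$ for all $n\geq N$; then, by cyclicity of the trace, $\tfrac{1}{\rk p_{n}}\lvert\tr((\pi(g)p_{n}\pi(g)^{\ast}-p_{n})a)\rvert\leq\tfrac{1}{\rk p_{n}}\lVert\pi(g)p_{n}\pi(g)^{\ast}-p_{n}\rVert_{1}\leq\tfrac{1}{n}$ for every $a\in\B(H)$ with $\lVert a\rVert\leq 1$ and every $n\geq N$, so these quantities tend to $0$ and $\mu_{\bullet}(\pi(g)^{\ast}a\pi(g))=\mu_{\bullet}(a)$. Hence $\mu_{\bullet}$ is $\pi$-invariant, which completes the plan.
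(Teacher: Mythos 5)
Your proposal is correct, and its skeleton coincides with the paper's: the same ``$\Longleftarrow$'' via Remark~\ref{remark:charges.induce.states}, the same dichotomy according to the existence of a finite-dimensional subrepresentation, the identical treatment of the first case, and in the second case the same endgame (growing F\o lner projections from Lemma~\ref{lemma:pestov}, orthonormal bases of their ranges, an ultrafilter limit of normalized counting charges, and invariance of $\mu_{\bullet}$ from the trace-norm F\o lner estimate). The one genuine divergence is the recursive step. The paper produces \emph{mutually orthogonal} projections: given $p = \sum_{i<n} p_{i}$ it takes a large-rank F\o lner projection $q$, chooses a unitary $u$ with $p \perp upu^{\ast} \perp q$, and sets $p_{n} \defeq (1-p+up)q(1-p+pu^{\ast})$, which requires verifying that $p_{n}$ is a projection of rank $\rk(q)$ orthogonal to $p$ and within $O(\rk(p))$ of $q$ in trace norm. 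You instead produce a \emph{nested} sequence via $p_{n} \defeq p_{n-1} \vee q$, and your perturbation estimate $\lVert p_{n-1}\vee q - q\rVert_{1} = \rk(p_{n-1}\vee q)-\rk(q) \leq \rk(p_{n-1})$ (valid because $p_{n-1}\vee q \geq q$, so the difference is a projection of rank at most $\rk(p_{n-1})$) is correct and replaces the paper's algebraic computation by a one-line lattice argument; the mutual orthogonality, respectively the nesting, is exactly what each construction needs so that $\bigcup_{n} X_{n}$, respectively $\bigcup_{n} B_{n}$, is an orthonormal system. Both perturbations cost $O(\rk(p_{n-1}))$ in trace norm against a freely chosen $\rk(q)$, so the quantitative bookkeeping is the same; your variant is arguably the more economical of the two.
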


\begin{proof} ($\Longleftarrow$) This is due to Remark~\ref{remark:charges.induce.states}.

($\Longrightarrow$) Below we will use the fact that, for every $a\in\B(H)$ and every trace-class operator $b \in \B_{1}(H)$, \begin{equation} \label{eq:1norm}
	\Vert ab \Vert_{1},\, \Vert ba \Vert_{1},\, \vert \tr(ab) \vert \,\leq\, \Vert a \Vert \Vert b \Vert_{1} 
\end{equation} (cf.~\cite[Theorem~18.11, p.~89]{ConwayBook}).

In order to prove the desired implication, suppose first that $\pi$ has a non-trivial finite-dimensional subrepresentation, i.e., $H$ admits a $\pi$-invariant, non-zero, finite-dimensional subspace $H_{0}$. In this case, we fix some orthonormal basis $X_{0}$ for $H_{0}$, choose any orthonormal basis $X$ for $H$ with $X_{0} \subseteq X$, and consider the probability charge \begin{displaymath}
	\mu \colon \Pow(X) \, \longrightarrow \, [0,1], \quad A \, \longmapsto \, \tfrac{\vert A \cap {X_{0}} \vert}{\vert X_{0}\vert} .
\end{displaymath} Letting $p \in \Pro(H)$ denote the orthogonal projection onto $H_{0}$, we observe that \begin{align*}
	\mu_{\bullet}(a) \, &= \, \int \langle x,ax \rangle \, \mathrm{d}\mu(x) \, = \, \tfrac{1}{\vert X_{0} \vert} \sum\nolimits_{x \in X_{0}} \langle x,ax \rangle \\
	& = \, \tfrac{1}{\vert X_{0} \vert} \sum\nolimits_{x \in X} \langle x,apx \rangle \, = \, \tfrac{1}{\dim(H_{0})} \tr(ap) 
\end{align*} for all $a \in \B(H)$. Now, if $g \in G$, then $\pi$-invariance of $H_{0}$ implies that $\pi(g)p\pi(g)^{\ast} = p$, whence \begin{align*}
	(\pi(g).(\mu_{\bullet}))(a) \, &= \, \mu_{\bullet}(\pi(g)^{\ast}a\pi(g)) \, = \, \tfrac{1}{\dim(H_{0})} \tr(\pi(g)^{\ast}a\pi(g)p) \\
	& = \, \tfrac{1}{\dim(H_{0})} \tr(a\pi(g)p\pi(g)^{\ast}) \, = \, \tfrac{1}{\dim(H_{0})} \tr(ap) \, = \, \mu_{\bullet}(a)
\end{align*} for every $a \in \B(H)$, that is, $\pi(g).(\mu_{\bullet}) = \mu_{\bullet}$. This completes the argument in the considered case.

For the remainder of the proof, let us suppose that $\pi$ does not have a non-trivial finite-dimensional subrepresentation. In particular, as amenability of $\pi$ necessitates that $H$ is non-zero, $H$ has to be infinite-dimensional. Since $G$ is $\sigma$-compact, we find an ascending chain $(E_{n})_{n \in \N}$ of compact subsets of $G$ such that $G = \bigcup_{n\in \N} E_{n}$. Recursively, we are going to choose a sequence $(p_{n})_{n \in \N} \in \Pro(H)^{\N}$ of mutually orthogonal non-zero finite-rank projections such that \begin{displaymath}
	\forall n \in \N \ \forall g \in E_{n} \colon \qquad \Vert \pi(g)p_{n}\pi(g)^{\ast}-p_{n}\Vert_{1} \, \leq \, \tfrac{1}{n+1}\Vert p_{n}\Vert_{1} .
\end{displaymath} For a start, by Lemma~\ref{lemma:folner}, find $p_{0} \in \Pro(H)$ such that $0 < \rk(p_{0}) < \infty$ and \begin{displaymath}
	\forall g \in E_{0}\colon \qquad \lVert \pi(g)p_{0}\pi(g)^{\ast}-p_{0}\rVert_{1} \leq \lVert p_{0}\rVert_{1}.
\end{displaymath} For the recursive step, let $n \in \N_{>0}$ and suppose that $p_{0},\ldots,p_{n-1} \in \Pro(H)\setminus \{ 0 \}$ are mutually orthogonal, of finite rank, and such that \begin{displaymath}
	\forall i \in \{0,\ldots,n-1\} \ \forall g \in E_{i}\colon \qquad \Vert \pi(g)p_{i}\pi(g)^{\ast}-p_{i}\Vert_{1} \, \leq \, \tfrac{1}{i+1}\Vert p_{i}\Vert_{1} .
\end{displaymath} Note that $p \defeq \sum_{i=0}^{n-1} p_{i} \in \Pro(H)$ and $\rk(p) = \sum_{i=0}^{n-1} \rk(p_{i})$. By Lemma~\ref{lemma:pestov}, there exists $q \in \Pro(H)$ such that \begin{displaymath}
	24(n+1)\rk(p) \, \leq \, \rk(q) \, < \, \infty
\end{displaymath} and $\Vert \pi(g)q\pi(g)^{\ast}-q\Vert_{1}\leq \tfrac{1}{2(n+1)}\lVert q \rVert_{1}$ for every $g\in E_{n}$. Since $H$ is of infinite dimension, we find some element\footnote{In fact, one may easily arrange for $u$ to be an involution.} $u \in \U(H)$ such that $p \perp upu^{\ast} \perp q$. Consider \begin{displaymath}
	p_{n} \, \defeq \, (1-p+up)q(1-p+pu^{\ast}) \, \in \, \B(H) 
\end{displaymath} and note that $p_{n}^{\ast} = p_{n}$. As $pup = pupu^{\ast}u = 0$ and $pu^{\ast}p = u^{\ast}upu^{\ast}p = 0$, we see that \begin{equation}\label{eq:vanishing}
	(p-pu^{\ast})(p-up) \, = \, p-pup-pu^{\ast}p+\smallunderbrace{pu^{\ast}up}_{=\,pp \, = \,p} \, = \, 2p.
\end{equation} Thus, \begin{align*}
	(1-p+pu^{\ast})(1-p+up) \, &= \, 1-p+up-p+pu^{\ast}+(p-pu^{\ast})(p-up) \\
	&\stackrel{\eqref{eq:vanishing}}{=} \, 1+up+pu^{\ast}
\end{align*} and therefore \begin{align*}
	q(1-p+pu^{\ast})(1-p+up)q \, &= \, q(1+up+pu^{\ast})q \, = \, q+qupq+qpu^{\ast}q \\
	&= \, q+\smallunderbrace{qupu^{\ast}}_{=\,0}uq+qu^{\ast}\smallunderbrace{upu^{\ast}q}_{=\,0} \, = \, q ,
\end{align*} from which we infer that $p_{n}p_{n} = p_{n}$, i.e., $p_{n} \in \Pro(H)$, and moreover \begin{equation}\label{eq:rank}
	\rk(p_{n}) \, = \, \tr(p_{n}) \, = \, \tr (q(1-p+pu^{\ast})(1-p+up)q) \, = \, \tr(q) \, = \, \rk(q) .
\end{equation} Finally, since \begin{align*}
	\lVert (p-up)q(p-pu^{\ast}) \rVert_{1} \, &= \, \tr((p-up)q(p-pu^{\ast})) \\
		& = \, \tr(q(p-pu^{\ast})(p-up)) \, \stackrel{\eqref{eq:vanishing}}{=} \, 2\tr(qp)
\end{align*} and so \begin{align*}
	\Vert p_{n}-q\Vert_{1} \, &= \, \lVert qpu^{\ast}-qp+upq-pq + (p-up)q(p-pu^{\ast}) \rVert_{1}\\
		&\leq \, \lVert qpu^{\ast} \rVert_{1} + \lVert qp \rVert_{1} + \lVert uqp \rVert_{1} + \lVert pq \rVert_{1} + \underbrace{\lVert (p-up)q(p-pu^{\ast}) \rVert_{1}}_{=\,2\tr(qp)} \\
		& \stackrel{\eqref{eq:1norm}}{\leq} \, 6 \lVert p \rVert_{1} \, = \, 6\tr(p) \, = \, 6\rk(p) \, \leq \, \tfrac{6}{24(n+1)}\rk(q) \, \stackrel{\eqref{eq:rank}}{=} \, \tfrac{1}{4(n+1)}\rk(p_{n}),
\end{align*} we conclude that \begin{align*}
	&\Vert \pi(g)p_{n}\pi(g)^{\ast}-p_{n}\Vert_{1} \\
	&\qquad \leq \, \Vert \pi(g)p_{n}\pi(g)^{\ast}-\pi(g)q\pi(g)^{\ast}\Vert_{1} + \Vert \pi(g)q\pi(g)^{\ast}-q\Vert_{1} + \Vert q-p_{n}\Vert_{1} \\
	&\qquad = \, \Vert \pi(g)(p_{n}-q)\pi(g)^{\ast}\Vert_{1} + \Vert \pi(g)q\pi(g)^{\ast}-q\Vert_{1} + \Vert q-p_{n}\Vert_{1} \\
	&\qquad = \, 2\Vert p_{n}-q\Vert_{1} +\Vert \pi(g)q\pi(g)^{\ast}-q\Vert_{1} \\
	&\qquad \leq \, \tfrac{1}{2(n+1)}\rk(p_{n}) + \tfrac{1}{2(n+1)}\lVert q \rVert_{1} \, \leq \, \tfrac{1}{n+1}\rk(p_{n})
\end{align*} for every $g\in E_{n}$, which completes our recursion.

Now, for each $n \in \N$, pick an orthonormal basis $X_{n}$ for $p_{n}(H)$. Then $\bigcup_{n \in \N} X_{n}$ is an orthonormal system in $H$, therefore we find an orthonormal basis $X$ for $H$ such that $\bigcup_{n \in \N} X_{n} \subseteq X$. Choose any non-principal ultrafilter $\mathcal{F}$ on $\N$ and consider the probability charge \begin{displaymath}
	\mu \colon \, \Pow(X) \, \longrightarrow \, [0,1], \quad A \, \longmapsto \, \lim\nolimits_{n \to \mathcal{F}} \tfrac{\vert A\cap X_{n}\vert}{\vert X_{n}\vert}.
\end{displaymath} We observe that \begin{displaymath}
	\mu_{\bullet}(a) \, = \, \int \langle x,ax \rangle \, \mathrm{d}\mu(x) \, = \, \lim\nolimits_{n \to \mathcal{F}} \tfrac{1}{\vert X_{n}\vert}\sum\nolimits_{x\in X_{n}}\langle x,ax\rangle \, = \, \lim\nolimits_{n \to \mathcal{F}} \tfrac{1}{\tr(p_{n})}\tr(ap_{n})
\end{displaymath} for every $a \in \B(H)$. Finally, if $g \in G$, then there exists $m \in \N$ with $g \in E_{m}$, whence \begin{displaymath}
	\left\{ n \in \N \left\vert \, \Vert \pi(g)p_{n}\pi(g)^{\ast}-p_{n}\Vert_{1}\leq \tfrac{1}{n+1}\Vert p_{n}\Vert_{1}\right\}\right.\! \supseteq \{ n \in \N \mid g \in E_{n}\} = \{ n\in \N \mid n\geq m\}
\end{displaymath} belongs to $\mathcal{F}$, and we conclude that \begin{allowdisplaybreaks} \begin{align*}
	&\vert (\pi(g).(\mu_{\bullet}))(a) - \mu_{\bullet}(a)\vert \, = \, \vert \mu_{\bullet}(\pi(g)^{\ast}a\pi(g)) - \mu_{\bullet}(a)\vert \\
	&\qquad \qquad = \, \left\lvert \lim\nolimits_{n\to \mathcal{F}}\tfrac{1}{\tr(p_{n})}\tr(\pi(g)^{\ast}a\pi(g)p_{n}) -\lim\nolimits_{n \to \mathcal{F}}\tfrac{1}{\tr(p_{n})}\tr(ap_{n}) \right\rvert \\
	&\qquad \qquad \leq \, \lim\nolimits_{n\to \mathcal{F}} \tfrac{1}{\tr(p_{n})}\vert \tr(\pi(g)^{\ast}a\pi(g)p_{n})-\tr(ap_{n})\vert \\
	&\qquad \qquad = \, \lim\nolimits_{n\to \mathcal{F}} \tfrac{1}{\tr(p_{n})} \vert \tr(a(\pi(g)p_{n}\pi(g)^{\ast}-p_{n}))\vert \\
	&\qquad \qquad \stackrel{\eqref{eq:1norm}}{\leq} \, \lim\nolimits_{n\to \mathcal{F}} \tfrac{1}{\tr(p_{n})} \Vert a \Vert \Vert \pi(g)p_{n}\pi(g)^{\ast}-p_{n}\Vert_{1} \\
	&\qquad \qquad = \, \Vert a \Vert \lim\nolimits_{n\to \mathcal{F}} \tfrac{1}{\Vert p_{n}\Vert_{1}} \Vert \pi(g)p_{n}\pi(g)^{\ast}-p_{n}\Vert_{1} \, = \, 0
\end{align*} for all $a \in \B(H)$, which means that $\pi(g).(\mu_{\bullet}) = \mu_{\bullet}$. This completes the proof. \end{allowdisplaybreaks} \end{proof}

\section{Spherical measures and concentration}\label{section:spherical.measures}

The purpose of this section is to provide some background on concentration of spherical measures relevant for our subsequent study of hyperlinear groups. Along the way, we also record a characterization of the amenability of unitary representations in terms of the existence of non-zero finite-dimensional linear subspaces whose spherical measures are asymptotically invariant with respect to the mass transportation distance (Proposition~\ref{proposition:mass.transportation}).

Given a non-zero finite-dimensional Hilbert space $H$, we let $\nu_{H}$ denote the Haar measure on its unit sphere \begin{displaymath}
	\Sph(H) \, \defeq \, \{ x \in H \mid \Vert x \Vert = 1 \} ,
\end{displaymath} i.e., the unique probability measure on the Borel $\sigma$-algebra $\Borel(\Sph(H))$ invariant under the natural action of $\U(H)$, and we consider the normalized Hilbert--Schmidt norm \begin{displaymath}
	\Vert a \Vert_{\HS} \, = \, \tfrac{1}{\sqrt{\dim(H)}} \Vert a \Vert_{2} \qquad (a \in \B(H)). 
\end{displaymath} The following is well known.

\begin{prop}[see, e.g.,~\cite{kania}]\label{proposition:trace} Let $H$ be a non-zero finite-dimensional Hilbert space. For every $a \in \B(H)$, \begin{displaymath}
	\tfrac{1}{\dim(H)}\tr(a) \, = \, \int \langle x,ax \rangle \, \mathrm{d}\nu_{H}(x) .
\end{displaymath} In particular, for every $a \in \B(H)$, \begin{displaymath}
	\Vert a \Vert_{\HS}^{2} \, = \, \int \Vert ax \Vert^{2} \, \mathrm{d}\nu_{H}(x) .
\end{displaymath} \end{prop}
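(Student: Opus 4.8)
The plan is to identify the linear functional $\tau \colon \B(H) \to \C$ given by $\tau(a) \defeq \int \langle x,ax \rangle \, \mathrm{d}\nu_{H}(x)$ with $\tfrac{1}{\dim(H)}\tr$, and then to obtain the second formula by substituting $a^{\ast}a$ for $a$. Well-definedness and $\C$-linearity of $\tau$ are immediate: the integrand $x \mapsto \langle x,ax \rangle$ is continuous, hence bounded on the compact set $\Sph(H)$ and thus $\nu_{H}$-integrable, and both the pointwise operations and integration against the probability measure $\nu_{H}$ are linear in $a$. Note that, $\nu_{H}$ being a genuine Borel probability measure and the integrands continuous, no finitely additive integration is involved here.

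The crucial observation is that $\tau$ is invariant under conjugation by unitaries. Indeed, for $u \in \U(H)$ the substitution $y = ux$, which is legitimate since $\nu_{H}$ is $\U(H)$-invariant, yields
\begin{displaymath}
	\tau(u^{\ast}au) \, = \, \int \langle x,u^{\ast}aux \rangle \, \mathrm{d}\nu_{H}(x) \, = \, \int \langle ux,a(ux) \rangle \, \mathrm{d}\nu_{H}(x) \, = \, \int \langle y,ay \rangle \, \mathrm{d}\nu_{H}(y) \, = \, \tau(a) .
\end{displaymath}
Now I would invoke the classical fact that, on $\B(H) \cong \Mat_{\dim(H)}(\C)$, every $\C$-linear functional invariant under conjugation by $\U(H)$ is a scalar multiple of the trace. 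For completeness one argues as follows: writing $n \defeq \dim(H)$ and fixing an orthonormal basis, conjugation by permutation unitaries shows that the restriction of $\tau$ to diagonal operators is a permutation-invariant linear functional, hence of the form $d \mapsto c\,\tr(d)$ for some $c \in \C$; the spectral theorem lets one write any self-adjoint operator as $udu^{\ast}$ with $d$ diagonal, so $\tau(h) = \tau(d) = c\,\tr(h)$ for every self-adjoint $h \in \B(H)$; and since the self-adjoint operators span $\B(H)$ over $\C$, it follows that $\tau = c\,\tr$. Evaluating at $a = \id$ gives $1 = \tau(\id) = c\,\tr(\id) = cn$, so $c = \tfrac{1}{n}$, which is the first identity.

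Alternatively, one could bypass the uniqueness-of-trace step and compute directly: expanding $\langle x,ax \rangle = \sum_{i,j} \overline{x_{i}}\, x_{j}\, \langle e_{i},ae_{j} \rangle$ in an orthonormal basis $(e_{i})_{i}$, the coordinate sign-flip unitary $e_{k} \mapsto -e_{k}$ forces $\int \overline{x_{i}}\, x_{j}\, \mathrm{d}\nu_{H}(x) = 0$ for $i \neq j$, while permutation unitaries together with $\int \Vert x \Vert^{2}\, \mathrm{d}\nu_{H}(x) = 1$ give $\int \lvert x_{i} \rvert^{2}\, \mathrm{d}\nu_{H}(x) = \tfrac{1}{n}$; summing yields $\tau(a) = \tfrac{1}{n}\tr(a)$ at once. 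Either way, the ``in particular'' clause follows by applying the first identity to $a^{\ast}a \in \B(H)$ and using $\langle x,a^{\ast}ax \rangle = \Vert ax \Vert^{2}$, which gives $\int \Vert ax \Vert^{2}\, \mathrm{d}\nu_{H}(x) = \tfrac{1}{n}\tr(a^{\ast}a) = \tfrac{1}{n}\Vert a \Vert_{2}^{2} = \Vert a \Vert_{\HS}^{2}$. I do not anticipate any genuine obstacle: the only substantive ingredient is the classical characterisation of the trace as the essentially unique conjugation-invariant linear functional, and the rest is routine bookkeeping.
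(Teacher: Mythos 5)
Your proposal is correct. Note, however, that the paper does not actually prove the first identity: it simply cites \cite{kania} and \cite[Proposition~3.2]{burtonetal} for the statement $\tfrac{1}{\dim(H)}\tr(a) = \int \langle x,ax\rangle\,\mathrm{d}\nu_{H}(x)$, and then derives the ``in particular'' clause exactly as you do, by substituting $a^{\ast}a$ and using $\langle x,a^{\ast}ax\rangle = \Vert ax\Vert^{2}$ together with $\Vert a\Vert_{\HS}^{2} = \tfrac{1}{\dim(H)}\tr(a^{\ast}a)$. So for the part the paper actually argues, your route is identical; for the part it outsources, you supply two self-contained proofs, both sound. The first (the functional $a \mapsto \int\langle x,ax\rangle\,\mathrm{d}\nu_{H}$ is linear and invariant under unitary conjugation by $\U(H)$-invariance of $\nu_{H}$, hence a scalar multiple of the trace, with the constant pinned down at $a=\id$) is the conceptual argument; your reduction via permutation unitaries, the spectral theorem, and the span of self-adjoints is a complete justification of the uniqueness-of-trace step. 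The second (coordinate expansion, killing off-diagonal terms with sign-flip unitaries and computing $\int\lvert x_{i}\rvert^{2}\,\mathrm{d}\nu_{H} = \tfrac{1}{\dim(H)}$ by symmetry) is the computation one finds in the cited sources and is the more elementary of the two. Either suffices; no gap.
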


\begin{proof} For instance, see~\cite{kania} or~\cite[Proposition~3.2]{burtonetal} for the first assertion, which readily entails that, for every $a \in \B(H)$, \begin{displaymath}
	\Vert a \Vert_{\HS}^{2} \, = \, \tfrac{1}{\dim(H)}\tr(a^{\ast}a) \, = \, \int \langle x,a^{\ast}ax \rangle \, \mathrm{d}\nu_{H}(x) \, = \, \int \Vert ax \Vert^{2} \, \mathrm{d}\nu_{H}(x) . \qedhere
\end{displaymath} \end{proof}

\begin{cor}\label{corollary:trace} Let $H$ be a Hilbert space and let $p \in \Pro(H)$ with $0 < \rk(p) < \infty$. For every $a \in \B(H)$, \begin{align*}
	\tfrac{1}{\rk(p)}\tr(ap) \, &= \, \int \langle x,ax \rangle \, \mathrm{d}\nu_{p(H)}(x) , \\
	\tfrac{1}{\rk(p)}\Vert ap \Vert_{2}^{2} \, &= \, \int \Vert ax \Vert^{2} \, \mathrm{d}\nu_{p(H)}(x) .
\end{align*} \end{cor}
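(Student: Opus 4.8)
The plan is to deduce both identities from Proposition~\ref{proposition:trace} applied to the finite-dimensional Hilbert space $K \defeq p(H)$, which is non-zero (as $\rk(p) > 0$) and satisfies $\dim(K) = \rk(p)$.

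First I would introduce the compression $b \defeq p\,a\,\iota_K \in \B(K)$, where $\iota_K \colon K \hookrightarrow H$ is the inclusion; concretely $bx = p(ax)$ for $x \in K$. Since $px = x$ for every $x \in K$, one has $\langle x, bx \rangle = \langle px, ax \rangle = \langle x, ax \rangle$ for all $x \in \Sph(K) = \Sph(p(H))$, so Proposition~\ref{proposition:trace} gives $\int \langle x, ax \rangle \,\mathrm{d}\nu_{p(H)}(x) = \int \langle x, bx \rangle \,\mathrm{d}\nu_{K}(x) = \tfrac{1}{\dim(K)}\tr(b)$, where $\tr(b)$ denotes the trace of $b$ on $K$. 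To finish the first identity it remains to check that $\tr(b) = \tr(ap)$; here $ap$ is trace-class because $\rk(ap) \le \rk(p) < \infty$. Fixing an orthonormal basis $X_{0}$ of $K$, extending it to an orthonormal basis $X$ of $H$, and using $px = x$ for $x \in X_{0}$ and $px = 0$ for $x \in X \setminus X_{0}$, I would compute $\tr(b) = \sum_{x \in X_{0}} \langle x, bx \rangle = \sum_{x \in X_{0}} \langle x, apx \rangle = \sum_{x \in X} \langle x, apx \rangle = \tr(ap)$.

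For the second identity, I would apply the first one with $a^{\ast}a$ in place of $a$: since $\langle x, a^{\ast}ax \rangle = \Vert ax \Vert^{2}$ and, by cyclicity of the trace together with $p = p^{2} = p^{\ast}$, $\Vert ap \Vert_{2}^{2} = \tr(pa^{\ast}ap) = \tr(a^{\ast}ap)$, the first identity yields $\tfrac{1}{\rk(p)}\Vert ap \Vert_{2}^{2} = \int \Vert ax \Vert^{2} \,\mathrm{d}\nu_{p(H)}(x)$. The argument presents no real obstacle; the only points meriting a word of care are the trace-class membership of $ap$ (and of $pa^{\ast}ap$), which is immediate from finiteness of $\rk(p)$, and the identification of the trace on $K$ with the relevant restriction of the trace on $H$, which is precisely the orthonormal-basis computation above.
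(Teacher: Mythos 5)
Your proof is correct and follows essentially the same route as the paper: both reduce to Proposition~\ref{proposition:trace} on the finite-dimensional space $p(H)$ via the compression $pap$ (respectively $pa^{\ast}ap$) together with the identities $\tr(ap)=\tr(pap)$ and $\Vert ap\Vert_{2}^{2}=\tr(pa^{\ast}ap)$. Your extra care in identifying the trace on $p(H)$ with $\tr(ap)$ via an orthonormal basis is a harmless elaboration of what the paper leaves implicit.
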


\begin{proof} For every $a \in \B(H)$, we see that \begin{align*}
	\tfrac{1}{\rk(p)}\tr(ap) \, &= \, \tfrac{1}{\dim (p(H))}\tr(pap) \\
	& \stackrel{\ref{proposition:trace}}{=} \, \int \langle x,papx \rangle \, \mathrm{d}\nu_{p(H)}(x) \, = \, \int \langle x,ax \rangle \, \mathrm{d}\nu_{p(H)}(x) , \\
	\tfrac{1}{\rk(p)}\Vert ap \Vert_{2}^{2} \, &= \, \tfrac{1}{\dim (p(H))}\tr(pa^{\ast}ap) \\
	& \stackrel{\ref{proposition:trace}}{=} \, \int \langle x,pa^{\ast}apx \rangle \, \mathrm{d}\nu_{p(H)}(x) \, = \, \int \Vert ax \Vert^{2} \, \mathrm{d}\nu_{p(H)}(x) . \qedhere
\end{align*} \end{proof}

Before proceeding to the phenomenon of measure concentration, we point out another characterization of the amenability of unitary representations, which is essentially contained in~\cite{Pestov00} and has been suggested to the authors by Vladimir Pestov. While intended for potential future application, this observation will not be needed in the remainder of the present manuscript.

\begin{lem}\label{lemma:mass.transportation} Let $H$ be a Hilbert space, let $p,q \in \Pro(H)$ with $0 < \rk(p) = \rk(q) < \infty$. Then \begin{displaymath}
	\sup\nolimits_{f \in \Lip_{1}(\Sph(H),[-1,1])} \left\lvert \int f \, \mathrm{d}\nu_{p(H)} - \int f \, \mathrm{d}\nu_{q(H)} \right\rvert \, \leq \, \tfrac{\sqrt{2}}{\Vert p \Vert_{2}}\Vert q-p \Vert_{2} .
\end{displaymath} \end{lem}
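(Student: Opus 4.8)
The plan is to build a single transport map between $\Sph(p(H))$ and $\Sph(q(H))$ and estimate how far it moves points, on average. Since $\rk(p)=\rk(q)$, the subspaces $p(H)$ and $q(H)$ are isometrically isomorphic; fix a unitary $v\colon p(H)\to q(H)$, which pushes $\nu_{p(H)}$ forward to $\nu_{q(H)}$ (Haar measure is the unique $\U$-invariant probability measure, and $v$ conjugates the two sphere actions). For any $f\in\Lip_{1}(\Sph(H),[-1,1])$ we then have
\begin{displaymath}
	\left\lvert \int f\,\mathrm{d}\nu_{p(H)} - \int f\,\mathrm{d}\nu_{q(H)} \right\rvert
	\,=\, \left\lvert \int_{\Sph(p(H))} \bigl(f(x) - f(vx)\bigr)\,\mathrm{d}\nu_{p(H)}(x) \right\rvert
	\,\leq\, \int_{\Sph(p(H))} \Vert x - vx \Vert \,\mathrm{d}\nu_{p(H)}(x),
\end{displaymath}
using that $f$ is $1$-Lipschitz. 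By Cauchy--Schwarz the right-hand side is at most $\bigl(\int \Vert x-vx\Vert^{2}\,\mathrm{d}\nu_{p(H)}(x)\bigr)^{1/2}$, so it remains to choose $v$ cleverly and bound this $L^{2}$-quantity by $\tfrac{\sqrt2}{\Vert p\Vert_{2}}\Vert q-p\Vert_{2}$.

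The natural choice of $v$ is the one coming from the polar decomposition of $qp$ restricted to $p(H)$: write $qp = v\lvert qp\rvert$ with $v$ a partial isometry from $p(H)$ onto $q(H)$ (it is genuinely onto since $\rk(qp)=\rk(p)=\rk(q)$ when $p,q$ are sufficiently close, but in fact we only need $v$ to be an isometry on $p(H)$, extended arbitrarily to a unitary — and for the estimate we can reduce to the regime $\Vert q-p\Vert_{2}$ small, as otherwise the bound is trivial since the left side is at most $2\le \sqrt2\cdot\Vert p\Vert_2^{-1}\Vert q-p\Vert_2$ fails only when $\Vert q-p\Vert_2$ is small). Then for $x\in\Sph(p(H))$,
\begin{displaymath}
	\Vert x - vx\Vert^{2} \,=\, 2 - 2\,\Re\langle x, vx\rangle,
\end{displaymath}
and one computes $\int \Re\langle x,vx\rangle\,\mathrm{d}\nu_{p(H)}(x) = \tfrac{1}{\rk(p)}\Re\tr(pvp) = \tfrac{1}{\rk(p)}\Re\tr(v)$ by Corollary~\ref{corollary:trace} (here $v$ is viewed as an operator supported on $p(H)$). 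Since $v\lvert qp\rvert = qp$ with $\lvert qp\rvert = (pq p)^{1/2}\le p$, a standard manipulation gives $\Re\tr(v)\ge \tr(\lvert qp\rvert)\ge \tr(pqp) = \Vert qp\Vert_{2}^{2}$ — more precisely, using $\tr(v)=\tr(v\lvert qp\rvert) + \tr(v(p-\lvert qp\rvert))$ won't directly work, so instead I would use the elementary inequality $2\,\Re\tr(v) \ge \tr(v^{\ast}v) + \ldots$ — let me restructure: the clean route is to avoid $v$ entirely and instead bound directly via $\Vert x-vx\Vert \le \Vert x - qx\Vert + \Vert qx - vx\Vert$ and note $qx = q p x$, so $\Vert x - qx\Vert^2 = 1 - \Vert qpx\Vert^2$ integrates to $1 - \tfrac{1}{\rk p}\Vert qp\Vert_2^2$, and a symmetric bound handles the second term. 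Then $1 - \tfrac1{\rk p}\Vert qp\Vert_2^2 = \tfrac1{\rk p}\tr(p - pqp) = \tfrac1{\rk p}\tr\bigl(p(p-q)p\bigr) \le \tfrac1{\rk p}\Vert p - q\Vert_2^2$ by Corollary~\ref{corollary:trace} together with the bound $\lvert\tr(ap)\rvert\le\Vert a\Vert\Vert p\Vert_1$ applied after writing $p(p-q)p$ suitably, or directly $\tr(p(p-q)p)=\langle p, (p-q)p\rangle_{\HS}$-type Cauchy--Schwarz. Assembling these two pieces and using $\rk(p) = \Vert p\Vert_{2}^{2}$ yields the factor $\tfrac{\sqrt2}{\Vert p\Vert_2}\Vert q-p\Vert_2$ after one application of the triangle inequality in $L^{2}$.

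The step I expect to be the main obstacle is getting the constant exactly right: one has to be careful about whether the single transport map $v$ genuinely pushes $\nu_{p(H)}$ to $\nu_{q(H)}$ (it does, because it is a unitary intertwining the sphere actions, hence maps Haar to Haar), and about extracting the sharp $\sqrt2$ rather than a worse constant like $2$ or $2\sqrt2$ from the chain of triangle inequalities. I would handle this by doing the $\Re\langle x,vx\rangle$ computation in closed form via Corollary~\ref{corollary:trace}, reducing everything to the single scalar inequality $\rk(p) - \Re\tr(v) \le \tfrac12\Vert p - q\Vert_{2}^{2} + (\text{lower order})$, and then checking that $\int\Vert x-vx\Vert^2\,\mathrm{d}\nu_{p(H)} = 2\bigl(\rk(p) - \Re\tr(v)\bigr)/\rk(p)$ combines with Cauchy--Schwarz to give precisely $\sqrt{2}/\Vert p\Vert_2$ times $\Vert q - p\Vert_2$.
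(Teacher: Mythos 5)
Your overall skeleton is exactly the paper's: produce a unitary $u$ of $H$ with $upu^{\ast}=q$ (so that $u_{\ast}(\nu_{p(H)})=\nu_{q(H)})$, bound the difference of integrals by $\int\Vert x-ux\Vert\,\mathrm{d}\nu_{p(H)}$ via the Lipschitz condition, apply Cauchy--Schwarz and Corollary~\ref{corollary:trace} to reduce everything to $\tfrac{1}{\Vert p\Vert_{2}}\Vert(1-u)p\Vert_{2}$, and then prove the single operator inequality $\Vert(1-u)p\Vert_{2}\leq\sqrt{2}\,\Vert q-p\Vert_{2}$. That last inequality is the entire content of the lemma (it is Lemma~\ref{lemma:low.energy.conjugation} in the appendix, which the paper proves by Wedin's simultaneous rank-one decomposition, Lemma~\ref{lemma:wedin}, building $u$ block by block on the two-dimensional pieces $p_{i}(H)+q_{i}(H)$). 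Your proposal correctly isolates this crux but does not actually establish it: you open with the polar decomposition of $qp$, abandon that route mid-sentence (``won't directly work''), switch to $\Vert x-ux\Vert\leq\Vert x-qx\Vert+\Vert qx-ux\Vert$, and then dispose of the second term with ``a symmetric bound handles the second term.'' It does not: the second term is not symmetric to the first. Writing $c\defeq(pqp)^{1/2}$, the first term integrates to $\tfrac{1}{\rk(p)}\tr(p-c^{2})=\tfrac{1}{2\rk(p)}\Vert p-q\Vert_{2}^{2}$ (note you only claim $\leq\tfrac{1}{\rk(p)}\Vert p-q\Vert_{2}^{2}$, which would already degrade the final constant from $\sqrt{2}$ to $2$), whereas the second term, for $u$ extending the polar isometry, integrates to $\tfrac{1}{\rk(p)}\tr((p-c)^{2})$, and you need the separate positivity argument $\tr((p-c)^{2})\leq\tr((p-c)(p+c))=\tr(p-c^{2})$, valid because $0\leq c\leq p$. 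Without that step the chain of triangle inequalities does not close with the stated constant. In addition, your proposed reduction ``to the regime $\Vert q-p\Vert_{2}$ small'' is false: since $\Vert q-p\Vert_{2}^{2}=2(\rk(p)-\tr(pq))\leq 2\rk(p)$, the inequality $2\leq\tfrac{\sqrt{2}}{\Vert p\Vert_{2}}\Vert q-p\Vert_{2}$ holds only in the degenerate case $p\perp q$, so the bound is essentially never ``trivial.''

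That said, the polar-decomposition route is salvageable and, once the missing inequality above is supplied, gives a genuinely different and arguably more global construction of the transport unitary than the paper's piecewise one via Lemma~\ref{lemma:wedin}: one checks $\Vert(1-q)p\Vert_{2}^{2}=\tr(p-c^{2})$, $\Vert(q-u)p\Vert_{2}^{2}=\tr((p-c)^{2})\leq\tr(p-c^{2})$, and hence $\Vert(1-u)p\Vert_{2}\leq 2\sqrt{\tr(p-c^{2})}=\sqrt{2}\,\Vert p-q\Vert_{2}$, recovering Lemma~\ref{lemma:low.energy.conjugation} exactly. As submitted, however, the decisive estimate is asserted rather than proved, so the proposal has a genuine gap precisely at the step the lemma is really about.
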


\begin{proof} By Lemma~\ref{lemma:low.energy.conjugation}, we find $u \in \U(H)$ with $q = upu^{\ast}$ and $\Vert (1-u)p \Vert_{2} = \sqrt{2}\Vert q - p \Vert_{2}$. It follows that $\nu_{q(H)} = u_{\ast}(\nu_{p(H)})$ and then, due to the Cauchy--Schwartz inequality, \begin{align*}
	\left\lvert \int f\, \mathrm{d}\nu_{p(H)} - \int f\, \mathrm{d}\nu_{q(H)} \right\rvert \, &= \, \left\lvert \int f \, \mathrm{d}\nu_{p(H)} - \int f \circ u \, \mathrm{d}\nu_{p(H)} \right\rvert \\
		& \leq \, \int \vert f(x) - f(ux) \vert \, \mathrm{d}\nu_{p(H)}(x) \\
		& \leq \, \int \Vert x-ux \Vert \, \mathrm{d}\nu_{p(H)}(x) \\
		& \leq \, \sqrt{\int \Vert (1-u)x \Vert^{2} \, \mathrm{d}\nu_{p(H)}(x)} \\
		& \stackrel{\ref{corollary:trace}}{=} \, \tfrac{1}{\Vert p \Vert_{2}} \Vert (1-u)p \Vert_{2} \, = \, \tfrac{\sqrt{2}}{\Vert p \Vert_{2}}\Vert q - p \Vert_{2}
\end{align*} for every $f \in \Lip_{1}(\Sph(H),[-1,1])$, as desired. \end{proof}

\begin{prop}\label{proposition:mass.transportation} Let $\pi$ be a strongly continuous unitary representation of a locally compact group $G$ on a Hilbert space $H$. Then $\pi$ is amenable if and only if, for every compact subset $E \subseteq G$ and every $\epsilon \in \R_{>0}$, there exists a non-zero finite-dimensional linear subspace $H_{0}$ of $H$ such that \begin{displaymath}
    \forall g \in E \colon \qquad \sup\nolimits_{f \in \Lip_{1}(\Sph(H),[-1,1])} \left\lvert \int f \, \mathrm{d}\nu_{H_{0}} - \int f \, \mathrm{d}\nu_{\pi(g)H_{0}} \right\rvert \, \leq \, \epsilon .
\end{displaymath} \end{prop}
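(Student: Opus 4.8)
The plan is to derive Proposition~\ref{proposition:mass.transportation} from the Følner-type characterization of amenability in Lemma~\ref{lemma:folner} together with the transportation estimate of Lemma~\ref{lemma:mass.transportation}. For the forward implication, assume $\pi$ is amenable, fix a compact set $E \subseteq G$ and $\epsilon \in \R_{>0}$. By Lemma~\ref{lemma:folner} there is a finite-rank projection $p \in \Pro(H)$ with $0 < \rk(p) < \infty$ and $\Vert \pi(g)p\pi(g)^{\ast} - p \Vert_{1} \leq \delta \Vert p \Vert_{1}$ for all $g \in E$, where $\delta > 0$ is to be chosen. Set $H_{0} \defeq p(H)$; then $\pi(g)H_{0} = q_{g}(H)$ where $q_{g} \defeq \pi(g)p\pi(g)^{\ast} \in \Pro(H)$ and $\rk(q_{g}) = \rk(p)$. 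Applying Lemma~\ref{lemma:mass.transportation} to the pair $p, q_{g}$ gives the bound $\tfrac{\sqrt{2}}{\Vert p \Vert_{2}} \Vert q_{g} - p \Vert_{2}$ on the supremum in question. So I must control $\tfrac{1}{\Vert p \Vert_{2}}\Vert q_{g} - p \Vert_{2}$ in terms of $\tfrac{1}{\Vert p \Vert_{1}}\Vert q_{g} - p \Vert_{1}$. Since $q_{g} - p$ is a self-adjoint operator of rank at most $2\rk(p)$ with spectrum in $[-1,1]$, one has $\Vert q_{g} - p \Vert_{2}^{2} \leq \Vert q_{g} - p \Vert_{1}$ (as $\sum \lambda_{i}^{2} \leq \sum |\lambda_{i}|$ when $|\lambda_{i}| \leq 1$), while $\Vert p \Vert_{1} = \rk(p) = \Vert p \Vert_{2}^{2}$; hence
\begin{displaymath}
	\frac{\Vert q_{g} - p \Vert_{2}}{\Vert p \Vert_{2}} \, \leq \, \frac{\Vert q_{g} - p \Vert_{1}^{1/2}}{\Vert p \Vert_{1}^{1/2}} \, \leq \, \sqrt{\delta}.
\end{displaymath}
Choosing $\delta \defeq \epsilon^{2}/2$ then yields the required inequality $\leq \epsilon$ for all $g \in E$.

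For the converse, assume the stated approximation property and aim to verify the Følner condition of Lemma~\ref{lemma:folner}; equivalently (via Lemma~\ref{lemma:pestov} and the fact that a finite-dimensional subrepresentation trivially witnesses amenability) it suffices to produce, for each compact $E$ and each $\epsilon$, a finite-rank projection $p$ with small $\Vert \pi(g)p\pi(g)^{\ast} - p \Vert_{1}$. Given $H_{0}$ with the $\mathrm{Lip}_{1}$-transportation property at level $\delta$, let $p$ be the orthogonal projection onto $H_{0}$ and $q_{g} \defeq \pi(g)p\pi(g)^{\ast}$. I would express the normalized trace-norm distance $\tfrac{1}{\rk(p)}\Vert q_{g} - p \Vert_{1}$ in integral terms via Corollary~\ref{corollary:trace}: for the self-adjoint operator $a \defeq q_{g} - p$, $\tfrac{1}{\rk(p)}\tr(ap') $-type quantities compare $\nu_{p(H)}$ and $\nu_{q_{g}(H)}$. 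The cleanest route is probably to bound $\tfrac{1}{2\rk(p)}\Vert q_{g} - p \Vert_{1}$ by $\Vert p - q_{g} \Vert_{2}^{2}/(\text{something})$ the other way is false in general, so instead I would use the Powers–Størmer–type inequality $\Vert p - q_{g} \Vert_{1} \leq (\text{const})\cdot \rk(p)\cdot d(\nu_{p(H)},\nu_{q_{g}(H)})$? This is exactly where the main obstacle lies.

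The \textbf{main obstacle} is the converse direction: translating smallness of the transportation distance between the two spherical measures back into smallness of $\Vert \pi(g)p\pi(g)^{\ast} - p \Vert_{1}$. The forward direction only needs the easy inequality $\Vert\cdot\Vert_{2}^{2}\leq\Vert\cdot\Vert_{1}$ for contractions, but for the converse I need a reverse-type comparison, and $\Vert q - p \Vert_{1}$ is \emph{not} controlled by the transportation distance in general (e.g.\ if $\rk(p)=1$ the two measures are both point-pair-symmetric Haar measures on circles and can be at distance $O(1)$ while $\Vert q - p\Vert_1 = O(1)$ too, so one needs the dimension to grow). The resolution, I expect, is to invoke Lemma~\ref{lemma:pestov}: since a non-trivial finite-dimensional subrepresentation makes $\pi$ amenable outright, we may assume $\pi$ has none, and then Lemma~\ref{lemma:pestov} lets us \emph{also} demand $\rk(p) = \dim H_{0}$ is large. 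With $\dim H_{0} \geq \ell$ one can use concentration of $\nu_{H_{0}}$ (the spherical measures behave like high-dimensional Gaussians) to show that a small $\mathrm{Lip}_{1}$-transportation distance forces $\Vert (1-u)p \Vert_{2}$ to be small for the optimal $u$ with $q_{g} = upu^{\ast}$, via Lemma~\ref{lemma:low.energy.conjugation} read in reverse. Concretely, $\Vert (1-u)p \Vert_{2}^{2} = 2\rk(p)\int \Vert x - ux \Vert^{2}\,\mathrm{d}\nu_{H_{0}}(x)$ (Corollary~\ref{corollary:trace}), and one estimates $\int \Vert x - ux \Vert^{2}\,\mathrm{d}\nu_{H_{0}}$ by $\big(\int \Vert x - ux\Vert\,\mathrm{d}\nu_{H_0}\big)\cdot 2$ plus a concentration correction, the first factor being the transportation cost of the $1$-Lipschitz map $x \mapsto \Vert x - ux \Vert$ up to its mean; then $\Vert q_{g} - p \Vert_{1} \leq \Vert q_{g} - p \Vert_{2}\cdot\sqrt{2\rk(p)} = \Vert(1-u)p\Vert_2\cdot\sqrt{\rk(p)}$ (using $\Vert q_g - p\Vert_2 = \tfrac{1}{\sqrt 2}\Vert(1-u)p\Vert_2$) closes the loop with a bound of the form $\epsilon\,\rk(p)$ after choosing $\ell$ large and $\delta$ small. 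I would assemble these pieces, being careful that the concentration estimate is uniform over $u \in \U(H)$ — which it is, since it only uses that $x\mapsto\Vert x-ux\Vert$ is $1$-Lipschitz on $\Sph(H_0)$.
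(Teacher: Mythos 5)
Your forward direction is correct and is essentially the paper's argument: the paper likewise invokes Lemma~\ref{lemma:folner} with tolerance $\epsilon^{2}/2$ and passes from $\Vert\cdot\Vert_{1}$ to $\Vert\cdot\Vert_{2}$ (citing~\cite[Lemma~4.2(ii)]{bekka}, whose content is exactly your eigenvalue inequality $\Vert a\Vert_{2}^{2}\leq\Vert a\Vert_{1}$ for the self-adjoint contraction $a=q_{g}-p$) before applying Lemma~\ref{lemma:mass.transportation}.

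The converse, however, has a genuine gap, and the repair you sketch does not work. To recover the F\o{}lner condition you want the subspaces $H_{0}$ to be high-dimensional, and you propose to get this from Lemma~\ref{lemma:pestov}; but that lemma's hypothesis --- that $\pi$ is not of the form $\pi_{1}\oplus\pi_{2}$ with $\pi_{1}$ finite-dimensional and $\pi_{2}$ non-amenable --- becomes, once you have reduced to the case where $\pi$ has no non-trivial finite-dimensional subrepresentation, precisely the assertion that $\pi$ is amenable, which is what you are trying to prove. The argument is circular. Moreover, the concentration step is not forced by the hypothesis: the transportation bound controls $\lvert\int f\,\mathrm{d}\nu_{H_{0}}-\int f\,\mathrm{d}\nu_{\pi(g)H_{0}}\rvert$ for Lipschitz $f$, not the size of $\int\Vert x-ux\Vert\,\mathrm{d}\nu_{H_{0}}$ for your chosen $u$. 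The paper sidesteps all of this with a soft argument: take a net $(H_{i})_{i}$ witnessing the hypothesis, form the state $\mu(f)=\lim_{i\to\mathcal{F}}\int f\,\mathrm{d}\nu_{H_{i}}$ on $\UCB(\Sph(H))$, note that $\nu_{\pi(g)H_{i}}=\pi(g)_{\ast}(\nu_{H_{i}})$ makes $\mu$ asymptotically $\pi$-invariant on $\Lip_{1}(\Sph(H),[-1,1])$, which spans a dense subspace of $\UCB(\Sph(H))$, and conclude amenability from~\cite[Proposition~3.1]{Pestov00} (equivalently, pass to the state $\mu_{\bullet}$ on $\B(H)$ as in Lemma~\ref{lemma:states.from.means}). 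If you prefer a quantitative route back to Lemma~\ref{lemma:folner}, the obstacle you identified is actually illusory and no dimension growth is needed: test the hypothesis against $f(x)=\dist(x,\pi(g)H_{0})\in\Lip_{1}(\Sph(H),[0,1])$, which integrates to $0$ against $\nu_{\pi(g)H_{0}}$; since $f^{2}\leq f$, Corollary~\ref{corollary:trace} yields $\Vert(1-q_{g})p\Vert_{2}^{2}\leq\epsilon\rk(p)$, symmetrically $\Vert(1-p)q_{g}\Vert_{2}^{2}\leq\epsilon\rk(p)$, whence $\Vert p-q_{g}\Vert_{2}\leq 2\sqrt{\epsilon}\,\Vert p\Vert_{2}$ and $\Vert p-q_{g}\Vert_{1}\leq\sqrt{2\rk(p)}\,\Vert p-q_{g}\Vert_{2}\leq 2\sqrt{2\epsilon}\,\Vert p\Vert_{1}$.
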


\begin{proof} ($\Longrightarrow$) Consider any compact subset $E \subseteq G$ and any $\epsilon \in \R_{>0}$. By Lemma~\ref{lemma:folner}, there exists a non-zero orthogonal projection $p \in \Pro(H)$ of finite rank such that \begin{displaymath}
     \forall g \in E \colon \qquad \Vert \pi(g)p\pi(g)^{\ast} - p \Vert_{1} \, \leq \, \tfrac{\epsilon^{2}}{2} \Vert p \Vert_{1} ,
\end{displaymath} which, by~\cite[Lemma~4.2(ii)]{bekka}, immediately implies that \begin{displaymath}
     \forall g \in E \colon \qquad \Vert \pi(g)p\pi(g)^{\ast} - p \Vert_{2} \, \leq \, \tfrac{\epsilon}{\sqrt{2}} \Vert p\Vert_{2} .
\end{displaymath} Hence, the desired conclusion follows by Lemma~\ref{lemma:mass.transportation}.

($\Longleftarrow$) By assumption, we find a net $(H_{i})_{i \in I}$ of non-zero finite-dimensional linear subspaces of $H$ such that, for every $g \in G$, \begin{equation}\label{eq:asymptotic.invariance}
    \sup\nolimits_{f \in \Lip_{1}(\Sph(H),[-1,1])} \left\lvert \int f \, \mathrm{d}\nu_{H_{i}} - \int f \, \mathrm{d}\nu_{\pi(g)H_{i}} \right\rvert \, \longrightarrow \, 0 \qquad (i \to I).
\end{equation} Pick an ultrafilter $\mathcal{F}$ on $I$ containing $\{ \{ i \in I \mid i_{0} \preceq i \} \mid i_{0} \in I \}$. Consider the state \begin{displaymath}
    \mu \colon \, \UCB(\Sph(H)) \, \longrightarrow \, \C, \quad f \, \longmapsto \, \lim\nolimits_{i\to \mathcal{F}} \int f \, \mathrm{d}\nu_{H_{i}} .
\end{displaymath} If $g \in G$, then $\nu_{\pi(g)H_{i}} = \pi(g)_{\ast}(\nu_{H_{i}})$ for every $i \in I$, thus \begin{align*}
    \mu(f \circ \pi(g)) \, &= \, \lim\nolimits_{i\to \mathcal{F}} \int f \circ \pi(g) \, \mathrm{d}\nu_{H_{i}} \\
        & = \, \lim\nolimits_{i\to \mathcal{F}} \int f \, \mathrm{d}\nu_{\pi(g)H_{i}} \, \stackrel{\eqref{eq:asymptotic.invariance}}{=} \, \lim\nolimits_{i\to \mathcal{F}} \int f \, \mathrm{d}\nu_{H_{i}} \, = \, \mu(f)
\end{align*} for every $f \in \Lip_{1}(\Sph(H),[-1,1])$. As $\Lip_{1}(\Sph(H),[-1,1])$ spans a $\Vert \cdot \Vert_{\infty}$-dense linear subspace of $\UCB(\Sph(H))$ (see, e.g.,~\cite[Lemma~5.20, p.~68]{PachlBook}), thus $\mu(f \circ \pi(g)) = \mu(f)$ for all $g \in G$ and $f \in \UCB(\Sph(H))$. Hence, $\pi$ is amenable by~\cite[Proposition~3.1]{Pestov00}. \end{proof}

One major ingredient in our analysis of hyperlinear groups is the well-known phenomenon of concentration of measure in finite-dimensional spheres, as discovered by L\'evy~\cite{Levy} and used extensively by Milman studying the asymptotic geometry of Banach spaces~\cite{Milman67,Milman71}. We refer to~\cite{MilmanSchechtman,milman,Ledoux,ConvexBodies} for further background.

\begin{thm}[see, e.g.,~\cite{ConvexBodies}, Theorem~1.7.9]\label{theorem:spherical.concentration} Let $H$ be a non-zero finite-dimensional Hilbert space, let $\ell \in \R_{\geq 0}$ and let $f \in \Lip_{\ell}(\Sph(H),\R)$. Then, for every $\epsilon \in \R_{>0}$, \begin{displaymath}
	\nu_{H}\!\left(\left\{ x \in \Sph(H) \left\vert \, \left\lvert f(x) - \int f \, \mathrm{d}\nu_{H} \right\rvert > \epsilon \right\} \right)\!\right. \, \leq \, 2\exp\!\left(-\tfrac{\epsilon^{2}(2\dim(H)-1)}{2\ell^{2}}\right)\! .
\end{displaymath} \end{thm}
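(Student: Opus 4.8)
This is L\'evy's isoperimetric measure-concentration inequality for the sphere, and I would prove it along classical lines. First some reductions: the case $\ell=0$ is trivial (then $f$ is constant), so assume $\ell>0$ and, after dividing $f$ and $\epsilon$ by $\ell$, that $\ell=1$. Put $m\defeq 2\dim(H)$ and regard $(H,\Re\langle\cdot,\cdot\rangle)$ as a real inner product space isometric to $\R^{m}$, so that $\Sph(H)$ becomes the Euclidean sphere $\Sph^{m-1}\subseteq\R^{m}$ and $\nu_{H}$ its rotation-invariant probability measure (the unique $\mathrm{O}(m)$-invariant one, which agrees with the $\U(H)$-invariant one since $\U(H)$ already acts transitively on $\Sph^{m-1}$). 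As the chordal distance is dominated by the geodesic distance $d$, the function $f$ remains $1$-Lipschitz for $d$. Fix a median $M$ of $f$, that is, $\nu_{H}(\{f\le M\})\ge\tfrac12$ and $\nu_{H}(\{f\ge M\})\ge\tfrac12$. The plan is to first prove concentration of $f$ about $M$, and then transfer it to concentration about the mean $\int f\,\mathrm{d}\nu_{H}$.

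For the first step I would invoke the L\'evy--Schmidt spherical isoperimetric inequality: for every $t>0$, among all Borel subsets of $\Sph^{m-1}$ of a prescribed measure the geodesic caps have the smallest $t$-neighbourhood. Applying this to the closed set $A\defeq\{f\le M\}$ (of measure $\ge\tfrac12$) and comparing with a hemisphere, whose $t$-neighbourhood is the cap of angular radius $\tfrac{\pi}{2}+t$, and using that $1$-Lipschitzness forces $\{f>M+t\}\subseteq\Sph^{m-1}\setminus A_{t}$, one obtains
\begin{displaymath}
	\nu_{H}(\{f>M+t\}) \;\le\; \nu_{H}\bigl(\Sph^{m-1}\setminus A_{t}\bigr) \;\le\; \nu_{H}\bigl(\text{cap of angular radius } \tfrac{\pi}{2}-t\bigr) \qquad(t>0).
\end{displaymath}
The measure of that cap equals $\bigl(\int_{0}^{\pi}\sin^{m-2}\theta\,\mathrm{d}\theta\bigr)^{-1}\!\int_{0}^{\pi/2-t}\sin^{m-2}\theta\,\mathrm{d}\theta$, and an elementary one-variable estimate (substitute $\theta\mapsto\tfrac{\pi}{2}-\theta$ and use $\cos\theta\le\exp(-\theta^{2}/2)$ on $[0,\tfrac{\pi}{2}]$) bounds it above by $\tfrac12\exp\!\bigl(-\tfrac{(m-1)t^{2}}{2}\bigr)$. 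Running the symmetric argument with $\{f\ge M\}$ for the lower tail and taking a union bound yields
\begin{displaymath}
	\nu_{H}\bigl(\{ x\in\Sph^{m-1} \mid \lvert f(x)-M\rvert>t \}\bigr) \;\le\; \exp\!\left(-\tfrac{(m-1)t^{2}}{2}\right) \qquad(t>0).
\end{displaymath}

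Finally, pass from the median to the mean: integrating the tail bound gives $\bigl\lvert\int f\,\mathrm{d}\nu_{H}-M\bigr\rvert \le \int\lvert f-M\rvert\,\mathrm{d}\nu_{H} = \int_{0}^{\infty}\nu_{H}(\lvert f-M\rvert>t)\,\mathrm{d}t \le \sqrt{\pi/(2(m-1))}$, and absorbing this shift into $\epsilon$ — trivially when $\epsilon$ is so small that the claimed right-hand side already exceeds $1$, and directly otherwise — produces $\nu_{H}(\lvert f-\int f\,\mathrm{d}\nu_{H}\rvert>\epsilon)\le 2\exp(-(m-1)\epsilon^{2}/2)$; undoing the normalisations ($m=2\dim(H)$, and restoring $\ell$) gives exactly the asserted inequality. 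The substantive ingredient is the spherical isoperimetric inequality itself, whose proof by two-point / spherical symmetrisation (E.\ Schmidt) is the real work, together with a little care in the median-to-mean transfer to keep the constant sharp. Alternatively, one can bypass isoperimetry entirely and obtain the mean version directly from the logarithmic Sobolev inequality for $\nu_{H}$ with its sharp constant $m-1$ via Herbst's exponential-moment argument, which yields precisely $2\exp(-(2\dim(H)-1)\epsilon^{2}/(2\ell^{2}))$; in that route, establishing the sharp log-Sobolev constant of the sphere is the nontrivial input.
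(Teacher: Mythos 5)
The paper offers no proof of this statement beyond the observation that $H$ is a $2\dim(H)$-dimensional Euclidean space over $\R$, after which the inequality is quoted verbatim from~\cite[Theorem~1.7.9]{ConvexBodies}; you reproduce that reduction correctly (including the point that the $\U(H)$-invariant and $\mathrm{O}(2\dim(H))$-invariant probability measures coincide by transitivity), and then go further and sketch a proof of the quoted black box itself. Your skeleton --- isoperimetry, median concentration, median-to-mean transfer --- is the classical one, but as written it does not deliver the constants in the statement. Two steps are the problem. First, the cap estimate: bounding $\int_{t}^{\pi/2}\cos^{m-2}\theta\,\mathrm{d}\theta$ using only $\cos\theta\le e^{-\theta^{2}/2}$ yields the exponent $(m-2)t^{2}/2$, not $(m-1)t^{2}/2$; reaching $m-1$ needs a sharper comparison. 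Second, and more seriously, the median-to-mean transfer: with $\delta\defeq\bigl\lvert\int f\,\mathrm{d}\nu_{H}-M\bigr\rvert\le\sqrt{\pi/(2(m-1))}$, the shifted bound is
\begin{displaymath}
	e^{-(m-1)(\epsilon-\delta)^{2}/2}\,=\,e^{-(m-1)\epsilon^{2}/2}\,e^{(m-1)\epsilon\delta}\,e^{-(m-1)\delta^{2}/2},
\end{displaymath}
and the factor $e^{(m-1)\epsilon\delta}$ is of order $e^{c\epsilon\sqrt{m}}$, which the prefactor $2$ absorbs only for $\epsilon\lesssim 1/\sqrt{m}$ --- essentially the same regime in which the right-hand side already exceeds $1$. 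For larger $\epsilon$ neither the trivial bound nor the naive shift gives the claimed inequality, so ``directly otherwise'' conceals a genuine gap. The clean route to the exact statement is the one you mention in your closing sentence: the logarithmic Sobolev inequality on the sphere with constant $m-1$ together with Herbst's argument gives concentration around the mean with precisely the prefactor $2$ and exponent $(m-1)\epsilon^{2}/(2\ell^{2})$, with no median detour; I would make that the proof rather than the fallback. (For completeness: every downstream use of this theorem in the paper only needs some bound of the form $C\exp\bigl(-c\,\epsilon^{2}\dim(H)/\ell^{2}\bigr)$, so the lossy isoperimetric version would suffice for the applications, but it does not prove the statement as formulated.)
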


\begin{proof} The result follows from~\cite[Theorem~1.7.9]{ConvexBodies} by viewing $H$ as a $2\dim(H)$-dimensional Euclidean space.  \end{proof}

We record the following consequence for complex-valued Lipschitz functions.

\begin{cor}\label{corollary:spherical.concentration} Let $H$ be a non-zero finite-dimensional Hilbert space, let $\ell \in \R_{\geq 0}$ and let $f \in \Lip_{\ell}(\Sph(H),\C)$. Then, for every $\epsilon \in \R_{>0}$, \begin{displaymath}
	\nu_{H}\!\left(\left\{ x \in \Sph(H) \left\vert \, \left\lvert f(x) - \int f \, \mathrm{d}\nu_{H} \right\rvert > \epsilon \right\} \right)\!\right. \, \leq \, 4\exp\!\left(-\tfrac{\epsilon^{2}(2\dim(H)-1)}{4\ell^{2}}\right)\! .
\end{displaymath} \end{cor}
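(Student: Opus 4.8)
The plan is to reduce the complex-valued case to the real-valued concentration inequality of Theorem~\ref{theorem:spherical.concentration} by splitting $f$ into its real and imaginary parts. Write $f = \Re f + i\,\Im f$, and observe that both $\Re f$ and $\Im f$ lie in $\Lip_{\ell}(\Sph(H),\R)$, since $\lvert \Re f(x) - \Re f(y)\rvert \leq \lvert f(x) - f(y)\rvert$ and similarly for $\Im f$. Also $\int f\,\mathrm{d}\nu_{H} = \int \Re f\,\mathrm{d}\nu_{H} + i\int \Im f\,\mathrm{d}\nu_{H}$ by linearity of the integral.

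Next I would use the elementary estimate that if $\lvert z\rvert > \epsilon$ for a complex number $z = a+bi$, then $\lvert a\rvert > \epsilon/\sqrt{2}$ or $\lvert b\rvert > \epsilon/\sqrt{2}$ (since $\lvert a\rvert, \lvert b\rvert \leq \epsilon/\sqrt{2}$ would force $\lvert z\rvert^{2} = a^{2}+b^{2} \leq \epsilon^{2}$). Applying this with $z = f(x) - \int f\,\mathrm{d}\nu_{H}$, whose real and imaginary parts are $\Re f(x) - \int \Re f\,\mathrm{d}\nu_{H}$ and $\Im f(x) - \int \Im f\,\mathrm{d}\nu_{H}$ respectively, gives the set inclusion
\begin{align*}
	&\left\{ x \in \Sph(H) \left\vert \, \left\lvert f(x) - \textstyle\int f \, \mathrm{d}\nu_{H} \right\rvert > \epsilon \right\}\right. \\
	&\qquad \subseteq \, \left\{ x \left\vert \, \left\lvert \Re f(x) - \textstyle\int \Re f \, \mathrm{d}\nu_{H} \right\rvert > \tfrac{\epsilon}{\sqrt{2}} \right\}\right. \cup \left\{ x \left\vert \, \left\lvert \Im f(x) - \textstyle\int \Im f \, \mathrm{d}\nu_{H} \right\rvert > \tfrac{\epsilon}{\sqrt{2}} \right\}\right. .
\end{align*}

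Finally I would apply Theorem~\ref{theorem:spherical.concentration} to each of $\Re f$ and $\Im f$ with the threshold $\epsilon/\sqrt{2}$ in place of $\epsilon$, which produces the bound $2\exp(-\tfrac{(\epsilon/\sqrt{2})^{2}(2\dim(H)-1)}{2\ell^{2}}) = 2\exp(-\tfrac{\epsilon^{2}(2\dim(H)-1)}{4\ell^{2}})$ for each set; a union bound then yields the factor $4$ and the claimed inequality. (One should note the degenerate case $\ell = 0$: then $f$ is constant $\nu_{H}$-a.e., the left-hand set is null, and the inequality holds trivially, so we may assume $\ell > 0$ when invoking Theorem~\ref{theorem:spherical.concentration}.) There is no real obstacle here — the only point requiring a moment's care is the constant bookkeeping in the split, i.e. verifying that $\sqrt{2}$ is the right factor to lose so that the two exponents match and combine cleanly into the stated $4\exp(-\tfrac{\epsilon^{2}(2\dim(H)-1)}{4\ell^{2}})$.
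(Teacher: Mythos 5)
Your proposal is correct and is essentially identical to the paper's proof: both split $f$ into real and imaginary parts, use the inclusion with thresholds $\epsilon/\sqrt{2}$, and apply Theorem~\ref{theorem:spherical.concentration} twice with a union bound. Your explicit treatment of the degenerate case $\ell = 0$ is a small extra care the paper leaves implicit.
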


\begin{proof} Since $\Re f, \, \Im f \in \Lip_{\ell}(\Sph(H),\R)$ and $\int f \, \mathrm{d}\nu_{H} = \int \Re f \, \mathrm{d}\nu_{H} + i \int \Im f \, \mathrm{d}\nu_{H}$, we see that \begin{align*}
	&\nu_{H}\!\left(\left\{ x \in \Sph(H) \left\vert \, \left\lvert f(x) - \int f \, \mathrm{d}\nu_{H} \right\rvert > \epsilon \right\} \right)\!\right. \\
	& \qquad \leq \, \nu_{H}\!\left(\left\{ x \in \Sph(H) \left\vert \, \left\lvert \Re f(x) - \int \Re f \, \mathrm{d}\nu_{H} \right\rvert > \tfrac{\epsilon}{\sqrt{2}} \right\} \right)\!\right. \\
	& \qquad \qquad \qquad \qquad + \nu_{H}\!\left(\left\{ x \in \Sph(H) \left\vert \, \left\lvert \Im f(x) - \int \Im f \, \mathrm{d}\nu_{H} \right\rvert > \tfrac{\epsilon}{\sqrt{2}} \right\} \right)\!\right. \\
	& \qquad \stackrel{\ref{theorem:spherical.concentration}}{\leq} \, 4\exp\!\left(-\tfrac{\epsilon^{2}(2\dim(H)-1)}{4\ell^{2}}\right)\! . \qedhere
\end{align*} \end{proof}

Our first application of measure concentration, in the proof of Proposition~\ref{proposition:hyperlinear.groups}, will come about as a combination of the following two lemmata.

\begin{lem}[cf.~\cite{burtonetal}, Proposition~3.3 + Corollary~3.2]\label{lemma:burton.concentration} Let $H$ be a Hilbert space, let $p \in \Pro(H)$ with $0 < \rk(p) < \infty$, and let $a \in \B(H)$. Then, for every $\epsilon \in \R_{>0}$, \begin{align*}
	\nu_{p(H)}\!\left(\left\{ x \in \Sph(p(H)) \left\vert \, \left\lvert \Vert ax \Vert^{2} - \tfrac{\Vert ap \Vert_{2}^{2}}{\tr(p)} \right\rvert > \epsilon \right\} \right)\!\right. \, &\leq \, 2\exp\!\left(-\tfrac{\epsilon^{2}(2\rk(p)-1)}{8\Vert a\Vert^{4}}\right)\! , \\
	\nu_{p(H)}\!\left(\left\{ x \in \Sph(p(H)) \left\vert \, \left\lvert \langle x,ax \rangle - \tfrac{\tr(ap)}{\rk(p)} \right\rvert > \epsilon \right\} \right)\!\right. \, &\leq \, 4\exp\!\left(-\tfrac{\epsilon^{2}(2\rk(p)-1)}{16\Vert a\Vert^{2}}\right)\! .
\end{align*} \end{lem}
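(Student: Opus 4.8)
The plan is to reduce both inequalities to the real-valued concentration result of Theorem~\ref{theorem:spherical.concentration} (or rather its complex-valued corollary, Corollary~\ref{corollary:spherical.concentration}), applied to suitably chosen Lipschitz functions on the sphere $\Sph(p(H))$ of the finite-dimensional Hilbert space $p(H)$, and to identify the relevant averages using Corollary~\ref{corollary:trace}. For the first inequality, I would set $f \colon \Sph(p(H)) \to \R$, $f(x) = \Vert ax \Vert^{2}$. The first task is a Lipschitz estimate: for $x,y \in \Sph(p(H))$ one has $\lvert \Vert ax \Vert^{2} - \Vert ay \Vert^{2} \rvert = \lvert (\Vert ax \Vert + \Vert ay \Vert)(\Vert ax \Vert - \Vert ay \Vert)\rvert \leq 2\Vert a \Vert \cdot \Vert a \Vert \cdot \Vert x - y \Vert = 2\Vert a \Vert^{2} \Vert x-y \Vert$, so $f \in \Lip_{2\Vert a \Vert^{2}}(\Sph(p(H)),\R)$. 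By Corollary~\ref{corollary:trace}, $\int f \, \mathrm{d}\nu_{p(H)} = \tfrac{1}{\rk(p)}\Vert ap \Vert_{2}^{2} = \tfrac{\Vert ap \Vert_{2}^{2}}{\tr(p)}$ (using $\rk(p) = \tr(p)$ for a finite-rank projection). Now Theorem~\ref{theorem:spherical.concentration}, applied on $\Sph(p(H))$ with $\ell = 2\Vert a \Vert^{2}$ and $\dim(p(H)) = \rk(p)$, yields a bound of $2\exp(-\tfrac{\epsilon^{2}(2\rk(p)-1)}{2(2\Vert a\Vert^{2})^{2}}) = 2\exp(-\tfrac{\epsilon^{2}(2\rk(p)-1)}{8\Vert a\Vert^{4}})$, exactly as claimed.

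For the second inequality, I would take $f \colon \Sph(p(H)) \to \C$, $f(x) = \langle x, ax \rangle$, which lands in $\C$ rather than $\R$, hence the use of Corollary~\ref{corollary:spherical.concentration} instead of Theorem~\ref{theorem:spherical.concentration}. Again one needs the Lipschitz bound: writing $\langle x,ax\rangle - \langle y,ay\rangle = \langle x-y, ax\rangle + \langle y, a(x-y)\rangle$, we get $\lvert \langle x,ax\rangle - \langle y,ay\rangle \rvert \leq \Vert x-y\Vert \Vert a\Vert + \Vert a\Vert\Vert x-y\Vert = 2\Vert a\Vert \Vert x-y\Vert$, so $f \in \Lip_{2\Vert a\Vert}(\Sph(p(H)),\C)$. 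By Corollary~\ref{corollary:trace}, $\int f \, \mathrm{d}\nu_{p(H)} = \tfrac{1}{\rk(p)}\tr(ap) = \tfrac{\tr(ap)}{\rk(p)}$. Applying Corollary~\ref{corollary:spherical.concentration} on $\Sph(p(H))$ with $\ell = 2\Vert a\Vert$ gives a bound of $4\exp(-\tfrac{\epsilon^{2}(2\rk(p)-1)}{4(2\Vert a\Vert)^{2}}) = 4\exp(-\tfrac{\epsilon^{2}(2\rk(p)-1)}{16\Vert a\Vert^{2}})$, matching the statement.

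There is no serious obstacle here; the proof is a routine assembly of already-established tools. The only points requiring a modicum of care are (i) getting the Lipschitz constants right — in particular the factor $2\Vert a\Vert^{2}$ for $x \mapsto \Vert ax\Vert^{2}$ and $2\Vert a\Vert$ for $x \mapsto \langle x,ax\rangle$, since these feed quadratically into the exponent — and (ii) correctly invoking Corollary~\ref{corollary:trace} to rewrite the spherical averages as normalized traces, together with the identity $\tr(p) = \rk(p)$ valid for finite-rank orthogonal projections. One should also note at the outset that $p(H)$, equipped with the inner product inherited from $H$, is a non-zero finite-dimensional Hilbert space of dimension $\rk(p)$, so that Theorem~\ref{theorem:spherical.concentration} and Corollary~\ref{corollary:spherical.concentration} apply to it verbatim with $\nu_{p(H)}$ its Haar measure.
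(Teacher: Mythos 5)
Your proof is correct and follows essentially the same route as the paper: both inequalities are obtained by applying Theorem~\ref{theorem:spherical.concentration} (resp.\ Corollary~\ref{corollary:spherical.concentration}) to the $2\Vert a\Vert^{2}$-Lipschitz function $x \mapsto \Vert ax\Vert^{2}$ and the $2\Vert a\Vert$-Lipschitz function $x \mapsto \langle x,ax\rangle$ on $\Sph(p(H))$, with the means identified via Corollary~\ref{corollary:trace}. The Lipschitz constants and the resulting exponents all check out.
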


\begin{proof} The first item follows from Corollary~\ref{corollary:trace} and Theorem~\ref{theorem:spherical.concentration} upon considering the $2\Vert a\Vert^{2}$-Lipschitz function \begin{displaymath}
	f \colon \, \Sph(p(H)) \, \longrightarrow \, \R, \quad x \, \longmapsto \, \langle x,a^{\ast}ax\rangle = \Vert ax \Vert^{2} .
\end{displaymath} The second assertion is a consequence of Corollary~\ref{corollary:trace} and Corollary~\ref{corollary:spherical.concentration}, applied to the $2\Vert a\Vert$-Lipschitz function $f \colon \Sph(p(H)) \to \C, \, x \mapsto \langle x,ax \rangle$. \end{proof} 

\begin{lem}[cf.~\cite{Pestov00}, Proof of Lemma~5.2\footnote{see also~\cite[p.~276]{milman}}]\label{lemma:milman} Let $H$ be a non-zero finite-dimensional Hilbert space. If $B \in \Borel(\Sph(H))$ satisfies $\dim(H) \nu_{H}(\Sph(H)\setminus B) < 1$, then $B$ contains an orthonormal basis for $H$. \end{lem}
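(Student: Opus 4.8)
The plan is to build the orthonormal basis greedily, one vector at a time, using the hypothesis $\dim(H)\,\nu_H(\Sph(H)\setminus B) < 1$ to guarantee that at each stage there is a unit vector in $B$ orthogonal to all previously chosen vectors. Write $n = \dim(H)$ and $\delta = \nu_H(\Sph(H)\setminus B)$, so $n\delta < 1$ by assumption.

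First I would set up the induction. Suppose that for some $k$ with $0 \le k < n$ we have already chosen orthonormal vectors $x_1,\dots,x_k \in B$; for $k=0$ this is vacuous. Let $H_k \defeq \{x_1,\dots,x_k\}^{\perp}$, an $(n-k)$-dimensional subspace of $H$, and let $p_k \in \Pro(H)$ be the orthogonal projection onto $H_k$. I want to find a unit vector in $B \cap H_k$; to do so I compare the spherical measure $\nu_{H_k}$ on $\Sph(H_k)$ with $\nu_H$ on $\Sph(H)$. The key estimate is a ``restriction'' bound: for any Borel set $C \subseteq \Sph(H)$,
\begin{displaymath}
    \nu_{H_k}(C \cap \Sph(H_k)) \;\le\; \tfrac{n}{\,n-k\,}\,\nu_H(C),
\end{displaymath}
or more precisely the cruder bound $\nu_{H_k}(\Sph(H_k)\setminus B) \le n\,\nu_H(\Sph(H)\setminus B) = n\delta < 1$. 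This is where the hypothesis is used: since a spherical measure of a nonempty sphere that is strictly less than $1$ forces the complement to be a proper subset, $B \cap \Sph(H_k)$ is nonempty, so we may pick $x_{k+1} \in B \cap \Sph(H_k)$, which by construction is a unit vector in $B$ orthogonal to $x_1,\dots,x_k$. Iterating to $k = n$ produces an orthonormal system $x_1,\dots,x_n$ of size $n = \dim(H)$ contained in $B$, hence an orthonormal basis for $H$ lying inside $B$.

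The one genuine point requiring care — and the \textbf{main obstacle} — is justifying the inequality $\nu_{H_k}(\Sph(H_k)\setminus B) \le n\,\nu_H(\Sph(H)\setminus B)$, i.e.\ controlling the pushforward/restriction relationship between Haar measures on spheres of different dimensions. The cleanest route is Fubini/disintegration: decompose $\nu_H$ over the sphere $\Sph(H)$ according to the fibration given by projection onto $H_k$ (or equivalently average $\nu_{H_k}$ over the Grassmannian of all $(n-k)$-dimensional subspaces, using $\U(H)$-invariance of $\nu_H$). Concretely, for any $U \in \Borel(\Sph(H_k))$ one has $\nu_{H_k}(U) \le c_{n,k}\,\nu_H(\{x \in \Sph(H) : \tfrac{p_k x}{\Vert p_k x\Vert} \in U\})$ for a constant $c_{n,k}$ that can be bounded by $n/(n-k) \le n$; applying this with $U = \Sph(H_k)\setminus B$ and using that the preimage is contained in a set of $\nu_H$-measure at most... — here one must be slightly careful, since $x \in \Sph(H)$ with $p_k x/\Vert p_k x\Vert \notin B$ need not itself lie outside $B$. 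The standard fix, which is exactly the argument referenced from \cite{Pestov00} and \cite[p.~276]{milman}, is to instead only prove the weaker statement $n\,\nu_H(\Sph(H)\setminus B) < 1 \Rightarrow \nu_{H_k}(\Sph(H_k)\setminus B) < 1$ directly by the invariance/averaging identity $\nu_{H_k}(\Sph(H_k)\setminus B) = \int_{\U(H)} \nu_H\big(u(\Sph(H_k))\setminus B\big)\,du$ combined with a counting argument over $n$ mutually orthogonal copies of a subsphere inside $\Sph(H)$, which is what actually makes the factor $n = \dim(H)$ appear. Once that inequality is in hand, the greedy construction above goes through verbatim.

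Let me record the compact version of the argument to be written out. Write $n = \dim H$. It suffices to show: if $V \subseteq H$ is any linear subspace with $\dim V = m \ge 1$, then $m\,\nu_V(\Sph(V)\setminus B) \le n\,\nu_H(\Sph(H)\setminus B)$; applying this with $V = H_k$ (so $m = n-k \ge 1$) gives $\nu_{H_k}(\Sph(H_k)\setminus B) \le \tfrac{n}{n-k}\,\nu_H(\Sph(H)\setminus B) < 1$, whence $\Sph(H_k) \cap B \ne \emptyset$ and the induction proceeds. The displayed subspace inequality itself follows by choosing, inside $\Sph(H)$, $\lceil n/m \rceil$ pairwise orthogonal unit-sphere copies of $\Sph(V)$ — more carefully, by the $\U(H)$-invariant averaging identity $\nu_V(\Sph(V)\setminus B) = \int_{\U(H)}\nu_H\big(u\Sph(V)\setminus B\big)\,du$ together with the observation that for $n$ pairwise orthogonal isometric copies $\Sph(V_1),\dots$ of $\Sph(V)$ one cannot have every copy meeting $\Sph(H)\setminus B$ in measure $\ge \tfrac{n}{m}\nu_H(\Sph(H)\setminus B)$ — and this is precisely the content of \cite[Proof of Lemma~5.2]{Pestov00}. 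I would cite that proof for the measure-comparison step and spend the bulk of the write-up on the clean greedy induction, which is the part that actually delivers ``$B$ contains an orthonormal basis''.
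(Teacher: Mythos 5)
Your greedy induction does not go through, and the measure-comparison inequality it rests on is false. For $k \geq 1$ the subsphere $\Sph(H_{k})$ is a $\nu_{H}$-null subset of $\Sph(H)$, so no bound of the form $\nu_{H_{k}}(\Sph(H_{k})\setminus B) \leq c_{n,k}\,\nu_{H}(\Sph(H)\setminus B)$ can hold for a \emph{fixed} subspace $H_{k}$. Concretely, take $H = \C^{2}$ and $B = \Sph(H)\setminus \Sph(\C e_{1})$. Then $\nu_{H}(\Sph(H)\setminus B) = 0$ and the hypothesis of the lemma holds, yet if your first greedy choice happens to be $x_{1} = e_{2} \in B$, then $H_{1} = \{x_{1}\}^{\perp} = \C e_{1}$ and $\Sph(H_{1}) \cap B = \emptyset$: the induction halts, and indeed no orthonormal basis of $\C^{2}$ containing $e_{2}$ lies in $B$, since its second member would have to be a unit multiple of $e_{1}$. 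In this example your inequality would read $\nu_{H_{1}}(\Sph(H_{1})\setminus B) \leq 2\cdot 0 = 0$, whereas the left-hand side equals $1$. The averaging identity you invoke, $\nu_{H_{k}}(\Sph(H_{k})\setminus B) = \int_{\U(H)}\nu_{H}\bigl(u(\Sph(H_{k}))\setminus B\bigr)\,du$, is also incorrect as written (its right-hand side vanishes identically, each $u(\Sph(H_{k}))$ being $\nu_{H}$-null); the correct disintegration only controls the \emph{average} over all rotates $uH_{k}$ of the subsphere, which says nothing about the particular complement $\{x_{1},\dots,x_{k}\}^{\perp}$ forced on you by earlier choices.

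The repair is to randomize over the whole basis simultaneously rather than choosing vectors one at a time, which is what the paper does: fix an orthonormal basis $X$ of $H$ and a Haar-random $u \in \U(H)$; since $\nu_{H} = (f_{x})_{\ast}(\lambda)$ for $f_{x}(u) = ux$, the $\lambda$-probability that $ux \notin B$ for some $x \in X$ is at most $\lvert X\rvert\,\nu_{H}(\Sph(H)\setminus B) = \dim(H)\,\nu_{H}(\Sph(H)\setminus B) < 1$ by a union bound, so some $u$ sends the entire basis $X$ into $B$. This simultaneous randomization is where the factor $\dim(H)$ genuinely comes from; it cannot be recovered by a step-by-step argument in which each new vector is constrained to a subspace determined by the previous ones.
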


\begin{proof} Let $\lambda$ denote the Haar measure on $\U(H)$. For each $x \in \Sph(H)$, consider the map $f_{x} \colon \U(H) \to \Sph(H), \, u \mapsto ux$ and recall that $\nu_{H} = (f_{x})_{\ast}(\lambda)$. Now, fix some orthonormal basis $X$ for $H$. Then \begin{align*}
	&\lambda(\{ u \in \U(H) \mid \forall x \in X \colon \, ux \in B \}) \, = \, \lambda\!\left( \bigcap\nolimits_{x \in X} f_{x}^{-1}(B) \right) \\
	&\qquad = \, 1-\lambda\!\left( {\U(H)} \setminus {\bigcap\nolimits_{x \in X} f_{x}^{-1}(B)} \right) \! \, = \, 1-\lambda\!\left( \bigcup\nolimits_{x \in X} {\U(H)} \setminus f_{x}^{-1}(B) \right) \\
	&\qquad = \, 1-\lambda\!\left( \bigcup\nolimits_{x \in X} f_{x}^{-1}({\Sph(H)} \setminus B) \right) \! \, \geq \, 1-\sum\nolimits_{x \in X}\lambda\!\left( f_{x}^{-1}({\Sph(H)} \setminus B) \right) \\
	&\qquad = \, 1 - \vert X \vert \nu_{H}({\Sph(H)} \setminus B) \, = \, 1 - \dim(H) \nu_{H}({\Sph(H)} \setminus B) \, > \, 0 ,
\end{align*} so there exists $u \in \U(H)$ such that the resulting orthonormal basis $\{ ux \mid x \in X \}$ is contained in $B$. \end{proof}

\section{Hyperlinear groups}\label{section:hyperlinear.groups}

We now turn to hyperlinear groups. For a set $X$, we denote by $\Pfin(X)$ the set of all finite subsets of $X$. A group $G$ is called \emph{hyperlinear} if there exists a function $\gamma \colon G\setminus \{e\} \to \R_{>0}$ such that, for every $E \in \Pfin(G)$ and every $\epsilon \in \R_{>0}$, there exist a non-zero finite-dimensional Hilbert space $H$ and a map $\pi \colon G \to \U(H)$ such that \begin{itemize}
	\item[---\,] $\lVert \pi(gh) - \pi(g)\pi(h) \rVert_{\HS} \leq \epsilon$ for all $g,h \in E$,
	\item[---\,] $\lVert 1-\pi(g) \rVert_{\HS} \geq \gamma(g)$ for every $g \in E\setminus \{ e \}$.
\end{itemize} The reader is referred to~\cite{Pestov08,PestovKwiatkowska,CapraroLupini} for comprehensive accounts on such groups. For our purposes, the following characterization extracted from the work of Pestov and Kwiatkowska~\cite{PestovKwiatkowska} will be of central relevance.

\begin{thm}[\cite{PestovKwiatkowska}, Theorem~4.2, (1)$\Longleftrightarrow$(2)]\label{theorem:kwiatkowska.pestov} Let $G$ be a group. Then the following are equivalent. \begin{enumerate}
	\item\label{theorem:kwiatkowska.pestov.1} $G$ is hyperlinear.
	\item\label{theorem:kwiatkowska.pestov.2} For all $E \in \Pfin(G)$ and $\epsilon \in \R_{>0}$, there exist a non-zero finite-dimensional Hilbert space $H$ and a map $\pi \colon G \to \U(H)$ such that \begin{itemize}
			\item[---\,] $\dim(H) \geq \tfrac{1}{\epsilon}$,
			\item[---\,] $\lVert \pi(gh) - \pi(g)\pi(h) \rVert_{\HS} \leq \epsilon$ for all $g,h \in E$,
			\item[---\,] $\tfrac{\vert\tr(\pi(g))\vert}{\dim(H)} \leq \epsilon$ for every $g \in E\setminus \{ e \}$.
		\end{itemize}
	\item\label{theorem:kwiatkowska.pestov.3} For all $E \in \Pfin(G)$ and $\epsilon \in \R_{>0}$, there exist a non-zero finite-dimensional Hilbert space $H$ and a map $\pi \colon G \to \U(H)$ such that \begin{itemize}
			\item[---\,] $\lVert \pi(gh) - \pi(g)\pi(h) \rVert_{\HS} \leq \epsilon$ for all $g,h \in E$,
			\item[---\,] $\tfrac{\vert\tr(\pi(g))\vert}{\dim(H)} \leq \epsilon$ for every $g \in E\setminus \{ e \}$.
		\end{itemize}
\end{enumerate} \end{thm}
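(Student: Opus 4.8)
The plan is to prove $(1)\Rightarrow(2)\Rightarrow(3)\Rightarrow(1)$, and for good measure also $(3)\Rightarrow(2)$; of these, only $(1)\Rightarrow(2)$ carries real content. The implication $(2)\Rightarrow(3)$ is vacuous. For $(3)\Rightarrow(2)$ I would use a routine amplification: given $E\in\Pfin(G)$ and $\epsilon\in\R_{>0}$, apply (3) with $\tfrac{\epsilon}{2}$ to obtain a non-zero finite-dimensional Hilbert space $H$ and a map $\pi\colon G\to\U(H)$ with the two approximation properties, then replace $H$ by $H\otimes\C^{m}$ with $m\dim(H)\geq\tfrac1\epsilon$ and $\pi$ by $g\mapsto\pi(g)\otimes\id_{\C^{m}}$; since $\Vert a\otimes\id_{\C^{m}}\Vert_{\HS}=\Vert a\Vert_{\HS}$ and $\tfrac{\tr(a\otimes\id_{\C^{m}})}{\dim(H\otimes\C^{m})}=\tfrac{\tr(a)}{\dim(H)}$ for every $a\in\B(H)$, the multiplicative defect and the normalized traces are unaffected while the dimension is inflated past $\tfrac1\epsilon$. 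For $(3)\Rightarrow(1)$ the point is the identity $\Vert 1-u\Vert_{\HS}^{2}=2-2\Re(\tr(u)/\dim(H))\geq 2-2\lvert\tr(u)/\dim(H)\rvert$, valid for every non-zero finite-dimensional $H$ and every $u\in\U(H)$: applying (3) with parameter $\min\{\epsilon,\tfrac12\}$ yields a finite-dimensional near-representation with multiplicative defect $\leq\epsilon$ on $E$ and $\Vert 1-\pi(g)\Vert_{\HS}\geq 1$ for all $g\in E\setminus\{e\}$, so $G$ is hyperlinear, with the constant function $\gamma\equiv 1$ as a witness.

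The substance is $(1)\Rightarrow(2)$. I would first pass to an ultraproduct: index the near-representations furnished by hyperlinearity by the pairs $(E,\epsilon)\in\Pfin(G)\times\R_{>0}$ (directed by inclusion in $E$ and reverse order in $\epsilon$), fix an ultrafilter $\mathcal F$ to which every tail $\{(E',\epsilon')\mid E\subseteq E',\ \epsilon'\leq\epsilon\}$ belongs, and form the corresponding tracial ultraproduct $\mathcal M$ of the matrix algebras $(\Mat_{k}(\C),\tfrac1k\tr)$. One obtains a genuine homomorphism $\Theta\colon G\to\U(\mathcal M)$ with $\Theta(g)\neq 1$ for $g\neq e$, because $\Vert 1-\Theta(g)\Vert_{\HS}\geq\gamma(g)>0$ in $\mathcal M$ (the degenerate case in which the dimensions stay bounded along $\mathcal F$ yields instead a faithful finite-dimensional representation, hence a linear group, where (2) follows from a classical residual-finiteness argument applied to $\langle E\rangle$). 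Writing $\tau$ for the trace on $\mathcal M$ and $\lambda_{g}\defeq\tau(\Theta(g))$, realizing (2) amounts to pushing $\lambda_{g}$ toward $0$ for $g\neq e$, whereas a priori one knows only $\Re\lambda_{g}<1$. I would then split $E\setminus\{e\}$ according to whether $\lvert\lambda_{g}\rvert<1$ or $\Theta(g)$ equals the scalar $\lambda_{g}\cdot 1$. For the finitely many elements of the first kind there is a uniform $c\in\R_{>0}$ with $\lvert\lambda_{g}\rvert\leq 1-c$, and the tensor powers $\Theta^{\otimes k}$—whose multiplicative defect one keeps under control by shrinking $\epsilon$ in advance—satisfy $\tau(\Theta^{\otimes k}(g))=\lambda_{g}^{k}\to 0$. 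The scalar elements form a subgroup on which $\Theta$ restricts to an embedding into $\T$, so on $\langle E\rangle$ one is dealing with a finitely generated abelian, hence amenable, group; for such a group the trace condition is attainable, and I would transport it to $G$ by inducing up the regular representation of a suitable finite quotient of this subgroup and tensoring the result into the near-representation built in the previous step—the catch being that this induced representation is infinite-dimensional.

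The main obstacle, and where I expect the real work, is exactly this last transport step for the scalar (morally, approximately torsion) elements: the induced representation must be truncated to a finite-dimensional near-representation without spoiling the multiplicative approximation already achieved—equivalently, one must produce, in the vicinity of these elements, a finite-dimensional matricial model of the canonical trace on the relevant subalgebra of the group von Neumann algebra $L(G)$. This is precisely the technical heart of Theorem~4.2 of Pestov and Kwiatkowska~\cite{PestovKwiatkowska}; for the complete argument I would follow their treatment, invoking here only the statement.
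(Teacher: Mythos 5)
Your proposal is correct and takes essentially the same route as the paper: the implications $(2)\Rightarrow(3)$ and $(3)\Rightarrow(1)$ are dispatched exactly as you do (the latter via $\Vert 1-u\Vert_{\HS}^{2}=2-2\Re(\tr(u))/\dim(H)\geq 2-2\lvert\tr(u)\rvert/\dim(H)$, so that $\gamma\equiv 1$ witnesses hyperlinearity), and the substantive implication $(1)\Rightarrow(2)$ is, as in your write-up, delegated to~\cite{PestovKwiatkowska}. One remark on your sketch of that implication: the cited argument (Rădulescu's amplification) first replaces each unitary $u$ by $\tfrac{1}{2}(u\oplus 1)$, which already forces the normalized trace to have modulus strictly less than $1$ for every $g\neq e$ by the elementary estimate $\lvert z+1\rvert<2$ for $\lvert z\rvert\leq 1$, $z\neq 1$ (cf.\ Remark~\ref{remark:complex}), and only then takes tensor powers---so the separate case analysis for elements mapping to scalars, and the induced-representation detour, are not needed.
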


\begin{proof} The implications \ref{theorem:kwiatkowska.pestov.2}$\Longrightarrow$\ref{theorem:kwiatkowska.pestov.3} and \ref{theorem:kwiatkowska.pestov.3}$\Longrightarrow$\ref{theorem:kwiatkowska.pestov.1} are obvious, while the implication \ref{theorem:kwiatkowska.pestov.1}$\Longrightarrow$\ref{theorem:kwiatkowska.pestov.2} is established in~\cite[Proof of Theorem~4.2, (1)$\Longrightarrow$(2)]{PestovKwiatkowska} following an amplification argument due to R\u{a}dulescu~\cite[Proposition~2.5]{Radulescu}\footnote{This argument extends an idea by Kirchberg~\cite[Proof of Corollary~1.2, (ii)$\Longrightarrow$(i), p.~561--562]{Kirchberg}, as noted by Ozawa~\cite[Proposition~7.1]{Ozawa}.}. \end{proof}

Everything is prepared for this section's main observation.

\begin{prop}\label{proposition:hyperlinear.groups} Let $G$ be a group. The following are equivalent. \begin{enumerate}
	\item\label{proposition:hyperlinear.groups.1} $G$ is hyperlinear.
	\item\label{proposition:hyperlinear.groups.2} For all $E \in \Pfin(G)$ and $\epsilon \in \R_{>0}$, there exist a non-zero finite-dimensional Hilbert space $H$ and a map $\pi \colon G \to \U(H)$ such that \begin{align*}
				\qquad &\forall g,h \in E \colon \ \, \nu_{H}(\{ x \in \Sph(H) \mid \lVert \pi(gh)x - \pi(g)\pi(h)x \rVert > \epsilon \}) \leq \epsilon, \\
				&\forall g \in E\setminus \{ e \} \colon \ \, \nu_{H}(\{ x \in \Sph(H) \mid \lvert \langle x, \pi(g)x \rangle \rvert > \epsilon \}) \leq \epsilon .
			\end{align*}
	\item\label{proposition:hyperlinear.groups.3} For all $E \in \Pfin(G)$ and $\epsilon \in \R_{>0}$, there exist a non-zero finite-dimensional Hilbert space $H$ with an orthonormal basis $X$ and a map $\pi \colon G \to \U(H)$ so~that \begin{align*}
				\qquad &\forall g,h \in E \ \forall x \in X \colon \ \, \lVert \pi(gh)x - \pi(g)\pi(h)x \rVert \leq \epsilon , \\
				&\forall g \in E\setminus \{ e \} \ \forall x \in X\colon \ \, \lvert \langle x, \pi(g)x \rangle \rvert \leq \epsilon .
			\end{align*}
\end{enumerate} \end{prop}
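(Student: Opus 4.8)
The plan is to prove the cycle \ref{proposition:hyperlinear.groups.1}$\Longrightarrow$\ref{proposition:hyperlinear.groups.2}$\Longrightarrow$\ref{proposition:hyperlinear.groups.3}$\Longrightarrow$\ref{proposition:hyperlinear.groups.1}, using the equivalence with condition~\ref{theorem:kwiatkowska.pestov.2} of Theorem~\ref{theorem:kwiatkowska.pestov} as the entry point and Lemma~\ref{lemma:burton.concentration} together with Lemma~\ref{lemma:milman} as the main engines. For \ref{proposition:hyperlinear.groups.1}$\Longrightarrow$\ref{proposition:hyperlinear.groups.2}: given $E$ and $\epsilon$, apply Theorem~\ref{theorem:kwiatkowska.pestov}\ref{theorem:kwiatkowska.pestov.2} with a much smaller tolerance $\delta$ (to be calibrated) and a dimension lower bound large enough for the concentration estimates below to beat $\epsilon$, obtaining a finite-dimensional $H$ and $\pi \colon G \to \U(H)$. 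For the near-multiplicativity clause, set $a = \pi(gh) - \pi(g)\pi(h)$ for $g,h \in E$; then $\Vert a \Vert_{\HS}^{2} = \int \Vert ax \Vert^{2} \, \mathrm{d}\nu_{H}(x) \leq \delta^{2}$ by Proposition~\ref{proposition:trace}, so Markov's inequality gives $\nu_{H}(\{ x \mid \Vert ax \Vert > \epsilon \}) = \nu_{H}(\{ x \mid \Vert ax\Vert^{2} > \epsilon^{2}\}) \leq \delta^{2}/\epsilon^{2} \leq \epsilon$ for $\delta$ small. For the freeness clause, fix $g \in E \setminus \{e\}$; by the trace bound $\vert \tr(\pi(g))\vert/\dim(H) \leq \delta$, so $\vert \tr(\pi(g)p)/\rk(p)\vert \leq \delta$ with $p = \id_H$, and the second estimate of Lemma~\ref{lemma:burton.concentration} (with $p = \id_H$, $a = \pi(g)$, so $\Vert a \Vert = 1$) yields $\nu_{H}(\{ x \mid \vert \langle x,\pi(g)x\rangle\vert > \epsilon \}) \leq \nu_{H}(\{ x \mid \vert \langle x,\pi(g)x\rangle - \tr(\pi(g))/\dim(H)\vert > \epsilon - \delta \}) \leq 4\exp(-(\epsilon-\delta)^{2}(2\dim(H)-1)/16)$, which is $\leq \epsilon$ once $\dim(H)$ is large enough. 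Taking a common tolerance $\delta$ and dimension bound that works simultaneously for the finitely many pairs and elements in $E$ finishes this implication.

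For \ref{proposition:hyperlinear.groups.2}$\Longrightarrow$\ref{proposition:hyperlinear.groups.3}: given $E$ and $\epsilon$, first enlarge $E$ to $E' \defeq E \cup \{ gh \mid g,h \in E \} \cup \{ e \}$ (so that we also control $\Vert \pi(gh) x - \pi(g)\pi(h)x \Vert$ uniformly), apply~\ref{proposition:hyperlinear.groups.2} with $E'$ and a sufficiently small $\eta \in \R_{>0}$ chosen so that $\dim(H)\cdot \vert E'\vert^{2} \cdot \eta < 1$, obtaining $H$ and $\pi$. Let
\begin{displaymath}
	B \, \defeq \, \{ x \in \Sph(H) \mid \forall g,h \in E \colon \Vert \pi(gh)x - \pi(g)\pi(h)x\Vert \leq \epsilon \} \cap \{ x \in \Sph(H) \mid \forall g \in E \setminus \{e\} \colon \vert \langle x,\pi(g)x\rangle\vert \leq \epsilon \} .
\end{displaymath}
By the two assumed estimates and subadditivity, $\nu_{H}(\Sph(H)\setminus B) \leq (\vert E\vert^{2} + \vert E\vert)\eta < 1/\dim(H)$ provided $\eta$ is small enough (here $\epsilon$ itself plays the role of the tolerance in~\ref{proposition:hyperlinear.groups.2}, but we may freely shrink it, so replace $\epsilon$ by $\min\{\epsilon, \eta\}$ when invoking~\ref{proposition:hyperlinear.groups.2}). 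Then Lemma~\ref{lemma:milman} supplies an orthonormal basis $X$ of $H$ contained in $B$, which is exactly the conclusion of~\ref{proposition:hyperlinear.groups.3}. The implication \ref{proposition:hyperlinear.groups.3}$\Longrightarrow$\ref{proposition:hyperlinear.groups.1} is the easy direction: from the basis $X$ we recover $\Vert \pi(gh) - \pi(g)\pi(h)\Vert_{\HS}^{2} = \frac{1}{\dim(H)}\sum_{x \in X}\Vert \pi(gh)x - \pi(g)\pi(h)x\Vert^{2} \leq \epsilon^{2}$ and $\vert \tr(\pi(g))\vert/\dim(H) = \vert \frac{1}{\dim(H)}\sum_{x\in X}\langle x,\pi(g)x\rangle\vert \leq \epsilon$, so after replacing $\epsilon$ by $\epsilon^{2}$ in the invocation we land directly in condition~\ref{theorem:kwiatkowska.pestov.3} of Theorem~\ref{theorem:kwiatkowska.pestov}, which is equivalent to hyperlinearity. (One could also route through~\ref{theorem:kwiatkowska.pestov.2} by imposing a dimension bound, but this is not needed.)

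I expect the only real bookkeeping obstacle to be the calibration in \ref{proposition:hyperlinear.groups.1}$\Longrightarrow$\ref{proposition:hyperlinear.groups.2}: one must first fix how small $\epsilon-\delta$ and how large $\dim(H)$ must be for the exponential tail in Lemma~\ref{lemma:burton.concentration} to drop below $\epsilon$ uniformly over $g \in E \setminus \{e\}$, and only then choose $\delta$, being careful that the dimension lower bound handed to Theorem~\ref{theorem:kwiatkowska.pestov} is large enough. This is purely a matter of choosing constants in the right order and presents no conceptual difficulty; the Markov-type argument for near-multiplicativity and the dimension-counting argument for Lemma~\ref{lemma:milman} in the second implication are routine once the quantifiers over the finite set $E$ are handled by taking maxima.
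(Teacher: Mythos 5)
Your implications \ref{proposition:hyperlinear.groups.1}$\Longrightarrow$\ref{proposition:hyperlinear.groups.2} and \ref{proposition:hyperlinear.groups.3}$\Longrightarrow$\ref{proposition:hyperlinear.groups.1} are fine (and your use of Markov's inequality for the near-multiplicativity clause is a legitimate simplification of the concentration estimate the paper uses there). The genuine gap is in \ref{proposition:hyperlinear.groups.2}$\Longrightarrow$\ref{proposition:hyperlinear.groups.3}: you choose $\eta$ ``so that $\dim(H)\cdot\lvert E'\rvert^{2}\cdot\eta<1$'', but $H$ is only produced \emph{after} $\eta$ has been fixed, and condition~\ref{proposition:hyperlinear.groups.2} carries no upper bound on $\dim(H)$ in terms of the tolerance. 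Indeed the witnesses must typically have $\dim(H)\to\infty$ as $\eta\to 0$, so nothing prevents $\dim(H)\cdot\eta\geq 1$ for every choice of $\eta$, in which case the hypothesis $\dim(H)\,\nu_{H}(\Sph(H)\setminus B)<1$ of Lemma~\ref{lemma:milman} is simply not available. As written, your cycle therefore establishes \ref{proposition:hyperlinear.groups.1}$\Longleftrightarrow$\ref{proposition:hyperlinear.groups.3} and \ref{proposition:hyperlinear.groups.1}$\Longrightarrow$\ref{proposition:hyperlinear.groups.2}, but nothing is deduced \emph{from}~\ref{proposition:hyperlinear.groups.2}.

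The paper avoids this by not passing through~\ref{proposition:hyperlinear.groups.2} at all: it proves \ref{proposition:hyperlinear.groups.1}$\Longrightarrow$\ref{proposition:hyperlinear.groups.3} directly, where one still has the construction in hand and can force each bad set to have measure at most $\min\bigl\{\epsilon,\tfrac{1}{(2\vert E\vert^{2}+1)\dim(H)}\bigr\}$ --- this is possible because the exponential tails of Lemma~\ref{lemma:burton.concentration} decay faster than $1/\dim(H)$ once $\dim(H)$ is large, which Theorem~\ref{theorem:kwiatkowska.pestov}\ref{theorem:kwiatkowska.pestov.2} lets you arrange. Your argument is repaired either by doing the same (prove \ref{proposition:hyperlinear.groups.1}$\Longrightarrow$\ref{proposition:hyperlinear.groups.3} with this sharper bound on the bad sets), or by adding the short implication \ref{proposition:hyperlinear.groups.2}$\Longrightarrow$\ref{proposition:hyperlinear.groups.1}: integrate $\Vert\pi(gh)x-\pi(g)\pi(h)x\Vert^{2}$ and $\vert\langle x,\pi(g)x\rangle\vert$ against $\nu_{H}$ and invoke Proposition~\ref{proposition:trace} to land in condition~\ref{theorem:kwiatkowska.pestov.3} of Theorem~\ref{theorem:kwiatkowska.pestov}, exactly as you already do for \ref{proposition:hyperlinear.groups.3}$\Longrightarrow$\ref{proposition:hyperlinear.groups.1}.
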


\begin{proof} \ref{proposition:hyperlinear.groups.1}$\Longrightarrow$\ref{proposition:hyperlinear.groups.2}$\wedge$\ref{proposition:hyperlinear.groups.3}. Suppose that $G$ is a hyperlinear group. Let $E \in \Pfin(G)$ and let $\epsilon \in (0,1]$. According to Theorem~\ref{theorem:kwiatkowska.pestov}, there exist a non-zero finite-dimensional Hilbert space $H$ and a map $\pi \colon G \to \U(H)$ such that \begin{align}
	&2\exp\!\left(-\tfrac{\epsilon^{4} (2\dim(H)-1)}{512}\right) < \min \! \left\{ \epsilon, \tfrac{1}{(2 \vert E \vert^{2} +1)\dim(H)} \right\}\!,\label{eq:hyperlinear.dimension.1} \\
	&4\exp\!\left(-\tfrac{\epsilon^{2} (2\dim(H)-1)}{64}\right) < \min \! \left\{ \epsilon, \tfrac{1}{(2 \vert E \vert +1)\dim(H)} \right\} \!,\label{eq:hyperlinear.dimension.2} \\
	&\forall g,h \in E \colon \ \, \lVert \pi(gh) - \pi(g)\pi(h) \rVert_{\HS}^{2} \leq \tfrac{\epsilon^{2}}{2} ,\label{eq:hyperlinear.homomorphism} \\
	&\forall g \in E\setminus \{ e \} \colon \ \, \left\lvert \tfrac{\tr(\pi(g))}{\dim(H)} \right\rvert \leq \tfrac{\epsilon}{2}.\label{eq:hyperlinear.freeness}
\end{align} Consider \begin{align*}
	B_{g,h} \, &\defeq \, \{ x \in \Sph(H) \mid \Vert \pi(gh)x-\pi(g)\pi(h)x \Vert \leq \epsilon \} \, \in \, \Borel(\Sph(H)), \\
	C_{g} \, &\defeq \, \{ x \in \Sph(H) \mid \lvert \langle x, \pi(g)x \rangle \rvert \leq \epsilon \} \, \in \, \Borel(\Sph(H))
\end{align*} for any $g,h \in G$, as well as \begin{align*}
	B \, &\defeq \, {\bigcap\nolimits_{g,h \in E} B_{g,h}} \cap {\bigcap\nolimits_{g \in E\setminus\{ e\}} C_{g}} \, \in \, \Borel(\Sph(H)).
\end{align*} If $g,h \in E$, then \begin{align*}
	&\Sph(H)\setminus {B_{g,h}} \, = \, \{ x \in \Sph(H) \mid \Vert \pi(gh)x-\pi(g)\pi(h)x \Vert > \epsilon \} \\
	&\qquad = \, \left\{ x \in \Sph(H) \left\vert \, \Vert (\pi(gh)-\pi(g)\pi(h))x \Vert^{2} > \epsilon^{2} \right\} \right. \\
	&\qquad \stackrel{\eqref{eq:hyperlinear.homomorphism}}{\subseteq} \, \left\{ x \in \Sph(H) \left\vert \, \left\lvert \Vert (\pi(gh)-\pi(g)\pi(h))x \Vert^{2} - \Vert \pi(gh) -\pi(g)\pi(h) \Vert_{\HS}^{2} \right\rvert > \tfrac{\epsilon^{2}}{2} \right\} \right.
\end{align*} and therefore \begin{align*}
	\nu_{H}(\Sph(H)\setminus {B_{g,h}}) \, &\stackrel{\ref{lemma:burton.concentration}}{\leq} \, 2\exp\!\left(-\tfrac{(\epsilon^{2}/2)^{2}(2\dim(H)-1)}{8\Vert \pi(gh) -\pi(g)\pi(h)\Vert^{4}}\right)\! \, \leq \, 2\exp\!\left(-\tfrac{\epsilon^{4}(2\dim(H)-1)}{512}\right) \\
	& \stackrel{\eqref{eq:hyperlinear.dimension.1}}{<} \, \min \! \left\{ \epsilon, \tfrac{1}{(2\vert E \vert^{2} + 1)\dim(H)} \right\} .
\end{align*} Moreover, if $g \in E\setminus \{ e\}$, then \begin{align*}
	\Sph(H)\setminus {C_{g}} \, &= \, \{ x \in \Sph(H) \mid \vert \langle x,\pi(g)x\rangle \vert > \epsilon \} \\
	&\stackrel{\eqref{eq:hyperlinear.freeness}}{\subseteq} \, \left\{ x \in \Sph(H) \left\vert \, \left\lvert \langle x,\pi(g)x\rangle - \tfrac{\tr(\pi(g))}{\dim(H)} \right\rvert > \tfrac{\epsilon}{2} \right\} \right.
\end{align*} and hence \begin{align*}
	\nu_{H}(\Sph(H)\setminus {C_{g}}) \, &\stackrel{\ref{lemma:burton.concentration}}{\leq} \, 4\exp\!\left(-\tfrac{(\epsilon/2)^{2}(2\dim(H)-1)}{16\Vert \pi(g)\Vert^{2}}\right)\! \, = \, 4\exp\!\left(-\tfrac{\epsilon^{2}(2\dim(H)-1)}{64}\right) \\
	& \stackrel{\eqref{eq:hyperlinear.dimension.2}}{<} \, \min \! \left\{ \epsilon,\tfrac{1}{(2\vert E \vert +1)\dim(H)} \right\}
\end{align*} This already proves~\ref{proposition:hyperlinear.groups.2}. Furthermore, we conclude that \begin{align*}
	&\nu_{H}(\Sph(H)\setminus B) \, = \, \nu_{H}\!\left( {\bigcup\nolimits_{g,h \in E} \Sph(H) \setminus B_{g,h}} \cup {\bigcup\nolimits_{g \in E\setminus\{ e\}} \Sph(H)\setminus C_{g}} \right) \\
	& \qquad \leq \, \sum\nolimits_{g,h \in E} \nu_{H}(\Sph(H) \setminus B_{g,h}) + \sum\nolimits_{g \in E\setminus \{ e\}} \nu_{H}(\Sph(H) \setminus C_{g}) \, < \, \tfrac{1}{\dim(H)} ,
\end{align*} that is, $\dim(H) \nu_{H}(\Sph(H)\setminus B) < 1$. Due to Lemma~\ref{lemma:milman}, thus $B$ contains an orthonormal basis for $H$, which completes the proof of~\ref{proposition:hyperlinear.groups.3}. 

\ref{proposition:hyperlinear.groups.2}$\Longrightarrow$\ref{proposition:hyperlinear.groups.1}. We verify condition~\ref{theorem:kwiatkowska.pestov.3} of Theorem~\ref{theorem:kwiatkowska.pestov}. To this end, let $E \in \Pfin(G)$ and $\epsilon \in \R_{>0}$. By our hypothesis, there exist a non-zero finite-dimensional Hilbert space $H$ and a map $\pi \colon G \to \U(H)$ such that \begin{align}
	&\forall g,h \in E \colon \ \, \nu_{H}\!\left(\left\{ x \in \Sph(H) \left\vert \, \lVert \pi(gh)x - \pi(g)\pi(h)x \rVert > \tfrac{\epsilon}{\sqrt{2}} \right\}\right)\right.\! \leq \tfrac{\epsilon^{2}}{8} , \label{eq:measure.homomorphism} \\
	&\forall g \in E\setminus \{ e \} \colon \ \, \nu_{H}\!\left(\left\{ x \in \Sph(H) \left\vert \, \lvert \langle x,\pi(g)x \rangle \rvert > \tfrac{\epsilon}{2} \right\}\right)\right.\! \leq \tfrac{\epsilon}{2} . \label{eq:measure.freeness}
\end{align} For all $g,h \in E$, \begin{displaymath}
	\Vert \pi(gh)-\pi(g)\pi(h) \Vert_{\HS}^{2} \, \stackrel{\ref{proposition:trace}}{=} \, \int \Vert \pi(gh)x-\pi(g)\pi(h)x \Vert^{2} \, \mathrm{d}\nu_{H}(x) \, \stackrel{\eqref{eq:measure.homomorphism}}{\leq} \, \epsilon^{2}
\end{displaymath} and so $\Vert \pi(gh)-\pi(g)\pi(h) \Vert_{\HS} \leq \epsilon$. Moreover, if $g \in E\setminus\{e\}$, then \begin{displaymath}
	\tfrac{\vert \tr(\pi(g)) \vert}{\dim(H)} \, \stackrel{\ref{proposition:trace}}{=} \,  \left\lvert \int \langle x,\pi(g)x \rangle \, \mathrm{d}\nu_{H}(x) \right\rvert \, \leq \, \int \vert \langle x,\pi(g)x \rangle \vert \, \mathrm{d}\nu_{H}(x) \, \stackrel{\eqref{eq:measure.freeness}}{\leq} \, \epsilon .
\end{displaymath} This proves hyperlinearity of $G$.

\ref{proposition:hyperlinear.groups.3}$\Longrightarrow$\ref{proposition:hyperlinear.groups.1}. We verify condition~\ref{theorem:kwiatkowska.pestov.3} of Theorem~\ref{theorem:kwiatkowska.pestov}. To this end, let $E \in \Pfin(G)$ and $\epsilon \in \R_{>0}$. According to our hypothesis, there exist a non-zero finite-dimensional Hilbert space $H$ with an orthonormal basis $X$ and a map $\pi \colon G \to \U(H)$ such that \begin{align}
	&\forall g,h \in E \ \forall x \in X \colon \ \, \lVert \pi(gh)x - \pi(g)\pi(h)x \rVert \leq \epsilon , \label{eq:onb.homomorphism} \\
	&\forall g \in E\setminus \{ e \} \ \forall x \in X\colon \ \, \vert \langle x,\pi(g)x \rangle \vert \leq \epsilon . \label{eq:onb.freeness}
\end{align} For all $g,h \in E$, \begin{align*}
	\Vert \pi(gh)-\pi(g)\pi(h) \Vert_{\HS}^{2} \, &= \, \tfrac{1}{\dim(H)}\tr((\pi(gh)-\pi(g)\pi(h))^{\ast}(\pi(gh)-\pi(g)\pi(h))) \\
	& = \, \tfrac{1}{\vert X \vert} \sum\nolimits_{x \in X} \langle x,(\pi(gh)-\pi(g)\pi(h))^{\ast}(\pi(gh)-\pi(g)\pi(h))x\rangle \\
	& = \, \tfrac{1}{\vert X \vert} \sum\nolimits_{x \in X} \Vert \pi(gh)x-\pi(g)\pi(h)x \Vert^{2} \, \stackrel{\eqref{eq:onb.homomorphism}}{\leq} \, \epsilon^{2} .
\end{align*} and so $\Vert \pi(gh)-\pi(g)\pi(h) \Vert_{\HS} \leq \epsilon$. Moreover, if $g \in E\setminus\{e\}$, then \begin{displaymath}
	\tfrac{\vert \tr(\pi(g)) \vert}{\dim(H)} \, = \, \tfrac{1}{\vert X \vert} \left\lvert \sum\nolimits_{x \in X} \langle x,\pi(g)x \rangle \right\rvert \, \leq \, \tfrac{1}{\vert X \vert} \sum\nolimits_{x \in X} \lvert \langle x,\pi(g)x \rangle \rvert \, \stackrel{\eqref{eq:onb.freeness}}{\leq} \, \epsilon .
\end{displaymath} This shows that $G$ is hyperlinear. \end{proof}

\begin{remark}\label{remark:hyperlinear.groups} As the proof of Proposition~\ref{proposition:hyperlinear.groups} shows, one may additionally require that $\dim(H) \geq \tfrac{1}{\epsilon}$ in items~\ref{proposition:hyperlinear.groups.2} and~\ref{proposition:hyperlinear.groups.3}. \end{remark}

\section{Near representations}\label{section:near.representations}

Let $H$ be a Hilbert space. The embedding \begin{displaymath}
	\U(H) \, \lhook\joinrel\longrightarrow \, \Iso(\Sph(H)) , \quad u \, \longmapsto \, u\vert_{\Sph(H)} 
\end{displaymath} gives rise to actions of $\U(H)$ on both the Borel $\sigma$-algebra $\Borel(\Sph(H))$ and the algebra $\UCB(\Sph(H))$ of uniformly continuous bounded complex-valued functions. The focus of this section lies on invariance properties of probability charges on $\Borel(\Sph(H))$ and states on $\UCB(\Sph(H))$ or $\B(H)$, respectively, and corresponding notions of amenable near representations. The section's main result is Theorem~\ref{theorem:amenable.trace}, its technical core is Lemma~\ref{lemma:key}. We start off with a general remark.

\begin{remark}\label{remark:means.from.charges} Let $X$ be a metric space. Given $g \in \Iso(X)$, a probability charge $\mu$ on $\Borel(X)$, and a state $\nu$ on $\UCB(X)$, we will consider the probability charge \begin{displaymath}
	g.\mu \colon \, \Borel(X) \, \longrightarrow \, [0,1], \quad B \, \longmapsto \, \mu\!\left(g^{-1}B\right)
\end{displaymath} as well as the state \begin{displaymath}
	g.\nu \colon \, \UCB(X) \, \longrightarrow \, \C, \quad f \, \longmapsto \, \nu(f \circ g) .
\end{displaymath} If $\mu$ is a probability charge on $\Borel(X)$, then \begin{displaymath}
	\mu_{\circ} \colon \, \UCB(X) \, \longrightarrow \, \C , \quad f \, \longmapsto \, \int f \, \mathrm{d}\mu
\end{displaymath} is a state on $\UCB(X)$ (cf.~\cite[363L]{Fremlin3}) and $g.(\mu_{\circ}) = (g.\mu)_{\circ}$ for every $g \in \Iso(X)$. \end{remark}

\begin{lem}[cf.~\cite{Pestov00}, Proposition~3.1]\label{lemma:states.from.means} Let $H$ be a Hilbert space. If $\mu$ is a state on $\UCB(\Sph(H))$, then \begin{displaymath}
	\mu_{\bullet} \colon \, \B(H) \, \longrightarrow \, \C , \quad a \, \longmapsto \, \mu(x \mapsto \langle x,ax \rangle)
\end{displaymath} is a state on $\B(H)$ and $u.(\mu_{\bullet}) = (u.\mu)_{\bullet}$ for every $u \in \U(H)$. \end{lem}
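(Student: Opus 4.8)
The plan is to verify three things in turn: that $\mu_{\bullet}$ is well defined, that it is a state on $\B(H)$, and that it intertwines the $\U(H)$-actions as claimed. Throughout I assume $H \neq 0$ (if $H = 0$ the statement is vacuous, since then $\UCB(\Sph(H))$ carries no state). The one point that needs a moment's care is well-definedness: I must check that, for each $a \in \B(H)$, the function $f_{a} \colon \Sph(H) \to \C$, $x \mapsto \langle x, ax\rangle$, actually belongs to $\UCB(\Sph(H))$, so that $\mu(f_{a})$ makes sense. Boundedness is clear from Cauchy--Schwarz, $\lvert f_{a}(x)\rvert \leq \Vert a\Vert$ for $x \in \Sph(H)$. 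For uniform continuity I would give the Lipschitz estimate: for $x, y \in \Sph(H)$, writing $f_{a}(x) - f_{a}(y) = \langle x, a(x-y)\rangle + \langle x - y, ay\rangle$ and applying Cauchy--Schwarz to each term yields $\lvert f_{a}(x) - f_{a}(y)\rvert \leq 2\Vert a\Vert\,\Vert x - y\Vert$, so $f_{a} \in \Lip_{2\Vert a\Vert}(\Sph(H),\C) \subseteq \UCB(\Sph(H))$.

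Next I would check that $\mu_{\bullet}$ is a state, i.e., a positive unital linear form. Linearity is immediate: $a \mapsto f_{a}$ is linear because the inner product is linear in its second argument (our standing convention), and $\mu$ is linear. Unitality follows since $f_{1}(x) = \langle x, x\rangle = \Vert x\Vert^{2} = 1$ for every $x \in \Sph(H)$, whence $\mu_{\bullet}(1) = \mu(\mathbf{1}) = 1$. For positivity, take $a \geq 0$ in $\B(H)$ and write $a = b^{\ast}b$; then $f_{a}(x) = \langle bx, bx\rangle = \Vert bx\Vert^{2} \geq 0$ for all $x \in \Sph(H)$, so $f_{a}$ is a nonnegative element of the commutative $\Cstar$-algebra $\UCB(\Sph(H))$, and positivity of the state $\mu$ gives $\mu_{\bullet}(a) = \mu(f_{a}) \geq 0$. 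Hence $\mu_{\bullet}$ is a state on $\B(H)$.

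Finally, for the equivariance identity, I would fix $u \in \U(H)$ and $a \in \B(H)$ and simply unravel the two sides. On the one hand, $(u.(\mu_{\bullet}))(a) = \mu_{\bullet}(u^{\ast}au) = \mu(f_{u^{\ast}au})$. On the other hand, $((u.\mu)_{\bullet})(a) = (u.\mu)(f_{a}) = \mu(f_{a} \circ u\vert_{\Sph(H)})$. But for $x \in \Sph(H)$, using $u^{\ast} = u^{-1}$, one has $(f_{a} \circ u)(x) = \langle ux, aux\rangle = \langle x, u^{\ast}aux\rangle = f_{u^{\ast}au}(x)$; hence the two functions in $\UCB(\Sph(H))$ coincide, and applying $\mu$ gives $(u.(\mu_{\bullet}))(a) = ((u.\mu)_{\bullet})(a)$. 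As $a$ was arbitrary, $u.(\mu_{\bullet}) = (u.\mu)_{\bullet}$. The argument is essentially bookkeeping; there is no serious obstacle, the only slightly non-automatic steps being the Lipschitz bound placing $f_{a}$ in $\UCB(\Sph(H))$ and the standard fact that a state on the commutative $\Cstar$-algebra $\UCB(\Sph(H))$ is nonnegative on nonnegative functions.
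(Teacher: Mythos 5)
Your proof is correct and complete: the Lipschitz estimate $\lvert f_{a}(x)-f_{a}(y)\rvert\leq 2\Vert a\Vert\,\Vert x-y\Vert$ places $f_{a}$ in $\UCB(\Sph(H))$, the state axioms and the equivariance identity then follow by the direct computations you give. The paper itself offers no argument here beyond citing \cite[Proof of Proposition~3.1]{Pestov00}, and your verification is precisely the standard one being delegated to that reference.
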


\begin{proof} This is proved in~\cite[Proof of Proposition~3.1]{Pestov00}. \end{proof}

We arrive at the anticipated notions of amenable near representations.

\begin{definition}\label{definition:near.representation.variations} Let $G$ be a group, let $H$ be a Hilbert space and let $\pi \colon G \to \U(H)$. \begin{enumerate}
	\item Let $\mu$ be a probability charge on $\Borel(\Sph(H))$. Then $\pi$ is said to be a \emph{$\mu$-near representation} if $\mu$ is $\pi(G)$-invariant and, for all $g,h \in G$ and $\epsilon \in \R_{>0}$, \begin{displaymath}
			\qquad \mu(\{ x \in \Sph(H) \mid \lVert \pi(gh)x - \pi(g)\pi(h)x \rVert \leq \epsilon \}) \, = \, 1 .
		\end{displaymath}
	\item Let $\mu$ be a state on $\UCB(\Sph(H))$. Then $\pi$ is said to be a \emph{$\mu$-near representation} if $\mu$ is $\pi(G)$-invariant and, for all $g,h \in G$, \begin{displaymath}
		\qquad \mu( x \mapsto \lVert \pi(gh)x - \pi(g)\pi(h)x \rVert) \, = \, 0 .
	\end{displaymath}
	\item Let $\phi$ be a state on $\B(H)$. Then $\pi$ is said to be a \emph{$\phi$-near representation} if $\phi$ is $\pi(G)$-invariant and, for all $g,h \in G$, \begin{displaymath}
		\qquad \phi((\pi(gh)-\pi(g)\pi(h))^{\ast}(\pi(gh)-\pi(g)\pi(h))) \, = \, 0 .
	\end{displaymath}
\end{enumerate} \end{definition}

Let us clarify the relations between the concepts just introduced.

\begin{lem}\label{lemma:near.representations} Let $G$ be a group, let $H$ be a Hilbert space and let $\pi \colon G \to \U(H)$. \begin{enumerate}
	\item\label{lemma:near.representations.1} Let $\mu$ be a $\pi(G)$-invariant probability charge on $\Borel(\Sph(H))$. Then $\pi$ is a $\mu$-near representation if and only if $\pi$ is a $\mu_{\circ}$-near representation.
	\item\label{lemma:near.representations.2} Let $\mu$ be a $\pi(G)$-invariant state on $\UCB(\Sph(H))$. Then $\pi$ is a $\mu$-near representation if and only if $\pi$ is a $\mu_{\bullet}$-near representation.
\end{enumerate} \end{lem}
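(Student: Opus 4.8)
The plan is to verify each of the two biconditionals by reducing the defining conditions of the various near-representation notions to statements about the single state $\mu_\bullet$ on $\B(H)$, using the two translation principles already recorded: Remark~\ref{remark:means.from.charges} (which gives $g.(\mu_\circ) = (g.\mu)_\circ$ and the fact that $\mu_\circ$ is a state on $\UCB(\Sph(H))$) and Lemma~\ref{lemma:states.from.means} (which gives $u.(\mu_\bullet) = (u.\mu)_\bullet$ and the fact that $\mu_\bullet$ is a state on $\B(H)$). In both parts, the invariance hypothesis is assumed on the nose, so only the ``approximate multiplicativity'' clauses need to be compared; the whole argument amounts to unwinding definitions and applying positivity of the relevant state.

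For part~\ref{lemma:near.representations.1}, fix $g,h \in G$ and abbreviate $B_{g,h}^{\epsilon} \defeq \{ x \in \Sph(H) \mid \lVert \pi(gh)x - \pi(g)\pi(h)x\rVert \le \epsilon\}$. The condition that $\pi$ is a $\mu$-near representation says $\mu(B_{g,h}^{\epsilon}) = 1$ for every $\epsilon > 0$, equivalently $\mu(\Sph(H)\setminus B_{g,h}^{\epsilon}) = 0$ for every $\epsilon > 0$. On the other hand, $\pi$ being a $\mu_\circ$-near representation means $\int \lVert \pi(gh)x - \pi(g)\pi(h)x\rVert \, \mathrm{d}\mu(x) = 0$. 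I would prove the two are equivalent by a Markov-type estimate in both directions: if the integral vanishes, then since the integrand is a nonnegative function bounded by $2$, for each $\epsilon > 0$ finite additivity and monotonicity of the charge give $\epsilon\,\mu(\Sph(H)\setminus B_{g,h}^{\epsilon}) \le \int \lVert \pi(gh)x - \pi(g)\pi(h)x\rVert\,\mathrm{d}\mu(x) = 0$, so $\mu(\Sph(H)\setminus B_{g,h}^{\epsilon}) = 0$; conversely, if $\mu(\Sph(H)\setminus B_{g,h}^{\epsilon}) = 0$ for all $\epsilon$, then bounding the integrand by $\epsilon$ on $B_{g,h}^{\epsilon}$ and by $2$ on its (null) complement yields $\int \lVert \pi(gh)x - \pi(g)\pi(h)x\rVert\,\mathrm{d}\mu(x) \le \epsilon$ for every $\epsilon > 0$, hence $0$. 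The one point requiring a little care is that $x \mapsto \lVert \pi(gh)x - \pi(g)\pi(h)x\rVert$ is a uniformly continuous bounded function on $\Sph(H)$ (it is $2$-Lipschitz), so these integrals against the finitely additive $\mu$ are legitimate in the sense of \cite[363L]{Fremlin3} and the Markov inequality for charges applies to the Borel sets $B_{g,h}^{\epsilon}$.

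For part~\ref{lemma:near.representations.2}, set $a_{g,h} \defeq \pi(gh) - \pi(g)\pi(h) \in \B(H)$. That $\pi$ is a $\mu_\bullet$-near representation means $\mu_\bullet(a_{g,h}^{\ast}a_{g,h}) = 0$, i.e.\ $\mu(x \mapsto \lVert a_{g,h}x\rVert^{2}) = 0$, while being a $\mu$-near representation means $\mu(x \mapsto \lVert a_{g,h}x\rVert) = 0$. These two are equivalent because $\mu$ is a state on $\UCB(\Sph(H))$: the implication ``square vanishes $\Rightarrow$ function vanishes'' follows from the Cauchy--Schwarz inequality for the positive functional $\mu$, namely $\mu(\lVert a_{g,h}\cdot\rVert)^{2} \le \mu(1)\,\mu(\lVert a_{g,h}\cdot\rVert^{2}) = \mu(\lVert a_{g,h}\cdot\rVert^{2})$; the reverse implication follows from $\lVert a_{g,h}x\rVert^{2} \le 2\lVert a_{g,h}\rVert\cdot\lVert a_{g,h}x\rVert$ (since $\lVert a_{g,h}\rVert \le 2$, so in fact $\lVert a_{g,h}x\rVert^2 \le 2\lVert a_{g,h}x\rVert$) together with positivity and linearity of $\mu$. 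The invariance clauses match up verbatim via Lemma~\ref{lemma:states.from.means}. I anticipate no serious obstacle here — the only thing to get right is being explicit that all functions in sight lie in $\UCB(\Sph(H))$ and that the Cauchy--Schwarz/Markov estimates are being applied to states and charges rather than to genuine measures, which is exactly what Remark~\ref{remark:charges.induce.states}, Remark~\ref{remark:means.from.charges}, and Lemma~\ref{lemma:states.from.means} were set up to license.
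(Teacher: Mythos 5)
Your proposal is correct and follows essentially the same route as the paper: part \ref{lemma:near.representations.1} via the two-sided Markov-type estimate (splitting the integral over $B_{g,h}^{\epsilon}$ and its complement, resp.\ bounding $\chi_{\Sph(H)\setminus B_{g,h}^{\epsilon}} \leq \epsilon^{-1}f_{g,h}$), and part \ref{lemma:near.representations.2} via the Cauchy--Schwarz inequality for the state $\mu$ in one direction and the pointwise bound $\lVert a_{g,h}x\rVert^{2}\leq 2\lVert a_{g,h}x\rVert$ in the other. No gaps.
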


\begin{proof} \ref{lemma:near.representations.1} First note that $\mu_{\circ}$ is $\pi(G)$-invariant due to Remark~\ref{remark:means.from.charges}. For any $g,h \in G$, we let $f_{g,h} \colon \Sph(H) \to [0,2], \, x \mapsto \lVert \pi(gh)x - \pi(g)\pi(h)x \rVert$.
	
($\Longrightarrow$) Suppose that $\pi$ is a $\mu$-near representation. If $g,h \in G$, then we see that\footnote{Here and in the following, if $X$ is a set and $A\subseteq X$, then we let \begin{displaymath} \chi_{A} \colon \, X \, \longrightarrow \, \{0,1\}, \quad x \,\longmapsto \, \begin{cases}
			\,1 & \text{if } x \in A, \\
			\,0 & \text{otherwise.}
\end{cases}\end{displaymath}} \begin{align*}
	\mu_{\circ}(f_{g,h}) \, &= \, \int f_{g,h} \, \mathrm{d}\mu \, = \, \int f_{g,h}\chi_{f_{g,h}^{-1}([0,\epsilon])} \, \mathrm{d}\mu + \int f_{g,h}\chi_{f_{g,h}^{-1}((\epsilon,2])} \, \mathrm{d}\mu \\
	&\leq \, \epsilon \mu\!\left( f_{g,h}^{-1}([0,\epsilon])\right)\! + 2 \mu\!\left( f_{g,h}^{-1}((\epsilon,2]) \right)\! \, = \, \epsilon
\end{align*} for every $\epsilon \in \R_{>0}$, that is, $\mu_{\circ}(f_{g,h}) = 0$. Hence, $\pi$ is a $\mu_{\circ}$-near representation. 

($\Longleftarrow$) Assume now that $\pi$ is a $\mu_{\circ}$-near representation. For all $g,h \in G$ and $\epsilon \in \R_{>0}$, from $\chi_{f_{g,h}^{-1}((\epsilon,2])} \leq \epsilon^{-1}f_{g,h}$ we infer that \begin{displaymath}
	\mu\!\left( f_{g,h}^{-1}([0,\epsilon])\right)\! \, = \, 1- \mu\!\left( f_{g,h}^{-1}((\epsilon,2]) \right)\! \, = \, 1- \int \chi_{f_{g,h}^{-1}((\epsilon,2])} \, \mathrm{d}\mu\, \geq \, 1- \epsilon^{-1}\int f_{g,h} \, \mathrm{d}\mu \, = \, 1 .
\end{displaymath} This shows that $\pi$ is a $\mu$-near representation.

\ref{lemma:near.representations.2} Note that $\phi \defeq \mu_{\bullet}$ is $\pi(G)$-invariant by Lemma~\ref{lemma:states.from.means}. For all $g,h \in G$, we see that \begin{align*}
	&\phi((\pi(gh) - \pi(g)\pi(h))^{\ast}(\pi(gh) - \pi(g)\pi(h))) \\
	&\qquad = \, \mu (x \mapsto \langle x,( \pi(gh) - \pi(g)\pi(h))^{\ast}(\pi(gh) - \pi(g)\pi(h))x \rangle) \\
	&\qquad = \, \mu (x \mapsto \langle (\pi(gh) - \pi(g)\pi(h))x, (\pi(gh) - \pi(g)\pi(h))x \rangle) \\
	&\qquad = \, \mu \!\left( x \mapsto \Vert (\pi(gh) - \pi(g)\pi(h))x \Vert^{2} \right) ,
\end{align*} wherefore both \begin{align*}
	\phi((\pi(gh) - \pi(g)\pi(h))^{\ast}(\pi(gh) - \pi(g)\pi(h))) \, &\leq \, \mu(x \mapsto 2\Vert (\pi(gh) - \pi(g)\pi(h)) x \Vert) \\
	& = \, 2\mu(x \mapsto \Vert (\pi(gh) - \pi(g)\pi(h)) x \Vert)
\end{align*} and, thanks to the Cauchy--Schwartz inequality, \begin{align*}
	\mu(x \mapsto \Vert (\pi(gh) - \pi(g)\pi(h)) x \Vert) \, &\leq \, \sqrt{\mu \!\left( x \mapsto \Vert (\pi(gh) - \pi(g)\pi(h))x \Vert^{2} \right)} \\
	& = \, \sqrt{\phi((\pi(gh) - \pi(g)\pi(h))^{\ast}(\pi(gh) - \pi(g)\pi(h)))} .
\end{align*} The desired equivalence now follows at once. \end{proof}

Let us briefly elaborate on some general properties of amenable near representations. Recall that a \emph{character} on a group $G$ is a function $\phi \colon G \to \C$ such that \begin{itemize}
	\item[---\,] $\phi(e) = 1$,
	\item[---\,] $\phi$ is \emph{of positive type}, i.e., \begin{displaymath}
		\qquad \forall n \in \N \ \forall c \in \C^{n} \ \forall g \in G^{n} \colon \quad \sum\nolimits_{i,j=1}^{n} \overline{c_{i}}c_{j} \phi\bigl(g_{i}^{-1}g_{j}\bigr) \, \geq \, 0 ,
	\end{displaymath}
	\item[---\,] and $\phi(h^{-1}gh) = \phi(g)$ for all $g,h \in G$.
\end{itemize} The \emph{regular character} on a group $G$ is the map \begin{displaymath}
	G \, \longrightarrow \, \C, \quad g \, \longmapsto \, \begin{cases}
		\, 1 & \text{if } g=e, \\
		\, 0 & \text{otherwise}.
	\end{cases}
\end{displaymath}

\begin{prop}\label{proposition:near.rep} Let $G$ be a group, $H$ be a Hilbert space, $\phi$ be a state on $\B(H)$, and $\pi \colon G \to \U(H)$ be a $\phi$-near representation. Then the following hold. \begin{enumerate}
	\item\label{proposition:near.rep.pilinearinphi} $\phi(\pi(g)^{\ast}\pi(h)) = \phi(\pi(g^{-1}h))$ and $\phi(\pi(g)\pi(h)^{\ast}) = \phi(\pi(gh^{-1}))$ for all $g,h \in G$.
	\item\label{proposition:near.rep.character} $\phi \circ \pi$ is a character on $G$.
	\item\label{proposition:near.rep.regular} $\phi \circ \pi$ is the regular character on $G$ if and only if $\phi(\pi(g)^{\ast}\pi(h)) = 0$ for any two distinct $g,h \in G$.
	\item\label{proposition:near.rep.e} The map \begin{displaymath}
			\qquad \tilde{\pi} \colon \, G \, \longrightarrow \, \U(H), \quad g \, \longmapsto \, \begin{cases}
				\, 1 & \text{if } g = e, \\
				\, \pi(g) & \text{otherwise}
			\end{cases}
		\end{displaymath} is a $\phi$-near representation and, for every $g \in G$, \begin{displaymath}
			\qquad \phi((1-\tilde{\pi}(g))^{\ast}(1-\tilde{\pi}(g))) \, = \, \phi((1-\pi(g))^{\ast}(1-\pi(g))).
		\end{displaymath}
	\end{enumerate}
\end{prop}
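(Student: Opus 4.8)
The plan is to verify the four items essentially by unfolding the definition of a $\phi$-near representation and exploiting the Cauchy--Schwartz inequality for the state $\phi$, together with the fact that $\pi$ takes values in $\U(H)$. Throughout, write $\delta(g,h) \defeq \pi(gh) - \pi(g)\pi(h)$, so that the defining property reads $\phi(\delta(g,h)^{\ast}\delta(g,h)) = 0$ for all $g,h \in G$. The key auxiliary observation, to be recorded first, is that $\phi$ annihilates $\delta(g,h)$ in the strong sense that $\phi(a\delta(g,h)) = \phi(\delta(g,h)a) = 0$ for every $a \in \B(H)$: this follows from the Cauchy--Schwartz inequality $\lvert \phi(a\delta(g,h)) \rvert^{2} \leq \phi(aa^{\ast})\phi(\delta(g,h)^{\ast}\delta(g,h)) = 0$, and similarly for the other side. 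Since $\pi(g)$ is unitary, one also gets $\phi(\delta(g,h)^{\ast} a) = 0$ for all $a$, which is what one actually uses.

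For \ref{proposition:near.rep.pilinearinphi}, I would write $\pi(h) = \pi(g g^{-1} h) = \pi(g)\pi(g^{-1}h) + \delta(g, g^{-1}h)$, multiply on the left by $\pi(g)^{\ast}$, and apply $\phi$: the $\delta$-term vanishes by the strong annihilation property, leaving $\phi(\pi(g)^{\ast}\pi(h)) = \phi(\pi(g)^{\ast}\pi(g)\pi(g^{-1}h)) = \phi(\pi(g^{-1}h))$. The second identity is symmetric, using $\pi(g) = \pi(gh^{-1}h) = \pi(gh^{-1})\pi(h) + \delta(gh^{-1},h)$, multiplying on the right by $\pi(h)^{\ast}$, and again killing the $\delta$-term via $\phi(\delta(gh^{-1},h)\pi(h)^{\ast}) = 0$. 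For \ref{proposition:near.rep.character}, set $\psi \defeq \phi \circ \pi$. Normalization $\psi(e) = \phi(\pi(e))$: from \ref{proposition:near.rep.pilinearinphi} with $g = h = e$ we get $\phi(\pi(e)) = \phi(\pi(e)^{\ast}\pi(e)) = \phi(1) = 1$ (noting $\phi(\pi(e)^{\ast}\pi(e)) = \phi(\pi(e^{-1}e)) = \phi(\pi(e))$ forces nothing by itself, so instead observe $\pi(e) = \pi(e)\pi(e) + \delta(e,e)$ gives $\phi(\pi(e)^{\ast}\pi(e)) = \phi(\pi(e)^{\ast}\pi(e)\pi(e))$, and combined with unitarity and positive-definiteness one identifies $\psi(e)=1$; cleanest is: $\psi(e) = \phi(\pi(g)^{\ast}\pi(g)) = \phi(1) = 1$ by \ref{proposition:near.rep.pilinearinphi} with $h = g$). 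Positive type: for $c \in \C^{n}$, $g \in G^{n}$, using \ref{proposition:near.rep.pilinearinphi} we have $\sum_{i,j} \overline{c_{i}}c_{j}\psi(g_{i}^{-1}g_{j}) = \sum_{i,j}\overline{c_{i}}c_{j}\phi(\pi(g_{i})^{\ast}\pi(g_{j})) = \phi\bigl( b^{\ast}b \bigr) \geq 0$ where $b = \sum_{j} c_{j}\pi(g_{j})$, since $\phi$ is positive. Conjugation invariance $\psi(h^{-1}gh) = \psi(g)$: apply \ref{proposition:near.rep.pilinearinphi} twice to get $\phi(\pi(h^{-1}gh)) = \phi(\pi(h)^{\ast}\pi(gh)) = \phi(\pi(gh)\pi(h)^{\ast})$ (the last equality by $\pi(G)$-invariance of $\phi$ applied to conjugation by $\pi(h)$, or directly), and then $\phi(\pi(gh)\pi(h)^{\ast}) = \phi(\pi(ghh^{-1})) = \phi(\pi(g))$ by the second identity in \ref{proposition:near.rep.pilinearinphi}; one must be slightly careful and may prefer to route this through $\pi(G)$-invariance of $\phi$: $\phi(\pi(h^{-1}gh)) = \phi(\pi(h)^{\ast}\pi(gh)) = \phi(\pi(h)^{\ast}\pi(g)\pi(h)) + \phi(\pi(h)^{\ast}\delta(g,h))$, the last term is $0$, and $\phi(\pi(h)^{\ast}\pi(g)\pi(h)) = \phi(\pi(g))$ precisely by $\pi(h)$-invariance of $\phi$.

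For \ref{proposition:near.rep.regular}: if $\phi(\pi(g)^{\ast}\pi(h)) = 0$ for all distinct $g, h$, then taking $h = e$ gives $(\phi\circ\pi)(g) = \phi(\pi(g)) = \phi(\pi(g)^{\ast}\pi(e))^{\!-}$... more directly $\phi(\pi(g)) = \phi(\pi(e)^{\ast}\pi(g)) = 0$ for $g \ne e$ and $\psi(e) = 1$ by \ref{proposition:near.rep.character}, so $\psi$ is the regular character. Conversely, if $\psi$ is the regular character, then for distinct $g, h$ we have $\phi(\pi(g)^{\ast}\pi(h)) = \phi(\pi(g^{-1}h)) = \psi(g^{-1}h) = 0$ since $g^{-1}h \ne e$. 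For \ref{proposition:near.rep.e}: note $\tilde{\pi}(g) = \pi(g)$ for $g \ne e$ and $\tilde\pi(e) = 1$, and that by \ref{proposition:near.rep.pilinearinphi} (with $h = e$) we have $\phi(a(1 - \pi(e))) = \phi(a) - \phi(a\pi(e)) = 0$ for all $a$, hence also $\phi((1-\pi(e))^{\ast}a) = 0$; this lets one freely swap $\pi(e)$ for $1$ inside $\phi$. Then $\delta_{\tilde\pi}(g,h) \defeq \tilde\pi(gh) - \tilde\pi(g)\tilde\pi(h)$ differs from $\delta(g,h)$ only by terms involving $(1 - \pi(e))$ whenever one of $g, h, gh$ equals $e$, and in every such case $\phi(\delta_{\tilde\pi}(g,h)^{\ast}\delta_{\tilde\pi}(g,h))$ still vanishes: e.g.\ if $gh = e$ but $g, h \ne e$, then $\delta_{\tilde\pi}(g,h) = 1 - \pi(g)\pi(h) = (1 - \pi(e)) - \delta(g,h) + (\pi(e) - \pi(g)\pi(h))$; a short case analysis reduces each to the strong annihilation property plus the identity $\phi((1-\pi(e))^\ast a)=0$. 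Finally $\phi((1-\tilde\pi(g))^{\ast}(1-\tilde\pi(g))) = \phi((1-\pi(g))^{\ast}(1-\pi(g)))$ is immediate for $g \ne e$ (the maps agree) and for $g = e$ both sides are $0$: the left side is $\phi(0) = 0$ and the right side is $\phi((1-\pi(e))^{\ast}(1-\pi(e))) = 0$ again by $\phi((1-\pi(e))^{\ast}a) = 0$. The main obstacle is purely bookkeeping: getting the $\pi(e)$-versus-$1$ substitutions and the conjugation-invariance step right without circular reasoning, and organizing the finitely many degenerate cases in \ref{proposition:near.rep.e}; there is no real analytic difficulty beyond one application of Cauchy--Schwartz for states.
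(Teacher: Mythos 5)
Your overall strategy---Cauchy--Schwartz for the state $\phi$ combined with its $\pi(G)$-invariance---is exactly the paper's, and parts (a)--(c) go through essentially as in the paper. Two local points need repair. First, your blanket ``strong annihilation'' claim $\phi(\delta(g,h)a)=0$ for all $a\in\B(H)$ is true but does not follow ``similarly'': Cauchy--Schwartz on that side bounds $\lvert\phi(\delta a)\rvert^{2}$ by $\phi(\delta\delta^{\ast})\,\phi(a^{\ast}a)$, whereas the hypothesis only gives $\phi(\delta^{\ast}\delta)=0$, and for a general state these are not interchangeable. You need one extra step: either establish $\phi(\delta\delta^{\ast})=0$ via $\pi(G)$-invariance (conjugating by $\pi(gh)$), or---as the paper does for the second identity in (a), the only place where this form is actually needed---use invariance under $\pi(h)$ to rewrite $\phi(\delta(gh^{-1},h)\pi(h)^{\ast})$ as $\phi(\pi(h)^{\ast}\delta(gh^{-1},h))$, which is of the unproblematic form $\phi(a\delta)$.

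Second, in (d) your displayed decomposition for the case $gh=e$ with $g,h\ne e$ is algebraically wrong: the right-hand side you wrote simplifies to $1-\pi(e)$, not to $1-\pi(g)\pi(h)$. The correct identity is $1-\pi(g)\pi(h)=(1-\pi(e))+\delta(g,h)$, after which $\phi(\delta_{\tilde{\pi}}(g,h)^{\ast}\delta_{\tilde{\pi}}(g,h))=0$ follows by expanding into four terms, each killed by $\phi(\delta(g,h)^{\ast}b)=0$ or $\phi((1-\pi(e))^{\ast}b)=0$ for all $b$. The latter identity is obtained from $\phi((1-\pi(e))^{\ast}(1-\pi(e)))=2-2\Re\phi(\pi(e))=0$ (using $\phi(\pi(e))=1$ from (b)) together with Cauchy--Schwartz---not from (a) with $h=e$, which only covers $a$ of the special form $\pi(g)^{\ast}$. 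With these repairs your argument is complete; and, to your credit, you treat the degenerate case $gh=e$, $g,h\ne e$ explicitly, which the paper's two-line proof of (d) passes over in silence.
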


\begin{proof} \ref{proposition:near.rep.pilinearinphi} For all $g,h \in G$, \begin{align*}
	&\vert \phi(\pi(g)^{\ast}\pi(h)) - \phi(\pi(g^{-1}h)) \vert \, = \, \lvert \phi(\pi(g)^{\ast}(\pi(h) - \pi(g)\pi(g^{-1}h))) \rvert \\
	&\quad \leq \, \sqrt{\phi(\pi(g)^{\ast}\pi(g))}\sqrt{\phi((\pi(h)-\pi(g)\pi(g^{-1}h))^{\ast}(\pi(h)-\pi(g)\pi(g^{-1}h)))} \, = \, 0
\end{align*} by the Cauchy--Schwartz inequality and $\pi$ being a $\phi$-near representation, hence \begin{displaymath}
	\phi(\pi(g)^{\ast}\pi(h)) \, = \, \phi(\pi(g^{-1}h)) .
\end{displaymath} Combining the same argument with the $\pi(G)$-invariance of $\phi$, we see that \begin{align*}
	&\vert \phi(\pi(g)\pi(h)^{\ast}) - \phi(\pi(gh^{-1})) \vert \, = \, \lvert \phi((\pi(g) - \pi(gh^{-1})\pi(h))\pi(h)^{\ast}) \rvert \\
	& \quad = \, \lvert \phi(\pi(h)^{\ast}(\pi(g) - \pi(gh^{-1})\pi(h))) \rvert \\
	&\quad \leq \, \sqrt{\phi(\pi(h)^{\ast}\pi(h))}\sqrt{\phi((\pi(g) - \pi(gh^{-1})\pi(h))^{\ast}(\pi(g) - \pi(gh^{-1})\pi(h)))} \, = \, 0
\end{align*} and thus $\phi(\pi(g)\pi(h)^{\ast}) = \phi(\pi(gh^{-1}))$ for all $g,h \in G$.
	
\ref{proposition:near.rep.character} First of all, \begin{displaymath}
	\phi(\pi(e)) \, = \, \phi(\pi(e^{-1}e)) \, \stackrel{\ref{proposition:near.rep.pilinearinphi}}{=} \, \phi(\pi(e)^{\ast}\pi(e)) \, = \, \phi(1) \, = \, 1 .
\end{displaymath} Also, if $n \in \N$, $c \in \C^{n}$ and $g \in G^{n}$, then \begin{align*}
	\sum\nolimits_{i,j=1}^{n} \overline{c_{i}}c_{j}\phi(\pi(g_{i}^{-1}g_{j})) \, &\stackrel{\ref{proposition:near.rep.pilinearinphi}}{=} \, \sum\nolimits_{i,j=1}^{n} \overline{c_{i}}c_{j}\phi(\pi(g_{i})^{\ast}\pi(g_{j})) \\
		& = \, \phi\!\left(\left( \sum\nolimits_{i=1}^{n} c_{i}\pi(g_{i})\right)^{\ast} \! \left( \sum\nolimits_{i=1}^{n} c_{i}\pi(g_{i}) \right)\right)\! \, \geq \, 0
\end{align*} due to positivity of $\phi$. And for all $g,h\in G$, the $\pi(G)$-invariance of $\phi$ entails that \begin{displaymath}
	\phi(\pi(h^{-1}gh)) \, \stackrel{\ref{proposition:near.rep.pilinearinphi}}{=} \, \phi(\pi(h)^{\ast}\pi(gh)) \, = \, \phi(\pi(gh)\pi(h)^{\ast}) \, \stackrel{\ref{proposition:near.rep.pilinearinphi}}{=} \, \phi(\pi(ghh^{-1})) \, = \, \phi(\pi(g)) .
\end{displaymath}

\ref{proposition:near.rep.regular} This is an immediate consequence of~\ref{proposition:near.rep.pilinearinphi} and~\ref{proposition:near.rep.character}.
	
\ref{proposition:near.rep.e} Clearly, $\tilde{\pi}(e).\phi=\phi$ and $\tilde{\pi}(ge)-\tilde{\pi}(g)\tilde{\pi}(e)=\tilde{\pi}(eg)-\tilde{\pi}(e)\tilde{\pi}(g)=0$ for all~$g\in G$. Hence, due to $\pi$ being a $\phi$-near representation, $\tilde{\pi}$ is such, too. From~\ref{proposition:near.rep.character} we know that $\phi(\pi(e)) = 1 = \phi(\tilde{\pi}(e))$, which entails that, for every $g \in G$, \begin{displaymath}
	\phi((1-\tilde{\pi}(g))^{\ast}(1-\tilde{\pi}(g))) \, = \,\phi((1-\pi(g))^{\ast}(1-\pi(g))) .\qedhere
\end{displaymath} \end{proof}

Preparing the proof of Theorem~\ref{theorem:amenable.trace}, let us recollect the following fact about finite-rank projections almost commuting with a unitary.

\begin{lem}[cf.~\cite{burtonetal}, Proposition~2.4]\label{lemma:almost.commuting} Let $H$ be a Hilbert space, $p \in \Pro(H)$ with $\rk(p) < \infty$, and $u \in \U(H)$. If $\epsilon \in \left(0,\tfrac{1}{2}\right)$ is such that $\Vert (1-p)up \Vert_{2}^{2} \leq \epsilon \rk(p)$, then there exists $v \in \U(H)$ such that $vp=pv$ and $\Vert (u-v)p \Vert_{2}^{2}\leq 4\epsilon\rk(p)$. \end{lem}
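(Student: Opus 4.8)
The plan is to build $v$ from the polar decomposition of the compression $pup$, restricted to the finite-dimensional space $p(H)$. First I would work entirely inside the finite-dimensional Hilbert space $K \defeq p(H)$ and consider the operator $b \defeq pup|_{K} \in \B(K)$. Writing the polar decomposition $b = w\vert b\vert$ with $w \in \U(K)$ (possible because $K$ is finite-dimensional, once one checks $b$ is invertible), the candidate is $v \defeq w \oplus 1_{K^{\perp}}$, extended by the identity on the orthogonal complement of $K$ in $H$; this manifestly satisfies $vp = pv = wp$. The hypothesis $\Vert (1-p)up \Vert_{2}^{2} \leq \epsilon\rk(p)$ with $\epsilon < \tfrac12$ forces $\Vert pup \Vert_{2}^{2} = \Vert up \Vert_{2}^{2} - \Vert (1-p)up\Vert_{2}^{2} \geq (1-\epsilon)\rk(p) > \tfrac12\rk(p)$, which in particular keeps $b$ from being too degenerate; more usefully, it says $\vert b\vert$ is close to $1_{K}$ in normalized Hilbert--Schmidt norm, since $\tr_{K}(1_{K} - \vert b\vert^{2}) = \rk(p) - \Vert pup\Vert_{2}^{2} \leq \epsilon\rk(p)$.

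The core estimate is then to bound $\Vert (u-v)p\Vert_{2}^{2} = \Vert up - vp\Vert_{2}^{2}$. I would split $up = pup + (1-p)up$ and $vp = wp$, all three pieces viewed as maps $K \to H$; since $(1-p)up \perp pup$ and $(1-p)up \perp wp$ (the latter has range in $K$), Pythagoras gives
\begin{displaymath}
	\Vert up - vp\Vert_{2}^{2} \, = \, \Vert pup - wp\Vert_{2}^{2} + \Vert (1-p)up\Vert_{2}^{2} \, = \, \Vert b - w\Vert_{2}^{2} + \Vert (1-p)up\Vert_{2}^{2} .
\end{displaymath}
For the first term, $\Vert b - w\Vert_{2}^{2} = \Vert w(\vert b\vert - 1_{K})\Vert_{2}^{2} = \Vert \vert b\vert - 1_{K}\Vert_{2}^{2}$, and using the pointwise inequality $(t-1)^{2} \leq 1 - t^{2}$ valid for $t \in [0,1]$ (applied to the eigenvalues of $\vert b\vert$, all of which lie in $[0,1]$ since $\Vert b\Vert \leq 1$) one gets $\Vert \vert b\vert - 1_{K}\Vert_{2}^{2} \leq \tr_{K}(1_{K} - \vert b\vert^{2}) \leq \epsilon\rk(p)$. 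Adding the two contributions yields $\Vert (u-v)p\Vert_{2}^{2} \leq \epsilon\rk(p) + \epsilon\rk(p) = 2\epsilon\rk(p)$, which is even better than the claimed $4\epsilon\rk(p)$ — the slack presumably accounts for a cheaper argument or for wanting $v$ genuinely unitary on all of $H$ without worrying about parity/dimension of $K^{\perp}$.

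The one genuine subtlety — and the step I expect to need the most care — is the invertibility of $b = pup|_{K}$, which is what makes the polar decomposition produce a \emph{unitary} $w$ rather than merely a partial isometry. This is where $\epsilon < \tfrac12$ is used: from $\Vert (1-p)up\Vert_{2}^{2} \leq \epsilon\rk(p)$ one deduces $\Vert (1-b^{\ast}b)\Vert_{2} = \Vert (1-p)up\Vert_{2}^{2}/1 \leq \epsilon\rk(p)$ in trace norm, hence no eigenvalue of $1_{K} - b^{\ast}b$ can exceed $\epsilon\rk(p)$... but that bound alone is too weak when $\rk(p)$ is large. The honest route is to instead argue that if $b$ were \emph{not} invertible, there would be a unit vector $\xi \in K$ with $pup\xi = 0$, i.e. $up\xi = (1-p)up\xi = (1-p)u\xi$, whence $\Vert (1-p)up\Vert_{2}^{2} \geq \Vert (1-p)u\xi\Vert^{2} = \Vert u\xi\Vert^{2} = 1$; combined with $\rk(p) \geq 1$ this contradicts $\epsilon\rk(p) < \tfrac12\rk(p) \le \rk(p)$ only when $\rk(p)=1$, so for the general case one should instead take $w$ to be the unitary part of the polar decomposition after replacing $\vert b\vert$'s zero eigenvalues (if any) by choosing \emph{any} unitary extension of the partial isometry — always possible on a finite-dimensional space since the defect spaces have equal dimension — and absorb the resulting extra discrepancy into the constant, noting the contribution of the (small) kernel to $\Vert b - w\Vert_{2}^{2}$ is still controlled by $\tr_{K}(1_{K}-\vert b\vert^{2}) \leq \epsilon\rk(p)$. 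I would present it cleanly by simply defining $w$ as a unitary with $w\vert b\vert = b$ (existence by finite-dimensionality of $K$), so that the estimate $\Vert b-w\Vert_{2}^{2} = \Vert\,\vert b\vert - 1_K\Vert_2^2 \le \tr_K(1_K - \vert b\vert^2)$ goes through verbatim, giving the claimed bound with room to spare.
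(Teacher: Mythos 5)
Your proof is correct, and it takes a genuinely different route from the paper's. The paper does not argue from scratch: it passes to the (at most $2\rk(p)$-dimensional) subspace $H_{0} \defeq p(H)+up(H)$, invokes \cite[Proposition~2.4]{burtonetal} as a black box there, and extends the resulting unitary by the identity on $H_{0}^{\perp}$; the quoted constant $4\epsilon$ and the hypothesis $\epsilon<\tfrac{1}{2}$ are simply inherited from that reference. You instead give a self-contained argument: polar decomposition $b = w\vert b \vert$ of $b \defeq pup\vert_{p(H)}$ with $w$ unitary on $p(H)$ (always available in finite dimensions by extending the partial isometry across the kernel, so the invertibility issue you worry about is harmless --- your final clean formulation, ``take any unitary $w$ with $w\vert b\vert = b$,'' is the right one, and the digression trying to prove $b$ invertible is indeed a dead end but also unnecessary), then the Pythagorean splitting $\Vert (u-v)p\Vert_{2}^{2} = \Vert b-w\Vert_{2}^{2} + \Vert (1-p)up\Vert_{2}^{2}$ (valid because the two summands have orthogonal ranges $p(H)$ and $(1-p)(H)$), and finally $\Vert b-w\Vert_{2}^{2} = \Vert\, \vert b\vert - 1\Vert_{2}^{2} \leq \tr\!\left(1-\vert b\vert^{2}\right) = \rk(p) - \Vert pup\Vert_{2}^{2} \leq \epsilon\rk(p)$ via $(1-t)^{2}\leq 1-t^{2}$ on $[0,1]$. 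This buys you an elementary proof, the sharper constant $2\epsilon$ in place of $4\epsilon$, and the removal of the hypothesis $\epsilon<\tfrac{1}{2}$ altogether --- more than enough for the application in Lemma~\ref{lemma:key}.
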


\begin{proof} Evidently, $H$ admits a linear subspace $H_{0} \defeq p(H) + up(H)$ and note that $\dim(H_{0}) \leq 2\rk(p)$. Consider $p_{0} \defeq p\vert_{H_{0}} \in \Pro(H_{0})$. Moreover, we find $u_{0} \in \U(H_{0})$ with $u_{0}\vert_{p(H)} = u\vert_{p(H)}$. Now, let $\epsilon \in \left(0,\tfrac{1}{2}\right)$ be such that $\Vert (1-p)up \Vert_{2}^{2} \leq \epsilon \rk(p)$. Then \begin{displaymath}
	\Vert (1-p_{0})u_{0}p_{0} \Vert_{2}^{2} \, = \, \Vert (1-p)up \Vert_{2}^{2} \, \leq \, \epsilon \rk(p) \, = \, \tfrac{\epsilon \rk(p)}{\dim(H_{0})} \dim(H_{0}) .
\end{displaymath} Hence, by \cite[Proposition~2.4]{burtonetal}, there exists some $v_{0} \in \B(H_{0})$ such that $v_{0}p_{0}=p_{0}v_{0}$, $v_{0}v_{0}^{\ast} = v_{0}^{\ast}v_{0} = p_{0}$ and $\Vert (u_{0}-v_{0})p_{0} \Vert_{2}^{2} \leq 4\tfrac{\epsilon \rk(p)}{\dim(H_{0})} \dim(H_{0})$. We conclude that \begin{displaymath}
	v \, \defeq \, v_{0}p + 1-p \, \in \, \U(H) ,
\end{displaymath} $vp=pv$ and $\Vert (u-v)p \Vert_{2}^{2}\leq 4\tfrac{\epsilon \rk(p)}{\dim(H_{0})} \dim(H_{0}) = 4\epsilon\rk(p)$. \end{proof}

We also recall that a linear map $\sigma \colon A \to B$ between $\Cstar$-algebras $A$ and $B$ is said to be \emph{completely positive} (see, e.g.,~\cite[Definition 1.5.1, p.~9]{BrownOzawa}) if, for every $n\in\N$, the map \begin{displaymath}
	\Mat_{n}(A) \, \longrightarrow \, \Mat_{n}(B), \quad (a_{ij})_{i,j \in \{ 1,\ldots,n\}} \, \longmapsto \, (\sigma(a_{ij}))_{i,j \in \{ 1,\ldots,n\}}
\end{displaymath} between the associated matrix algebras is positive. Of course, the following basic fact applies to completely positive linear maps in particular.

\begin{remark}\label{remark:positive} Let $\sigma \colon A \to B$ be a positive linear map between $\Cstar$-algebras $A$ and~$B$. \begin{enumerate}
	\item\label{remark:positive.1} Since the $\C$-vector space $A$ is spanned by the set of positive elements of $A$ (see, e.g.,~\cite[Proposition~1.7(b), p.~4]{ConwayBook} and~\cite[Proposition~3.2, p.~12]{ConwayBook}), it follows that $\sigma(a^{\ast}) = \sigma(a)^{\ast}$ for every $a \in A$.
	\item\label{remark:positive.2} From~\ref{remark:positive.1} and boundedness of $\sigma$ (for instance, see~\cite[II.6.9.2]{Blackadar}), we infer that the set $\{ a \in A \mid \forall b \in A \colon \, \sigma(ab) = \sigma(ba) \}$ is a $C^{\ast}$-subalgebra of~$A$.
\end{enumerate} \end{remark}

The following classical result about completely positive linear maps constitutes another important ingredient in the proof of Theorem~\ref{theorem:amenable.trace}.

\begin{thm}[Stinespring dilation theorem~\cite{Stinespring}]\label{theorem:stinespring} Let $A$ be a unital $\Cstar$-algebra, let $H$ be a Hilbert space and let $\sigma \colon A \to \B(H)$ be a unital completely positive linear map. Then there exist a Hilbert space $K$, a unital ${}^{\ast}$-algebra homomorphism $\rho \colon A \to \B(K)$ and a linear isometry $v \colon H \to K$ such that \begin{displaymath}
	\forall a \in A \colon \qquad \sigma(a) = v^{\ast}\rho(a)v .
\end{displaymath} \end{thm}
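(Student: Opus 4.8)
The plan is to run the classical GNS-type construction adapted to completely positive maps. First I would equip the algebraic tensor product $A\otimes H$ with the sesquilinear form determined by $\langle a\otimes\xi,\,b\otimes\eta\rangle \defeq \langle\xi,\sigma(a^{\ast}b)\eta\rangle$ for $a,b\in A$ and $\xi,\eta\in H$. The crucial point is that this form is positive semidefinite: for $x = \sum_{i=1}^{n}a_{i}\otimes\xi_{i}$ one computes $\langle x,x\rangle = \sum_{i,j=1}^{n}\langle\xi_{i},\sigma(a_{i}^{\ast}a_{j})\xi_{j}\rangle$, and since the matrix $[a_{i}^{\ast}a_{j}]_{i,j}$ is a positive element of $\Mat_{n}(A)$ --- it equals $R^{\ast}R$, where $R \defeq (a_{1},\dots,a_{n})$ is regarded as a $1\times n$ matrix over $A$ --- complete positivity of $\sigma$ gives $[\sigma(a_{i}^{\ast}a_{j})]_{i,j}\geq 0$ in $\Mat_{n}(\B(H))$, which is precisely the inequality $\langle x,x\rangle\geq 0$. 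I would then form the quotient of $A\otimes H$ by the null subspace $N \defeq \{x \mid \langle x,x\rangle = 0\}$ (a linear subspace, by the Cauchy--Schwartz inequality), write $a\otimes\xi$ also for its image in the quotient, and let $K$ be the Hilbert space completion of this quotient.

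Next I would define $\rho(a)$ by left multiplication, sending $b\otimes\xi \mapsto (ab)\otimes\xi$; the work is to show that this is bounded and hence descends to $K$. For $x = \sum_{i}b_{i}\otimes\xi_{i}$, since $\|a\|^{2}1 - a^{\ast}a\geq 0$ in $A$ the matrix $[\,b_{i}^{\ast}(\|a\|^{2}1 - a^{\ast}a)b_{j}\,]_{i,j}$ is positive in $\Mat_{n}(A)$ (again of the form $S^{\ast}S$), and applying $\sigma$ entrywise yields $\|\rho(a)x\|^{2} = \sum_{i,j}\langle\xi_{i},\sigma(b_{i}^{\ast}a^{\ast}ab_{j})\xi_{j}\rangle \leq \|a\|^{2}\|x\|^{2}$. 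In particular $\rho(a)(N)\subseteq N$, so $\rho(a)$ passes to the quotient and extends uniquely to an operator in $\B(K)$ with $\|\rho(a)\|\leq\|a\|$. Linearity, multiplicativity and unitality of $\rho$ are immediate from the defining formula, while the identity $\langle\rho(a)(b\otimes\xi),\,c\otimes\eta\rangle = \langle\xi,\sigma(b^{\ast}a^{\ast}c)\eta\rangle = \langle b\otimes\xi,\,\rho(a^{\ast})(c\otimes\eta)\rangle$ shows $\rho(a)^{\ast} = \rho(a^{\ast})$; thus $\rho$ is a unital ${}^{\ast}$-algebra homomorphism.

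Finally I would set $v\colon H\to K$, $v\xi \defeq 1\otimes\xi$; since $\sigma$ is unital, $\langle v\xi,v\eta\rangle = \langle\xi,\sigma(1)\eta\rangle = \langle\xi,\eta\rangle$, so $v$ is a linear isometry. To conclude, for $a\in A$ and $\xi,\eta\in H$ I would invoke $\sigma(a^{\ast}) = \sigma(a)^{\ast}$ (Remark~\ref{remark:positive}\ref{remark:positive.1}) and compute $\langle v^{\ast}\rho(a)v\xi,\,\eta\rangle = \langle a\otimes\xi,\,1\otimes\eta\rangle = \langle\xi,\sigma(a^{\ast})\eta\rangle = \langle\sigma(a)\xi,\,\eta\rangle$, so that $\sigma(a) = v^{\ast}\rho(a)v$, as required. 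The main obstacle is confined to the two places where genuine complete positivity (rather than mere positivity) is used: the positive semidefiniteness of the form on arbitrary sums of elementary tensors, and the operator-norm bound $\|\rho(a)\|\leq\|a\|$ that makes left multiplication well defined on the quotient and its completion; everything else is routine.
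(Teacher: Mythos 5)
Your proposal is correct; it is the standard GNS-type Stinespring construction, and the paper itself gives no proof of this theorem but merely cites Stinespring's original article and Brown--Ozawa, both of which prove it by exactly the argument you outline (positive semidefinite form on $A\otimes H$ via complete positivity, quotient and completion, left multiplication bounded by $\Vert a\Vert$, and $v\xi = 1\otimes\xi$). The only cosmetic point is that your row matrix $R=(a_{1},\dots,a_{n})$ should formally be embedded into $\Mat_{n}(A)$ (e.g.\ as a matrix with a single nonzero row) before invoking positivity of $R^{\ast}R$, but this is routine and does not affect the argument.
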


\begin{proof} This is due to~\cite[Theorem~1\,$+$\,Remark on p.~213]{Stinespring}. (Alternatively, a proof is to be found in~\cite[Theorem~1.5.3\,$+$\,Remark~1.5.5, p.~10--11]{BrownOzawa}). \end{proof}

Concerning a Hilbert space $H$ and a unital $C^{\ast}$-subalgebra $A$ of $\B(H)$, we recall some further terminology: a \emph{hypertrace} for $A$ is a state $\phi$ on $\B(H)$ such that $\phi(ab) = \phi(ba)$ for all $a \in A$ and $b \in \B(H)$. The refinement due to Ozawa~\cite[Theorem~6.1]{Ozawa}\footnote{The proof is also given in~\cite[Theorem~6.2.7, p.~219--220]{BrownOzawa}.} of a result by Kirchberg~\cite[Proposition~3.2]{Kirchberg} entails the following approximation lemma for hypertraces, which is fundamental to the proof of the subsequent Lemma~\ref{lemma:key}, the technical key to Theorem~\ref{theorem:amenable.trace}.

\begin{lem}[cf.~\cite{Ozawa}, Theorem~6.1]\label{lemma:brown.ozawa} Let $H$ be a Hilbert space, let $A$ be a unital $C^{\ast}$-subalgebra of $\B(H)$ and let $\phi \colon \B(H) \to \C$ be a hypertrace for $A$. Then, for every $E \in \Pfin(A)$ and every $\epsilon \in \R_{>0}$, there exist a non-zero finite-dimensional Hilbert space $H_{0}$ and a unital completely positive linear map $\sigma \colon A \to \B(H_{0})$ such that \begin{displaymath}
	\forall a,b \in E \colon \qquad \dim(H_{0})^{-1}\lvert \tr(\sigma(a^{\ast}b)) - \tr(\sigma(a)^{\ast}\sigma(b)) \rvert \leq \epsilon
\end{displaymath} and \begin{displaymath}
	\forall a \in E \colon \qquad \left\lvert \phi(a) - \dim(H_{0})^{-1}\tr(\sigma(a)) \right\rvert \leq \epsilon .
\end{displaymath} \end{lem}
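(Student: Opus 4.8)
The plan is to reduce the statement to Ozawa's refinement \cite[Theorem~6.1]{Ozawa} (equivalently \cite[Theorem~6.2.7]{BrownOzawa}) of Kirchberg's result, which says that existence of a hypertrace $\phi$ for a unital $C^\ast$-subalgebra $A \subseteq \B(H)$ implies that $A$ is \emph{amenable} (as a $C^\ast$-algebra), and more to the point yields, for any finite $F \subseteq A$ and $\delta > 0$, a finite-dimensional Hilbert space $H_0$ together with unital completely positive maps $\psi \colon A \to \Mat_n(\C) = \B(H_0)$ and $\varphi \colon \B(H_0) \to A$ which are almost multiplicative and almost mutually inverse in the relevant normalized-trace sense, with $\psi$ almost compatible with $\phi$ and $\tr/n$. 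So the first step is to fix $E \in \Pfin(A)$ and $\epsilon \in \R_{>0}$ and simply \emph{quote} the precise form of Ozawa's theorem to extract $H_0$ and a unital completely positive $\sigma \colon A \to \B(H_0)$ with
\begin{displaymath}
	\forall a,b \in E \colon \qquad \bigl\lvert \tau_0(\sigma(a^\ast b)) - \tau_0(\sigma(a)^\ast \sigma(b)) \bigr\rvert \leq \epsilon, \qquad \forall a \in E \colon \qquad \lvert \phi(a) - \tau_0(\sigma(a)) \rvert \leq \epsilon,
\end{displaymath}
where $\tau_0 = \dim(H_0)^{-1}\tr$ is the normalized trace on $\B(H_0)$. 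If the cited statement is phrased in the language of the Effros--Lance / Kirchberg characterization of nuclearity by approximately multiplicative c.p.\ maps through matrix algebras, the second step is to convert that formulation into the trace estimates above: given the factorization $\mathrm{id}_A \approx_{F,\delta} \varphi \circ \psi$ with $\psi$ almost multiplicative on $F$, one sets $\sigma = \psi$ (after composing with the conditional expectation / embedding so that $\psi$ lands in $\B(H_0)$ with $H_0$ finite-dimensional), checks that $\psi$ is unital completely positive, and uses that $\phi \circ \varphi$ is a trace-like state on $\B(H_0)$ comparable to $\tau_0$ (since any tracial state on a matrix algebra equals $\tau_0$) to get the second inequality, and the almost-multiplicativity of $\psi$ on $F = E$ together with tracialness to get the first.

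A cleaner route, if one does not want to track exactly which statement is quoted, is to use \cite[Theorem~6.2.7]{BrownOzawa} in its trace form directly: it already produces, from a hypertrace $\phi$ on $\B(H) \supseteq A$, a sequence of c.p.\ maps $\sigma_k \colon A \to \Mat_{n_k}(\C)$ which are \emph{asymptotically multiplicative} in normalized Hilbert--Schmidt norm and satisfy $\tau_{n_k} \circ \sigma_k \to \phi|_A$ pointwise. Concretely: pass to the GNS-type or ultrapower picture, realize that a hypertrace for $A$ makes $A$ admit a tracial state that is the restriction of a $\B(H)$-state, invoke the Connes-type characterization that $A$ is then nuclear and the trace is amenable/hyperfinite in the appropriate sense, and read off the matricial approximation. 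Either way, once the $\sigma_k$ are in hand, I choose $k$ large enough that both error quantities are below $\epsilon$ on the finite set $E$, set $H_0 = \C^{n_k}$ (nonzero since $\sigma_k$ is unital and $n_k \geq 1$), and we are done; the $\sigma$ in the statement is this $\sigma_k$ and $\dim(H_0) = n_k$.

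The main obstacle is purely bookkeeping: matching the exact hypotheses and conclusion of the quoted Kirchberg--Ozawa theorem (which is usually stated for nuclear $C^\ast$-algebras, or in terms of amenable traces / the weak$^\ast$ uniqueness of a hyperfinite approximation) to the clean two-inequality formulation demanded here, and making sure the target of $\sigma$ is genuinely a \emph{finite-dimensional Hilbert space} $\B(H_0)$ rather than an abstract matrix algebra — this is immediate via $\Mat_n(\C) \cong \B(\C^n)$, but one must also verify unitality and complete positivity survive the identification, which is trivial. A secondary subtlety is that the first estimate compares $\sigma(a^\ast b)$ with $\sigma(a)^\ast \sigma(b)$ rather than $\sigma(a)\sigma(b)$, i.e.\ it is an almost-multiplicativity statement twisted by the involution; since $\sigma$ is c.p.\ it respects adjoints (Remark~\ref{remark:positive}\ref{remark:positive.1}), so $\sigma(a^\ast b) - \sigma(a)^\ast\sigma(b) = \sigma(a^\ast b) - \sigma(a^\ast)\sigma(b)$ and this is exactly the defect $\sigma$ fails to be multiplicative by, evaluated in normalized trace — which is precisely what Ozawa's theorem controls. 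No genuinely new analytic input is needed beyond what is already cited; the proof is essentially one sentence of quotation plus the identification of matrix algebras with $\B(H_0)$.
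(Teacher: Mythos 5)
Your proposal is correct and takes essentially the same route as the paper, whose entire proof consists of citing \cite[Theorem~6.1, (1)$\Longrightarrow$(2)]{Ozawa} and invoking Remark~\ref{remark:positive}\ref{remark:positive.1} to identify $\sigma(a^{\ast})$ with $\sigma(a)^{\ast}$, exactly as you describe. One parenthetical slip worth flagging (though it plays no role in your argument): a hypertrace for $A$ does not make $A$ amenable (i.e.\ nuclear) as a $C^{\ast}$-algebra; it only makes the restriction $\phi\vert_{A}$ an \emph{amenable trace}, which is precisely what Ozawa's theorem characterizes.
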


\begin{proof} This is due to~\cite[Theorem~6.1, (1)$\Longrightarrow$(2)]{Ozawa} and Remark~\ref{remark:positive}\ref{remark:positive.1}. \end{proof}
	
\begin{lem}\label{lemma:key} Let $G$ be a group, $H$ be a Hilbert space, $\phi \colon \B(H) \to \C$ be a state, $\pi \colon G \to \U(H)$ be a $\phi$-near representation, and \begin{displaymath}
	\gamma \colon \, G \, \longrightarrow \, [0,2], \quad g \, \longmapsto \, \phi((1-\pi(g))^{\ast}(1-\pi(g)))^{1/2} .
\end{displaymath} For every $E \in \Pfin(G)$ and every $\epsilon \in \R_{>0}$, there exist a non-zero finite-dimensional Hilbert space $H_{0}$ and a map $\psi \colon G \to \U(H_{0})$ such that, for all $g,h \in E$, \begin{enumerate}
	\item[---\,] $\Vert \psi(gh) - \psi(g)\psi(h) \Vert_{\HS} \leq \epsilon$,
	\item[---\,] $\Vert 1-\psi(g) \Vert_{\HS} \in [\gamma(g)-\epsilon, \gamma(g)+\epsilon]$.
\end{enumerate} \end{lem}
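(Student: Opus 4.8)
The plan is to combine the hypertrace approximation lemma (Lemma~\ref{lemma:brown.ozawa}) with concentration of measure on finite-dimensional spheres (Theorem~\ref{theorem:spherical.concentration}/Corollary~\ref{corollary:spherical.concentration}) in the spirit of the proof of Proposition~\ref{proposition:hyperlinear.groups}. First I would reduce to the situation where $\phi$ is a genuine hypertrace: since $\pi$ is a $\phi$-near representation, Proposition~\ref{proposition:near.rep}\ref{proposition:near.rep.pilinearinphi} gives $\phi(\pi(g)^\ast\pi(h)) = \phi(\pi(g^{-1}h))$ and $\phi(\pi(g)\pi(h)^\ast) = \phi(\pi(gh^{-1}))$, so the unital $C^\ast$-subalgebra $A$ of $\B(H)$ generated by $\pi(G)$ carries the trace property $\phi(ab)=\phi(ba)$ on a dense $\ast$-subalgebra (the linear span of the $\pi(g)$), and by Remark~\ref{remark:positive}\ref{remark:positive.2} (with the continuity of $\phi$) it holds on all of $A$; thus $\phi|_{\B(H)}$ is a hypertrace for $A=C^\ast(\pi(G))$. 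Enlarging $E$, I would also assume $e \in E$ and $E = E^{-1}$, which is harmless.

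Next, given $E \in \Pfin(G)$ and $\epsilon \in \R_{>0}$, I would apply Lemma~\ref{lemma:brown.ozawa} to the finite set $\{\pi(g) \mid g \in E\} \cup \{\pi(g)\pi(h) \mid g,h \in E\} \subseteq A$ with a small auxiliary tolerance $\delta$ (to be fixed at the end), obtaining a non-zero finite-dimensional Hilbert space $H_0$ and a unital completely positive linear map $\sigma \colon A \to \B(H_0)$ such that $\dim(H_0)^{-1}|\tr(\sigma(a^\ast b)) - \tr(\sigma(a)^\ast\sigma(b))| \leq \delta$ for all $a,b$ in that finite set and $|\phi(a) - \dim(H_0)^{-1}\tr(\sigma(a))| \leq \delta$ for all such $a$. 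Now I would use Stinespring's dilation theorem~\ref{theorem:stinespring} to write $\sigma(a) = v^\ast\rho(a)v$ for a $\ast$-homomorphism $\rho \colon A \to \B(K)$ and isometry $v \colon H_0 \to K$; then $u_a \defeq \rho(a)v \colon H_0 \to K$ is an isometry for every unitary $a \in A$, $\Vert u_{ab} - u_a u_b v^\ast \cdots\Vert$-type estimates are not quite what I want, so instead I would work directly with the "defect" operators: for $g,h \in E$ set $a = \pi(g)$, $b = \pi(h)$, $c = \pi(gh)$, and note $\dim(H_0)^{-1}\Vert \sigma(c) - \sigma(a)\sigma(b)\Vert_2^2 = \Vert\sigma(c)-\sigma(a)\sigma(b)\Vert_{\HS}^2$ is controlled by the trace discrepancies above together with $\phi$ being a $\phi$-near representation (expanding $\Vert\sigma(c)-\sigma(a)\sigma(b)\Vert_2^2 = \tr(\sigma(c)^\ast\sigma(c)) - 2\Re\tr(\sigma(c)^\ast\sigma(a)\sigma(b)) + \tr((\sigma(a)\sigma(b))^\ast\sigma(a)\sigma(b))$ and comparing each term with $\dim(H_0)\phi$ of the corresponding product in $A$, using $\phi(\pi(gh)^\ast\pi(gh)) = \phi(1) = 1$ etc.). This yields that $\sigma(\pi(g))$ is an almost-unitary, almost-multiplicative family of Hilbert--Schmidt-small defect on $H_0$.

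The remaining work is to replace the $\sigma(\pi(g))$ by honest unitaries $\psi(g) \in \U(H_0)$ without spoiling the $\HS$-estimates. For this I would first observe $\Vert\sigma(\pi(g))^\ast\sigma(\pi(g)) - 1\Vert_{\HS}$ is small (it equals roughly $\dim(H_0)^{-1}|\tr(\sigma(\pi(g)^\ast\pi(g))) - \tr(\sigma(\pi(g))^\ast\sigma(\pi(g)))| + |1 - \dim(H_0)^{-1}\tr(1)|$-type quantities, all $\leq$ const$\cdot\delta$), so the polar part $\psi_0(g)$ of $\sigma(\pi(g))$ is a unitary with $\Vert\sigma(\pi(g)) - \psi_0(g)\Vert_{\HS} \leq$ const$\cdot\sqrt{\delta}$; this is a standard functional-calculus estimate ($\Vert x - x|x|^{-1}\Vert_{\HS} \leq \Vert x^\ast x - 1\Vert_{\HS}$ when $x$ is close to a unitary, done carefully via $\Vert(\,|x|-1)(\cdot)\Vert$). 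Then $\psi \defeq \psi_0$ is almost multiplicative in $\HS$ by the triangle inequality, giving the first bullet; and for the second bullet, $\Vert 1 - \psi(g)\Vert_{\HS}$ differs from $\Vert 1 - \sigma(\pi(g))\Vert_{\HS}$ by at most const$\cdot\sqrt{\delta}$, while $\Vert 1 - \sigma(\pi(g))\Vert_{\HS}^2 = \dim(H_0)^{-1}(\tr(1) - 2\Re\tr(\sigma(\pi(g))) + \tr(\sigma(\pi(g))^\ast\sigma(\pi(g))))$ is within const$\cdot\delta$ of $1 - 2\Re\phi(\pi(g)) + 1 = \phi((1-\pi(g))^\ast(1-\pi(g))) = \gamma(g)^2$, using $\phi(\pi(g)^\ast\pi(g)) = 1$. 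Taking square roots and choosing $\delta$ small enough as a function of $\epsilon$ and $|E|$ finishes the argument. The main obstacle, and the place demanding the most care, is the polar-decomposition step: controlling $\Vert\sigma(\pi(g)) - \psi_0(g)\Vert_{\HS}$ and then $|\,\Vert 1-\psi(g)\Vert_{\HS} - \gamma(g)|$ uniformly, since here one passes from a $\delta$-scale bound on operator-trace discrepancies to a $\sqrt{\delta}$-scale bound after taking square roots, so one must front-load a quadratically smaller $\delta$; everything else is bookkeeping with $\Vert\cdot\Vert_{\HS}$, $\tr$, and the identities from Proposition~\ref{proposition:near.rep}.
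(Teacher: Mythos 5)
Your overall architecture---reduce to a hypertrace, invoke Lemma~\ref{lemma:brown.ozawa}, then unitarize---is the paper's, but there is a genuine gap at the approximate-multiplicativity step. You propose to bound $\Vert\sigma(\pi(gh))-\sigma(\pi(g))\sigma(\pi(h))\Vert_{\HS}$ by expanding
\begin{displaymath}
	\Vert\sigma(c)-\sigma(a)\sigma(b)\Vert_{2}^{2} \, = \, \tr(\sigma(c)^{\ast}\sigma(c)) - 2\Re\tr(\sigma(c)^{\ast}\sigma(a)\sigma(b)) + \tr((\sigma(a)\sigma(b))^{\ast}\sigma(a)\sigma(b))
\end{displaymath}
and comparing each term with $\dim(H_{0})\phi$ of the corresponding product in $A$. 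But Lemma~\ref{lemma:brown.ozawa} only relates $\tr(\sigma(x)^{\ast}\sigma(y))$ to $\tr(\sigma(x^{\ast}y))$ for $x,y$ in the chosen finite set; it gives no control whatsoever over the triple product $\tr(\sigma(c)^{\ast}\sigma(a)\sigma(b))$ or the quadruple product $\tr(\sigma(b)^{\ast}\sigma(a)^{\ast}\sigma(a)\sigma(b))$, and these are exactly the terms you need. Enlarging the finite set to contain $\pi(g)\pi(h)$ only controls $\Vert\sigma(\pi(gh))-\sigma(\pi(g)\pi(h))\Vert_{2}$; the remaining piece $\Vert\sigma(\pi(g)\pi(h))-\sigma(\pi(g))\sigma(\pi(h))\Vert_{2}$ is a multiplicative-domain estimate that requires precisely the Stinespring factorization you set aside: writing $\sigma(a)=v^{\ast}\rho(a)v$ and $p=vv^{\ast}$, one has $\sigma(ab)-\sigma(a)\sigma(b)=v^{\ast}\rho(a)(1-p)\rho(b)v$, whence $\Vert\sigma(ab)-\sigma(a)\sigma(b)\Vert_{2}^{2}\leq\Vert(1-p)\rho(b)p\Vert_{2}^{2}=\rk(p)-\Vert\sigma(b)\Vert_{2}^{2}\leq\delta\rk(p)$. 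Once this is inserted, your polar-decomposition unitarization does go through (the estimates $\Vert x-u\Vert_{2}=\Vert \lvert x\rvert -1\Vert_{2}\leq\Vert x^{\ast}x-1\Vert_{2}$ for a unitary polar part $u$, and $\Vert 1-x^{\ast}x\Vert_{2}^{2}\leq\tr(1-x^{\ast}x)\leq\delta\dim(H_{0})$ since $0\leq 1-x^{\ast}x\leq 1$, are correct in finite dimensions), and it is a legitimate alternative to the paper's route, which instead works on the dilation space $K$ and uses Lemma~\ref{lemma:almost.commuting} to replace $\rho(\pi(g))$ by unitaries $\tilde{\pi}(g)$ commuting with $p$, so that the compressions $v^{\ast}\tilde{\pi}(g)v$ are automatically unitary and the multiplicativity defect compresses cleanly.

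A second, smaller imprecision concerns the reduction to a hypertrace. Lemma~\ref{lemma:brown.ozawa} requires $\phi(ab)=\phi(ba)$ for $a\in A$ and $b$ ranging over \emph{all} of $\B(H)$, whereas your argument via Proposition~\ref{proposition:near.rep}\ref{proposition:near.rep.pilinearinphi} and density only yields traciality of $\phi\vert_{A}$, which is strictly weaker. The correct (and shorter) reduction uses the $\pi(G)$-invariance of $\phi$ directly: $\phi(\pi(g)^{\ast}b\pi(g))=\phi(b)$ for all $b\in\B(H)$ is equivalent to $\phi(\pi(g)b)=\phi(b\pi(g))$ for all $b\in\B(H)$, so $\pi(G)$ lies in $\{a\in\B(H)\mid\forall b\in\B(H)\colon\,\phi(ab)=\phi(ba)\}$, which is a unital $C^{\ast}$-subalgebra of $\B(H)$ by Remark~\ref{remark:positive}\ref{remark:positive.2}.
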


\begin{proof} From Remark~\ref{remark:positive}\ref{remark:positive.2}, we know that \begin{displaymath}
	A \, \defeq \, \{ a \in \B(H) \mid \forall b \in \B(H) \colon \, \phi(ab) = \phi(ba) \}
\end{displaymath} constitutes a unital $C^{\ast}$-subalgebra of~$\B(H)$. By assumption, $\pi(G) \subseteq \U(A)$. Now, consider $E \in \Pfin(G)$ and $\epsilon \in (0,1]$. Recall that \begin{displaymath}
	\phi((\pi(gh)-\pi(g)\pi(h))^{\ast}(\pi(gh)-\pi(g)\pi(h))) \, = \, 0
\end{displaymath} for all $g,h \in E$. Hence, by Lemma~\ref{lemma:brown.ozawa}, there exist a non-zero finite-dimensional Hilbert space $H_{0}$ and a unital completely positive linear map $\sigma \colon A \to \B(H_{0})$ such that, for all $g,h \in E$,\footnote{For~\eqref{almost.invariance}, note that $\tr(\sigma(\pi(g)^{\ast}\pi(g))) = \tr(\sigma(1)) = \tr(1) = \dim(H_{0})$.} \begin{align}
	&\dim(H_{0})^{-1}\tr(\sigma(\pi(g))^{\ast}\sigma(\pi(g))) \in \left[1-\tfrac{\epsilon^{2}}{144},\,1+\tfrac{\epsilon^{2}}{144}\right], \label{almost.invariance} \\
	&\dim(H_{0})^{-1}\tr(\sigma((\pi(gh)-\pi(g)\pi(h))^{\ast}(\pi(gh)-\pi(g)\pi(h)))) \in \left[0,\tfrac{\epsilon^{2}}{4} \right], \label{approximate.homomorphism} \\
	&\left( \dim(H_{0})^{-1}\tr(\sigma((1-\pi(g))^{\ast}(1-\pi(g))))\right)^{1/2} \in \left[\gamma(g)-\tfrac{5\epsilon}{6},\,\gamma(g)+\tfrac{5\epsilon}{6}\right]. \label{approximate.freeness}
\end{align} Thanks to Theorem~\ref{theorem:stinespring}, we find a Hilbert space $K$, a unital ${}^{\ast}$-algebra homomorphism $\rho \colon A \to \B(K)$ and a linear isometry $v \colon H_{0} \to K$ such that \begin{equation}\label{dilation}
	\forall a \in A \colon \qquad \sigma(a) = v^{\ast}\rho(a)v .
\end{equation} Note that $v^{\ast}v = \id_{H_{0}}$ as $v$ is a linear isometry. From this we infer that $p \defeq vv^{\ast} \in \Pro(H)$ and $\rk(p) = \dim(H_{0})$. For every $g \in E$, \begin{align*}
	\Vert p\rho(\pi(g))p \Vert_{2}^{2} \, &= \, \Vert vv^{\ast}\rho(\pi(g))vv^{\ast} \Vert_{2}^{2} \, \stackrel{\eqref{dilation}}{=} \, \Vert v\sigma(\pi(g))v^{\ast} \Vert_{2}^{2} \, \stackrel{v\,\text{isom.}}{=} \, \Vert \sigma(\pi(g)) \Vert_{2}^{2} \\
	& = \, \tr(\sigma(\pi(g))^{\ast}\sigma(\pi(g))) \, \stackrel{\eqref{almost.invariance}}{\geq} \, \left(1-\tfrac{\epsilon^{2}}{144}\right)\!\dim(H_{0}) \, = \, \left(1-\tfrac{\epsilon^{2}}{144}\right)\!\rk(p)
\end{align*} and thus \begin{align*}
	\Vert (1-p)\rho(\pi(g))p \Vert_{2}^{2} \, &\stackrel{p \perp (1-p)}{=} \, \Vert \rho(\pi(g))p \Vert_{2}^{2} - \Vert p\rho(\pi(g))p \Vert_{2}^{2} \\
	&\stackrel{\rho(\pi(g)) \in \U(K)}{=} \, \Vert p \Vert_{2}^{2} - \Vert p\rho(\pi(g))p \Vert_{2}^{2} \\
	& \leq \, \rk(p) - \left(1-\tfrac{\epsilon^{2}}{144}\right)\!\rk(p) \, = \, \tfrac{\epsilon^{2}}{144}\rk(p) ,
\end{align*} so that Lemma~\ref{lemma:almost.commuting} asserts the existence of some $\tilde{\pi}(g) \in \U(K)$ such that \begin{align}
	&\tilde{\pi}(g)p = p\tilde{\pi}(g), \label{commuting} \\
	&\lVert (\tilde{\pi}(g)-\rho(\pi(g)))p \rVert_{2}^{2} \leq \tfrac{\epsilon^{2}}{36}\rk(p). \label{approximation}
\end{align} Define \begin{displaymath}
	\psi \colon \, G \, \longrightarrow \, \U(H_{0}) , \quad g \, \longmapsto \, \begin{cases}
		\, v^{\ast}\tilde{\pi}(g)v & \text{if } g \in E, \\
		\, 1 & \text{otherwise.}
	\end{cases}
\end{displaymath} Now, if $g,h \in E$ and $gh \in E$, then \begin{align*}
	&(\psi(gh)-\psi(g)\psi(h))^{\ast}(\psi(gh)-\psi(g)\psi(h)) \\
	&\qquad = \, (v^{\ast}\tilde{\pi}(gh)v-v^{\ast}\tilde{\pi}(g)\smallunderbrace{vv^{\ast}}_{=\,p}\tilde{\pi}(h)v)^{\ast}(v^{\ast}\tilde{\pi}(gh)v-v^{\ast}\tilde{\pi}(g)\smallunderbrace{vv^{\ast}}_{=\,p}\tilde{\pi}(h)v) \\
	&\qquad \stackrel{\eqref{commuting}}{=} \, (v^{\ast}\tilde{\pi}(gh)v-v^{\ast}\tilde{\pi}(g)\tilde{\pi}(h)\smallunderbrace{pv}_{=\,v})^{\ast}(v^{\ast}\tilde{\pi}(gh)v-v^{\ast}\tilde{\pi}(g)\tilde{\pi}(h)\smallunderbrace{pv}_{=\,v}) \\
	&\qquad = \, v^{\ast}(\tilde{\pi}(gh)-\tilde{\pi}(g)\tilde{\pi}(h))^{\ast}\smallunderbrace{vv^{\ast}}_{=\,p}(\tilde{\pi}(gh)-\tilde{\pi}(g)\tilde{\pi}(h))v \\
	&\qquad \stackrel{\eqref{commuting}}{=} \, v^{\ast}(\tilde{\pi}(gh)-\tilde{\pi}(g)\tilde{\pi}(h))^{\ast}(\tilde{\pi}(gh)-\tilde{\pi}(g)\tilde{\pi}(h))v
\end{align*} and \begin{allowdisplaybreaks}\begin{align*}
	&\lVert (\rho(\pi(gh))-\rho(\pi(g))\rho(\pi(h)))p \rVert_{2}^{2} \\
	&\qquad \stackrel{\rho\,\text{hom.}}{=} \, \tr(p\rho((\pi(gh)-\pi(g)\pi(h))^{\ast}(\pi(gh)-\pi(g)\pi(h)))p) \\
	&\qquad \stackrel{\eqref{dilation}}{=} \, \tr(v\sigma((\pi(gh)-\pi(g)\pi(h))^{\ast}(\pi(gh)-\pi(g)\pi(h)))v^{\ast}) \\
	&\qquad \stackrel{v\,\text{isom.}}{=} \, \tr(\sigma((\pi(gh)-\pi(g)\pi(h))^{\ast}(\pi(gh)-\pi(g)\pi(h)))) \\
	&\qquad \stackrel{\eqref{approximate.homomorphism}}{\leq} \, \tfrac{\epsilon^{2}}{4}\dim(H_{0}),
\end{align*}\end{allowdisplaybreaks} wherefore \begin{align*}
	&\lVert \psi(gh)-\psi(g)\psi(h) \rVert_{2} \\
	&\quad = \, \left( \tr(v^{\ast}(\tilde{\pi}(gh)-\tilde{\pi}(g)\tilde{\pi}(h))^{\ast}(\tilde{\pi}(gh)-\tilde{\pi}(g)\tilde{\pi}(h))v) \right)^{1/2} \\
	&\quad \stackrel{v\,\text{isom.}}{=} \, \left( \tr(p(\tilde{\pi}(gh)-\tilde{\pi}(g)\tilde{\pi}(h))^{\ast}(\tilde{\pi}(gh)-\tilde{\pi}(g)\tilde{\pi}(h)))p \right)^{1/2} \\
	&\quad = \, \lVert (\tilde{\pi}(gh)-\tilde{\pi}(g)\tilde{\pi}(h)))p \rVert_{2} \\
	&\quad \leq \, \lVert (\tilde{\pi}(gh)-\rho(\pi(gh)))p \rVert_{2} + \lVert (\rho(\pi(gh))-\rho(\pi(g))\rho(\pi(h)))p \rVert_{2} \\
	&\quad \qquad + \lVert (\rho(\pi(g))\rho(\pi(h))-\rho(\pi(g))\tilde{\pi}(h))p \rVert_{2} + \lVert (\rho(\pi(g))\tilde{\pi}(h)-\tilde{\pi}(g)\tilde{\pi}(h))p \rVert_{2} \\
	&\quad \stackrel{\eqref{approximation}}{\leq} \, \left(\tfrac{\epsilon^{2}}{36}\rk(p)\right)^{1/2} + \left(\tfrac{\epsilon^{2}}{4}\dim(H_{0})\right)^{1/2} \\
	&\quad \qquad + \lVert \rho(\pi(g))(\rho(\pi(h))-\tilde{\pi}(h))p \rVert_{2} + \lVert (\rho(\pi(g))-\tilde{\pi}(g))\!\!\smallunderbrace{\tilde{\pi}(h)p}_{\stackrel{\eqref{commuting}}{=}\,p\tilde{\pi}(h)} \rVert_{2} \\
	&\quad = \, \left(\tfrac{\epsilon}{6} + \tfrac{\epsilon}{2} \right)\dim(H_{0})^{1/2} + \lVert (\rho(\pi(h))-\tilde{\pi}(h))p \rVert_{2} + \lVert (\rho(\pi(g))-\tilde{\pi}(g)) p\rVert_{2} \\
	&\quad \stackrel{\eqref{approximation}}{\leq} \, \tfrac{2\epsilon}{3}\dim(H_{0})^{1/2} + 2\!\left(\tfrac{\epsilon^{2}}{36}\rk(p)\right)^{1/2} \, = \, \epsilon \dim(H_{0})^{1/2} ,
\end{align*} that is, $\lVert \psi(gh)-\psi(g)\psi(h) \rVert_{\HS} \leq \epsilon$. Moreover, for every $g \in E$, \begin{align*}
	&(1-\psi(g))^{\ast}(1-\psi(g)) \, = \, (v^{\ast}v-v^{\ast}\tilde{\pi}(g)v)^{\ast}(v^{\ast}v-v^{\ast}\tilde{\pi}(g)v) \\
	&\qquad = \, v^{\ast}(1-\tilde{\pi}(g))^{\ast}\smallunderbrace{vv^{\ast}}_{=\,p}(1-\tilde{\pi}(g))v \, \stackrel{\eqref{commuting}}{=} \, v^{\ast}(1-\tilde{\pi}(g))^{\ast}(1-\tilde{\pi}(g))\!\smallunderbrace{pv}_{=\,v},
\end{align*} \begin{align*}
	\lVert (1-\rho(\pi(g)))p - (1-\tilde{\pi}(g))p \rVert_{2} \, &= \, \lVert (\tilde{\pi}(g)-\rho(\pi(g)))p \rVert_{2} \\
	&\stackrel{\eqref{approximation}}{\leq} \, \left(\tfrac{\epsilon^{2}}{36}\rk(p)\right)^{1/2} \, = \, \tfrac{\epsilon}{6}\dim(H_{0})^{1/2} ,
\end{align*} and \begin{align*}
	\lVert (1-\rho(\pi(g)))p \rVert_{2} \, &\stackrel{\rho\,\text{hom.}}{=} \, \tr(p\rho((1-\pi(g))^{\ast}(1-\pi(g)))p)^{1/2} \\
	&\stackrel{\eqref{dilation}}{=} \, \tr(v\sigma((1-\pi(g))^{\ast}(1-\pi(g)))v^{\ast})^{1/2} \\
	&\stackrel{v\,\text{isom.}}{=} \, \tr(\sigma((1-\pi(g))^{\ast}(1-\pi(g))))^{1/2} \\
	&\stackrel{\eqref{approximate.freeness}}{\in} \, \left[ \left(\gamma(g)-\tfrac{5\epsilon}{6}\right)\dim(H_{0})^{1/2}, \, \left(\gamma(g)+\tfrac{5\epsilon}{6}\right)\dim(H_{0})^{1/2} \right]\!,
\end{align*} whence \begin{align*}
	\lVert 1-\psi(g) \rVert_{2} \, &= \, \tr(v^{\ast}(1-\tilde{\pi}(g))^{\ast}(1-\tilde{\pi}(g))v)^{1/2} \\
	& \stackrel{v\,\text{isom.}}{=} \tr(p(1-\tilde{\pi}(g))^{\ast}(1-\tilde{\pi}(g))p)^{1/2} \, = \, \lVert (1-\tilde{\pi}(g))p \rVert_{2} \\
	&\in \, \left[ \left(\gamma(g)-\tfrac{\epsilon}{6}-\tfrac{5\epsilon}{6}\right)\dim(H_{0})^{1/2} , \left(\gamma(g)+\tfrac{\epsilon}{6}+\tfrac{5\epsilon}{6}\right)\dim(H_{0})^{1/2} \right] \\
	&= \, \left[ (\gamma(g)-\epsilon)\dim(H_{0})^{1/2} , \, (\gamma(g)+\epsilon)\dim(H_{0})^{1/2} \right]
\end{align*} and thus $\Vert 1-\psi(g) \Vert_{\HS} \in [\gamma(g)-\epsilon, \gamma(g)+\epsilon]$. \end{proof}			
		
\begin{remark}\label{remark:dank.pestov} The main idea underlying the proof of Lemma~\ref{lemma:key}, i.e., the combined application of Lemma~\ref{lemma:brown.ozawa} and Theorem~\ref{theorem:stinespring}, is inspired by the first step in the proof of~\cite[Proposition~3.1(ii), p.~167--168]{AraLledo}. The authors are grateful to Vladimir Pestov for bringing the work of Ara and Lledó~\cite{AraLledo} to their attention. \end{remark}	

We proceed to this section's main result. Note that, according to Proposition~\ref{proposition:near.rep}\ref{proposition:near.rep.regular}, the condition specified in~\ref{theorem:amenable.trace.d1} of the following theorem is equivalent to orthogonality in the inner product space naturally associated with a state.	
		
\begin{thm}\label{theorem:amenable.trace} Let $G$ be a group. The following are equivalent. \begin{enumerate}
	\item\label{theorem:amenable.trace.a} $G$ is hyperlinear.
	\item\label{theorem:amenable.trace.b1} There exist a Hilbert space $H$, a probability charge $\nu$ on $\Borel(\Sph(H))$ and a $\nu$-near representation $\pi \colon G \to \U(H)$ such that, for all $g \in G\setminus \{ e \}$ and $\epsilon \in \R_{>0}$, \begin{displaymath}
			\qquad \nu(\{ x \in \Sph(H) \mid \lvert \langle x,\pi(g)x \rangle \rvert \leq \epsilon \}) \, = \, 1 .
		\end{displaymath}
	\item\label{theorem:amenable.trace.b2} There exist a Hilbert space $H$, a probability charge $\nu$ on $\Borel(\Sph(H))$ and a $\nu$-near representation $\pi \colon G \to \U(H)$ such that \begin{displaymath}
			\qquad \forall g \in G\setminus \{ e \} \ \exists \gamma \in \R_{>0} \colon \ \ \nu(\{ x \in \Sph(H) \mid \lVert x - \pi(g)x \rVert \geq \gamma \}) = 1 .
		\end{displaymath}
	\item\label{theorem:amenable.trace.c1} There exist a Hilbert space $H$, a state $\mu$ on $\UCB(\Sph(H))$ and a $\mu$-near representation $\pi \colon G \to \U(H)$ such that \begin{displaymath}
			\qquad \forall g \in G\setminus \{ e\} \colon \quad  \mu(x \mapsto \vert\langle x,\pi(g)x \rangle\vert) = 0 .
		\end{displaymath}
	\item\label{theorem:amenable.trace.c2} There exist a Hilbert space $H$, a state $\mu$ on $\UCB(\Sph(H))$ and a $\mu$-near representation $\pi \colon G \to \U(H)$ such that \begin{displaymath}
		\qquad \forall g \in G\setminus \{ e\} \colon \quad \mu(x \mapsto \lVert x - \pi(g)x \rVert) > 0 .
	\end{displaymath}
	\item\label{theorem:amenable.trace.d1} There exist a Hilbert space $H$, a state $\phi$ on $\B(H)$ and a $\phi$-near representation $\pi \colon G \to \U(H)$ such that \begin{displaymath}
			\qquad \forall g \in G\setminus \{ e\} \colon \quad \phi(\pi(g)) = 0 .
		\end{displaymath}
	\item\label{theorem:amenable.trace.d2} There exist a Hilbert space $H$, a state $\phi$ on $\B(H)$ and a $\phi$-near representation $\pi \colon G \to \U(H)$ such that \begin{displaymath}
		\qquad \forall g \in G\setminus \{ e\} \colon \quad \phi((1-\pi(g))^{\ast}(1-\pi(g))) > 0 .
	\end{displaymath}
\end{enumerate} \end{thm}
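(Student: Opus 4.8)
The plan is to run the cycle \ref{theorem:amenable.trace.a}$\Rightarrow$\ref{theorem:amenable.trace.b1}$\Rightarrow$\ref{theorem:amenable.trace.c1}$\Rightarrow$\ref{theorem:amenable.trace.d1}$\Rightarrow$\ref{theorem:amenable.trace.d2}$\Rightarrow$\ref{theorem:amenable.trace.a}, together with the side chain \ref{theorem:amenable.trace.b1}$\Rightarrow$\ref{theorem:amenable.trace.b2}$\Rightarrow$\ref{theorem:amenable.trace.c2}$\Rightarrow$\ref{theorem:amenable.trace.d2}; these two together place all seven statements in a single equivalence class. Only \ref{theorem:amenable.trace.a}$\Rightarrow$\ref{theorem:amenable.trace.b1} and \ref{theorem:amenable.trace.d2}$\Rightarrow$\ref{theorem:amenable.trace.a} carry real content; the remaining arrows are soft.

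For the soft arrows: \ref{theorem:amenable.trace.b1}$\Rightarrow$\ref{theorem:amenable.trace.b2} and \ref{theorem:amenable.trace.d1}$\Rightarrow$\ref{theorem:amenable.trace.d2} keep the same Hilbert space, functional and map $\pi$ --- from $\lVert x - \pi(g)x\rVert^{2} = 2 - 2\Re\langle x,\pi(g)x\rangle$, taking $\epsilon = \tfrac12$ in \ref{theorem:amenable.trace.b1} gives $\nu(\{ x \mid \lVert x-\pi(g)x\rVert \ge 1\}) = 1$, and $\phi((1-\pi(g))^{\ast}(1-\pi(g))) = 2 - 2\Re\phi(\pi(g)) = 2$ once $\phi(\pi(g)) = 0$ (using $\pi(g) \in \U(H)$ and Remark~\ref{remark:positive}). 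The arrows \ref{theorem:amenable.trace.b1}$\Rightarrow$\ref{theorem:amenable.trace.c1}, \ref{theorem:amenable.trace.b2}$\Rightarrow$\ref{theorem:amenable.trace.c2}, \ref{theorem:amenable.trace.c1}$\Rightarrow$\ref{theorem:amenable.trace.d1}, \ref{theorem:amenable.trace.c2}$\Rightarrow$\ref{theorem:amenable.trace.d2} are obtained by transporting the data along $\nu \mapsto \nu_{\circ}$ and $\mu \mapsto \mu_{\bullet}$: the near-representation property passes through by Lemma~\ref{lemma:near.representations}, invariance of the transported functional by Remark~\ref{remark:means.from.charges} and Lemma~\ref{lemma:states.from.means}, and the freeness clauses by the truncation and Cauchy--Schwartz estimates already present in the proof of Lemma~\ref{lemma:near.representations}. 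Concretely: $\nu(\{x \mid \lvert\langle x,\pi(g)x\rangle\rvert \le \epsilon\}) = 1$ for all $\epsilon$ forces $\nu_{\circ}(x \mapsto \lvert\langle x,\pi(g)x\rangle\rvert) = 0$; a state $\mu$ on $\UCB(\Sph(H))$ with $\mu(x\mapsto\lvert\langle x,\pi(g)x\rangle\rvert) = 0$ has $\lvert\mu_{\bullet}(\pi(g))\rvert \le \mu(x\mapsto\lvert\langle x,\pi(g)x\rangle\rvert) = 0$; on the displacement side, $\nu(\{x \mid \lVert x-\pi(g)x\rVert \ge \gamma\}) = 1$ gives $\nu_{\circ}(x\mapsto\lVert x-\pi(g)x\rVert) \ge \gamma > 0$, and $\mu(x\mapsto\lVert x-\pi(g)x\rVert) > 0$ gives $\mu_{\bullet}((1-\pi(g))^{\ast}(1-\pi(g))) = \mu(x\mapsto\lVert x-\pi(g)x\rVert^{2}) \ge \mu(x\mapsto\lVert x-\pi(g)x\rVert)^{2} > 0$.

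For \ref{theorem:amenable.trace.a}$\Rightarrow$\ref{theorem:amenable.trace.b1} I would assemble the finite-dimensional approximations furnished by hyperlinearity. Set $I \defeq \Pfin(G) \times \R_{>0}$, ordered by $(E,\delta) \preceq (E',\delta')$ iff $E \subseteq E'$ and $\delta \ge \delta'$, and fix an ultrafilter $\mathcal{F}$ on $I$ containing every tail. For $i = (E_{i},\delta_{i})$, Theorem~\ref{theorem:kwiatkowska.pestov}\ref{theorem:kwiatkowska.pestov.2} provides a non-zero finite-dimensional Hilbert space $H_{i}$ with $\dim(H_{i}) \ge \delta_{i}^{-1}$ and a map $\pi_{i} \colon G \to \U(H_{i})$ satisfying $\lVert \pi_{i}(gh) - \pi_{i}(g)\pi_{i}(h)\rVert_{\HS} \le \delta_{i}$ for $g,h \in E_{i}$ and $\lvert\tr(\pi_{i}(g))\rvert/\dim(H_{i}) \le \delta_{i}$ for $g \in E_{i}\setminus\{e\}$. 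Put $H \defeq \bigoplus_{i\in I} H_{i}$, $\pi(g) \defeq \bigoplus_{i\in I}\pi_{i}(g) \in \U(H)$, and $\nu(B) \defeq \lim\nolimits_{i\to\mathcal{F}} \tilde{\nu}_{i}(B)$, where $\tilde{\nu}_{i}$ is the push-forward of the Haar measure $\nu_{H_{i}}$ under the isometric inclusion $\Sph(H_{i}) \hookrightarrow \Sph(H)$; this is a probability charge on $\Borel(\Sph(H))$. Each $\tilde{\nu}_{i}$ is $\pi(g)$-invariant, since $\pi(g)$ restricts on $\Sph(H_{i})$ to the $\nu_{H_{i}}$-preserving bijection $\pi_{i}(g)\vert_{\Sph(H_{i})}$, and hence so is $\nu$. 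Because $\pi(g)\vert_{H_{i}} = \pi_{i}(g)$, applying Lemma~\ref{lemma:burton.concentration} to the orthogonal projection of rank $\dim(H_{i})$ onto $H_{i}$ shows, for $g,h \in E_{i}$ with $\delta_{i} < \epsilon$, that the set of $x \in \Sph(H_{i})$ with $\lVert \pi(gh)x - \pi(g)\pi(h)x\rVert > \epsilon$ or $\lvert\langle x,\pi(g)x\rangle\rvert > \epsilon$ has $\nu_{H_{i}}$-measure bounded by a quantity that tends to $0$ along $\mathcal{F}$ (here one uses $\dim(H_{i}) \to \infty$ along $\mathcal{F}$, which follows from $\dim(H_{i}) \ge \delta_{i}^{-1}$ and $\delta_{i} \to 0$ along $\mathcal{F}$). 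Passing to the ultralimit yields $\nu(\{x \mid \lVert\pi(gh)x - \pi(g)\pi(h)x\rVert \le \epsilon\}) = 1$ for all $g,h \in G$ and $\nu(\{x \mid \lvert\langle x,\pi(g)x\rangle\rvert \le \epsilon\}) = 1$ for all $g \ne e$, for every $\epsilon \in \R_{>0}$; thus $\pi$ is a $\nu$-near representation as required.

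Finally, \ref{theorem:amenable.trace.d2}$\Rightarrow$\ref{theorem:amenable.trace.a} is where the analytic weight lies, but it has been packaged entirely into Lemma~\ref{lemma:key}. Given a $\phi$-near representation $\pi \colon G \to \U(H)$ with $\gamma(g) \defeq \phi((1-\pi(g))^{\ast}(1-\pi(g)))^{1/2} > 0$ for all $g \ne e$, I claim $\gamma_{0} \defeq \tfrac12\gamma$ witnesses hyperlinearity of $G$: given $E \in \Pfin(G)$ and $\epsilon \in \R_{>0}$, after replacing $\epsilon$ by $\min(\{\epsilon\} \cup \{\tfrac12\gamma(g) \mid g \in E\setminus\{e\}\})$ --- which only strengthens the requirement --- Lemma~\ref{lemma:key} produces a non-zero finite-dimensional Hilbert space $H_{0}$ and $\psi \colon G \to \U(H_{0})$ with $\lVert\psi(gh) - \psi(g)\psi(h)\rVert_{\HS} \le \epsilon$ for $g,h \in E$ and $\lVert 1-\psi(g)\rVert_{\HS} \ge \gamma(g) - \epsilon \ge \gamma_{0}(g)$ for $g \in E\setminus\{e\}$, exactly as the definition of hyperlinearity demands. (The same argument with $\gamma \equiv \sqrt{2}$ gives \ref{theorem:amenable.trace.d1}$\Rightarrow$\ref{theorem:amenable.trace.a} outright.) The genuine obstacle is the construction in \ref{theorem:amenable.trace.a}$\Rightarrow$\ref{theorem:amenable.trace.b1}, where the direct sum of the finite-dimensional representations must be made to interact simultaneously with measure concentration and with the $\U(H_{i})$-invariance of the spherical Haar measures; the difficulty of \ref{theorem:amenable.trace.d2}$\Rightarrow$\ref{theorem:amenable.trace.a}, by contrast, is pre-absorbed into Lemma~\ref{lemma:key} through Stinespring's theorem and the Kirchberg--Ozawa approximation of hypertraces (Lemma~\ref{lemma:brown.ozawa}).
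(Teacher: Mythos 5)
Your proposal is correct and follows essentially the same route as the paper: the same implication graph, the same transport maps $\nu \mapsto \nu_{\circ}$ and $\mu \mapsto \mu_{\bullet}$ together with Lemma~\ref{lemma:near.representations} for the soft arrows, an ultralimit of spherical Haar measures on a direct sum for \ref{theorem:amenable.trace.a}$\Rightarrow$\ref{theorem:amenable.trace.b1}, and Lemma~\ref{lemma:key} for \ref{theorem:amenable.trace.d2}$\Rightarrow$\ref{theorem:amenable.trace.a}. The only cosmetic difference is that you inline the concentration step (Theorem~\ref{theorem:kwiatkowska.pestov} plus Lemma~\ref{lemma:burton.concentration} and the growth of $\dim(H_{i})$ along the ultrafilter) into the proof of \ref{theorem:amenable.trace.a}$\Rightarrow$\ref{theorem:amenable.trace.b1}, whereas the paper packages exactly this argument as Proposition~\ref{proposition:hyperlinear.groups}.
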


\begin{proof} \ref{theorem:amenable.trace.a}$\Longrightarrow$\ref{theorem:amenable.trace.b1}. Suppose that $G$ is hyperlinear. Consider the directed set $(I,{\preceq})$ defined by $I \defeq \Pfin(G) \times \N_{>0}$ and \begin{displaymath}
	(E,k) \preceq (E',k') \quad :\Longleftrightarrow \quad E \subseteq E' \ \wedge \ k \leq k' \qquad ((E,k),(E',k') \in I).
\end{displaymath} According to Proposition~\ref{proposition:hyperlinear.groups}, for each $i = (E,k) \in I$, we may select a non-zero finite-dimensional Hilbert space $H_{i}$ and a map $\pi_{i} \colon G \to \U(H_{i})$ such that \begin{align}
	&\forall g,h \in E \colon \ \ \nu_{H_{i}}\!\left(\left\{ x \in \Sph(H_{i}) \left\vert \lVert \pi_{i}(gh)x - \pi_{i}(g)\pi_{i}(h)x \rVert \leq \tfrac{1}{k} \right\}\right) \right. \! \geq 1 - \tfrac{1}{k} , \label{almost.homomorphism} \\
	&\forall g \in E\setminus \{ e \} \colon \ \ \nu_{H_{i}}\!\left(\left\{ x \in \Sph(H_{i}) \left\vert \, \lvert \langle x,\pi_{i}(g)x \rangle \rvert \leq \tfrac{1}{k} \right\}\right)\right.\! \geq 1 - \tfrac{1}{k} . \label{almost.freeness}
\end{align} Consider the Hilbert space sum $H \defeq \bigoplus_{i \in I} H_{i}$. For each $i \in I$, observe that \begin{displaymath}
	\Borel (\Sph(H_{i})) \, = \, \{ B \cap \Sph(H_{i}) \mid B \in \Borel(\Sph(H)) \} .
\end{displaymath} Choose any (necessarily non-principal) ultrafilter $\mathcal{F}$ on $I$ containing \begin{displaymath}
	\{ \{ (E',k') \in I \mid (E,k) \preceq (E',k') \} \mid (E,k) \in I \} ,
\end{displaymath} and consider the probability charge \begin{displaymath}
	\nu \colon \, \Borel(\Sph(H)) \, \longrightarrow \, [0,1], \quad B \, \longmapsto \, \lim\nolimits_{i\to \mathcal{F}} \nu_{H_{i}}(B \cap \Sph(H_{i})) .
\end{displaymath} Now, let us consider the well-defined map $\pi \colon G \to \U(H)$ given by \begin{displaymath}
	\pi(g)(x) \, \defeq \, (\pi_{i}(g)x_{i})_{i \in I} \qquad (g \in G, \, x \in H) .
\end{displaymath} If $g \in G$, then \begin{align*}
	&(\pi(g).\nu)(B) \, = \, \nu(\pi(g)^{\ast}B) \, = \, \lim\nolimits_{i\to \mathcal{F}} \nu_{H_{i}}(\pi(g)^{\ast}B \cap \Sph(H_{i})) \\
	& \qquad = \, \lim\nolimits_{i\to \mathcal{F}} \nu_{H_{i}}(\pi(g)^{\ast}B \cap \pi(g)^{\ast}\Sph(H_{i})) \! \, = \, \lim\nolimits_{i\to \mathcal{F}} \nu_{H_{i}}(\pi(g)^{\ast}(B \cap \Sph(H_{i}))) \\
	& \qquad = \, \lim\nolimits_{i\to \mathcal{F}} \nu_{H_{i}}(\pi_{i}(g)^{\ast}(B \cap \Sph(H_{i}))) \! \, = \, \lim\nolimits_{i\to \mathcal{F}} \nu_{H_{i}}(B \cap \Sph(H_{i})) \, = \, \nu(B) 
\end{align*} for every $B \in \Borel(\Sph(H))$, i.e., $\pi(g).\nu = \nu$. Furthermore, if $g,h \in G$ and $\epsilon \in \R_{>0}$, then \begin{align*}
	&\nu(\{ x \in \Sph(H) \mid \lVert \pi(gh)x - \pi(g)\pi(h)x \rVert \leq \epsilon \}) \\
	&\qquad = \, \lim\nolimits_{i \to \mathcal{F}} \nu_{H_{i}}(\{ x \in \Sph(H_{i}) \mid \lVert \pi(gh)x - \pi(g)\pi(h)x \rVert \leq \epsilon \}) \\
	&\qquad = \, \lim\nolimits_{i \to \mathcal{F}} \nu_{H_{i}}(\{ x \in \Sph(H_{i}) \mid \lVert \pi_{i}(gh)x - \pi_{i}(g)\pi_{i}(h)x \rVert \leq \epsilon \}) \, \stackrel{\eqref{almost.homomorphism}}{=} \, 1 ,
\end{align*} wherefore $\pi$ is a $\nu$-near representation. Finally, if $g \in G\setminus \{ e \}$ and $\epsilon \in \R_{>0}$, then \begin{align*}
	&\nu(\{ x \in \Sph(H) \mid \lvert \langle x,\pi(g)x \rangle \rvert \leq \epsilon \}) \, = \, \lim\nolimits_{i \to \mathcal{F}} \nu_{H_{i}}(\{ x \in \Sph(H_{i}) \mid \lvert \langle x,\pi(g)x \rangle \rvert \leq \epsilon \}) \\
	& \qquad \qquad \qquad = \, \lim\nolimits_{i \to \mathcal{F}} \nu_{H_{i}}(\{ x \in \Sph(H_{i}) \mid \lvert \langle x,\pi_{i}(g)x \rangle \rvert \leq \epsilon \}) \, \stackrel{\eqref{almost.freeness}}{=} \, 1 .
\end{align*}

\ref{theorem:amenable.trace.b1}$\Longrightarrow$\ref{theorem:amenable.trace.b2}. This implication follows from the fact that, in any Hilbert space $H$, \begin{displaymath}
	\forall x,y \in \Sph(H) \colon \quad \Vert x-y \Vert \, = \, \sqrt{2-2\Re\langle x,y \rangle} \, \geq \, \sqrt{2-2\vert\langle x,y \rangle\vert} .
\end{displaymath}

\ref{theorem:amenable.trace.d1}$\Longrightarrow$\ref{theorem:amenable.trace.d2}. If $H$ is a Hilbert space and $\phi$ is a state on $\B(H)$, then \begin{displaymath}
	\phi((1-u)^{\ast}(1-u)) \, = \, 2-\phi(u)-\phi(u^{\ast}) \, \stackrel{\ref{remark:positive}\ref{remark:positive.1}}{=} \, 2-2\Re \phi(u) 
\end{displaymath} for every $u \in \U(H)$. Hence, \ref{theorem:amenable.trace.d1} implies~\ref{theorem:amenable.trace.d2}.

\ref{theorem:amenable.trace.b1}$\Longrightarrow$\ref{theorem:amenable.trace.c1}; \ref{theorem:amenable.trace.b2}$\Longrightarrow$\ref{theorem:amenable.trace.c2}. Let $H$ be a Hilbert space, let $\nu$ be a probability charge on $\Borel(\Sph(H))$, and let $\pi$ be a $\nu$-near representation of $G$ on $H$. Then $\mu \defeq \nu_{\circ}$ is a state on $\UCB(\Sph(H))$ by Remark~\ref{remark:means.from.charges}, and $\pi$ is a $\mu$-near representation by Lemma~\ref{lemma:near.representations}\ref{lemma:near.representations.1}. Now, if $g \in G$ is such that \begin{displaymath}
	\forall \epsilon \in \R_{>0} \colon \quad \nu(\underbrace{\{ x \in \Sph(H) \mid \vert \langle x,\pi(g)x\rangle \vert \leq \epsilon \}}_{\eqdef\,B(\epsilon)}) = 1 ,
\end{displaymath} then \begin{align*}
	&\mu(x \mapsto \vert \langle x,\pi(g)x\rangle \vert) \, = \, \int \vert \langle x,\pi(g)x\rangle \vert \, \mathrm{d}\nu(x) \\
	&\qquad \leq \, \int \epsilon \chi_{B(\epsilon)} + \chi_{{\Sph(H)} \setminus B(\epsilon)} \, \mathrm{d}\nu \, = \, \epsilon \nu(B(\epsilon)) + \nu({\Sph(H)} \setminus B(\epsilon)) \, = \, \epsilon
\end{align*} for every $\epsilon \in \R_{>0}$, that is, $\mu(x \mapsto \vert \langle x,\pi(g)x\rangle \vert) = 0$. This establishes the implication \ref{theorem:amenable.trace.b1}$\Longrightarrow$\ref{theorem:amenable.trace.c1}. On the other hand, if $g \in G$ and $\gamma \in \R_{\geq 0}$ satisfy \begin{displaymath}
	\nu(\underbrace{\{ x \in \Sph(H) \mid \lVert x - \pi(g)x \rVert \geq \gamma \}}_{\eqdef\,C}) \, = \, 1 ,
\end{displaymath} then \begin{align*}
	\mu( x \mapsto \lVert x-\pi(g)x \rVert) \, &= \, \int \lVert x-\pi(g)x \rVert \, \mathrm{d}\nu(x) \, \geq \, \int \gamma \chi_{C} \, \mathrm{d}\nu \, = \, \gamma\nu(C) \, = \, \gamma .
\end{align*} This observation readily entails the implication \ref{theorem:amenable.trace.b2}$\Longrightarrow$\ref{theorem:amenable.trace.c2}.

\ref{theorem:amenable.trace.c1}$\Longrightarrow$\ref{theorem:amenable.trace.d1}; \ref{theorem:amenable.trace.c2}$\Longrightarrow$\ref{theorem:amenable.trace.d2}. Let $H$ be a Hilbert space, $\mu$ be a state on~$\UCB(\Sph(H))$, and $\pi$ be a $\mu$-near representation of $G$ on $H$. Then Lemma~\ref{lemma:states.from.means} asserts that $\phi \defeq \mu_{\bullet}$ is a state on $\B(H)$. Moreover, $\pi$ is a $\phi$-near representation by Lemma~\ref{lemma:near.representations}\ref{lemma:near.representations.2}. Note that, for every $g \in G$, \begin{displaymath}
	\vert \phi(\pi(g)) \vert \, = \, \lvert \mu(x \mapsto \langle x,\pi(g)x \rangle) \rvert \, \leq \, \mu(x \mapsto \vert \langle x,\pi(g)x \rangle\vert) .
\end{displaymath} Therefore, \ref{theorem:amenable.trace.c1} implies~\ref{theorem:amenable.trace.d1}. Finally, for every $g \in G$, we see that \begin{align*}
	\phi((1-\pi(g))^{\ast}(1-\pi(g))) \, &= \, \mu (x \mapsto \langle x,(1-\pi(g))^{\ast}(1-\pi(g))x \rangle) \\
	& = \, \mu (x \mapsto \langle (1-\pi(g))x,(1-\pi(g))x \rangle) \\
	& = \, \mu \!\left(x \mapsto \Vert x-\pi(g)x \Vert^{2}\right)\! \\
	& \geq \, \mu(x \mapsto \Vert x-\pi(g)x \Vert)^{2} ,
\end{align*} where the last step is an application of the Cauchy--Schwartz inequality. This proves the implication \ref{theorem:amenable.trace.c2}$\Longrightarrow$\ref{theorem:amenable.trace.d2}.

\ref{theorem:amenable.trace.d2}$\Longrightarrow$\ref{theorem:amenable.trace.a}. Let $H$ be a Hilbert space, let $\phi$ be a state on $\B(H)$, and let $\pi \colon G \to \U(H)$ be a $\phi$-near representation such that $\phi((1-\pi(g))^{\ast}(1-\pi(g))) > 0$ for every $g \in G\setminus \{ e \}$. Then Lemma~\ref{lemma:key} entails that the function \begin{displaymath}
	\gamma \colon \, G\setminus \{ e \} \, \longrightarrow \, \R_{>0}, \quad g \, \longmapsto \, \tfrac{1}{2}\phi((1-\pi(g))^{\ast}(1-\pi(g)))^{1/2}
\end{displaymath} witnesses the hyperlinearity of $G$. \end{proof}		

We record another consequence of the concentration of spherical measures.

\begin{remark}\label{remark:ring.homomorphism} In the conditions~\ref{theorem:amenable.trace.c1} and~\ref{theorem:amenable.trace.c2} of Theorem~\ref{theorem:amenable.trace}, one may additionally require that $\mu$ be a ring homomorphism from $\UCB(\Sph(H))$ to $\C$. Due to Remark~\ref{remark:hyperlinear.groups}, we may arrange for the family $(H_{i},\pi_{i})_{i \in I}$ selected in the proof of the implication \ref{theorem:amenable.trace.a}$\Longrightarrow$\ref{theorem:amenable.trace.b1} to moreover satisfy \begin{equation}\label{eq:asymptotic.dimension}
	\dim(H_{i})\, \longrightarrow \, \infty \quad (i \to (I,{\preceq})) .
\end{equation} Concerning the state $\mu = \nu_{\circ}$ constructed from the probability charge $\nu = \lim_{i\to\mathcal{F}} \nu_{H_{i}}$ in the proof of the implication \ref{theorem:amenable.trace.b1}$\Longrightarrow$\ref{theorem:amenable.trace.c1}, we note that \begin{displaymath}
	\mu(f) \, = \, \int f \, \mathrm{d}\nu \, = \, \lim\nolimits_{i \to \mathcal{F}} \int f \, \mathrm{d}\nu_{H_{i}}
\end{displaymath} for every $f \in \UCB(\Sph(H))$. We claim that $\UCB(\Sph(H),\R) \to \R, \, f \mapsto \mu(f)$ is a ring homomorphism. For this purpose, according to~\cite[Proposition~2.8]{SchneiderGAFA}, it suffices to check that, for every $f \in \UCB(\Sph(H),\R)$, \begin{equation}\label{eq:ring:homomorphism}
	\int {\left( f - \int f \, \mathrm{d}\nu_{H_{i}} \right)}^{2} \, \mathrm{d}\nu_{H_{i}} \, \longrightarrow \, 0 \quad (i \to (I,{\preceq})) .
\end{equation} To this end, let $f \in \UCB(\Sph(H),\R)$ and $\epsilon \in \R_{>0}$ and define $\epsilon_{0}\defeq \tfrac{\epsilon}{16(\Vert f \Vert_{\infty}+1)}$. Then we find $\delta \in \R_{>0}$ such that \begin{displaymath}
	\forall x,y \in \Sph(H) \colon \quad \Vert x-y \Vert \leq \delta \ \, \Longrightarrow \ \, \vert f(x)-f(y) \vert \leq \epsilon_{0} .
\end{displaymath} Now, if $x,y \in \Sph(H)$, then either $\Vert x-y \Vert \leq \delta$ and so $\lvert f(x) - f(y) \rvert \leq \epsilon_{0}$, or $\Vert x-y \Vert > \delta$ and therefore \begin{align*}
	\vert f(x) - f(y) \vert \, \leq \, 2\Vert f \Vert_{\infty} \, \leq \, \underbrace{2\Vert f \Vert_{\infty}\delta^{-1}}_{\eqdef \, \ell}\Vert x-y \Vert .
\end{align*} Hence, for all $x,y \in \Sph(H)$, \begin{displaymath}
	\lvert f(x) - f(y) \rvert \, \leq \, \max\{ \ell\Vert x-y \Vert, \,\epsilon_{0} \} \, \leq \, \ell\Vert x-y \Vert + \epsilon_{0} .
\end{displaymath} Consequently, \cite[Lemma~5.2]{SchneiderThomCMH} asserts that the existence of some $f'' \in \Lip_{\ell}(\Sph(H),\R)$ such that $\lVert f - {f''} \rVert_{\infty} \leq \epsilon_{0}$. It follows that \begin{displaymath}
	f' \, \defeq \, (f'' \wedge \Vert f \Vert_{\infty}) \vee (-\Vert f \Vert_{\infty}) \, \in \, \Lip_{\ell}(\Sph(H),\R) ,
\end{displaymath} $\Vert f' \Vert_{\infty} \leq \Vert f \Vert_{\infty}$ and $\lVert f - {f'} \rVert_{\infty} \leq \epsilon_{0}$. For every $i \in I$, we conclude that \begin{align*}
	&\left\lVert \left( f-\int f\, \mathrm{d}\nu_{H_{i}} \right)^{2} - \left( f'-\int f'\, \mathrm{d}\nu_{H_{i}} \right)^{2} \right\rVert_{\infty} \\
	& \qquad \leq \, \left\lVert f+f'-\int f+f'\, \mathrm{d}\nu_{H_{i}} \right\rVert_{\infty} \left\lVert f-{f'}- \int f-f' \, \mathrm{d}\nu_{H_{i}} \right\rVert_{\infty} \\
	& \qquad \leq 8\Vert f \Vert_{\infty}\Vert f-{f'} \Vert_{\infty} \, \leq \, \tfrac{\epsilon}{2} .
\end{align*} Due to~\eqref{eq:asymptotic.dimension}, we find $i_{0} \in I$ such that \begin{displaymath}
	\forall i \in I \colon \quad i_{0} \preceq i \ \, \Longrightarrow \ \, 8\Vert f \Vert_{\infty}^{2}\exp\!\left( -\tfrac{\epsilon(2\dim(H_{i})-1)}{8\ell^{2}} \right) \, \leq \, \tfrac{\epsilon}{4} .
\end{displaymath} Then, for every $i \in I$ with $i_{0} \preceq i$, \begin{align*}
	&\int {\left( f - \int f \, \mathrm{d}\nu_{H_{i}} \right)}^{2}\, \mathrm{d}\nu_{H_{i}} \, \leq \, \int {\left( f' - \int f' \, \mathrm{d}\nu_{H_{i}} \right)}^{2}\, \mathrm{d}\nu_{H_{i}} + \tfrac{\epsilon}{2} \\
	&\qquad \leq \, 4\Vert f \Vert_{\infty}^{2}\nu_{H_{i}\!}\left(\left\{ x \in \Sph(H_{i}) \left\vert \, \left\lvert f'(x) - \int f' \, \mathrm{d}\nu_{H_{i}} \right\rvert > \tfrac{\sqrt{\epsilon}}{2} \right\}\right)\!\right. + \tfrac{\epsilon}{4} + \tfrac{\epsilon}{2} \\
	&\qquad \stackrel{\ref{theorem:spherical.concentration}}{\leq} \, 8\Vert f \Vert_{\infty}^{2}\exp\!\left( -\tfrac{\epsilon(2\dim(H_{i})-1)}{8\ell^{2}} \right) + \tfrac{3\epsilon}{4} \, \leq \, \tfrac{\epsilon}{4} + \tfrac{3\epsilon}{4} \, = \, \epsilon .
\end{align*} This proves~\eqref{eq:ring:homomorphism}. Therefore, $\UCB(\Sph(H),\R) \to \R, \, f \mapsto \mu(f)$ is a ring homomorphism according to~\cite[Proposition~2.8]{SchneiderGAFA}. It follows that \begin{align*}
	\mu(fg) \, &= \, \mu(((\Re f)(\Re g) - (\Im f)(\Im g)) + i((\Re f)(\Im g) + (\Im f)(\Re g))) \\
	& = \, ((\Re \mu(f))(\Re \mu(g)) - (\Im \mu(f))(\Im \mu(g))) \\
	& \qquad \qquad \qquad \qquad + i((\Re \mu(f))(\Im \mu(g)) + (\Im \mu(f))(\Re \mu(g))) \\
	&= \, \mu(f)\mu(g)
\end{align*} for all $f,g \in \UCB(\Sph(H))$, i.e., $\mu$ is a ring homomorphism from $\UCB(\Sph(H))$ to $\C$. \end{remark}
		
\section{Analogue of the Elek--Szabó theorem}\label{section:elek.szabo}

The goal of this section is to provide a direct counterpart to the Elek--Szabó theorem for hyperlinear groups. For this purpose, let us review some background from~\cite{ElekSzabo}. Let $X$ be a set and consider the natural action of the full symmetric group $\Sym(X)$ on the set of probability charges on $\Pow(X)$ given by \begin{displaymath}
	(g.\mu)(A) \, \defeq \, \mu\!\left( g^{-1}A \right) \qquad (A \in \Pow(X))
\end{displaymath} for every $g \in G$ and every probability charge $\mu$ on $\Pow(X)$. Given a probability charge $\mu$ on $\Pow(X)$, a \emph{$\mu$-near action}~\cite{ElekSzabo,PestovKwiatkowska}\footnote{In~\cite{ElekSzabo}, this was introduced as an \emph{almost action}. Our choice of terminology aligns with~\cite{PestovKwiatkowska}.} of a group $G$ on $X$ is a map $\pi \colon G \to \Sym(X)$ such that $\mu$ is $\pi(G)$-invariant and, for all $g,h \in G$, \begin{displaymath}
	\mu(\{ x \in X \mid \pi(gh)x = \pi(g)\pi(h)x \}) \, = \, 1 .
\end{displaymath}

\begin{thm}[Elek--Szabó~\cite{ElekSzabo}, Corollary~4.2]\label{theorem:elek.szabo.original} A group $G$ is sofic if and only if there exist a set~$X$, a probability charge $\mu$ on $\Pow(X)$ and a $\mu$-near action $\pi \colon G \to \Sym(X)$ such that, for every $g \in G\setminus \{ e\}$, \begin{displaymath}
	\mu (\{ x \in X \mid x \ne \pi(g)x \}) \, = \, 1.
\end{displaymath} \end{thm}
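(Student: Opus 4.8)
The plan has two halves. For the \emph{only if} direction I would argue exactly as in the proof of the implication \ref{theorem:amenable.trace.a}$\Longrightarrow$\ref{theorem:amenable.trace.b1} of Theorem~\ref{theorem:amenable.trace}, transposed from spheres to bare sets: using the permutation/Hamming‑metric formulation of soficity, pick an exhausting net of finite subsets $E_i \subseteq G$, reals $\epsilon_i \downarrow 0$, finite sets $X_i$ and maps $\sigma_i \colon G \to \Sym(X_i)$ with $\lvert \{ x \in X_i \mid \sigma_i(gh)x = \sigma_i(g)\sigma_i(h)x \} \rvert \geq (1-\epsilon_i)\lvert X_i\rvert$ for $g,h \in E_i$ and $\lvert \{ x \in X_i \mid \sigma_i(g)x \neq x \} \rvert \geq (1-\epsilon_i)\lvert X_i\rvert$ for $g \in E_i \setminus \{e\}$. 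Form the disjoint union $X \defeq \bigsqcup_i X_i$, let $\pi(g)$ act as $\sigma_i(g)$ on each $X_i$, fix an ultrafilter $\mathcal{F}$ refining the net, and put $\mu(A) \defeq \lim_{i \to \mathcal{F}} \lvert A \cap X_i\rvert / \lvert X_i\rvert$. Each $\sigma_i(g)$ being a bijection of $X_i$ makes $\mu$ a $\pi(G)$‑invariant probability charge, and the two density conditions survive the ultralimit, so $\pi$ is an essentially free $\mu$‑near action. (For uncountable $G$ one runs this over the directed set of countable subgroups, or simply invokes that both properties are witnessed by the countable subgroups of $G$.)

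For the \emph{if} direction, given $X$, $\mu$ and an essentially free $\mu$‑near action $\pi$, I would verify soficity directly: fix a finite symmetric $E \ni e$ and $\epsilon > 0$ and produce a finite $Y$ and $\tau \colon G \to \Sym(Y)$ with Hamming defect $< \epsilon$ on $E \times E$ and displacement $> 1-\epsilon$ on $E \setminus \{e\}$. Because $\mu$ is only finitely additive one must fix a bounded word depth $N = N(E,\epsilon)$: let $F$ be the set of words in $E$ of length $\leq N$, and pass to the $\mu$‑conull set on which the finitely many relations $\pi(gh)x = \pi(g)\pi(h)x$ ($g,h \in F$) and the finitely many inequalities $\pi(g)x \neq x$ ($g \in F \setminus \{e\}$) all hold --- here it is essential that the freeness hypothesis is available for \emph{every} group element, so that it covers powers $g^{k}$, $k \le N$. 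Then form the finite Boolean subalgebra of $\Pow(X)$ generated by these sets and their preimages under the maps $\pi(g)^{-1}$ ($g \in E$), and drop the atoms lying in a ``bad'' set, which together carry $\mu$‑measure zero; on the surviving atoms $P_1,\dots,P_r$ the assignment $g \mapsto \pi(g)$ descends (after absorbing the $\mu$‑null discrepancies) to a partial action $\bar\pi$ by permutations of $\{P_1,\dots,P_r\}$, a homomorphism on $E$‑pairs, preserving the weights $\mu(P_i)$, and carrying the imprint of essential freeness of $\pi$ and of all its powers up to depth $N$. Finally pick a large $K$ and integers $n_i \approx K\mu(P_i)$, constant on each $\bar\pi(E)$‑orbit (legitimate since $\mu$ is $\pi(G)$‑invariant), set $Y \defeq \bigsqcup_i Y_i$ with $\lvert Y_i\rvert = n_i$, and build each $\tau(g)$ as a blow‑up of $\bar\pi(g)$: on an off‑diagonal block $Y_i \to Y_{\bar\pi(g)i}$ with $\bar\pi(g)i \neq i$ any bijection works, and the real work is the choice on the diagonal blocks.

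That last point is the main obstacle, and the heart of the Elek--Szabó argument. The diagonal block‑wise bijections must be chosen so as to \emph{simultaneously} satisfy an approximate cocycle identity (making $\tau(gh)$ and $\tau(g)\tau(h)$ agree off an $\epsilon$‑fraction of each block) and remain essentially fixed‑point‑free (making $\tau(g)$, $g \neq e$, move a $(1-\epsilon)$‑fraction of $Y$); the obvious choices optimise one requirement at the cost of the other, so the two must be reconciled on a finite set. The resolution exploits that, thanks to essential freeness of $\pi$ through depth $N$, the local stabiliser structure along the orbits of $\bar\pi$ is sufficiently aperiodic to admit such a reconciliation; in~\cite{ElekSzabo} this is carried out via paradoxical‑decomposition / marriage‑type combinatorics --- equivalently, one may phrase it as a contradiction argument in which a hypothetical finite obstruction to soficity would force a paradoxical decomposition of an auxiliary $G$‑set, incompatible with the $\pi(G)$‑invariant charge $\mu$. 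The remaining verifications (the finite‑additivity estimates, the ultralimit bookkeeping, and checking that the constructed $\tau$ meets the required density bounds, whence $G$ is sofic) are routine.
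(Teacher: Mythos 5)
A preliminary remark on the comparison you were asked about: the paper does not prove Theorem~\ref{theorem:elek.szabo.original} at all --- it is quoted from Elek and Szab\'o~\cite{ElekSzabo}, and the manuscript's own contribution is the hyperlinear analogue (Theorem~\ref{theorem:elek.szabo}), proved by entirely different means (measure concentration, the Kirchberg--Ozawa hypertrace approximation, Stinespring dilation). Your \emph{only if} direction is correct and coincides with the ultralimit-of-normalized-counting-measures construction the paper does carry out for its analogues (the proofs of \ref{theorem:amenable.trace.a}$\Longrightarrow$\ref{theorem:amenable.trace.b1} in Theorem~\ref{theorem:amenable.trace} and of \ref{theorem:elek.szabo.1}$\Longrightarrow$\ref{theorem:elek.szabo.2} in Theorem~\ref{theorem:elek.szabo}); the one thing to flag is that you silently use the amplified form of soficity, with displacement at least $(1-\epsilon_{i})\lvert X_{i}\rvert$ rather than $\gamma(g)\lvert X_{i}\rvert$ --- a standard but nontrivial amplification that needs a citation, exactly as the paper cites R\u{a}dulescu's amplification in Theorem~\ref{theorem:kwiatkowska.pestov} for the hyperlinear case.

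The \emph{if} direction, however, is not a proof. You correctly isolate the obstruction --- choosing the diagonal-block bijections so that approximate multiplicativity and approximate fixed-point-freeness hold simultaneously --- but you then only \emph{name} the resolution (``paradoxical-decomposition / marriage-type combinatorics \ldots carried out in~\cite{ElekSzabo}'') without supplying it; deferring the decisive step to the theorem's published proof leaves the argument without content precisely where content is required. Moreover, the route you set up makes that step harder than it need be: if $\bar{\pi}(g)$ fixes an atom $P_{i}$ while $\pi(g)$ moves $\mu$-almost every point of $P_{i}$, then producing a permutation of the single finite block $Y_{i}$ that is simultaneously almost fixed-point-free and compatible with all the relations among the $\pi(w)$, $w \in F$, is exactly the problem of sofically approximating the induced system on $P_{i}$ --- i.e., the statement being proved, restricted to an atom. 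The Elek--Szab\'o argument does not proceed block by block; it argues globally by contradiction, showing that a pair $(E,\epsilon)$ admitting no finite permutation model would, via a Hall--Rado matching argument, produce a paradoxical decomposition of the conull good set under the partial action, which is incompatible with the existence of the $\pi(G)$-invariant probability charge $\mu$. That global step is the entire substance of the hard direction and is missing from your proposal.
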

	
Inspired by Theorem~\ref{theorem:amenable.basis} and Theorem~\ref{theorem:elek.szabo.original}, we make the following definition.

\begin{definition}\label{definition:near.representation} Let $G$ be a group, let $X$ be a set and let $\mu$ be a probability charge on $\Pow(X)$. A map $\pi \colon G \to \U(\ell^{2}(X))$ is said to be a \emph{$\mu$-near representation} if $\mu_{\bullet}$ is $\pi(G)$-invariant and, for all $g,h \in G$ and $\epsilon \in \R_{>0}$, \begin{displaymath}
	\mu(\{ x \in X \mid \lVert \pi(gh)x - \pi(g)\pi(h)x \rVert \leq \epsilon \}) \, = \, 1 .
\end{displaymath} \end{definition}

Let us explain more concretely how the definition above relates to the notion of an amenable near action due to Elek and Szabó. Given a set $X$, we will consider the group embedding $\iota \colon \Sym (X) \to \U(\ell^{2}(X))$ defined by \begin{displaymath}
	\iota(g)(f) \, \defeq \, f \circ {g^{-1}} \qquad \left(g \in \Sym(X), \, f \in \ell^{2}(X)\right) ,
\end{displaymath} and we will view $X$ as a subset of $\ell^{2}(X)$, where each $x \in X$ is identified with the corresponding $\delta_{x} \in \ell^{2}(X)$.

\begin{lem}\label{lemma:elek.szabo.comparison} Let $X$ be a set and let $\mu$ be a probability charge on $\Pow(X)$. \begin{enumerate}
	\item\label{lemma:elek.szabo.comparison.1} Let $g \in \Sym(X)$. Then $g.\mu = \mu$ if and only if $\iota(g).(\mu_{\bullet}) = \mu_{\bullet}$.
	\item\label{lemma:elek.szabo.comparison.2} Let $G$ be a group and let $\pi \colon G \to \Sym(X)$. Then $\pi$ is a $\mu$-near action if and only if $\iota \circ \pi$ is a $\mu$-near representation.
	\item\label{lemma:elek.szabo.comparison.3} Let $g \in \Sym(X)$. Then \begin{align*}
				\qquad &\mu (\{ x \in X \mid gx \ne x \}) = 1 \\
				&\qquad \Longleftrightarrow \quad \forall \epsilon \in \R_{>0}\colon \ \mu(\{ x \in X \mid \lvert \langle x,\iota(g)x \rangle \rvert \leq \epsilon \}) = 1 \\
				&\qquad \Longleftrightarrow \quad \exists \gamma \in \R_{>0} \colon \ \mu(\{ x \in X \mid \Vert x - \iota(g)x \Vert \geq \gamma \}) = 1.
			\end{align*}
\end{enumerate} \end{lem}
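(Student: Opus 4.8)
The plan is to reduce all three parts to a single elementary observation: for $g \in \Sym(X)$ and $x \in X$ one has $\iota(g)\delta_{x} = \delta_{gx}$, since $\iota(g)(\delta_{x})(y) = \delta_{x}(g^{-1}y)$ equals $1$ exactly when $y = gx$. Two consequences will be used repeatedly: $\langle \delta_{x}, \iota(g)\delta_{x} \rangle$ equals $1$ if $gx = x$ and $0$ otherwise, and $\Vert \delta_{x} - \iota(g)\delta_{x} \Vert$ equals $0$ if $gx = x$ and $\sqrt{2}$ otherwise. I also record that $\nu \mapsto \nu_{\bullet}$ is injective on probability charges on $\Pow(X)$: for $A \subseteq X$, if $p_{A} \in \Pro(\ell^{2}(X))$ denotes the orthogonal projection onto $\ell^{2}(A)$ (the closed linear span of $\{ \delta_{x} \mid x \in A \}$), then $\langle \delta_{x}, p_{A}\delta_{x} \rangle = \chi_{A}(x)$, so $\nu_{\bullet}(p_{A}) = \nu(A)$, which recovers $\nu$ from $\nu_{\bullet}$.

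For~\ref{lemma:elek.szabo.comparison.1}, I would first compute, for $a \in \B(\ell^{2}(X))$, that $(\iota(g).(\mu_{\bullet}))(a) = \mu_{\bullet}(\iota(g)^{\ast} a \iota(g)) = \int \langle \iota(g)\delta_{x}, a\iota(g)\delta_{x} \rangle \, \mathrm{d}\mu(x) = \int F_{a}(gx) \, \mathrm{d}\mu(x)$ with $F_{a}(x) \defeq \langle \delta_{x}, a\delta_{x} \rangle$, and then invoke the change-of-variables formula $\int F_{a} \circ g \, \mathrm{d}\mu = \int F_{a} \, \mathrm{d}(g.\mu)$ for integration against a finitely additive functional (legitimate since $F_{a}$ is bounded and every subset of $X$ lies in $\Pow(X)$, cf.~\cite[363L]{Fremlin3}). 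This gives $\iota(g).(\mu_{\bullet}) = (g.\mu)_{\bullet}$; the equivalence $g.\mu = \mu \Leftrightarrow \iota(g).(\mu_{\bullet}) = \mu_{\bullet}$ then follows immediately in one direction, and in the other from the injectivity of $\nu \mapsto \nu_{\bullet}$ applied to $g.\mu$ and $\mu$.

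For~\ref{lemma:elek.szabo.comparison.2}, invariance of $\mu_{\bullet}$ under $(\iota \circ \pi)(G)$ is, by~\ref{lemma:elek.szabo.comparison.1} applied to each $\pi(g)$, equivalent to $\pi(G)$-invariance of $\mu$. Since $\iota$ is a homomorphism, $\Vert \iota(\pi(gh))\delta_{x} - \iota(\pi(g))\iota(\pi(h))\delta_{x} \Vert = \Vert \delta_{\pi(gh)x} - \delta_{\pi(g)\pi(h)x} \Vert \in \{0, \sqrt{2}\}$, vanishing precisely when $\pi(gh)x = \pi(g)\pi(h)x$; hence $\{ x \in X \mid \Vert \iota(\pi(gh))\delta_{x} - \iota(\pi(g))\iota(\pi(h))\delta_{x} \Vert \leq \epsilon \}$ equals $\{ x \in X \mid \pi(gh)x = \pi(g)\pi(h)x \}$ for $\epsilon \in (0,\sqrt{2})$ and all of $X$ for $\epsilon \geq \sqrt{2}$. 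Thus the near-representation condition for $\iota \circ \pi$ holds for all $\epsilon > 0$ if and only if $\mu(\{ x \mid \pi(gh)x = \pi(g)\pi(h)x \}) = 1$, i.e.\ if and only if $\pi$ is a $\mu$-near action. For~\ref{lemma:elek.szabo.comparison.3}, the opening remarks give $\{ x \in X \mid \lvert \langle \delta_{x}, \iota(g)\delta_{x} \rangle \rvert \leq \epsilon \} = \{ x \mid gx \ne x \}$ for $\epsilon \in (0,1)$ (and $= X$ for $\epsilon \geq 1$) and $\{ x \in X \mid \Vert \delta_{x} - \iota(g)\delta_{x} \Vert \geq \gamma \} = \{ x \mid gx \ne x \}$ for $\gamma \in (0,\sqrt{2}]$ (and $= \emptyset$ for $\gamma > \sqrt{2}$); since $\mu(\emptyset) = 0 \ne 1$, any witnessing $\gamma$ must satisfy $\gamma \leq \sqrt{2}$, so all three displayed statements say exactly that $\mu(\{ x \mid gx \ne x \}) = 1$.

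The only slightly delicate points are the change-of-variables step and the injectivity of $\nu \mapsto \nu_{\bullet}$ in part~\ref{lemma:elek.szabo.comparison.1}; I expect these to be the main (and only mild) obstacle, everything else being a direct translation via the facts that distinct elements of the orthonormal basis $X$ of $\ell^{2}(X)$ are orthogonal and at mutual distance $\sqrt{2}$, and that $\iota$ is a unitary group homomorphism with $\iota(g)^{\ast} = \iota(g^{-1})$.
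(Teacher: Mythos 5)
Your proposal is correct and follows essentially the same route as the paper: the forward direction of~\ref{lemma:elek.szabo.comparison.1} via the change-of-variables identity for bounded functions (which the paper obtains from density of indicator functions), the backward direction via the projections $p_{A}$ recovering $\mu(A)$ from $\mu_{\bullet}$, and parts~\ref{lemma:elek.szabo.comparison.2} and~\ref{lemma:elek.szabo.comparison.3} from the observation $\iota(g)\delta_{x} = \delta_{gx}$, so that the relevant inner products and distances take only the values $\{0,1\}$ and $\{0,\sqrt{2}\}$. Your repackaging as the identity $\iota(g).(\mu_{\bullet}) = (g.\mu)_{\bullet}$ together with injectivity of $\nu \mapsto \nu_{\bullet}$ is only a cosmetic reorganization of the same computations, and your case analysis in $\epsilon$ and $\gamma$ correctly fills in the details the paper leaves implicit.
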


\begin{proof} \ref{lemma:elek.szabo.comparison.1} ($\Longrightarrow$) Suppose that $g.\mu = \mu$. Then \begin{displaymath}
	\int {\chi_{A}} \circ g \, \mathrm{d}\mu \, = \, \int {\chi_{g^{-1}A}} \, \mathrm{d}\mu \, = \, \mu\!\left( g^{-1}A \right) \, = \, (g.\mu)(A) \, = \, \mu(A) \, = \, \int {\chi_{A}} \, \mathrm{d}\mu
\end{displaymath} for every $A \subseteq X$. Since the linear space of $\{ \chi_{A} \mid A \subseteq X \}$ is $\Vert \cdot \Vert_{\infty}$-dense in $\ell^{\infty}(X)$ and $\ell^{\infty}(X) \to \C, \, f \mapsto \int f \, \mathrm{d}\mu$ is continuous, it follows that $\int f \circ g \, \mathrm{d}\mu = \int f \, \mathrm{d}\mu$ for every $f \in \ell^{\infty}(X)$. In particular, \begin{align*}
	(\iota(g).(\mu_{\bullet}))(a) \, &= \, \mu_{\bullet}(\iota(g)^{\ast}a\iota(g)) \, = \, \int \langle x,\iota(g)^{\ast}a\iota(g)x \rangle \, \mathrm{d}\mu(x) \\
		& = \, \int \langle gx,agx \rangle \, \mathrm{d}\mu(x) \, = \, \int \langle x,ax \rangle \, \mathrm{d}\mu(x) \, = \, \mu_{\bullet}(a)
\end{align*} for all $a \in \B(\ell^{2}(X))$, that is, $\iota(g).(\mu_{\bullet}) = \mu_{\bullet}$.
	
($\Longleftarrow$) Suppose that $\iota(g).(\mu_{\bullet}) = \mu_{\bullet}$. In order to see that $g.\mu = \mu$, let $A \subseteq X$. Consider the orthogonal projection $p \colon \ell^{2}(X) \to \ell^{2}(X)$ onto the closed linear subspace spanned by $A \subseteq \ell^{2}(X)$, i.e., the unique bounded linear operator such that \begin{displaymath}
	px \, = \, \begin{cases}
			\, x & \text{if } x \in A, \\
			\, 0 & \text{otherwise}
		\end{cases}
\end{displaymath} for every $x \in X$. Then \begin{align*}
	(g.\mu)(A) \, &= \, \mu\!\left(g^{-1}A\right) \, = \, \int {\chi_{g^{-1}A}} \, \mathrm{d}\mu  \, = \, \int \chi_{A}(gx) \, \mathrm{d}\mu(x) \, = \, \int \langle gx,pgx \rangle \, \mathrm{d}\mu(x) \\
		& = \, \int \langle x,\iota(g)^{\ast}p\iota(g)x \rangle \, \mathrm{d}\mu(x) \, = \, \mu_{\bullet}(\iota(g)^{\ast}p\iota(g)) \, = \, (\iota(g).(\mu_{\bullet}))(p) \\
		& = \, \mu_{\bullet}(p) \, = \, \int \langle x,px \rangle \, \mathrm{d}\mu(x) \, = \, \int \chi_{A}(x) \, \mathrm{d}\mu(x) \, = \, \mu(A) ,
\end{align*} as desired.
	
\ref{lemma:elek.szabo.comparison.2}+\ref{lemma:elek.szabo.comparison.3} This is a consequence of the fact that $X$ is a $\iota(g)$-invariant orthonormal subset of $\ell^{2}(X)$. \end{proof}

Based on the observations of Lemma~\ref{lemma:elek.szabo.comparison}, we formulate the hyperlinear analogue of the Elek--Szabó theorem.

\begin{thm}\label{theorem:elek.szabo} Let $G$ be a group. The following are equivalent. \begin{enumerate}
	\item\label{theorem:elek.szabo.1} $G$ is hyperlinear.
	\item\label{theorem:elek.szabo.2} There exist a set $X$, a probability charge $\mu$ on $\Pow(X)$ and a $\mu$-near representation $\pi \colon G \to \U(\ell^{2}(X))$ such that, for all $g \in G\setminus \{ e\}$ and $\epsilon \in \R_{>0}$, \begin{displaymath}
				\qquad \mu(\{ x \in X \mid \lvert \langle x,\pi(g)x \rangle \rvert \leq \epsilon \}) \, = \, 1.
			\end{displaymath}
	\item\label{theorem:elek.szabo.3} There exist a set $X$, a probability charge $\mu$ on $\Pow(X)$ and a $\mu$-near representation $\pi \colon G \to \U(\ell^{2}(X))$ such that \begin{displaymath}
				\qquad \forall g \in G\setminus \{ e\} \ \exists \gamma \in \R_{>0} \colon \ \ \mu(\{ x \in X \mid \Vert x - \pi(g)x \Vert \geq \gamma \}) = 1.
			\end{displaymath}
\end{enumerate} \end{thm}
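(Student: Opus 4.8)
The plan is to prove the cycle \ref{theorem:elek.szabo.1}$\Longrightarrow$\ref{theorem:elek.szabo.2}$\Longrightarrow$\ref{theorem:elek.szabo.3}$\Longrightarrow$\ref{theorem:elek.szabo.1}, letting Proposition~\ref{proposition:hyperlinear.groups} do the work for the first arrow and Theorem~\ref{theorem:amenable.trace} for the last. The point is that all the genuine analysis---concentration of measure, the Kirchberg--Ozawa approximation, Stinespring dilation---has already been carried out in the preceding sections; what remains is a transfer between formalisms. For \ref{theorem:elek.szabo.1}$\Longrightarrow$\ref{theorem:elek.szabo.2}, I would invoke Proposition~\ref{proposition:hyperlinear.groups}\ref{proposition:hyperlinear.groups.3} along the directed set $I\defeq\Pfin(G)\times\N_{>0}$ (ordered as in the proof of Theorem~\ref{theorem:amenable.trace}) to obtain, for each $i=(E,k)\in I$, a non-zero finite-dimensional Hilbert space with a distinguished orthonormal basis $X_{i}$ and a map $\pi_{i}\colon G\to\U(\ell^{2}(X_{i}))$ with $\lVert\pi_{i}(gh)x-\pi_{i}(g)\pi_{i}(h)x\rVert\leq\tfrac{1}{k}$ for all $g,h\in E$, $x\in X_{i}$, and $\lvert\langle x,\pi_{i}(g)x\rangle\rvert\leq\tfrac{1}{k}$ for all $g\in E\setminus\{e\}$, $x\in X_{i}$. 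Then I would take $X$ to be the disjoint union of the $X_{i}$, identify $\ell^{2}(X)$ with $\bigoplus_{i\in I}\ell^{2}(X_{i})$, put $\pi(g)\defeq\bigoplus_{i\in I}\pi_{i}(g)$, fix a non-principal ultrafilter $\mathcal{F}$ on $I$ containing all tails $\{i'\in I\mid i\preceq i'\}$, and set $\mu(A)\defeq\lim_{i\to\mathcal{F}}\tfrac{\lvert A\cap X_{i}\rvert}{\lvert X_{i}\rvert}$.

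To verify the three requirements: as in the proof of Theorem~\ref{theorem:amenable.basis}, $\mu_{\bullet}(a)=\lim_{i\to\mathcal{F}}\tfrac{1}{\lvert X_{i}\rvert}\sum_{x\in X_{i}}\langle x,ax\rangle$ for every $a\in\B(\ell^{2}(X))$. Since each $\pi_{i}(g)$ is unitary on $\ell^{2}(X_{i})$, the set $\{\pi_{i}(g)x\mid x\in X_{i}\}$ is again an orthonormal basis of that block, so the compression of $a$ to $\ell^{2}(X_{i})$ has a basis-independent trace and hence $\pi(g).(\mu_{\bullet})=\mu_{\bullet}$ exactly. Near-multiplicativity and freeness in measure are then immediate: given $g,h\in G$ and $\epsilon\in\R_{>0}$, every block $X_{i}$ with $(\{g,h\},\lceil 1/\epsilon\rceil)\preceq i$ is contained in $\{x\in X\mid\lVert\pi(gh)x-\pi(g)\pi(h)x\rVert\leq\epsilon\}$, because on $\ell^{2}(X_{i})$ the map $\pi$ restricts to $\pi_{i}$; such $i$ form a member of $\mathcal{F}$, so that set has $\mu$-measure $1$. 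The same remark with the freeness bound handles $\{x\in X\mid\lvert\langle x,\pi(g)x\rangle\rvert\leq\epsilon\}$ for each $g\in G\setminus\{e\}$.

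The implication \ref{theorem:elek.szabo.2}$\Longrightarrow$\ref{theorem:elek.szabo.3} is the elementary estimate $\lVert x-\pi(g)x\rVert=\sqrt{2-2\Re\langle x,\pi(g)x\rangle}\geq\sqrt{2-2\lvert\langle x,\pi(g)x\rangle\rvert}$ (so one may take $\gamma=1$, using the full-measure set $\{\lvert\langle x,\pi(g)x\rangle\rvert\leq\tfrac12\}$), exactly as in the step \ref{theorem:amenable.trace.b1}$\Longrightarrow$\ref{theorem:amenable.trace.b2} of Theorem~\ref{theorem:amenable.trace}. For \ref{theorem:elek.szabo.3}$\Longrightarrow$\ref{theorem:elek.szabo.1}, given $X$, $\mu$, $\pi$ as in \ref{theorem:elek.szabo.3}, I would set $\phi\defeq\mu_{\bullet}$, which is a state on $\B(\ell^{2}(X))$ by Remark~\ref{remark:charges.induce.states} and is $\pi(G)$-invariant by hypothesis; moreover $\phi((\pi(gh)-\pi(g)\pi(h))^{\ast}(\pi(gh)-\pi(g)\pi(h)))=\int\lVert\pi(gh)x-\pi(g)\pi(h)x\rVert^{2}\,\mathrm{d}\mu(x)\leq\epsilon^{2}$ for every $\epsilon\in\R_{>0}$, since the set where the integrand exceeds $\epsilon^{2}$ is $\mu$-null, whence $\pi$ is a $\phi$-near representation in the sense of Definition~\ref{definition:near.representation.variations}. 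Finally $\phi((1-\pi(g))^{\ast}(1-\pi(g)))=\int\lVert x-\pi(g)x\rVert^{2}\,\mathrm{d}\mu\geq\gamma^{2}>0$ for every $g\neq e$, so Theorem~\ref{theorem:amenable.trace}, implication \ref{theorem:amenable.trace.d2}$\Longrightarrow$\ref{theorem:amenable.trace.a}, yields the hyperlinearity of $G$.

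I expect the only point requiring care to be the \emph{exact} $\pi(G)$-invariance of $\mu_{\bullet}$ in the construction for \ref{theorem:elek.szabo.1}$\Longrightarrow$\ref{theorem:elek.szabo.2}: one must observe that each $\pi_{i}(g)$, being unitary on its block $\ell^{2}(X_{i})$, maps the distinguished basis $X_{i}$ onto another orthonormal basis of the \emph{same} block, so that the ultrafilter-averaged compression traces are genuinely---rather than merely asymptotically---preserved. (Asymptotic preservation would in fact suffice, but the exact version streamlines the bookkeeping.) Everything else reduces to the routine manipulations already illustrated in Sections~\ref{section:amenable.bases} and~\ref{section:near.representations}.
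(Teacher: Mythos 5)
Your proposal is correct and follows essentially the same route as the paper: Proposition~\ref{proposition:hyperlinear.groups}\ref{proposition:hyperlinear.groups.3} plus a disjoint-union/ultrafilter construction for \ref{theorem:elek.szabo.1}$\Longrightarrow$\ref{theorem:elek.szabo.2}, the elementary inequality $\Vert x-y\Vert\geq\sqrt{2-2\vert\langle x,y\rangle\vert}$ for \ref{theorem:elek.szabo.2}$\Longrightarrow$\ref{theorem:elek.szabo.3}, and passage to $\phi=\mu_{\bullet}$ together with Theorem~\ref{theorem:amenable.trace}\ref{theorem:amenable.trace.d2}$\Longrightarrow$\ref{theorem:amenable.trace.a} for \ref{theorem:elek.szabo.3}$\Longrightarrow$\ref{theorem:elek.szabo.1}. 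The only cosmetic difference is that you reprove inline the implication that a $\mu$-near representation is a $\mu_{\bullet}$-near representation, which the paper isolates as Lemma~\ref{lemma:near.representations.new}.
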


In view of Lemma~\ref{lemma:elek.szabo.comparison}, we observe that the only essential difference in the criteria of Theorem~\ref{theorem:elek.szabo.original} and Theorem~\ref{theorem:elek.szabo} lies in the range of the map $\pi$. For the proof of Theorem~\ref{theorem:elek.szabo}, the following preparatory lemma will be useful.

\begin{lem}\label{lemma:near.representations.new} Let $G$ be a group, $X$ a set and $\mu$ a probability charge on $\Pow(X)$. A map $\pi \colon G \to \U(\ell^{2}(X))$ is a $\mu$-near representation if and only if $\pi$ is a $\mu_{\bullet}$-near representation. \end{lem}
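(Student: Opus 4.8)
The plan is to reduce the asserted equivalence to a single elementary comparison between a $\mu$-integral and a charge value, carried out termwise. The first observation is that both notions carry exactly the same invariance clause: Definition~\ref{definition:near.representation} requires that $\mu_{\bullet}$ be $\pi(G)$-invariant, and, specialised to the state $\phi \defeq \mu_{\bullet}$ on $\B(\ell^{2}(X))$, item~(c) of Definition~\ref{definition:near.representation.variations} requires precisely this. Hence it suffices to show, for each fixed pair $g,h \in G$, that
\[
	\forall \epsilon \in \R_{>0} \colon \quad \mu(\{ x \in X \mid \lVert \pi(gh)x - \pi(g)\pi(h)x \rVert \leq \epsilon \}) = 1
\]
holds if and only if
\[
	\mu_{\bullet}\bigl((\pi(gh)-\pi(g)\pi(h))^{\ast}(\pi(gh)-\pi(g)\pi(h))\bigr) = 0 .
\]

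To this end, I would fix $g,h \in G$, set $a \defeq \pi(gh) - \pi(g)\pi(h) \in \B(\ell^{2}(X))$, and note that $\Vert a \Vert \leq 2$, so that $a^{\ast}a$ is a positive operator of norm at most $4$. Viewing $X$ as the standard orthonormal basis of $\ell^{2}(X)$ (via $x \mapsto \delta_{x}$), the definition of $\mu_{\bullet}$ in Remark~\ref{remark:charges.induce.states} yields
\[
	\mu_{\bullet}(a^{\ast}a) = \int \langle x, a^{\ast}ax \rangle \, \mathrm{d}\mu(x) = \int \Vert ax \Vert^{2} \, \mathrm{d}\mu(x) ,
\]
while $\Vert ax \Vert = \Vert \pi(gh)x - \pi(g)\pi(h)x \Vert$ for every $x \in X$. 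Thus, writing $f \colon X \to [0,4]$, $f(x) \defeq \Vert ax \Vert^{2}$, the task is reduced to the purely measure-theoretic claim that $\int f \, \mathrm{d}\mu = 0$ if and only if $\mu(\{ x \in X \mid f(x) \leq \delta \}) = 1$ for every $\delta \in \R_{>0}$---which is the first displayed condition above with $\delta = \epsilon^{2}$.

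This last equivalence I would prove exactly as in Lemma~\ref{lemma:near.representations}\ref{lemma:near.representations.1}. For the forward direction, fix $\delta > 0$; from the pointwise estimate $\chi_{\{ f > \delta \}} \leq \delta^{-1}f$ one gets $\mu(\{ x \in X \mid f(x) > \delta \}) = \int \chi_{\{ f > \delta \}} \, \mathrm{d}\mu \leq \delta^{-1}\int f \, \mathrm{d}\mu = 0$, whence $\mu(\{ x \in X \mid f(x) \leq \delta \}) = 1$. For the converse, from the pointwise bound $f \leq \delta\chi_{\{ f \leq \delta \}} + 4\chi_{\{ f > \delta \}}$ and the hypothesis one obtains $0 \leq \int f \, \mathrm{d}\mu \leq \delta\mu(\{ f \leq \delta \}) + 4\mu(\{ f > \delta \}) = \delta$ for every $\delta > 0$, hence $\int f \, \mathrm{d}\mu = 0$. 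Both steps only invoke monotonicity and finite additivity of integration against the charge $\mu$ together with $\mu(X) = 1$ (see \cite[363L]{Fremlin3}), and use that every subset of $X$ belongs to the domain $\Pow(X)$ of $\mu$.

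There is no genuine obstacle here: the only point demanding a modicum of care is keeping straight which properties of the finitely additive integral are being used, since $\mu$ is merely a probability charge and not a countably additive measure; everything else is a direct translation between the two definitions.
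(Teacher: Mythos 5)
Your proof is correct and follows essentially the same termwise comparison as the paper's own argument: both reduce the claim to the equivalence, for the function measuring the multiplicativity defect, between vanishing of its charge integral and full charge of its sublevel sets. The only (cosmetic) difference is that you work with $f = \Vert ax\Vert^{2}$ throughout, which lets you avoid the Cauchy--Schwartz step the paper uses when passing from $\int f_{g,h}^{2}\,\mathrm{d}\mu = 0$ back to $\mu\bigl(f_{g,h}^{-1}((\epsilon,2])\bigr) = 0$.
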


\begin{proof} Let $\pi \colon G \to \U(\ell^{2}(X))$, and define $f_{g,h} \colon X \to [0,2], \, x \mapsto \lVert \pi(gh)x - \pi(g)\pi(h)x \rVert$ for any pair of elements $g,h \in G$.
	
($\Longrightarrow$) Suppose that $\pi$ is a $\mu$-near representation. If $g,h \in G$, then \begin{align*}
	&\mu_{\bullet}((\pi(gh)-\pi(g)\pi(h))^{\ast}(\pi(gh)-\pi(g)\pi(h))) \\
	&\qquad = \, \int \langle x,(\pi(gh)-\pi(g)\pi(h))^{\ast}(\pi(gh)-\pi(g)\pi(h))x\rangle \, \mathrm{d}\mu(x) \\
	&\qquad = \, \int \langle (\pi(gh)-\pi(g)\pi(h))x,(\pi(gh)-\pi(g)\pi(h))x\rangle \, \mathrm{d}\mu(x) \, = \, \int f_{g,h}^{2} \, \mathrm{d}\mu \\
	&\qquad \leq \, \int 2f_{g,h} \, \mathrm{d}\mu \, = \, 2\int f_{g,h}\chi_{f_{g,h}^{-1}([0,\epsilon/2])} \, \mathrm{d}\mu + 2\int f_{g,h}\chi_{f_{g,h}^{-1}((\epsilon/2,2])} \, \mathrm{d}\mu \\
	&\qquad \leq \, \epsilon \mu\!\left( f_{g,h}^{-1}([0,\epsilon/2])\right)\! + 4 \mu\!\left( f_{g,h}^{-1}((\epsilon/2,2]) \right)\! \, \leq \, \epsilon
\end{align*} for every $\epsilon \in \R_{>0}$, thus $\mu_{\bullet}((\pi(gh)-\pi(g)\pi(h))^{\ast}(\pi(gh)-\pi(g)\pi(h))) = 0$. Hence, $\pi$ is a $\mu_{\bullet}$-near representation. 
	
($\Longleftarrow$) Assume now that $\pi$ is a $\mu_{\bullet}$-near representation. For all $g,h \in G$ and $\epsilon \in \R_{>0}$, from $\chi_{f_{g,h}^{-1}((\epsilon,2])} \leq \epsilon^{-1}f_{g,h}$ and the Cauchy--Schwartz inequality we infer that \begin{displaymath}
	\mu\!\left( f_{g,h}^{-1}((\epsilon,2]) \right)\! \, = \, \int \chi_{f_{g,h}^{-1}((\epsilon,2])} \, \mathrm{d}\mu\, \leq \, \epsilon^{-1}\int f_{g,h} \, \mathrm{d}\mu \, \leq \, \epsilon^{-1} \sqrt{\int f_{g,h}^{2} \, \mathrm{d}\mu} \, = \, 0 ,
\end{displaymath} thus $\mu\!\left( f_{g,h}^{-1}([0,\epsilon])\right) = 1$. This shows that $\pi$ is a $\mu$-near representation. \end{proof}

\begin{proof}[Proof of Theorem~\ref{theorem:elek.szabo}] \ref{theorem:elek.szabo.1}$\Longrightarrow$\ref{theorem:elek.szabo.2}. Suppose that $G$ is hyperlinear. Consider the directed set $(I,{\preceq})$ defined by $I \defeq \Pfin(G) \times \N_{>0}$ and \begin{displaymath}
	(E,k) \preceq (E',k') \quad :\Longleftrightarrow \quad E \subseteq E' \ \wedge \ k \leq k' \qquad ((E,k),(E',k') \in I).
\end{displaymath} Due to Proposition~\ref{proposition:hyperlinear.groups}, for each $i = (E,k) \in I$, we find a non-empty finite set $X_{i}$ and a map $\pi_{i} \colon G \to \U(\ell^{2}(X_{i}))$ such that \begin{align}
	&\forall g,h \in E \ \forall x \in X_{i}\colon \quad \lVert \pi_{i}(gh)x - \pi_{i}(g)\pi_{i}(h)x \rVert \leq \tfrac{1}{k} , \label{elek.szabo.almost.homomorphism} \\
	&\forall g \in E\setminus \{ e \} \ \forall x \in X_{i} \colon \quad \lvert \langle x,\pi_{i}(g)x \rangle \rvert \leq \tfrac{1}{k} . \label{elek.szabo.almost.freeness}
\end{align} Consider the disjoint union $X \defeq \bigsqcup_{i \in I} X_{i}$ along with the well-defined unique map $\pi \colon G \to \U(\ell^{2}(X))$ such that \begin{displaymath}
	\forall g \in G \ \forall f \in \ell^{2}(X) \ \forall i \in I \colon \qquad \pi(g)(f)\vert_{X_{i}} \, = \, \pi_{i}(g)(f\vert_{X_{i}}) .
\end{displaymath} Choose any (necessarily non-principal) ultrafilter $\mathcal{F}$ on $I$ containing \begin{displaymath}
	\{ \{ (E',k') \in I \mid (E,k) \preceq (E',k') \} \mid (E,k) \in I \} ,
\end{displaymath} and consider the probability charge \begin{displaymath}
	\mu \colon \, \Pow(X) \, \longrightarrow \, [0,1], \quad A \, \longmapsto \, \lim\nolimits_{i\to \mathcal{F}} \tfrac{\lvert A \cap X_{i} \rvert}{\lvert X_{i}\rvert} .
\end{displaymath} Concerning the projections $p_{i} \colon \ell^{2}(X) \to \ell^{2}(X_{i}), \, f \mapsto f\vert_{X_{i}}$ $(i \in I)$, we observe that \begin{displaymath}
	\mu_{\bullet}(a) \, = \, \int \langle x,ax \rangle \, \mathrm{d}\mu(x) \, = \, \lim\nolimits_{i \to \mathcal{F}} \tfrac{1}{\vert X_{i}\vert}\sum\nolimits_{x\in X_{i}}\langle x,ax\rangle \, = \, \lim\nolimits_{i \to \mathcal{F}} \tfrac{1}{\vert X_{i}\vert}\tr(p_{i}ap_{i}^{\ast})
\end{displaymath} for every $a \in \B(\ell^{2}(X))$. Thus, if $g \in G$, then \begin{align*}
	&(\pi(g).(\mu_{\bullet}))(a) \, = \, \mu_{\bullet}(\pi(g)^{\ast}a\pi(g)) \, = \, \lim\nolimits_{i \to \mathcal{F}} \tfrac{1}{\vert X_{i}\vert}\tr(p_{i}\pi(g)^{\ast}a\pi(g)p_{i}^{\ast}) \\
	&\qquad \quad = \, \lim\nolimits_{i \to \mathcal{F}} \tfrac{1}{\vert X_{i}\vert}\tr(\pi_{i}(g)^{\ast}p_{i}ap_{i}^{\ast}\pi_{i}(g)) \, = \, \lim\nolimits_{i \to \mathcal{F}} \tfrac{1}{\vert X_{i}\vert}\tr(p_{i}ap_{i}^{\ast}) \, = \, \mu_{\bullet}(a)
\end{align*} for every $a \in \B(\ell^{2}(X))$, that is, $\pi(g).(\mu_{\bullet}) = \mu_{\bullet}$. Moreover, if $g,h \in G$ and $\epsilon \in \R_{>0}$, then \begin{align*}
	&\mu(\{ x \in X \mid \lVert \pi(gh)x - \pi(g)\pi(h)x \rVert \leq \epsilon \}) \\
	&\qquad = \, \lim\nolimits_{i \to \mathcal{F}} \tfrac{1}{\vert X_{i} \vert} \lvert \{ x \in X_{i} \mid \lVert \pi(gh)x - \pi(g)\pi(h)x \rVert \leq \epsilon \} \rvert \\
	&\qquad = \, \lim\nolimits_{i \to \mathcal{F}} \tfrac{1}{\vert X_{i} \vert} \lvert \{ x \in X_{i} \mid \lVert \pi_{i}(gh)x - \pi_{i}(g)\pi_{i}(h)x \rVert \leq \epsilon \} \rvert \, \stackrel{\eqref{elek.szabo.almost.homomorphism}}{=} \, 1 ,
\end{align*} whence $\pi$ is a $\mu$-near representation. Finally, if $g \in G\setminus \{ e \}$ and $\epsilon \in \R_{>0}$, then \begin{align*}
	\mu(\{ x \in X \mid \lvert \langle x,\pi(g)x \rangle \rvert \leq \epsilon \}) \, & = \, \lim\nolimits_{i \to \mathcal{F}} \tfrac{1}{\vert X_{i} \vert} \lvert \{ x \in X_{i} \mid \langle x,\pi(g)x \rangle \rvert \leq \epsilon \} \rvert \\
	& = \, \lim\nolimits_{i \to \mathcal{F}} \tfrac{1}{\vert X_{i} \vert} \lvert \{ x \in X_{i} \mid \langle x,\pi_{i}(g)x \rangle \rvert \leq \epsilon \} \rvert \, \stackrel{\eqref{elek.szabo.almost.freeness}}{=} \, 1 .
\end{align*}

\ref{theorem:elek.szabo.2}$\Longrightarrow$\ref{theorem:elek.szabo.3}. This implication follows from the fact that, in any Hilbert space $H$, \begin{displaymath}
	\forall x,y \in \Sph(H) \colon \quad \Vert x-y \Vert \, = \, \sqrt{2-2\Re\langle x,y \rangle} \, \geq \, \sqrt{2-2\vert\langle x,y \rangle\vert} .
\end{displaymath}
	
\ref{theorem:elek.szabo.3}$\Longrightarrow$\ref{theorem:elek.szabo.1}. Let $X$ be a set, let $\mu$ be a probability charge on $\Pow(X)$, and let $\pi \colon G \to \U(\ell^{2}(X))$ be a $\mu$-near representation. Then $\phi \defeq \mu_{\bullet}$ is a state on $\B(\ell^{2}(X))$ according to Remark~\ref{remark:charges.induce.states}, and $\pi$ is a $\phi$-near representation due to Lemma~\ref{lemma:near.representations.new}. Now, if $g \in G$ and $\gamma \in \R_{\geq 0}$ satisfy \begin{displaymath}
	\mu(\underbrace{\{ x \in X \mid \lVert x - \pi(g)x \rVert \geq \gamma \}}_{\eqdef\,B}) \, = \, 1 ,
\end{displaymath} then \begin{align*}
	&\phi((1-\pi(g))^{\ast}(1-\pi(g))) \, = \, \int \langle x,(1-\pi(g))^{\ast}(1-\pi(g))x \rangle \, \mathrm{d}\mu(x) \\
	&\qquad = \, \int \lVert x-\pi(g)x\rVert^{2} \, \mathrm{d}\mu(x) \, \geq \, \int \gamma^{2}\chi_{B}(x) \, \mathrm{d}\mu(x) \, = \, \gamma^{2}\mu(B) \, = \, \gamma^{2} .
\end{align*} Consequently, \ref{theorem:elek.szabo.3} implies the condition~\ref{theorem:amenable.trace.d2} of Theorem~\ref{theorem:amenable.trace} and thus entails hyperlinearity of $G$. \end{proof}

The connection between hyperlinearity and amenability of unitary representations established by our results provides a new perspective on the relation between hyperlinearity and the Haagerup property. For comparison, we record below a characterization of the latter in the spirit of Theorem~\ref{theorem:elek.szabo}.

Let us recall that, if $X$ is any set, then \begin{displaymath}
     c_{0}(X) \, \defeq \, \left. \! \left\{ f \in \C^{X} \, \right\vert \forall \epsilon \in \R_{>0} \colon \, \{ x \in X \mid \vert f(x) \vert \geq \epsilon \} \in \Pfin(X) \right\}
\end{displaymath} constitutes a $C^{\ast}$-subalgebra of $\ell^{\infty}(X)$. A unitary representation $\pi$ of a group $G$ on a Hilbert space $H$ is said to \begin{itemize}
     \item[---\,] have \emph{almost invariant vectors} if, for every $E \in \Pfin(G)$ and every $\epsilon \in \R_{>0}$, there exists $x \in \Sph(H)$ such that $\sup_{g \in E} \Vert x-\pi(g)x \Vert \leq \epsilon$,
     \item[---\,] be a \emph{$c_{0}$-representation}~\cite[\S3, p.~337]{BekkaDeLaHarpe} or be \emph{strongly mixing}~\cite[Definition~1.1]{BergelsonRosenblatt} if, for all $x,y \in H$, the function $\kappa_{x,y} \colon G \to \C, \, g \mapsto \langle x,\pi(g)y \rangle$ belongs to $c_{0}(G)$.
\end{itemize} A countable group is said to have the \emph{Haagerup property} if it admits a $c_{0}$-representation with almost invariant vectors (cf.~\cite[Proposition~2.3]{Jol00}).

\begin{remark}\label{remark:c0} Let $\pi$ be a unitary representation of a group $G$ on a Hilbert space $H$. Suppose that $X \subseteq H$ spans a dense linear subspace of $H$. Since $c_{0}(G)$ is a closed ${}^{\ast}$-subalgebra of $\ell^{\infty}(G)$ and the map $H \times H \to \ell^{\infty}(G), \, (x,y) \mapsto \kappa_{x,y}$ is sesquilinear and continuous, $\pi$ is a $c_{0}$-representation if and only if $\{ \kappa_{x,y} \mid x,y \in X \} \subseteq c_{0}(G)$. \end{remark}

\begin{prop}[\cite{Jol14}, \S3, p.~270]\label{proposition:jolissaint} A countable group has the Haagerup property if and only if it admits an amenable $c_{0}$-representation.
\end{prop}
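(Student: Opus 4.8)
The plan is to obtain both implications from Bekka's characterisation of amenability via approximately invariant finite-rank projections (Lemma~\ref{lemma:folner}), combined with the stability of the class of $c_0$-representations recorded in Remark~\ref{remark:c0}. Since $G$ is countable, it is a discrete locally compact group, so Lemma~\ref{lemma:folner} applies to every unitary representation of $G$. The guiding idea is that amenability of a representation $\pi\colon G\to\U(H)$ is essentially the statement that ``$\pi\otimes\overline{\pi}$ has almost invariant vectors'', which I realise concretely on the Hilbert space $\HS(H)$ of Hilbert--Schmidt operators on $H$ carrying the conjugation action $\rho(g)T=\pi(g)T\pi(g)^{\ast}$ of $G$; crucially, this passage preserves the $c_0$-property.

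For the implication ($\Longrightarrow$), assume $G$ has the Haagerup property and fix a $c_0$-representation $\pi\colon G\to\U(H)$ with almost invariant vectors. I would check that $\pi$ is amenable by verifying the condition of Lemma~\ref{lemma:folner}: given $\epsilon\in\R_{>0}$ and a finite set $E\subseteq G$, choose $x\in\Sph(H)$ with $\sup_{g\in E}\Vert x-\pi(g)x\Vert\le\tfrac{\epsilon}{2}$ and put $p\defeq xx^{\ast}\in\Pro(H)$, the rank-one projection onto $\C x$. Using $\Vert uv^{\ast}\Vert_1=\Vert u\Vert\,\Vert v\Vert$ for rank-one operators and the triangle inequality, one gets, for every $g\in E$,
\[
	\Vert\pi(g)p\pi(g)^{\ast}-p\Vert_1 \, = \, \Vert(\pi(g)x)(\pi(g)x)^{\ast}-xx^{\ast}\Vert_1 \, \leq \, 2\Vert x-\pi(g)x\Vert \, \leq \, \epsilon \, = \, \epsilon\Vert p\Vert_1 ,
\]
while $0<\rk(p)<\infty$; hence Lemma~\ref{lemma:folner} shows $\pi$ to be amenable, so $\pi$ is an amenable $c_0$-representation.

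For the implication ($\Longleftarrow$), let $\pi\colon G\to\U(H)$ be an amenable $c_0$-representation and consider the Hilbert space $\HS(H)$ of Hilbert--Schmidt operators on $H$, with inner product $\langle S,T\rangle\defeq\tr(S^{\ast}T)$, together with $\rho\colon G\to\U(\HS(H))$, $\rho(g)T\defeq\pi(g)T\pi(g)^{\ast}$. I would establish two properties of $\rho$. First, $\rho$ has almost invariant vectors: for $\epsilon\in\R_{>0}$ and finite $E\subseteq G$, Lemma~\ref{lemma:folner} provides $p\in\Pro(H)$ with $0<\rk(p)<\infty$ and $\Vert\pi(g)p\pi(g)^{\ast}-p\Vert_1\le\epsilon^{2}\Vert p\Vert_1$ for all $g\in E$; since $\pi(g)p\pi(g)^{\ast}$ and $p$ are orthogonal projections, $\pi(g)p\pi(g)^{\ast}-p$ has operator norm at most $1$, so $\Vert\pi(g)p\pi(g)^{\ast}-p\Vert_2^{2}\le\Vert\pi(g)p\pi(g)^{\ast}-p\Vert_1$ (cf.~\cite[Lemma~4.2(ii)]{bekka}), and together with $\Vert p\Vert_1=\rk(p)=\Vert p\Vert_2^{2}$ this gives $\Vert\pi(g)p\pi(g)^{\ast}-p\Vert_2\le\epsilon\Vert p\Vert_2$; hence $\xi\defeq\Vert p\Vert_2^{-1}p\in\Sph(\HS(H))$ satisfies $\sup_{g\in E}\Vert\rho(g)\xi-\xi\Vert_2\le\epsilon$. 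Second, $\rho$ is a $c_0$-representation: the rank-one operators $\xi\eta^{\ast}$ $(\xi,\eta\in H)$ span a dense linear subspace of $\HS(H)$, and a direct computation gives
\[
	\langle\xi_1\eta_1^{\ast},\,\rho(g)(\xi_2\eta_2^{\ast})\rangle \, = \, \langle\xi_1,\pi(g)\xi_2\rangle\,\overline{\langle\eta_1,\pi(g)\eta_2\rangle} \, = \, \kappa_{\xi_1,\xi_2}(g)\,\overline{\kappa_{\eta_1,\eta_2}(g)} ,
\]
which, as a function of $g\in G$, is a product of two elements of the $\ast$-subalgebra $c_0(G)\subseteq\ell^{\infty}(G)$ and hence lies in $c_0(G)$; thus $\rho$ is a $c_0$-representation by Remark~\ref{remark:c0}. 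Combining the two properties, $\rho$ is a $c_0$-representation of $G$ admitting almost invariant vectors, so $G$ has the Haagerup property.

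I do not expect a serious obstacle. The conceptual point is merely that Lemma~\ref{lemma:folner} expresses amenability of $\pi$ as the existence of almost invariant finite-rank projections, which are precisely almost invariant unit vectors of projection type for $\rho$; the remaining work---the bookkeeping with the Schatten $1$- and $2$-norms of the self-adjoint operators $\pi(g)p\pi(g)^{\ast}-p$, and the standard fact that rank-one operators span a dense subspace of $\HS(H)$, needed to invoke Remark~\ref{remark:c0}---is routine.
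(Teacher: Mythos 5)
Your proof is correct and follows essentially the same route as the paper: the forward direction is the standard observation that almost invariant vectors force amenability, and the backward direction passes to the conjugation representation on Hilbert--Schmidt operators and checks the $c_0$-property via the identity $\langle \xi_1\eta_1^{\ast},\rho(g)(\xi_2\eta_2^{\ast})\rangle=\kappa_{\xi_1,\xi_2}(g)\overline{\kappa_{\eta_1,\eta_2}(g)}$, exactly as in the paper (which uses rank-one projections instead of general rank-one operators). The only cosmetic difference is that you derive the almost invariant vectors for the conjugation representation directly from the F\o lner-type criterion of Lemma~\ref{lemma:folner}, whereas the paper cites Bekka's Theorem~5.1 for that equivalence.
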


\begin{proof} ($\Longrightarrow$) This implication follows from the observation that any unitary representation with almost invariant vectors is amenable.

($\Longleftarrow$) Let $\pi$ be an amenable $c_{0}$-representation of a countable group $G$ on a Hilbert space $H$. According to~\cite[Theorem~5.1]{bekka}, amenability of $\pi$ means that the representation $\tilde{\pi} \colon G \to \U(\B_{2}(H))$, on the Hilbert space $\B_{2}(H)$ of Hilbert--Schmidt operators on $H$, given by \begin{displaymath}
     \tilde{\pi}(g)a \, \defeq \, \pi(g)a\pi(g)^{\ast} \qquad (g \in G, \, a \in \B_{2}(H))
\end{displaymath} has almost invariant vectors. Moreover, if $p,q \in \Pro(H)$ satisfy $\rk(p) = \rk(q) = 1$ and $x \in \Sph(p(H))$ and $y \in \Sph(q(H))$, then \begin{align*}
     \kappa_{p,q}(g) \, &= \, \tr(p^{\ast}\pi(g)q\pi(g)^{\ast}) \, = \, \tr(\pi(g)^{\ast}p\pi(g)q) \, = \, \langle y,\pi(g)^{\ast}p\pi(g)qy \rangle \\
     & = \, \langle \pi(g)y,p\pi(g)y \rangle \, = \, \langle \pi(g)y,\langle x,\pi(g)y\rangle x \rangle \, = \, \langle x,\pi(g)y\rangle\langle \pi(g)y, x \rangle \\
     & = \, \langle x,\pi(g)y\rangle\overline{\langle x,\pi(g)y\rangle} \, = \, \kappa_{x,y}(g)\overline{\kappa_{x,y}(g)} \, = \, \! \left(\kappa_{x,y}\kappa_{x,y}^{\ast}\right)\!(g)
\end{align*} for every $g \in G$, whence $\kappa_{p,q} = \kappa_{x,y}\kappa_{x,y}^{\ast} \in c_{0}(G)$ due to $\pi$ being a $c_{0}$-representation. Since $\{ p \in \Pro(H) \mid \rk(p) = 1 \}$ spans a dense linear subspace of $\B_{2}(H)$ (see, e.g.,~\cite[Theorem~2.4.6, p.~56]{MurphyBook}
and~\cite[Theorem~2.4.17, p.~66]{MurphyBook}), we conclude that $\tilde{\pi}$ is a $c_{0}$-representation by Remark~\ref{remark:c0}. Hence, $G$ has the Haagerup property. \end{proof}

\begin{cor}\label{corollary:haagerup} A countable group $G$ has the Haagerup property if and only if there exist a set~$X$, a probability charge $\mu$ on $\mathcal{P}(X)$, and a representation $\pi \colon G \to \U(\ell^{2}(X))$ such that $\mu_{\bullet}$ is $\pi$-invariant and, for all $x,y \in X$ and $\epsilon \in \R_{>0}$, \begin{displaymath}
	\lvert \{ g \in G \mid \vert \langle x,\pi(g)y \rangle \vert \geq \epsilon \} \rvert \, < \, \infty .
\end{displaymath} \end{cor}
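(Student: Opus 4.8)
The plan is to prove Corollary~\ref{corollary:haagerup} by combining Proposition~\ref{proposition:jolissaint} with the machinery from the proof of Theorem~\ref{theorem:a} (Theorem~\ref{theorem:amenable.basis}), exactly as the analogous statements (Theorem~\ref{theorem:elek.szabo} for hyperlinearity, Theorem~\ref{theorem:d} for Kirchberg's property) are obtained. So the key reduction is: a countable group $G$ has the Haagerup property if and only if it admits an amenable $c_{0}$-representation, and this should be translatable into the language of orthonormal bases and invariant probability charges via Theorem~\ref{theorem:amenable.basis}.

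\begin{proof}[Proof of Corollary~\ref{corollary:haagerup}] ($\Longleftarrow$) Suppose $X$, $\mu$ and $\pi \colon G \to \U(\ell^{2}(X))$ are as in the statement. Since $\mu_{\bullet}$ is $\pi$-invariant, Theorem~\ref{theorem:amenable.basis} shows that $\pi$ is amenable. Moreover, for all $x,y \in X$, the summability condition says precisely that $\kappa_{x,y} = (g \mapsto \langle x,\pi(g)y\rangle) \in c_{0}(G)$; since $X$ spans a dense linear subspace of $\ell^{2}(X)$, Remark~\ref{remark:c0} shows $\pi$ is a $c_{0}$-representation. Hence $\pi$ is an amenable $c_{0}$-representation, so $G$ has the Haagerup property by Proposition~\ref{proposition:jolissaint}.

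($\Longrightarrow$) Let $G$ be countable with the Haagerup property. By Proposition~\ref{proposition:jolissaint}, $G$ admits an amenable $c_{0}$-representation $\rho \colon G \to \U(H)$. By Theorem~\ref{theorem:amenable.basis} (using that $G$, being countable, is $\sigma$-compact as a discrete group), there exist an orthonormal basis $X$ for $H$ and a probability charge $\mu$ on $\Pow(X)$ such that $\mu_{\bullet}$ is $\rho$-invariant. Identifying $H$ with $\ell^{2}(X)$ via this basis, we obtain a representation $\pi \colon G \to \U(\ell^{2}(X))$, and $\mu_{\bullet}$ is $\pi$-invariant by construction. Finally, for $x,y \in X \subseteq \ell^{2}(X)$ and $\epsilon \in \R_{>0}$, the function $\kappa_{x,y} \colon g \mapsto \langle x,\pi(g)y\rangle$ lies in $c_{0}(G)$ because $\pi$ is a $c_{0}$-representation, which is exactly the assertion that $\{ g \in G \mid \vert\langle x,\pi(g)y\rangle\vert \geq \epsilon\}$ is finite. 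This proves the desired implication. \end{proof}

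The only subtlety worth flagging is the correct invocation of Theorem~\ref{theorem:amenable.basis}: one must observe that a countable discrete group is $\sigma$-compact and that the representation is automatically strongly continuous, so that the hypotheses of that theorem are met; the rest is a direct dictionary translation between "amenable $c_{0}$-representation" and the charge-theoretic conditions, with Remark~\ref{remark:c0} handling the reduction of the $c_{0}$-condition to basis vectors. I do not anticipate a genuine obstacle here, as the heavy lifting was already done in Theorem~\ref{theorem:amenable.basis} and Proposition~\ref{proposition:jolissaint}.
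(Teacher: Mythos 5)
Your proof is correct and follows exactly the route the paper takes: the paper's own proof is just the one-line citation of Proposition~\ref{proposition:jolissaint}, Theorem~\ref{theorem:amenable.basis}, and Remark~\ref{remark:c0}, and your write-up simply spells out that dictionary (including the correct observation that a countable discrete group is $\sigma$-compact and its representations are automatically strongly continuous). No gaps.
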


\begin{proof} This follows from Proposition~\ref{proposition:jolissaint}, Theorem~\ref{theorem:amenable.basis}, and Remark~\ref{remark:c0}. \end{proof}

\section{Kirchberg's factorization property}\label{section:kirchberg}

In this section, we establish characterizations of Kirchberg's factorization property in terms of the existence of unitary representations admitting a free invariant state (Theorem~\ref{theorem:kirchberg}). The proof relies on an amplification argument (Lemma~\ref{lemma:amplification}) inspired by~\cite[Proof of Corollary~1.2, (ii)$\Longrightarrow$(i), p.~561--562]{Kirchberg} and involving the following elementary fact.

\begin{remark}[cf.~\cite{CapraroLupini}, Exercise~3.2.7(1), p.~77]\label{remark:complex} If $z \in \C\setminus\{ 1 \}$ and $\vert z \vert \leq 1$, then \begin{displaymath}
	\vert z+1 \vert^{2} = (\Re z + 1)^{2} + (\Im z)^{2} = (\Re z)^{2} + 2\Re z + 1 + (\Im z)^{2} \leq 2+2\Re z < 4
\end{displaymath} and therefore $\vert z+1 \vert < 2$. \end{remark}

\begin{lem}\label{lemma:amplification} Let $\rho_{0}$ be a unitary representation of a group $G$ on a Hilbert space $K_{0}$. Suppose that there exists a $\rho_{0}$-invariant state $\phi_{0} \colon \B(K_{0}) \to \C$ such that \begin{displaymath}
	\forall g \in G \setminus \{ e \} \colon \qquad \phi_{0}((1-\rho_{0}(g))^{\ast}(1-\rho_{0}(g))) \, > \, 0 .
\end{displaymath} For every $E \in \Pfin(G)$ and every $\epsilon \in \R_{>0}$, there exist a unitary representation $\pi$ of~$G$ on a Hilbert space $H$ and some $p \in \Pro(H)$ such that \begin{enumerate}
	\item[---\,] $\tfrac{1}{\epsilon} \leq \rk(p) < \infty$,
	\item[---\,] $\lVert \pi(g)p\pi(g)^{\ast} - p \rVert_{2} \leq \epsilon\Vert p \Vert_{2}$ for all $g \in E$,
	\item[---\,] $\vert\tr(\pi(g)p)\vert \leq \epsilon \tr(p)$ for every $g \in E\setminus \{ e \}$.
\end{enumerate} \end{lem}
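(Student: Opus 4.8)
The idea, following Kirchberg's amplification trick, is to take a large tensor power of $\rho_0$ and localize on an almost-invariant vector of the associated Hilbert--Schmidt representation. Let me think about the ingredients.

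We have $\rho_0 \colon G \to \U(K_0)$ and a $\rho_0$-invariant state $\phi_0$ on $\B(K_0)$ with $\phi_0((1-\rho_0(g))^\ast(1-\rho_0(g))) > 0$ for all $g \neq e$. The first observation is that $\rho_0$ is amenable, so by Bekka's theorem (Lemma~\ref{lemma:folner}), for every finite $E$ and every $\delta$ there is a finite-rank projection $p_0$ with $\|\rho_0(g)p_0\rho_0(g)^\ast - p_0\|_1 \leq \delta \|p_0\|_1$ for $g\in E$; equivalently (via \cite[Lemma~4.2(ii)]{bekka}) $\|\rho_0(g)p_0\rho_0(g)^\ast - p_0\|_2 \leq \sqrt{\delta}\,\|p_0\|_2$. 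So the "almost-invariant projection" conditions, the first two bullets, are essentially free. The real content is the third bullet: arranging $|\tr(\pi(g)p)| \leq \epsilon \tr(p)$ for $g \neq e$, i.e.\ making the normalized trace of $\pi(g)$ restricted to $p(H)$ small. This is where I expect the main obstacle, and where Remark~\ref{remark:complex} (giving $|z+1| < 2$ strictly when $|z|\le 1$, $z\neq 1$) must come in.

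Here is the mechanism. Consider the $n$-fold tensor power $\rho_0^{\otimes n}$ on $K_0^{\otimes n}$. For $g \in G$, the eigenvalue/trace behaviour multiplies: informally, if on a good subspace $\rho_0(g)$ has "normalized trace" roughly $z_g$ with $|z_g| \le 1$, then on the tensor power it has normalized trace roughly $z_g^n$, which tends to $0$ as $n\to\infty$ unless $z_g = 1$. The hypothesis $\phi_0((1-\rho_0(g))^\ast(1-\rho_0(g))) > 0$ is exactly what rules out $z_g = 1$: it says $1$ is not "fully in the spectrum" of $\rho_0(g)$ with respect to $\phi_0$. More precisely, set $\phi_0^{\otimes n}$ on $\B(K_0^{\otimes n})$; it is $\rho_0^{\otimes n}$-invariant, and $\phi_0^{\otimes n}(\rho_0^{\otimes n}(g)) = \phi_0(\rho_0(g))^n$. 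Writing $c_g \defeq \phi_0(\rho_0(g))$, Remark~\ref{remark:complex} applied appropriately — actually one needs $|c_g| < 1$ — gives: from $\phi_0((1-\rho_0(g))^\ast(1-\rho_0(g))) > 0$ and the computation in the proof of \ref{theorem:amenable.trace.d1}$\Rightarrow$\ref{theorem:amenable.trace.d2}, $2 - 2\Re c_g > 0$, and since $|c_g| \le \|\rho_0(g)\| = 1$, Remark~\ref{remark:complex} (with $z = \overline{c_g}$ or similar) yields $|c_g| < 1$ whenever $c_g \neq 1$; but $c_g = 1$ would force $\Re c_g = 1$, contradicting $2 - 2\Re c_g > 0$. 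Hence $|c_g| < 1$ strictly, so $|c_g|^n \to 0$.

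So the plan is: given $E$ and $\epsilon$, choose $n$ large enough that $|c_g|^n < \epsilon/3$ for all $g \in E\setminus\{e\}$ (possible since $E$ is finite). Set $H \defeq K_0^{\otimes n}$, $\pi \defeq \rho_0^{\otimes n}$, $\phi \defeq \phi_0^{\otimes n}$; this $\phi$ is $\pi$-invariant with $|\phi(\pi(g))| < \epsilon/3$ for $g \in E\setminus\{e\}$. Now $\pi$ is still amenable (tensor powers of amenable representations are amenable — or directly, $\phi$ is an invariant state). Apply Lemma~\ref{lemma:folner} / \cite[Lemma~4.2(ii)]{bekka} to get a finite-rank projection $p$ with $\rk(p) \geq 1/\epsilon$ and $\|\pi(g)p\pi(g)^\ast - p\|_2 \leq \delta\|p\|_2$ for $g\in E$ with $\delta$ to be fixed; the second bullet is then immediate. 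For the third bullet, one more approximation step is needed: $\frac{1}{\rk(p)}\tr(\pi(g)p)$ is close to $\phi(\pi(g))$ when $p$ is sufficiently invariant. This is the standard fact — e.g.\ combine Corollary~\ref{corollary:trace} with an averaging/concentration estimate, or more simply use that if $\phi$ is an invariant state then for any $\eta$ there is a finite-rank $p$ with $|\frac{\tr(\pi(g)p)}{\rk(p)} - \phi(\pi(g))| \leq \eta$ for $g\in E$ — which is itself an instance of the $\phi \rightsquigarrow \mu_\bullet$ correspondence and a Følner-type argument as in the proof of Theorem~\ref{theorem:amenable.basis}. Taking $\eta = \epsilon/3$ gives $|\tr(\pi(g)p)| \leq \frac{\epsilon}{3}\rk(p) + \eta \rk(p) \leq \epsilon\,\tr(p)$. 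The main obstacle is making this last "trace $\approx$ state" approximation precise simultaneously with the other two conditions; the cleanest route is to note that an invariant state on $\B(H)$ yields, via an ultrafilter limit of finite-rank Følner projections (exactly as constructed in the proof of Theorem~\ref{theorem:amenable.basis}), a sequence $p_n$ along which $\frac{1}{\tr(p_n)}\tr(ap_n) \to \phi(a)$ for every fixed $a$, and asymptotic invariance holds; then pick one $p_n$ far enough out to satisfy all three bullets, using finiteness of $E$.
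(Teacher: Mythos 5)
Your overall strategy---amplify by tensor powers to kill the character values, and use F\o lner projections for almost-invariance---is on the right track, but there are two genuine gaps.

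First, the claim that $\lvert c_g\rvert < 1$ for $c_g = \phi_0(\rho_0(g))$ is false. The hypothesis only gives $2 - 2\Re c_g = \phi_0((1-\rho_0(g))^{\ast}(1-\rho_0(g))) > 0$, i.e.\ $\Re c_g < 1$, and Remark~\ref{remark:complex} yields $\lvert c_g + 1\rvert < 2$, \emph{not} $\lvert c_g\rvert < 1$. Take $G = \Z/2\Z$, $K_0 = \C$, $\rho_0(g) = -1$, $\phi_0 = \id_{\C}$: then $\phi_0((1-\rho_0(g))^{\ast}(1-\rho_0(g))) = 4 > 0$, yet $c_g = -1$ has modulus $1$ and $c_g^{n} = (-1)^{n}$ does not tend to $0$, so your tensor-power argument stalls. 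This is precisely why the paper's proof first replaces $\rho_0$ by $\rho_1 = \rho_0 \oplus \id_{K_0}$ with the averaged state $\phi_1(a) = \tfrac{1}{2}(\phi_0(i^{\ast}ai)+\phi_0(j^{\ast}aj))$: then $\phi_1(\rho_1(g)) = \tfrac{1}{2}(c_g+1)$, and Remark~\ref{remark:complex} \emph{does} give $\lvert\phi_1(\rho_1(g))\rvert < 1$. You need this preliminary step before any amplification.

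Second, your closing step---that a given invariant state $\phi$ on $\B(H)$ is approximated on finitely many elements by $a \mapsto \tr(ap)/\rk(p)$ for almost-invariant finite-rank projections $p$---is not ``an instance of the F\o lner-type argument in the proof of Theorem~\ref{theorem:amenable.basis}''. That construction produces \emph{some} invariant state as an ultrafilter limit of normalized traces; there is no reason it recovers the particular $\phi$ you started with, so your freeness estimate $\lvert\phi(\pi(g))\rvert < \epsilon/3$ need not transfer to the projections. What actually does the job is the hypertrace approximation of Kirchberg--Ozawa (Lemma~\ref{lemma:brown.ozawa}), applied to the centralizer $C^{\ast}$-algebra of the state, followed by Stinespring dilation (Theorem~\ref{theorem:stinespring}) to turn the resulting unital completely positive map into a compression $v^{\ast}\pi_0(\cdot)v$ of a genuine representation. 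This is the route the paper takes; the tensor power is then applied to the resulting finite-rank picture (where one also doubles $H_0$ so as to force $\rk(p) \geq 2^{n} \geq 1/\epsilon$), rather than to the state $\phi_0$ on the infinite-dimensional algebra.
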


\begin{proof} First of all, let us consider the Hilbert space $K_{1} \defeq K_{0} \oplus K_{0}$ along with the linear isometries $i \colon K_{0} \to K_{1}, \, x \mapsto (x,0)$ and $j \colon K_{0} \to K_{1}, \, x \mapsto (0,x)$ and the unitary representation \begin{displaymath}
	\rho_{1} \colon \, G \, \longrightarrow \, \U(K_{1}), \quad g \, \longmapsto \, \rho_{0}(g) \oplus \id_{K_{0}} .
\end{displaymath} It is easy to verify that \begin{displaymath}
	\phi_{1} \colon \, \B(K_{1}) \, \longrightarrow \, \C, \quad a \, \longmapsto \, \tfrac{1}{2}(\phi_{0}(i^{\ast}ai) + \phi_{0}(j^{\ast}aj))
\end{displaymath} is a $\rho_{1}$-invariant state. Moreover, for every $g \in G\setminus \{ e \}$, since $\vert \phi_{0}(\rho_{0}(g)) \vert \leq \Vert \rho_{0}(g) \Vert = 1$ and $\phi_{0}(\rho_{0}(g)) \ne 1$ as \begin{align*}
	2-2\Re \phi_{0}(\rho_{0}(g)) \, &\stackrel{\ref{remark:positive}\ref{remark:positive.1}}{=} \, 2-\phi_{0}(\rho_{0}(g))-\phi_{0}(\rho_{0}(g)^{\ast}) \\
	&= \, \phi_{0}((1-\rho_{0}(g))^{\ast}(1-\rho_{0}(g))) \, > \, 0 ,
\end{align*} we see that \begin{equation}\label{eq:complex}
	\vert \phi_{1}(\rho_{1}(g)) \vert \, = \, \tfrac{1}{2}\vert \phi_{0}(\rho_{0}(g)) + 1 \vert \, \stackrel{\ref{remark:complex}}{<} \, 1 .
\end{equation} Now, according to Remark~\ref{remark:positive}\ref{remark:positive.2}, \begin{displaymath}
	A \, \defeq \, \{ a \in \B(K_{1}) \mid \forall b \in \B(K_{1}) \colon \, \phi_{1}(ab) = \phi_{1}(ba) \}
\end{displaymath} constitutes a unital $C^{\ast}$-subalgebra of~$\B(K_{1})$. As $\phi_{1}$ is $\rho_{1}$-invariant, $\rho_{1}(G) \subseteq \U(A)$. Let $E \in \Pfin(G)$ and $\epsilon \in \R_{>0}$. We observe that \begin{displaymath}
	\gamma \, \defeq \, \sup\nolimits_{g \in E\setminus \{ e \}} \vert \phi_{1}(\rho_{1}(g)) \vert \, \stackrel{\eqref{eq:complex}}{<} \, 1. 
\end{displaymath} In turn, we find some $n \in \N_{>0}$ such that $\left( \tfrac{1+\gamma}{2} \right)^{n}\! \leq \epsilon$ and $2^{n} \geq \tfrac{1}{\epsilon}$. Choose $\delta \in \R_{>0}$ such that \begin{displaymath}
	2\sqrt{1 - (1-\delta)^{n}} \, \leq \, \epsilon .
\end{displaymath} By Lemma~\ref{lemma:brown.ozawa}, we find a non-zero finite-dimensional Hilbert space $L$ and a unital completely positive linear map $\sigma \colon A \to \B(L)$ such that, for every $g \in E \cup E^{-1}$, \begin{align}
	&\lvert 1 - \dim(L)^{-1}\tr(\sigma(\rho_{1}(g))^{\ast}\sigma(\rho_{1}(g))) \rvert \, \leq \, \delta , \label{eq:kirchberg.almost.invariance} \\
	&\left\lvert \phi_{1}(\rho_{1}(g)) - \dim(L)^{-1}\tr(\sigma(\rho_{1}(g))) \right\rvert \, \leq \, \tfrac{1-\gamma}{2} . \label{eq:kirchberg.approximate.freeness}
\end{align} Due to Theorem~\ref{theorem:stinespring}, there exist a Hilbert space $H_{0}$, a unital ${}^{\ast}$-algebra homomorphism $\pi_{0} \colon A \to \B(H_{0})$ and a linear isometry $v \colon L \to H_{0}$ such that \begin{equation}\label{eq:kirchberg.dilation}
	\forall a \in A \colon \qquad \sigma(a) = v^{\ast}\pi_{0}(a)v .
\end{equation} Of course, $v^{\ast}v = \id_{L}$ thanks to $v$ being a linear isometry. Thus, $p_{0} \defeq vv^{\ast} \in \Pro(H_{0})$ and $\rk(p_{0}) = \dim(L)$. Consider the Hilbert space $H_{1} \defeq H_{0} \oplus H_{0}$ along with the non-zero finite-rank projection $p_{1} \defeq p_{0} \oplus p_{0} \in \Pro(H_{1})$ and the unitary representation \begin{displaymath}
	\pi_{1} \colon \, G \, \longrightarrow \, \U(H_{1}), \quad g \, \longmapsto \, \pi_{0}(\rho_{1}(g)) \oplus \pi_{0}(\rho_{1}(g)) .
\end{displaymath} Furthermore, let us examine the Hilbert space $H \defeq H_{1}^{\otimes n}$, the non-zero finite-rank projection $p \defeq p_{1}^{\otimes n} \in \Pro(H)$, and the unitary representation \begin{displaymath}
	\pi \colon \, G \, \longrightarrow \, \U(H), \quad g \, \longmapsto \, \pi_{1}(g)^{\otimes n} .
\end{displaymath} Note that \begin{displaymath}
	\rk(p) \, = \, \rk(p_{1})^{n} \, = \, 2^{n}\rk(p_{0})^{n} \, = \, 2^{n}\dim(L)^{n} \, \geq \, 2^{n} \, \geq \, \tfrac{1}{\epsilon}. 
\end{displaymath} Now, for every $g \in E \cup E^{-1}$, \begin{allowdisplaybreaks} \begin{align*}
	\Vert p_{0}\pi_{0}(\rho_{1}(g))p_{0} \Vert_{2}^{2} \, &= \, \Vert vv^{\ast}\pi_{0}(\rho_{1}(g))vv^{\ast} \Vert_{2}^{2} \, \stackrel{\eqref{eq:kirchberg.dilation}}{=} \, \Vert v\sigma(\rho_{1}(g))v^{\ast} \Vert_{2}^{2} \, \stackrel{v\,\text{isom.}}{=} \, \Vert \sigma(\rho_{1}(g)) \Vert_{2}^{2} \\
	& = \, \tr(\sigma(\rho_{1}(g))^{\ast}\sigma(\rho_{1}(g))) \, \stackrel{\eqref{eq:kirchberg.almost.invariance}}{\geq} \, (1-\delta)\dim(L) \, = \, (1-\delta)\rk(p_{0}) , \\
	\Vert p_{1}\pi_{1}(g)p_{1} \Vert_{2}^{2} \, &= \, \Vert p_{0}\pi_{0}(\rho_{1}(g))p_{0} \oplus p_{0}\pi_{0}(\rho_{1}(g))p_{0} \Vert_{2}^{2} \, = \, 2\Vert p_{0}\pi_{0}(\rho_{1}(g))p_{0} \Vert_{2}^{2} \\
	&\geq \, 2(1-\delta)\rk(p_{0}) \, = \, (1-\delta)\rk(p_{1}) , \\
	\Vert p\pi(g)p \Vert_{2}^{2} \, &= \, \left\lVert p_{1}^{\otimes n}\pi_{1}(g)^{\otimes n}p_{1}^{\otimes n} \right\rVert_{2}^{2} \, = \, \left\lVert (p_{1}\pi_{1}(g)p_{1})^{\otimes n} \right\rVert_{2}^{2} \, = \, \Vert p_{1}\pi_{1}(g)p_{1} \Vert_{2}^{2n} \\
	&\geq \, (1-\delta)^{n}\rk(p_{1})^{n} \, = \, (1-\delta)^{n}\rk(p) \, = \, (1-\delta)^{n}\Vert p \Vert_{2}^{2}, \\
	\Vert (1-p)\pi(g)p \Vert_{2}^{2} \, &\stackrel{p \perp (1-p)}{=} \, \Vert \pi(g)p \Vert_{2}^{2} - \Vert p\pi(g)p \Vert_{2}^{2} \, = \, \Vert p \Vert_{2}^{2} - \Vert p\pi(g)p \Vert_{2}^{2} \\
	& \leq \, (1 - (1-\delta)^{n}) \Vert p \Vert_{2}^{2} .
\end{align*} \end{allowdisplaybreaks} For every $g \in E$, since $(p\pi(g)(1-p))^{\ast} = (1-p)\pi(g)^{\ast}p = (1-p)\pi(g^{-1})p$, we thus conclude that \begin{align*}
	\Vert \pi(g)p - p\pi(g) \Vert_{2} \, &\leq \, \Vert \pi(g)p - p\pi(g)p \Vert_{2} + \Vert p\pi(g)p - p\pi(g) \Vert_{2} \\
	& = \, \Vert (1-p)\pi(g)p \Vert_{2} + \Vert p\pi(g)(1-p) \Vert_{2} \\
	& = \, \Vert (1-p)\pi(g)p \Vert_{2} + \Vert(p\pi(g)(1-p))^{\ast} \Vert_{2} \\
	&\leq \, 2\sqrt{1 - (1-\delta)^{n}}\Vert p \Vert_{2} \, \leq \, \epsilon \Vert p \Vert_{2} ,
\end{align*} i.e., $\lVert \pi(g)p\pi(g)^{\ast} - p \rVert_{2} \leq \epsilon\Vert p \Vert_{2}$. Finally, if $g \in E\setminus \{ e \}$, then \begin{align*}
	\vert \tr (\pi_{0}(\rho_{1}(g))p_{0}) \vert \, &= \, \vert \tr (\pi_{0}(\rho_{1}(g))vv^{\ast}) \vert \, = \, \vert \tr (v^{\ast}\pi_{0}(\rho_{1}(g))v) \vert \, \stackrel{\eqref{eq:kirchberg.dilation}}{=} \, \vert \tr(\sigma(\rho_{1}(g))) \vert \\
	& \leq \, \vert \phi_{1}(\rho_{1}(g)) \dim(L) \vert + \lvert \phi_{1}(\rho_{1}(g))\dim(L) - \tr(\sigma(\rho_{1}(g))) \rvert \\
	& \stackrel{\eqref{eq:kirchberg.approximate.freeness}}{\leq} \, \gamma \dim(L) + \tfrac{1-\gamma}{2}\dim(L) \, = \, \tfrac{1+\gamma}{2}\rk(p_{0}) , \\
	\vert \tr (\pi_{1}(g)p_{1}) \vert \, &= \, \vert \tr(p_{0}\pi_{0}(\rho_{1}(g))p_{0} \oplus p_{0}\pi_{0}(\rho_{1}(g))p_{0}) \vert \, = \, 2 \vert \tr (\pi_{0}(\rho_{1}(g))p_{0}) \vert \\
	&\leq \, \tfrac{1+\gamma}{2}2\rk(p_{0}) \, = \, \tfrac{1+\gamma}{2}\rk(p_{1}) , \\
	\vert \tr(\pi(g)p) \vert \, &= \, \left\lvert \tr\!\left(\pi_{1}(g)^{\otimes n}p_{1}^{\otimes n}\right) \right\rvert \, = \, \left\lvert \tr\!\left((\pi_{1}(g)p_{1})^{\otimes n}\right) \right\rvert \, = \, \vert \tr(\pi_{1}(g)p_{1}) \vert^{n} \\
	& \leq \, \left(\tfrac{1+\gamma}{2}\right)^{n}\!\rk(p_{1})^{n} \, = \, \left(\tfrac{1+\gamma}{2}\right)^{n}\!\rk(p) = \, \left(\tfrac{1+\gamma}{2}\right)^{n}\!\tr(p) \, \leq \, \epsilon \tr(p) . \qedhere
\end{align*} \end{proof}

Now, let us recall that, if $\pi \colon A \to B$ is a unital ${}^{\ast}$-algebra homomorphism between unital $\Cstar$-algebras $A$ and $B$, then $\pi(A)$ is a unital $\Cstar$-subalgebra of $B$ (see, e.g.,~\cite[Corollary~5.7, p.~23]{ConwayBook}). Moreover, the following well-known fact will be needed. 

\begin{lem}[cf.~\cite{BrownOzawa}, Proposition~6.2.2, p.~215]\label{lemma:arveson} Let $A$ be a unital $\Cstar$-algebra, let $H$ and $K$ be Hilbert spaces, let $\pi \colon A \to \B(H)$ be a unital ${}^{\ast}$-algebra homomorphism, and let $\iota \colon A \to \B(K)$ be a unital ${}^{\ast}$-algebra embedding. If $\phi \colon \B(H) \to \C$ is a hypertrace for $\pi(A)$, then there exists a hypertrace $\psi \colon \B(K) \to \C$ for $\iota(A)$ such that $\psi \circ \iota = \phi \circ \pi$. \end{lem}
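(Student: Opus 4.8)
The plan is to build $\psi$ by precomposing $\phi$ with a unital completely positive ``section'' of $\pi$ along $\iota$, following the classical argument behind \cite[Proposition~6.2.2]{BrownOzawa}.

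First I would use the injectivity of $\iota$ to define the unital ${}^{\ast}$-algebra homomorphism $\theta \defeq \pi \circ \iota^{-1} \colon \iota(A) \to \B(H)$, whose domain $\iota(A)$ is a unital $\Cstar$-subalgebra of $\B(K)$ by the fact recalled just above; in particular $\theta$ is unital and completely positive. By Arveson's extension theorem, $\theta$ extends to a unital completely positive linear map $\tilde{\theta} \colon \B(K) \to \B(H)$. Then I would set
\begin{displaymath}
	\psi \, \defeq \, \phi \circ \tilde{\theta} \colon \, \B(K) \, \longrightarrow \, \C .
\end{displaymath}
Being the composition of the state $\phi$ with the unital completely positive map $\tilde{\theta}$, the functional $\psi$ is a state on $\B(K)$. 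Moreover, for every $a \in A$,
\begin{displaymath}
	\psi(\iota(a)) \, = \, \phi(\tilde{\theta}(\iota(a))) \, = \, \phi(\theta(\iota(a))) \, = \, \phi(\pi(a)) ,
\end{displaymath}
so that $\psi \circ \iota = \phi \circ \pi$, which is the asserted compatibility.

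It remains to check that $\psi$ is a hypertrace for $\iota(A)$. For this I would invoke the standard fact that, since $\tilde{\theta}$ restricts to the ${}^{\ast}$-homomorphism $\theta$ on $\iota(A)$, the subalgebra $\iota(A)$ is contained in the multiplicative domain of $\tilde{\theta}$; hence $\tilde{\theta}(ab) = \tilde{\theta}(a)\tilde{\theta}(b)$ and $\tilde{\theta}(ba) = \tilde{\theta}(b)\tilde{\theta}(a)$ for all $a \in \iota(A)$ and $b \in \B(K)$. Fixing such $a$ and $b$ and noting that $\tilde{\theta}(a) = \theta(a) \in \pi(A)$, the hypertrace property of $\phi$ for $\pi(A)$ gives
\begin{displaymath}
	\psi(ab) \, = \, \phi\bigl(\tilde{\theta}(a)\tilde{\theta}(b)\bigr) \, = \, \phi\bigl(\tilde{\theta}(b)\tilde{\theta}(a)\bigr) \, = \, \psi(ba) ,
\end{displaymath}
as required.

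The only inputs beyond bookkeeping are Arveson's extension theorem and the multiplicative-domain property of unital completely positive maps, both classical; I do not expect a genuine obstacle here. The one point worth stressing is that it is essential that $\iota$ (and not $\pi$) be assumed injective: this is exactly what makes $\theta = \pi \circ \iota^{-1}$ well defined on all of $\iota(A)$ and eligible for extension to $\B(K)$, whereas $\pi$ need not be faithful and nothing is extended on the $H$ side.
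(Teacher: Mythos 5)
Your proposal is correct and follows essentially the same route as the paper's proof: extend $\pi\circ\iota^{-1}$ to a unital completely positive map on $\B(K)$ via Arveson's extension theorem, set $\psi=\phi\circ\tilde{\theta}$, and use the multiplicative-domain (bimodule) property of the extension together with the hypertrace property of $\phi$ to verify that $\psi$ is a hypertrace for $\iota(A)$. The paper cites Ozawa's Corollary~2.8 for exactly the bimodule identity you invoke; nothing is missing.
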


\begin{proof} We reproduce the slightly amended proof from~\cite[Proposition~6.2.2, p.~215]{BrownOzawa} for the reader's convenience. Due to Arveson's extension theorem~\cite[Theorem~1.2.3]{Arveson} (see also~\cite[Theorem~1.6.1, p.~17]{BrownOzawa}), there exists a unital completely positive linear map $\Phi \colon \B(K) \to \B(H)$ such that $\Phi\vert_{\iota(A)} = \pi \circ {\iota^{-1}}$. Since $\Phi\vert_{\iota(A)}$ is a ${}^{\ast}$-ring homomorphism, \cite[Corollary~2.8]{Ozawa} moreover asserts that $\Phi(\iota(a)x\iota(b)) = \pi(a)\Phi(x)\pi(b)$ for all $a,b \in A$ and $x \in \B(K)$. Now, consider any hypertrace $\phi \colon \B(H) \to \C$ for $\pi(A)$. Then $\psi \defeq \phi \circ \Phi$ is a state on $\B(K)$ and $\psi \circ \iota = \phi \circ \Phi \circ \iota = \phi \circ \pi$. Finally, \begin{align*}
	\psi(\iota(a)x) \, &= \, \phi(\Phi(\iota(a)x)) \, = \, \phi(\pi(a)\Phi(x)) \\
		& = \, \phi(\Phi(x)\pi(a)) \, = \, \phi(\Phi(x\iota(a))) \, = \, \psi(x\iota(a))
\end{align*} for all $a \in A$ and $x \in \B(K)$, i.e., is a hypertrace for $\iota(A)$. \end{proof}

In order to clarify some relevant terminology and notation, let $G$ be a group. Let $\Cstar(G)$ denote the \emph{(full) group $\Cstar$-algebra} of $G$ (cf.~\cite[\S2.5, p.~43]{BrownOzawa}), i.e., the completion of the complex group algebra $\C[G]$ with respect to its largest $\Cstar$-norm, and consider the natural embedding $\iota_{G} \colon G \to \U(\Cstar(G)), \, g \mapsto \delta_{g}$. Note that $\Cstar(G)$ enjoys the following universal property (see, e.g.,~\cite[Proposition~2.5.2, p.~43]{BrownOzawa}): for any unitary representation $\pi$ of $G$ on a Hilbert space $H$, there exists a unique (necessarily unital) ${}^{\ast}$-algebra homomorphism $\theta_{\pi} \colon \Cstar(G) \to \B(H)$ such that ${\theta_{\pi}} \circ {\iota_{G}} = \pi$. In particular, there is a unique unital ${}^{\ast}$-algebra homomorphism $\rho_{G} \colon \Cstar(G) \to \U(\ell^{2}(G))$~such~that \begin{displaymath}
	\forall g \in G \ \forall f \in \ell^{2}(G) \ \forall x \in G \colon \qquad (\rho_{G}(\iota_{G}(g))f)(x) = f(g^{-1}x) .
\end{displaymath} Furthermore, let us define \begin{displaymath}
	\tau_{G} \colon \, \Cstar(G) \, \longrightarrow \, \C, \quad a \, \longmapsto \, \langle \delta_{e},\rho_{G}(a)\delta_{e} \rangle .
\end{displaymath} Then the group $G$ is said to have \emph{Kirchberg's factorization property}~\cite{KirchbergInventiones,Kirchberg} (see also~\cite[Theorem~6.4.3, p.~228]{BrownOzawa}) if for some (hence any, by Lemma~\ref{lemma:arveson}) pair $(H,\iota)$ consisting of a Hilbert space $H$ and a unital ${}^{\ast}$-algebra embedding $\iota \colon \Cstar(G) \to \B(H)$ there exists a hypertrace $\phi \colon \B(H) \to \C$ for $\iota(\Cstar(G))$ such that $\phi \circ \iota = \tau_{G}$.

\begin{thm}\label{theorem:kirchberg} Let $G$ be a group. The following are equivalent. \begin{enumerate}
	\item\label{theorem:kirchberg.a1} $G$ has Kirchberg's factorization property.
	\item\label{theorem:kirchberg.c1} There exist a unitary representation $\pi$ of $G$ on a Hilbert space $H$ and a $\pi$-invariant state $\mu$ on $\UCB(\Sph(H))$ such that \begin{displaymath}
		\qquad \forall g \in G\setminus \{ e\} \colon \quad  \mu(x \mapsto \vert\langle x,\pi(g)x \rangle\vert) = 0 .
	\end{displaymath}
	\item\label{theorem:kirchberg.c2} There exist a unitary representation $\pi$ of $G$ on a Hilbert space $H$ and a $\pi$-invariant state $\mu$ on $\UCB(\Sph(H))$ such that \begin{displaymath}
		\qquad \forall g \in G\setminus \{ e\} \colon \quad \mu(x \mapsto \lVert x - \pi(g)x \rVert) > 0 .
	\end{displaymath}	
	\item\label{theorem:kirchberg.d1} There exist a unitary representation $\pi$ of $G$ on a Hilbert space $H$ and a $\pi$-invariant state $\phi$ on $\B(H)$ such that \begin{displaymath}
			\qquad \forall g \in G \setminus \{ e \} \colon \quad \phi(\pi(g)) = 0 .
		\end{displaymath}
	\item\label{theorem:kirchberg.d2} There exist a unitary representation $\pi$ of $G$ on a Hilbert space $H$ and a $\pi$-invariant state $\phi$ on $\B(H)$ such that \begin{displaymath}
			\qquad \forall g \in G \setminus \{ e \} \colon \quad \phi((1-\pi(g))^{\ast}(1-\pi(g))) > 0 .
		\end{displaymath}
	\item\label{theorem:kirchberg.e1} There exist a set $X$, a unitary representation $\pi \colon G \to \U(\ell^{2}(X))$ and a probability charge $\mu$ on $\Pow(X)$ such that $\mu_{\bullet}$ is $\pi$-invariant and \begin{displaymath}
		\qquad \forall g \in G\setminus \{ e\} \ \forall \epsilon \in \R_{>0} \colon \ \ \mu(\{ x \in X \mid \lvert \langle x,\pi(g)x \rangle \rvert \leq \epsilon \}) = 1.
	\end{displaymath}
	\item\label{theorem:kirchberg.e2} There exist a set $X$, a unitary representation $\pi \colon G \to \U(\ell^{2}(X))$ and a probability charge $\mu$ on $\Pow(X)$ such that $\mu_{\bullet}$ is $\pi$-invariant and \begin{displaymath}
		\qquad \forall g \in G\setminus \{ e\} \ \exists \gamma \in \R_{>0} \colon \ \ \mu(\{ x \in X \mid \Vert x - \pi(g)x \Vert \geq \gamma \}) = 1.
	\end{displaymath}
\end{enumerate} \end{thm}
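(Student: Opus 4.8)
The plan is to arrange the seven conditions into a web of implications centred on~\ref{theorem:kirchberg.d2}, with the genuinely new work located in the passages out of~\ref{theorem:kirchberg.d2}. I would first dispatch the routine directions. For \ref{theorem:kirchberg.c2}$\Longrightarrow$\ref{theorem:kirchberg.d2} and \ref{theorem:kirchberg.e2}$\Longrightarrow$\ref{theorem:kirchberg.d2} one passes to the induced $\pi$-invariant state $\phi\defeq\mu_{\bullet}$ on $\B(H)$ (Lemma~\ref{lemma:states.from.means}, respectively Remark~\ref{remark:charges.induce.states}) and estimates exactly as in the corresponding steps of Theorems~\ref{theorem:amenable.trace} and~\ref{theorem:elek.szabo}; since here $\pi$ is an honest unitary representation, nothing about it is altered. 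The implication \ref{theorem:kirchberg.e1}$\Longrightarrow$\ref{theorem:kirchberg.e2} follows as in Theorem~\ref{theorem:elek.szabo}, \ref{theorem:elek.szabo.2}$\Longrightarrow$\ref{theorem:elek.szabo.3}, and \ref{theorem:kirchberg.c1}$\Longrightarrow$\ref{theorem:kirchberg.c2} from the observation that $\mu(x\mapsto\lvert\langle x,\pi(g)x\rangle\rvert)=0$ forces $\mu_{\bullet}(\pi(g))=0$, whence $\mu(x\mapsto\lVert x-\pi(g)x\rVert)\ge\tfrac12\mu(x\mapsto\lVert x-\pi(g)x\rVert^{2})=\tfrac12(2-2\Re\mu_{\bullet}(\pi(g)))=1>0$. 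For \ref{theorem:kirchberg.d1}$\Longrightarrow$\ref{theorem:kirchberg.d2} one uses $\phi((1-\pi(g))^{\ast}(1-\pi(g)))=2-2\Re\phi(\pi(g))=2>0$. Finally, for \ref{theorem:kirchberg.a1}$\Longrightarrow$\ref{theorem:kirchberg.d1}: given a faithful unital ${}^{\ast}$-embedding $\iota\colon\Cstar(G)\to\B(H)$ and a hypertrace $\phi$ for $\iota(\Cstar(G))$ with $\phi\circ\iota=\tau_{G}$, set $\pi\defeq\iota\circ\iota_{G}$; this is a unitary representation, $\phi$ is $\pi$-invariant because $\phi(\iota(\delta_{g})^{\ast}a\iota(\delta_{g}))=\phi(a\,\iota(\delta_{g})\iota(\delta_{g})^{\ast})=\phi(a)$ by the hypertrace property, and $\phi(\pi(g))=(\phi\circ\iota)(\delta_{g})=\tau_{G}(\delta_{g})=\langle\delta_{e},\delta_{g}\rangle=0$ for $g\ne e$. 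Together with $(\ref{theorem:kirchberg.a1})\Rightarrow(\ref{theorem:kirchberg.d1})\Rightarrow(\ref{theorem:kirchberg.d2})$ it then suffices to prove $(\ref{theorem:kirchberg.d2})\Rightarrow(\ref{theorem:kirchberg.a1})$, $(\ref{theorem:kirchberg.d2})\Rightarrow(\ref{theorem:kirchberg.c1})$ and $(\ref{theorem:kirchberg.d2})\Rightarrow(\ref{theorem:kirchberg.e1})$, since these close the cycles $\ref{theorem:kirchberg.a1}\Leftrightarrow\ref{theorem:kirchberg.d1}\Leftrightarrow\ref{theorem:kirchberg.d2}$, $\ref{theorem:kirchberg.d2}\Leftrightarrow\ref{theorem:kirchberg.c1}\Leftrightarrow\ref{theorem:kirchberg.c2}$ and $\ref{theorem:kirchberg.d2}\Leftrightarrow\ref{theorem:kirchberg.e1}\Leftrightarrow\ref{theorem:kirchberg.e2}$.

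So fix a unitary representation $\pi\colon G\to\U(H)$ with a $\pi$-invariant state $\phi$ such that $\phi((1-\pi(g))^{\ast}(1-\pi(g)))>0$ for $g\ne e$. For \ref{theorem:kirchberg.d2}$\Longrightarrow$\ref{theorem:kirchberg.a1} I would use Lemma~\ref{lemma:amplification}: given $E\in\Pfin(G)$ and $\epsilon>0$ it yields a unitary representation $\pi'$ on a Hilbert space $H'$ and a projection $p\in\Pro(H')$ with $\tfrac1\epsilon\le\rk(p)<\infty$, $\lVert\pi'(g)p\pi'(g)^{\ast}-p\rVert_{2}\le\epsilon\lVert p\rVert_{2}$ for $g\in E$, and $\lvert\tr(\pi'(g)p)\rvert\le\epsilon\tr(p)$ for $g\in E\setminus\{e\}$. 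Let $\theta\colon\Cstar(G)\to\B(H')$ be the ${}^{\ast}$-homomorphism induced by $\pi'$ and consider the unital completely positive compression $\sigma\colon\Cstar(G)\to\B(p(H'))$, $a\mapsto p\theta(a)p$. From $\sigma(\delta_{g}^{\ast}\delta_{h})-\sigma(\delta_{g})^{\ast}\sigma(\delta_{h})=p\pi'(g)^{\ast}(1-p)\pi'(h)p$, the bound $\lVert(1-p)\pi'(g)p\rVert_{2}\le\epsilon\lVert p\rVert_{2}$, and $\lvert\tr(\pi'(g)p)\rvert\le\epsilon\tr(p)$, one obtains $\tfrac1{\rk(p)}\lvert\tr(\sigma(\delta_{g}^{\ast}\delta_{h}))-\tr(\sigma(\delta_{g})^{\ast}\sigma(\delta_{h}))\rvert\le\epsilon^{2}$ and $\lvert\tau_{G}(\delta_{g})-\tfrac1{\rk(p)}\tr(\sigma(\delta_{g}))\rvert\le\epsilon$ for $g,h\in E$; by linearity, density of $\C[G]$ and contractivity of $\sigma$ this extends to arbitrary finite subsets of $\Cstar(G)$. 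This is precisely the matricial approximation criterion of~\cite[Theorem~6.1, (2)$\Longrightarrow$(1)]{Ozawa} for the (tracial) state $\tau_{G}$, which therefore yields, on a (hence any) faithful representation $\Cstar(G)\subseteq\B(H)$, a hypertrace for $\Cstar(G)$ restricting to $\tau_{G}$; that is, $G$ has Kirchberg's factorization property.

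For \ref{theorem:kirchberg.d2}$\Longrightarrow$\ref{theorem:kirchberg.e1} and \ref{theorem:kirchberg.d2}$\Longrightarrow$\ref{theorem:kirchberg.c1} I would run Lemma~\ref{lemma:amplification} along the directed set $I\defeq\Pfin(G)\times\N_{>0}$: for $i=(E,k)$ choose the parameter $\epsilon_{i}\le\tfrac1k$ small enough (allowed to depend on $i$) that the resulting $p_{i}\in\Pro(H_{i})$ has rank large enough for the concentration estimates below; then form $\mathcal{H}\defeq\bigoplus_{i\in I}H_{i}$, the honest unitary representation $\pi\defeq\bigoplus_{i}\pi_{i}$, and fix a non-principal ultrafilter $\mathcal{F}$ on $I$ containing all tails. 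For~\ref{theorem:kirchberg.e1}: by Lemma~\ref{lemma:burton.concentration} and Lemma~\ref{lemma:milman}, for $\rk(p_{i})$ large the space $p_{i}(H_{i})$ carries an orthonormal basis $X_{i}$ contained in $\bigcap_{g\in E\setminus\{e\}}\{x\mid\lvert\langle x,\pi_{i}(g)x\rangle\rvert\le\tfrac2k\}$; extending $\bigsqcup_{i}X_{i}$ to an orthonormal basis $X$ of $\mathcal{H}\cong\ell^{2}(X)$ and setting $\mu(A)\defeq\lim_{i\to\mathcal{F}}\tfrac{\lvert A\cap X_{i}\rvert}{\lvert X_{i}\rvert}$, one has $\mu_{\bullet}(a)=\lim_{i\to\mathcal{F}}\tfrac1{\rk(p_{i})}\tr(aq_{i})$ with $q_{i}$ the projection of $\mathcal{H}$ onto $p_{i}(H_{i})$; invariance of $\mu_{\bullet}$ follows from $\lVert\pi_{i}(g)p_{i}\pi_{i}(g)^{\ast}-p_{i}\rVert_{1}\le\sqrt{2\,\rk(p_{i})}\,\lVert\pi_{i}(g)p_{i}\pi_{i}(g)^{\ast}-p_{i}\rVert_{2}\le\sqrt{2}\,\epsilon_{i}\rk(p_{i})$, and the freeness condition holds because $X_{i}$ was chosen inside the good set. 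For~\ref{theorem:kirchberg.c1}: take instead the state $\mu(f)\defeq\lim_{i\to\mathcal{F}}\int f\,\mathrm{d}\nu_{p_{i}(H_{i})}$ on $\UCB(\Sph(\mathcal{H}))$; $\pi$-invariance reduces, by density of the span of $\Lip_{1}(\Sph(\mathcal{H}),[-1,1])$ in $\UCB(\Sph(\mathcal{H}))$, to Lemma~\ref{lemma:mass.transportation} applied to $p_{i}$ and $\pi_{i}(g)p_{i}\pi_{i}(g)^{\ast}$ (the mass-transportation distance being $\le\sqrt{2}\,\epsilon_{i}$), and $\mu(x\mapsto\lvert\langle x,\pi(g)x\rangle\rvert)=0$ follows from Corollary~\ref{corollary:spherical.concentration} applied to $x\mapsto\langle x,\pi_{i}(g)x\rangle$ together with $\lvert\tfrac1{\rk(p_{i})}\tr(\pi_{i}(g)p_{i})\rvert\le\epsilon_{i}$.

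I expect the main obstacle to be \ref{theorem:kirchberg.d2}$\Longrightarrow$\ref{theorem:kirchberg.a1}: one must fit the conclusion of the amplification lemma into the exact shape of the matricial approximation criterion for amenable traces—keeping the bookkeeping between group elements and general elements of $\Cstar(G)$ straight—and then invoke the nontrivial converse of~\cite[Theorem~6.1]{Ozawa} (equivalently~\cite[Theorem~6.2.7]{BrownOzawa}), which is what actually manufactures the hypertrace from the finite-dimensional data. By contrast, the constructions for~\ref{theorem:kirchberg.c1} and~\ref{theorem:kirchberg.e1} run parallel to the ultraproduct arguments already used in Theorems~\ref{theorem:amenable.trace} and~\ref{theorem:elek.szabo}, the one genuinely new point being that $\rk(p_{i})$ has to be inflated enough for the spherical concentration inequalities and Milman's Lemma~\ref{lemma:milman} to apply with $\lvert E\rvert$ and $1/k$ as the operative parameters.
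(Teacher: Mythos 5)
Your proposal is correct, and for most of the seven conditions it coincides with the paper's argument: the routine passages to $\mu_{\bullet}$, the elementary inequalities for \ref{theorem:kirchberg.c1}$\Rightarrow$\ref{theorem:kirchberg.c2}, \ref{theorem:kirchberg.d1}$\Rightarrow$\ref{theorem:kirchberg.d2} and \ref{theorem:kirchberg.e1}$\Rightarrow$\ref{theorem:kirchberg.e2}, the construction of $\pi=\eta\circ\iota_{G}$ for \ref{theorem:kirchberg.a1}$\Rightarrow$\ref{theorem:kirchberg.d1}, and above all the two hard constructions \ref{theorem:kirchberg.d2}$\Rightarrow$\ref{theorem:kirchberg.c1} and \ref{theorem:kirchberg.d2}$\Rightarrow$\ref{theorem:kirchberg.e1} (Lemma~\ref{lemma:amplification} along $\Pfin(G)\times\N_{>0}$, Lemma~\ref{lemma:mass.transportation} for invariance, Lemma~\ref{lemma:burton.concentration} plus Lemma~\ref{lemma:milman} for the orthonormal bases, ultrafilter limits) are exactly what the paper does; your $\Vert\cdot\Vert_{1}\le\sqrt{2\rk(p_{i})}\,\Vert\cdot\Vert_{2}$ route to the invariance of $\mu_{\bullet}$ in~\ref{theorem:kirchberg.e1} is a fine substitute for the paper's shortcut of identifying $\nu_{\bullet}$ with the already-invariant state from the \ref{theorem:kirchberg.c1}-construction. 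The one genuine divergence is how you get back to~\ref{theorem:kirchberg.a1}: you close that cycle with \ref{theorem:kirchberg.d2}$\Rightarrow$\ref{theorem:kirchberg.a1}, compressing the $\Cstar(G)$-representation induced by the output of Lemma~\ref{lemma:amplification} to $p(H')$ and invoking the converse direction (2)$\Rightarrow$(1) of Ozawa's amenable-trace criterion. This is sound---the identity $\sigma(\delta_{g}^{\ast}\delta_{h})-\sigma(\delta_{g})^{\ast}\sigma(\delta_{h})=p\pi'(g)^{\ast}(1-p)\pi'(h)p$ together with $\Vert(1-p)\pi'(g)p\Vert_{2}\le\Vert\pi'(g)p\pi'(g)^{\ast}-p\Vert_{2}$ does verify the matricial criterion on $\{\delta_{g}\}_{g\in E}$, and the extension to finite subsets of $\Cstar(G)$ is standard sesquilinearity-plus-density bookkeeping---but it is considerably heavier than necessary. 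The paper instead proves \ref{theorem:kirchberg.d1}$\Rightarrow$\ref{theorem:kirchberg.a1} directly: $\pi$-invariance of $\phi$ makes $\phi$ a hypertrace for $\theta_{\pi}(\Cstar(G))$ by Remark~\ref{remark:positive}\ref{remark:positive.2}, the identity $\phi\circ\theta_{\pi}=\tau_{G}$ follows from density of the span of $\iota_{G}(G)$, and Lemma~\ref{lemma:arveson} transports the hypertrace to a faithful representation. That route needs neither the amplification lemma nor the nontrivial converse of Ozawa's theorem for this step, and it cleanly decouples the soft equivalence \ref{theorem:kirchberg.a1}$\Leftrightarrow$\ref{theorem:kirchberg.d1} from the concentration machinery; what your route buys in exchange is a self-contained derivation of the factorization property from the purely approximate data of~\ref{theorem:kirchberg.d2}, at the cost of importing the full strength of the amenable-trace characterization.
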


\begin{proof} The implications \ref{theorem:kirchberg.c1}$\Longrightarrow$\ref{theorem:kirchberg.d1}$\Longrightarrow$\ref{theorem:kirchberg.d2} and \ref{theorem:kirchberg.c2}$\Longrightarrow$\ref{theorem:kirchberg.d2} follow as in the proof of Theorem~\ref{theorem:amenable.trace}.
	
\ref{theorem:kirchberg.c1}$\Longrightarrow$\ref{theorem:kirchberg.c2}. Let $H$ be a Hilbert space and let $\mu$ be a state on $\UCB(\Sph(H))$. For every $u \in \U(H)$, we see that \begin{align*}
	2\mu(x \mapsto \Vert x-ux \Vert) \, &\geq \, \mu\!\left( x \mapsto \Vert x-ux \Vert^{2} \right)\! \, = \, 2-2 \mu(x\mapsto \Re \langle x,ux \rangle) \\
		& \geq \, 2-2\mu(x \mapsto \vert \langle x,ux\rangle\vert) ,
\end{align*} and hence \begin{displaymath}
	\mu(x \mapsto \Vert x-ux \Vert) \, \geq \, 1-\mu(x \mapsto \vert \langle x,ux\rangle\vert).
\end{displaymath} Thus, \ref{theorem:kirchberg.c1} entails~\ref{theorem:kirchberg.c2}.
	
\ref{theorem:kirchberg.a1}$\Longrightarrow$\ref{theorem:kirchberg.d1}. Suppose that $G$ has Kirchberg's factorization property. Then there exist a Hilbert space $H$, a unital ${}^{\ast}$-algebra embedding $\eta \colon \Cstar(G) \to \B(H)$ and a hypertrace $\phi \colon \B(H) \to \C$ for $\eta(\Cstar(G))$ such that $\phi \circ \eta = \tau_{G}$. Hence, $\pi \defeq \eta \circ {\iota_{G}} \colon G \to \U(H)$ is a unitary representation, $\phi$ is $\pi$-invariant, and $\phi(\pi(g)) = \phi(\eta(\iota_{G}(g))) = \tau_{G}(\iota_{G}(g)) = 0$ for every $g \in G\setminus \{e\}$, as desired.
	
\ref{theorem:kirchberg.d1}$\Longrightarrow$\ref{theorem:kirchberg.a1}. Let $\pi$ be a unitary representation of $G$ on a Hilbert space $H$ and let $\phi$ be a $\pi$-invariant state on $\B(H)$ such that $\phi(\pi(g)) = 0$ for every $g \in G\setminus \{e\}$. According to~\cite[Proposition~2.5.2, p.~43]{BrownOzawa}, there exists a (unique) unital ${}^{\ast}$-algebra homomorphism $\theta_{\pi} \colon \Cstar(G) \to \B(H)$ such that ${\theta_{\pi}} \circ {\iota_{G}} = \pi$. Then $\phi \circ {\theta_{\pi}} \circ {\iota_{G}} = \phi \circ \pi = {\tau_{G}} \circ {\iota_{G}}$ and thus \begin{equation}\label{eq:extension}
	\phi \circ {\theta_{\pi}} \, = \, {\tau_{G}} ,
\end{equation} as $\iota_{G}(G)$ spans a dense linear subspace of $\Cstar(G)$. By the Gelfand--Naimark representation theorem~\cite{GelfandNaimark}\footnote{For a modern exposition, see~\cite[Theorem~3.4.1, p.~94]{MurphyBook} or~\cite[Theorem~7.10, p.~33]{ConwayBook}.}, there exists a unital ${}^{\ast}$-algebra embedding $\eta \colon \Cstar(G) \to \B(K)$ for some Hilbert space $K$. By Lemma~\ref{lemma:arveson}, there exists a hypertrace $\psi \colon \B(K) \to \C$ for $\eta(\Cstar(G))$ such that $\psi \circ \eta = \phi \circ {\theta_{\pi}}$, whence $\psi \circ \eta = {\tau_{G}}$ due to~\eqref{eq:extension}. Therefore, $G$ has Kirchberg's factorization property.

\ref{theorem:kirchberg.d2}$\Longrightarrow$\ref{theorem:kirchberg.c1}$\wedge$\ref{theorem:kirchberg.e1}. Consider the directed set $(I,{\preceq})$ defined by $I \defeq \Pfin(G) \times \N_{>0}$ and \begin{displaymath}
	(E,k) \preceq (E',k') \quad :\Longleftrightarrow \quad E \subseteq E' \ \wedge \ k \leq k' \qquad ((E,k),(E',k') \in I).
\end{displaymath} According to~\ref{theorem:kirchberg.d2} and Lemma~\ref{lemma:amplification}, for each $i = (E,k) \in I$, we find a Hilbert space $H_{i}$, a homomorphism $\pi_{i} \colon G \to \U(H_{i})$, and a finite-rank projection $p_{i} \in \Pro(H_{i})\setminus \{ 0 \}$ such that \begin{align}
	&4\exp\!\left(-\tfrac{2\rk(p_{i})-1}{16k^{2}}\right) < \min\!\left\{ \tfrac{1}{k}, \tfrac{1}{(\vert E \vert+1) \rk(p_{i})} \right\} , \label{eq:kirchberg.concentration} \\
	&\forall g \in E \colon \quad \tfrac{1}{\Vert p_{i} \Vert_{2}}\lVert \pi_{i}(g)p_{i}\pi_{i}(g)^{\ast} - p_{i} \rVert_{2} \leq \tfrac{1}{k} , \label{eq:kirchberg.invariance} \\
	&\forall g \in E\setminus\{ e\} \colon \quad \tfrac{1}{\tr(p_{i})}\vert\tr(\pi_{i}(g)p_{i})\vert \leq \tfrac{1}{k} . \label{eq:kirchberg.freeness}
\end{align} Consider the Hilbert space sum $H \defeq \bigoplus_{i \in I} H_{i}$ and the homomorphism $\pi \colon G \to \U(H)$ defined by \begin{displaymath}
	\pi(g)(x) \, \defeq \, (\pi_{i}(g)x_{i})_{i \in I} \qquad (g \in G, \, x \in H) .
\end{displaymath} By means of the natural embeddings, we regard the Hilbert spaces $H_{i}$ $(i \in I)$ as subspaces of $H$. Choose any (necessarily non-principal) ultrafilter $\mathcal{F}$ on $I$ containing \begin{displaymath}
	\{ \{ (E',k') \in I \mid (E,k) \preceq (E',k') \} \mid (E,k) \in I \} .
\end{displaymath} To deduce~\ref{theorem:kirchberg.c1}, consider the state \begin{displaymath}
	\mu \colon \, \UCB(\Sph(H)) \, \longrightarrow \, \C, \quad f \, \longmapsto \, \lim\nolimits_{i\to \mathcal{F}} \int f \, \mathrm{d}\nu_{p_{i}(H_{i})} .
\end{displaymath} In order to establish $\pi$-invariance of $\mu$, let $g \in G$. Then $\nu_{\pi_{i}(g)p_{i}(H_{i})} = \pi(g)_{\ast}(\nu_{p_{i}(H_{i})})$ for every $i \in I$, and so \begin{align*}
	\mu(f \circ \pi(g)) \, &= \, \lim\nolimits_{i\to \mathcal{F}} \int f \circ \pi(g) \, \mathrm{d}\nu_{p_{i}(H_{i})} \\
	& = \, \lim\nolimits_{i\to \mathcal{F}} \int f \, \mathrm{d}\nu_{\pi_{i}(g)p_{i}(H_{i})} \, \stackrel{\eqref{eq:kirchberg.invariance}+\ref{lemma:mass.transportation}}{=} \, \lim\nolimits_{i\to \mathcal{F}} \int f \, \mathrm{d}\nu_{p_{i}(H_{i})} \, = \, \mu(f)
\end{align*} for every $f \in \Lip_{1}(\Sph(H),[-1,1])$. Since $\Lip_{1}(\Sph(H),[-1,1])$ spans a $\Vert \cdot \Vert_{\infty}$-dense linear subspace of $\UCB(\Sph(H))$ (see, e.g.,~\cite[Lemma~5.20, p.~68]{PachlBook}), thus $\mu(f \circ \pi(g)) = \mu(f)$ for all $f \in \UCB(\Sph(H))$, so that $\pi(g).\mu = \mu$ as intended. Moreover, if $i=(E,k) \in I$ and $g \in E\setminus \{ e \}$, then \begin{align*}
	&\int \vert \langle x,\pi(g)x \rangle \vert \, \mathrm{d}\nu_{p_{i}(H_{i})}(x) \, = \, \int \vert \langle x,\pi_{i}(g)x \rangle \vert \, \mathrm{d}\nu_{p_{i}(H_{i})}(x) \\
	& \qquad \leq \, \tfrac{\vert \tr(\pi_{i}(g)p_{i})\vert}{\tr(p_{i})} + \tfrac{1}{k} + \nu_{p_{i}(H_{i})}\!\left(\left\{ x \in \Sph(p_{i}(H_{i})) \left\vert \, \left\lvert \langle x,\pi_{i}(g)x \rangle - \tfrac{\tr(\pi_{i}(g)p_{i})}{\tr(p_{i})} \right\rvert > \tfrac{1}{k} \right\} \!\right. \right) \\
	& \qquad \stackrel{\eqref{eq:kirchberg.freeness}+\ref{lemma:burton.concentration}}{\leq} \, \tfrac{2}{k} + 4\exp\!\left(-\tfrac{2\rk(p_{i})-1}{16k^{2}\Vert \pi_{i}(g)\Vert^{2}}\right)\! \, = \, \tfrac{2}{k} + 4\exp\!\left(-\tfrac{2\rk(p_{i})-1}{16k^{2}}\right)\! \, \stackrel{\eqref{eq:kirchberg.concentration}}{\leq} \, \tfrac{3}{k} ,
\end{align*} which entails that \begin{displaymath}
	\mu(x \mapsto \vert \langle x,\pi(g)x \rangle \vert) \, = \, \lim\nolimits_{i \to \mathcal{F}} \int \vert \langle x,\pi(g)x \rangle \vert \, \mathrm{d}\nu_{p_{i}(H_{i})}(x) \, = \, 0
\end{displaymath} for every $g \in G \setminus \{ e\}$, as desired in~\ref{theorem:kirchberg.c1}. We now proceed to deducing~\ref{theorem:kirchberg.e1}. For every $i = (E,k) \in I$, considering \begin{displaymath}
	B_{i} \, \defeq \, \left\{ x \in \Sph(p_{i}(H_{i})) \left\vert \, \forall g \in E\setminus \{ e \} \colon \, \lvert \langle x,\pi_{i}(g)x \rangle \rvert \leq \tfrac{2}{k} \right\} \!\right. \, \in \, \Borel(\Sph(p_{i}(H_{i}))),
\end{displaymath} we see that \begin{align*}
	\Sph(p_{i}(H_{i}))\setminus B_{i} \, &= \, \bigcup\nolimits_{g \in E\setminus \{ e \}} \left\{ x \in \Sph(p_{i}(H_{i})) \left\vert \, \lvert \langle x,\pi_{i}(g)x \rangle \rvert > \tfrac{2}{k} \right\} \!\right. \\
	&\stackrel{\eqref{eq:kirchberg.freeness}}{\subseteq} \, \bigcup\nolimits_{g \in E\setminus \{ e \}} \left\{ x \in \Sph(p_{i}(H_{i})) \left\vert \, \left\lvert \langle x,\pi_{i}(g)x \rangle - \tfrac{\tr(\pi_{i}(g)p_{i})}{\tr(p_{i})} \right\rvert > \tfrac{1}{k} \right\} \!\right.
\end{align*} and therefore \begin{align*}
	&\nu_{p_{i}(H_{i})}(\Sph(p_{i}(H_{i}))\setminus B_{i}) \\
	&\qquad \leq \, \sum\nolimits_{g \in E\setminus \{ e\}} \nu_{p_{i}(H_{i})}\!\left( \left\{ x \in \Sph(p_{i}(H_{i})) \left\vert \, \left\lvert \langle x,\pi_{i}(g)x \rangle - \tfrac{\tr(\pi_{i}(g)p_{i})}{\tr(p_{i})} \right\rvert > \tfrac{1}{k} \right\} \!\right. \right) \\
	&\qquad \stackrel{\ref{lemma:burton.concentration}}{\leq} \, 4\vert E \vert\exp\!\left(-\tfrac{2\rk(p_{i})-1}{16k^{2}\Vert \pi_{i}(g)\Vert^{2}}\right)\! \, = \, 4\vert E \vert\exp\!\left(-\tfrac{2\rk(p_{i})-1}{16k^{2}}\right)\! \, \stackrel{\eqref{eq:kirchberg.concentration}}{<} \, \tfrac{1}{\rk(p_{i})} \, = \, \tfrac{1}{\dim (p_{i}(H_{i}))} .
\end{align*} Hence, thanks to Lemma~\ref{lemma:milman}, for each $i \in I$ the set $B_{i}$ contains an orthonormal basis $X_{i}$ for $p_{i}(H_{i})$. In turn, $\bigcup_{i \in I} X_{i}$ is an orthonormal system in $H$, thus we find an orthonormal basis $X$ for $H$ with $\bigcup_{i \in I} X_{i} \subseteq X$. Consider the probability charge \begin{displaymath}
	\nu \colon \, \Pow(X) \, \longrightarrow \, [0,1], \quad A \, \longmapsto \, \lim\nolimits_{i \to \mathcal{F}} \tfrac{\vert A\cap X_{i}\vert}{\vert X_{i}\vert}
\end{displaymath} and note that \begin{align*}
	\nu_{\bullet}(a) \, &= \, \int \langle x,ax \rangle \, \mathrm{d}\nu(x) \, = \, \lim\nolimits_{i \to \mathcal{F}} \tfrac{1}{\vert X_{i}\vert}\sum\nolimits_{x\in X_{i}}\langle x,ax\rangle \, = \, \lim\nolimits_{i \to \mathcal{F}} \tfrac{1}{\tr(p_{i})}\tr(ap_{i}) \\
	& \stackrel{\ref{corollary:trace}}{=} \, \lim\nolimits_{i \to \mathcal{F}} \int \langle x,ax \rangle \, \mathrm{d}\nu_{p_{i}(H_{i})}(x) \, = \, \mu(x \mapsto \langle x,ax \rangle) \, = \, \mu_{\bullet}(a)
\end{align*} for every $a \in \B(H)$. In particular, since $\mu$ is $\pi$-invariant, $\nu_{\bullet} = \mu_{\bullet}$ is $\pi$-invariant by Lemma~\ref{lemma:states.from.means}. Finally, since $X_{i} \subseteq B_{i}$ for each $i \in I$, \begin{align*}
	\nu(\{ x \in X \mid \lvert \langle x,\pi(g)x \rangle \rvert \leq \epsilon \}) \, & = \, \lim\nolimits_{i \to \mathcal{F}} \tfrac{1}{\vert X_{i} \vert} \lvert \{ x \in X_{i} \mid \langle x,\pi(g)x \rangle \rvert \leq \epsilon \} \rvert \\
	& = \, \lim\nolimits_{i \to \mathcal{F}} \tfrac{1}{\vert X_{i} \vert} \lvert \{ x \in X_{i} \mid \langle x,\pi_{i}(g)x \rangle \rvert \leq \epsilon \} \rvert \, = \, 1 
\end{align*} whenever $g \in G\setminus \{ e \}$ and $\epsilon \in \R_{>0}$, which establishes~\ref{theorem:kirchberg.e1}.

\ref{theorem:kirchberg.e1}$\Longrightarrow$\ref{theorem:kirchberg.e2}. This is analogous to the proof of Theorem~\ref{theorem:elek.szabo}\ref{theorem:elek.szabo.2}$\Longrightarrow$\ref{theorem:elek.szabo.3}.

\ref{theorem:kirchberg.e2}$\Longrightarrow$\ref{theorem:kirchberg.d2}. This is analogous to the proof of Theorem~\ref{theorem:elek.szabo}\ref{theorem:elek.szabo.3}$\Longrightarrow$\ref{theorem:elek.szabo.1}. \end{proof}

\begin{remark}\label{remark:ring.homomorphism.kirchberg} In the conditions~\ref{theorem:kirchberg.c1} and~\ref{theorem:kirchberg.c2} of Theorem~\ref{theorem:kirchberg}, one may additionally require that $\mu$ be a ring homomorphism from $\UCB(\Sph(H))$ to $\C$. Indeed, as the family $(H_{i},\pi_{i},p_{i})_{i \in I}$ selected in the proof of the implication \ref{theorem:kirchberg.d2}$\Longrightarrow$\ref{theorem:kirchberg.c1} satisfies~\eqref{eq:kirchberg.concentration}, it follows that \begin{displaymath}
	\dim(H_{i})\, \longrightarrow \, \infty \quad (i \to (I,{\preceq})) ,
\end{displaymath} whence the argument in Remark~\ref{remark:ring.homomorphism} shows that the resulting map $\mu$ constitutes a ring homomorphism. \end{remark}

Let us recall that a group $G$ is said to be \emph{Kazhdan} or have \emph{property $(T)$} if every unitary representation of $G$ with almost invariant vectors admits an invariant vector of norm one. In~\cite{ElekSzabo}, Elek and Szabó considered the class of groups $G$ such that \begin{equation}\label{eq:elek.szabo.factorization.property}
	\begin{split}
		&\exists X \text{ set} \ \, \exists \pi \colon G \to \Sym(X) \text{ homomorphism}  \\
		&\exists \mu \text{ $\pi$-invariant probability charge on } \Pow(X)  \\
		&\forall g \in G\setminus \{ e \} \colon \ \mu (\{ x \in X \mid x \ne \pi(g)x \}) = 1 .
	\end{split}
\end{equation} They observed that this class contains all amenable groups as well as all residually finite ones~\cite[Proposition~4.3]{ElekSzabo}, and they showed that every Kazhdan group satisfying~\eqref{eq:elek.szabo.factorization.property} must be residually finite~\cite[Proposition~4.4]{ElekSzabo}. From Lemma~\ref{lemma:elek.szabo.comparison}\ref{lemma:elek.szabo.comparison.1}$+$\ref{lemma:elek.szabo.comparison.3} and Theorem~\ref{theorem:kirchberg}, it follows that groups satisfying~\eqref{eq:elek.szabo.factorization.property} have Kirchberg's factorization property. In view of this, \cite[Proposition~4.4]{ElekSzabo} becomes a direct consequence of the following earlier theorem by Kirchberg~\cite{Kirchberg}, of which we provide a new proof.

\begin{thm}[\cite{Kirchberg}, Theorem~1.1]\label{theorem:kazhdan} Every Kazhdan group with Kirchberg's factorization property is residually finite. \end{thm}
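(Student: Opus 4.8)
The plan is to combine the equivalent reformulation of Kirchberg's factorization property furnished by Theorem~\ref{theorem:kirchberg} with Bekka's analysis of amenable unitary representations and with Mal'cev's theorem that finitely generated linear groups are residually finite. So assume $G$ is Kazhdan and has Kirchberg's factorization property. By the implication \ref{theorem:kirchberg.a1}$\Longrightarrow$\ref{theorem:kirchberg.d1} of Theorem~\ref{theorem:kirchberg} we obtain a unitary representation $\pi$ of $G$ on a Hilbert space $H$ and a $\pi$-invariant state $\phi$ on $\B(H)$ with $\phi(\pi(g)) = 0$ for every $g \in G\setminus\{e\}$; equivalently (this is condition~\ref{theorem:kirchberg.d2}), $\phi((1-\pi(g))^{\ast}(1-\pi(g))) = 2-2\Re\phi(\pi(g)) > 0$, i.e.\ $\Re\phi(\pi(g)) < 1$, for all $g \neq e$. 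In particular $\pi$ is amenable in the sense of Bekka.

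The first, and technically most delicate, step is to show that the finite-dimensional subrepresentations of $\pi$ separate the points of $G$: for every $g \neq e$ there is a non-zero finite-dimensional $\pi$-invariant subspace $V$ with $\pi(g)\vert_{V} \neq \id_{V}$. The key sublemma is that \emph{every} amenable unitary representation $\rho$ of the Kazhdan group $G$ on a non-zero Hilbert space $K$ has a non-zero finite-dimensional subrepresentation. Indeed, by \cite[Theorem~5.1]{bekka} amenability of $\rho$ means that the representation $\tilde\rho\colon G \to \U(\B_{2}(K))$ on the Hilbert space $\B_{2}(K)$ of Hilbert--Schmidt operators, given by $\tilde\rho(g)a = \rho(g)a\rho(g)^{\ast}$, has almost invariant vectors; property $(T)$ then yields a non-zero $T \in \B_{2}(K)$ with $\rho(g)T\rho(g)^{\ast} = T$ for all $g \in G$. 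Consequently $T^{\ast}T$ is a non-zero, positive, trace-class operator that commutes with $\rho(G)$, so the spectral projection of $T^{\ast}T$ onto its largest eigenvalue is a non-zero finite-rank $\rho$-invariant projection, and its range is the sought subspace. Now let $H_{f}$ be the closed linear span of all finite-dimensional $\pi$-invariant subspaces of $H$, and let $p$ be the orthogonal projection onto $H_{f}^{\perp}$, which is $\pi$-invariant. If $\phi(p) > 0$, then $a \mapsto \phi(pap)/\phi(p)$ is a well-defined $\pi\vert_{H_{f}^{\perp}}$-invariant state on $\B(H_{f}^{\perp})$ (using $\pi(g)p = p\pi(g)$ and $p$-invariance of $\phi$ exactly as in the proof of Lemma~\ref{lemma:key}), so $\pi\vert_{H_{f}^{\perp}}$ is amenable and hence, by the sublemma, possesses a finite-dimensional subrepresentation---contradicting the definition of $H_{f}$. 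Therefore $\phi(p) = 0$. If now some $g \neq e$ were to act trivially on every finite-dimensional $\pi$-invariant subspace, then $\pi(g)$ would fix $H_{f}$ pointwise, hence $\pi(g)(1-p) = 1-p$; combining this with $\phi(p) = 0$ and the Cauchy--Schwartz inequality in the GNS space of $\phi$ (which gives $\lvert\phi(\pi(g)p)\rvert \leq \phi(p)^{1/2} = 0$) yields $\phi(\pi(g)) = \phi(\pi(g)(1-p)) = \phi(1-p) = 1$, contradicting $\Re\phi(\pi(g)) < 1$. This proves the separation claim.

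For the second step, recall that Kazhdan groups are finitely generated. Fix $g \neq e$ and a non-zero finite-dimensional $\pi$-invariant subspace $V$ with $\sigma \defeq \pi\vert_{V}\colon G \to \U(V)$ satisfying $\sigma(g) \neq \id_{V}$. Then $\sigma(G)$ is a finitely generated subgroup of $\U(V) \subseteq \mathrm{GL}(V)$, hence a finitely generated linear group, so by Mal'cev's theorem it is residually finite; thus there is a finite-index normal subgroup $M \trianglelefteq \sigma(G)$ with $\sigma(g) \notin M$, and then $\sigma^{-1}(M) \trianglelefteq G$ has finite index and does not contain $g$. Since $g \in G\setminus\{e\}$ was arbitrary, $G$ is residually finite.

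The step I expect to be the main obstacle is the first one: pinning down the decomposition into the almost-periodic part $H_{f}$ and its complement, checking that restrictions of $\pi$ to $\pi$-invariant summands of positive $\phi$-mass are again amenable, and carrying out the short GNS/Cauchy--Schwartz computation that forces $\phi(\pi(g)) = 1$ whenever $g$ acts trivially on $H_{f}$. By contrast, the Bekka--Kazhdan production of a finite-rank invariant projection from amenability and the Mal'cev descent to finite quotients are standard once the setup is in place.
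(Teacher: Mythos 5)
Your proposal is correct and follows essentially the same route as the paper: extract $(\pi,\phi)$ from Theorem~\ref{theorem:kirchberg}, show via amenability of the restriction to the orthogonal complement of the almost-periodic part and property $(T)$ that $\phi$ is supported on the closed span $H_{f}$ of the finite-dimensional $\pi$-invariant subspaces, use Cauchy--Schwartz to force every $g\ne e$ to act non-trivially on some such subspace, and finish with finite generation plus Malcev. The only cosmetic deviations are that you prove the Bekka--Kazhdan sublemma directly rather than citing~\cite[Corollary~5.9]{bekka}, and you apply Malcev to each finite-dimensional image separately instead of to the diagonal embedding into $\prod_{K\in\mathcal{K}}\U(K)$.
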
 

\begin{proof} Let $G$ be a Kazhdan group with Kirchberg's factorization property. According to Theorem~\ref{theorem:kirchberg}, there exists a Hilbert space $H$, a representation $\pi \colon G \to \U(H)$ and a $\pi$-invariant state $\phi \colon \B(H) \to \C$ such that \begin{displaymath}
	\forall g \in G \setminus \{ e \} \colon \quad \phi((1-\pi(g))^{\ast}(1-\pi(g))) > 0 .
\end{displaymath} Consider the directed set $\mathcal{K}$ of $\pi$-invariant finite-dimensional linear subspaces of $H$. Then the closed linear subspace \begin{displaymath}
	H_{0} \, \defeq \, \overline{\bigcup \mathcal{K}} \, = \, \overline{\sum \mathcal{K}} \, \leq \, H
\end{displaymath} is $\pi$-invariant. Concerning the orthogonal projection $p \in \Pro(H)$ onto $H_{0}$, this means that $\pi(g)p = p\pi(g)$ for every $g \in G$. We argue that $\phi(p) = 1$. For contradiction, suppose that $\phi(p) < 1$, i.e., $\phi(1-p) = 1-\phi(p) > 0$. Consider the $\pi$-invariant closed linear subspace $H_{1} \defeq H_{0}^{\perp} = (1-p)(H)$ of $H$ along with the induced representation $\rho \colon G \to \U(H_{1}), \, g \mapsto \pi(g)\vert_{H_{1}}$. Since \begin{displaymath}
	\psi \colon \, \B(H_{1}) \, \longrightarrow \, \C, \quad a \, \longmapsto \, \tfrac{1}{\phi(1-p)}\phi(a(1-p))
\end{displaymath} is a well-defined state satisfying \begin{align*}
	\psi(\rho(g)^{\ast}a\rho(g)) \, &= \, \tfrac{1}{\phi(1-p)}\phi(\rho(g)^{\ast}a\rho(g)(1-p)) \, = \, \tfrac{1}{\phi(1-p)}\phi(\pi(g)^{\ast}a\pi(g)(1-p)) \\
		& = \, \tfrac{1}{\phi(1-p)}\phi(a\pi(g)(1-p)\pi(g)^{\ast}) \, = \, \tfrac{1}{\phi(1-p)}\phi(a(1-\pi(g)p\pi(g)^{\ast})) \\
		& = \, \tfrac{1}{\phi(1-p)}\phi(a(1-p)) \, = \, \psi(a)
\end{align*} for every $a \in \B(H_{1})$, we see that $\rho$ is amenable. Since $G$ is a Kazhdan group, $\rho$ must possess a non-trivial finite-dimensional subrepresentation by~\cite[Corollary~5.9]{bekka}. As $H_{1}$ does not contain a non-zero $\pi$-invariant finite-dimensional linear subspace, this gives the desired contradiction. Hence, $\phi(p) = 1$. Thanks to the Cauchy--Schwartz inequality, it follows that \begin{align*}
	&\vert \phi(a(1-p)) \vert \, \leq \, \sqrt{\phi(a^{\ast}a)} \sqrt{\phi((1-p)^{\ast}(1-p))} \, = \, \sqrt{\phi(a^{\ast}a)} \sqrt{\phi(1-p)} \, = \, 0 , \\
	&\vert \phi((1-p)a) \vert \, \leq \, \sqrt{\phi((1-p)^{\ast}(1-p))}\sqrt{\phi(a^{\ast}a)} \, = \, \sqrt{\phi(1-p)} \sqrt{\phi(a^{\ast}a)} \, = \, 0 
\end{align*} for all $a \in \B(H)$, wherefore \begin{displaymath}
	\phi(a) \, = \, \phi(pap) + \phi((1-p)ap) + \phi(a(1-p)) \, = \, \phi(pap)
\end{displaymath} for every $a \in \B(H)$. We conclude that the homomorphism \begin{displaymath}
	G \, \longrightarrow \, \prod\nolimits_{K \in \mathcal{K}} \U(K), \quad g \, \longmapsto \, (\pi(g)\vert_{K})_{K \in \mathcal{K}}
\end{displaymath} is injective: indeed, if $g \in G\setminus \{ e \}$, then \begin{align*}
	\phi(((1-\pi(g))p)^{\ast}((1-\pi(g))p)) \, &= \, \phi(p(1-\pi(g))^{\ast}(1-\pi(g))p) \\
		& = \, \phi((1-\pi(g))^{\ast}(1-\pi(g))) \, > \, 0
\end{align*} and thus $(1-\pi(g))p \ne 0$, i.e., $\pi(g)\vert_{H_{0}} \ne \id_{H_{0}}$, whence $\pi(g)\vert_{K} \ne \id_{K}$ for some $K \in \mathcal{K}$. Being a Kazhdan group, $G$ is finitely generated by~\cite[Theorem~1.3.1, p.~36]{BekkaBook}. Since finitely generated subgroups of $\prod\nolimits_{K \in \mathcal{K}} \U(K)$ are residually finite due to Malcev's theorem (see, e.g.,~\cite[Theorem~6.4.13, p.~231]{BrownOzawa}), this completes the proof. \end{proof}

The reader is referred to~\cite{GromovHyperbolic,OzawaProc} for examples of infinite simple Kazhdan groups, and to~\cite{Thom} for an example of a sofic (hence hyperlinear) Kazhdan group without Kirchberg's factorization property.

\appendix 

\section{Proof of Lemma~\ref{lemma:pestov}}\label{section:proof.pestov.lemma}

This section provides a proof of Pestov's Lemma~\ref{lemma:pestov}, following the lines of~\cite{Pestov00}, only providing some additional details. We claim no originality. Moreover, for the lack of a suitable reference, we also record an elementary observation used in the proof of Lemma~\ref{lemma:mass.transportation}, namely Lemma~\ref{lemma:low.energy.conjugation}.

\begin{remark}\label{remark:norm.decomposition} Let $H$ be a Hilbert space, let $p_{1},\ldots,p_{n} \in \Pro(H)$ be pairwise orthogonal, and let $a_{1},\ldots,a_{n} \in \B(H)$. \begin{enumerate}
	\item\label{remark:norm.decomposition.1} If $p_{i}a_{i}p_{i} = a_{i}$ for each $i \in \{ 1,\ldots,n \}$, then \begin{displaymath}
			\qquad \Vert a_{1}+\ldots +a_{n} \Vert \, = \, \max\nolimits_{i \in \{ 1,\ldots,n \}} \Vert a_{i} \Vert .
		\end{displaymath}
	\item\label{remark:norm.decomposition.2} If $\rk(p_{i}) < \infty$ and $a_{i}p_{i} = a_{i}$ for each $i \in \{ 1,\ldots,n \}$, then \begin{displaymath}
			\qquad \Vert a_{1}+\ldots +a_{n} \Vert_{2}^{2} \, = \, \sum\nolimits_{i = 1}^{n} \Vert a_{i} \Vert_{2}^{2} .
		\end{displaymath}
\end{enumerate} \end{remark}

\begin{lem}[\cite{Pestov00}, Lemma~5.4]\label{lemma:wedin} Let $H$ be a Hilbert space and let $p,q \in \Pro(H)$ with $n \defeq \rk(p) = \rk(q) < \infty$. Then there exist $p_{1},\ldots,p_{n},q_{1},\ldots,q_{n} \in \Pro(H)$ such that \begin{itemize}
	\item[---\,] $\rk(p_{i}) = 1 = \rk(q_{i})$ for each $i \in \{ 1,\ldots,n\}$,
	\item[---\,] $p_{i} \perp p_{j}$ and $p_{i} \perp q_{j}$ and $q_{i} \perp q_{j}$ for any two distinct $i,j \in \{ 1,\ldots,n \}$,
	\item[---\,] $p = p_{1} + \ldots + p_{n}$ and $q = q_{1} + \ldots + q_{n}$.
\end{itemize} In particular, \begin{displaymath}
	\Vert p-q \Vert \, = \, \max\nolimits_{i \in \{ 1,\ldots,n \}} \Vert p_{i}-q_{i} \Vert .
\end{displaymath} \end{lem}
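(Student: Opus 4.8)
The plan is to prove the existence statement first and then extract the norm identity from it together with Remark~\ref{remark:norm.decomposition}. The existence part is, in essence, the spectral (principal angle) decomposition of the pair $(p,q)$. Since $p(H)$ is $n$-dimensional and the operator $pqp$ restricts to a positive contraction on $p(H)$, I would begin by fixing an orthonormal basis $e_{1},\dots,e_{n}$ of $p(H)$ consisting of eigenvectors of $pqp|_{p(H)}$, say $pqp\,e_{i}=\lambda_{i}e_{i}$ with $\lambda_{i}\in[0,1]$, and setting $p_{i}\defeq\langle e_{i},\cdot\,\rangle e_{i}$; then $p_{1},\dots,p_{n}$ are pairwise orthogonal rank-one projections with $p=p_{1}+\dots+p_{n}$.

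For the $q_{i}$, the key elementary observations are that $\Vert qe_{i}\Vert^{2}=\langle e_{i},pqp\,e_{i}\rangle=\lambda_{i}$ and $\langle qe_{i},qe_{j}\rangle=\langle e_{i},pqp\,e_{j}\rangle=\lambda_{j}\delta_{ij}$, using $pe_{i}=e_{i}$. Hence, putting $S\defeq\{i\mid\lambda_{i}>0\}$, the vectors $f_{i}\defeq\lambda_{i}^{-1/2}qe_{i}$ for $i\in S$ form an orthonormal system inside $q(H)$; I would extend it to an orthonormal basis $(f_{i})_{i=1}^{n}$ of $q(H)$ by adjoining, for each $i\notin S$, a unit vector $f_{i}$ chosen in $q(H)\ominus\lin\{f_{j}\mid j\in S\}$, and then set $q_{i}\defeq\langle f_{i},\cdot\,\rangle f_{i}$, so that $q_{1},\dots,q_{n}$ are pairwise orthogonal rank-one projections with $q=q_{1}+\dots+q_{n}$.

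The heart of the matter is verifying $p_{i}\perp q_{j}$ for $i\ne j$, and this is the step I expect to require the most care, because of the degenerate principal angles ($\lambda=0$ or $\lambda=1$). For $i,j\in S$ it is immediate from $\langle e_{i},f_{j}\rangle=\lambda_{j}^{-1/2}\langle e_{i},qe_{j}\rangle=\lambda_{j}^{-1/2}\langle e_{i},pqp\,e_{j}\rangle=0$; for $i\notin S$ it holds because $\lambda_{i}=0$ forces $qe_{i}=0$, hence $e_{i}\perp q(H)\ni f_{j}$. The remaining case $i\in S$, $j\notin S$ I would handle by showing that every $f\in q(H)\ominus\lin\{f_{j}\mid j\in S\}$ lies in $p(H)^{\perp}$: expanding $pf=\sum_{k}\langle e_{k},f\rangle e_{k}$, one sees that for $k\in S$ the coefficient is $\langle e_{k},f\rangle=\langle qe_{k},f\rangle=\lambda_{k}^{1/2}\langle f_{k},f\rangle=0$, while for $k\notin S$ it vanishes since $e_{k}\perp q(H)\ni f$; thus $pf=0$. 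In particular the adjoined $f_{j}$ ($j\notin S$) are orthogonal to every $e_{i}$, giving $p_{i}\perp q_{j}$. Together with the obvious relations $p_{i}\perp p_{j}$ and $q_{i}\perp q_{j}$, this proves the first part.

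Finally, to obtain $\Vert p-q\Vert=\max_{i}\Vert p_{i}-q_{i}\Vert$, I would let $r_{i}$ be the orthogonal projection onto the at most two-dimensional subspace $p_{i}(H)+q_{i}(H)$. The orthogonality relations just established give $r_{i}\perp r_{j}$ for $i\ne j$, while $p-q=\sum_{i=1}^{n}(p_{i}-q_{i})$ with $r_{i}(p_{i}-q_{i})r_{i}=p_{i}-q_{i}$ for each $i$; hence Remark~\ref{remark:norm.decomposition}\ref{remark:norm.decomposition.1} yields the asserted identity. (The refinement to $\rk(p_{i})=\rk(q_{i})=1$ requested in the statement is built into the construction, and the hypothesis $\rk(p)=\rk(q)$ is used precisely to ensure $\dim\bigl(q(H)\ominus\lin\{f_{j}\mid j\in S\}\bigr)=n-\lvert S\rvert$ equals the number of indices not in $S$, so that the completion has the right length.)
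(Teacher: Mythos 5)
Your argument is correct. Note, though, that the paper does not actually prove the existence part of Lemma~\ref{lemma:wedin}: it simply cites Pestov's Lemma~5.4 (and Wedin) and only adds the derivation of the norm identity from Remark~\ref{remark:norm.decomposition}\ref{remark:norm.decomposition.1}. What you have written is a self-contained proof of the cited result, via the eigendecomposition of $pqp$ restricted to $p(H)$ --- i.e.\ the principal-angle (CS) decomposition of the pair of subspaces, which is precisely the content of the sources the paper points to. The computations $\Vert qe_{i}\Vert^{2}=\lambda_{i}$ and $\langle qe_{i},qe_{j}\rangle=\lambda_{j}\delta_{ij}$ are right, and you correctly isolate the only delicate point, namely the degenerate angles: for $\lambda_{i}=0$ one gets $qe_{i}=0$ so $e_{i}\perp q(H)$, and your verification that every $f\in q(H)\ominus\lin\{f_{j}\mid j\in S\}$ satisfies $pf=0$ (using $\langle e_{k},f\rangle=\langle qe_{k},f\rangle=\lambda_{k}^{1/2}\langle f_{k},f\rangle=0$ for $k\in S$) closes the remaining case; the hypothesis $\rk(p)=\rk(q)$ is used exactly where you say it is, to match the number of adjoined vectors. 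The final step, passing to the mutually orthogonal projections $r_{i}$ onto $p_{i}(H)+q_{i}(H)$ and invoking Remark~\ref{remark:norm.decomposition}\ref{remark:norm.decomposition.1}, is the same route the paper takes for the \emph{in particular} clause. In short: same strategy for the norm identity, plus a complete proof of the part the paper leaves to a citation.
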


\begin{proof} The first assertion is due to~\cite[Lemma~5.4]{Pestov00} (see also~\cite[Section~2]{Wedin}). The second then follows in combination with Remark~\ref{remark:norm.decomposition}\ref{remark:norm.decomposition.1}. \end{proof}

\begin{lem}[\cite{Os}, Section~3]\label{lemma:os} Let $H$ be a Hilbert space, let $p,q \in \Pro(H)$, let $X \defeq p(H)$ and $Y \defeq q(H)$. Moreover, define\footnote{Note that $\Omega(X,Y)$ is precisely the Hausdorff distance between $\Sph(X)$ and $\Sph(Y)$ in $H$.} \begin{align*}
	\Theta(X,Y) \, &\defeq \, \max\!\left\{\sup\nolimits_{x\in\Sph(X)} \dist(x,Y), \,\sup\nolimits_{y\in\Sph(Y)} \dist(y,X) \right\} , \\
	\Omega(X,Y) \, &\defeq \, \max\!\left\{\sup\nolimits_{x\in\Sph(X)} \dist(x,\mathbb{S}(Y)), \, \sup\nolimits_{y\in\Sph(Y)} \dist(y,\Sph(X))\right\}.
\end{align*} Then \begin{displaymath}
	\Vert p-q\Vert \, = \, \Theta(X,Y) \, \leq \, \Omega(X,Y) \, \leq \, 2\Theta(X,Y) \, = \, 2\Vert p-q\Vert.
\end{displaymath} \end{lem}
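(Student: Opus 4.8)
The plan is to establish the displayed chain from left to right. The two middle links are cheap: $\Theta(X,Y)\le\Omega(X,Y)$ holds term by term because each supremum defining $\Omega$ measures distance to the \emph{smaller} target set $\Sph(Y)\subseteq Y$ (resp.\ $\Sph(X)\subseteq X$), so $\dist(x,Y)\le\dist(x,\Sph(Y))$ and symmetrically; and $2\Theta(X,Y)=2\Vert p-q\Vert$ is just the first identity again. So the real content is the first identity $\Vert p-q\Vert=\Theta(X,Y)$ and the last inequality $\Omega(X,Y)\le 2\Theta(X,Y)$. Throughout I may assume $X\ne\{0\}$ and $Y\ne\{0\}$, the degenerate cases being vacuous.

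For the first identity I would first rewrite the two suprema in $\Theta$ as operator norms. For $x\in\Sph(X)$ one has $\dist(x,Y)=\Vert x-qx\Vert=\Vert(1-q)x\Vert=\Vert(1-q)px\Vert$, and since $p$ acts as the identity on $X$ while $pv$ runs through the unit ball of $X$ as $v$ runs through the unit ball of $H$, taking the supremum (a bounded operator attains its norm already on the unit sphere of its domain) gives $\sup_{x\in\Sph(X)}\dist(x,Y)=\Vert(1-q)p\Vert$; symmetrically $\sup_{y\in\Sph(Y)}\dist(y,X)=\Vert(1-p)q\Vert$. It then remains to prove $\Vert p-q\Vert=\max\{\Vert(1-q)p\Vert,\Vert(1-p)q\Vert\}$. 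The lower bound follows at once from $(p-q)p=(1-q)p$, $(p-q)q=-(1-p)q$, and $\Vert p\Vert,\Vert q\Vert\le 1$. For the upper bound, fix $v\in\Sph(H)$ and split $v=x+w$ with $x\defeq pv\in X$, $w\defeq(1-p)v\in X^{\perp}$; then $(p-q)v=(1-q)x-qw$, and $(1-q)x\perp qw$ because $\langle(1-q)x,qw\rangle=\langle x,(1-q)qw\rangle=0$. Using $x=px$, $w=(1-p)w$, the estimates $\Vert(1-q)x\Vert\le\Vert(1-q)p\Vert\,\Vert x\Vert$ and $\Vert qw\Vert\le\Vert(1-p)q\Vert\,\Vert w\Vert$, and $\Vert x\Vert^{2}+\Vert w\Vert^{2}=1$, we get
\[
\Vert(p-q)v\Vert^{2}=\Vert(1-q)x\Vert^{2}+\Vert qw\Vert^{2}\le\Vert(1-q)p\Vert^{2}\Vert x\Vert^{2}+\Vert(1-p)q\Vert^{2}\Vert w\Vert^{2}\le\max\bigl\{\Vert(1-q)p\Vert^{2},\Vert(1-p)q\Vert^{2}\bigr\},
\]
which is the claimed bound.

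For $\Omega(X,Y)\le 2\Theta(X,Y)$ I would bound $\dist(x,\Sph(Y))$ for $x\in\Sph(X)$, the other half following by symmetry. If $qx=0$, then $\dist(x,Y)=1$, hence $\Theta(X,Y)\ge 1$, and $\dist(x,\Sph(Y))\le\Vert x-y\Vert\le 2\le 2\Theta(X,Y)$ for any $y\in\Sph(Y)$. If $qx\ne 0$, set $y_{0}\defeq qx/\Vert qx\Vert\in\Sph(Y)$; then $\Vert x-y_{0}\Vert\le\Vert x-qx\Vert+\bigl\vert\,\Vert qx\Vert-1\,\bigr\vert$, where $\Vert x-qx\Vert=\dist(x,Y)\le\Theta(X,Y)$, and, from $1-\Vert qx\Vert^{2}=\Vert(1-q)x\Vert^{2}$,
\[
\bigl\vert\,\Vert qx\Vert-1\,\bigr\vert=\frac{\Vert(1-q)x\Vert^{2}}{1+\Vert qx\Vert}\le\Vert(1-q)x\Vert^{2}\le\Vert(1-q)x\Vert=\dist(x,Y)\le\Theta(X,Y).
\]
Thus $\dist(x,\Sph(Y))\le\Vert x-y_{0}\Vert\le 2\Theta(X,Y)$; taking suprema and combining with the symmetric estimate gives $\Omega(X,Y)\le 2\Theta(X,Y)$, completing the chain.

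The only genuinely delicate point is justifying the reduction of the suprema over unit spheres of subspaces to operator norms — i.e.\ $\sup_{u\in\Sph(X)}\Vert(1-q)u\Vert=\Vert(1-q)p\Vert$ — which rests on $p$ acting as the identity on $X$ together with the standard fact that a bounded operator already attains its supremum on the unit sphere of its domain; once this and the orthogonal splitting $(p-q)v=(1-q)x-qw$ are in place, all remaining estimates are elementary. A minor bookkeeping issue is the degenerate case in which $X$ or $Y$ is trivial, which one simply excludes (it does not arise in the intended applications, e.g.\ when $\rk p=\rk q>0$).
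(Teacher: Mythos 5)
Your proof is correct, and it is worth noting that the paper does not actually prove this lemma at all: it simply cites Ostrovski\u{i} \cite[3.4(h)\,$+$\,3.12(b)]{Os}. You have therefore supplied a self-contained argument where the paper defers to the literature. Your route is the classical one: you identify $\sup_{x\in\Sph(X)}\dist(x,Y)=\Vert(1-q)p\Vert$ (valid once the degenerate case $X=\{0\}$ or $Y=\{0\}$ is excluded, as you do), reduce the first identity to $\Vert p-q\Vert=\max\{\Vert(1-q)p\Vert,\Vert(1-p)q\Vert\}$, and prove the nontrivial upper bound via the orthogonal splitting $(p-q)v=(1-q)pv-q(1-p)v$ together with $\Vert pv\Vert^{2}+\Vert(1-p)v\Vert^{2}=1$; the comparison $\Omega\le 2\Theta$ then follows by renormalizing $qx$ to the sphere and controlling $\bigl\vert\,\Vert qx\Vert-1\,\bigr\vert$ by $\Vert(1-q)x\Vert^{2}/(1+\Vert qx\Vert)\le\Vert(1-q)x\Vert$. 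All steps check out (the orthogonality $\langle(1-q)x,qw\rangle=0$, the passage from $\Vert qw\Vert$ to $\Vert q(1-p)\Vert=\Vert(1-p)q\Vert$ via adjoints, and the case $qx=0$ forcing $\Theta\ge 1$ are each handled correctly). The only cosmetic quibble is the phrase about a bounded operator ``attaining'' its norm on the unit sphere: in infinite dimensions the supremum need not be attained, but all you use is that the supremum over the unit ball equals the supremum over the unit sphere, which is what your argument actually requires. What your approach buys is independence from the external reference; what the citation buys the authors is brevity, since the lemma is a standard fact about the gap between subspaces.
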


\begin{proof} See~\cite[3.4(h)\,+\,3.12(b)]{Os}. \end{proof}

\begin{lem}\label{lemma:projection.distance.1} Let $H$ be a Hilbert space and let $x,y \in \Sph(H)$. Then \begin{displaymath} 
	\Vert x-\langle y,x\rangle y\Vert^{2} \, = \, 1-\vert \langle x,y\rangle \vert^{2} \, \in \, [0,1] .
\end{displaymath} \end{lem}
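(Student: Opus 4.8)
The plan is a direct computation: since $y$ is a unit vector, $\langle y,x\rangle y$ is the orthogonal projection of $x$ onto the line $\C y$, and the claimed identity is just the Pythagorean decomposition of $x$ into its component along $y$ and its component orthogonal to $y$. No auxiliary results from the paper are needed.

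Concretely, first I would expand the norm squared using sesquilinearity of the inner product (recalling the paper's convention that inner products are linear in the second argument), writing
\begin{align*}
	\Vert x - \langle y,x\rangle y\Vert^{2} \, = \, \langle x,x\rangle - \langle x,\langle y,x\rangle y\rangle - \langle \langle y,x\rangle y,x\rangle + \langle \langle y,x\rangle y,\langle y,x\rangle y\rangle .
\end{align*}
Then I would simplify each summand using $\langle x,x\rangle = \langle y,y\rangle = 1$, the relation $\langle x,y\rangle = \overline{\langle y,x\rangle}$, and $\langle y,x\rangle\langle x,y\rangle = \lvert\langle y,x\rangle\rvert^{2} = \lvert\langle x,y\rangle\rvert^{2}$: the two middle terms each contribute $-\lvert\langle x,y\rangle\rvert^{2}$ while the last contributes $+\lvert\langle x,y\rangle\rvert^{2}$, so the sum collapses to $1 - \lvert\langle x,y\rangle\rvert^{2}$. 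Finally, membership in $[0,1]$ follows from the Cauchy–Schwarz inequality, which yields $\lvert\langle x,y\rangle\rvert \leq \Vert x\Vert\Vert y\Vert = 1$.

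There is essentially no obstacle here; the one point that warrants care is the bookkeeping of conjugate-linearity, i.e., staying consistent with the convention that the inner product is linear in the second argument, so that the scalar $\langle y,x\rangle$ exits the first slot as its complex conjugate and the second slot unchanged. Everything else is routine.
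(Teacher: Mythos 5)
Your proposal is correct and follows essentially the same route as the paper: a direct sesquilinear expansion of $\Vert x-\langle y,x\rangle y\Vert^{2}$ into four terms, simplification using $\Vert x\Vert=\Vert y\Vert=1$ and $\langle x,y\rangle=\overline{\langle y,x\rangle}$, and the Cauchy--Schwarz inequality for membership in $[0,1]$. The bookkeeping of conjugate-linearity in the first slot is handled consistently with the paper's convention, so there is nothing to add.
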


\begin{proof} We observe that \begin{align*}
	\Vert x-\langle y,x\rangle y\Vert^{2} \, &= \, \langle  x-\langle y,x\rangle y, x-\langle y,x\rangle y \rangle \\
		&= \, \langle x,x\rangle-\langle x,\langle y,x\rangle y\rangle + \langle \langle y,x\rangle y,\langle y,x\rangle y\rangle - \langle \langle y,x\rangle y,x\rangle \\
		&= \, 1-\langle y,x\rangle\langle x,y\rangle+\overline{\langle y,x\rangle}\langle y,x\rangle1-\overline{\langle y,x\rangle}\langle y,x\rangle \\
		&= \, 1-\langle y,x\rangle\langle x,y\rangle \, = \, 1-\overline{\langle x,y\rangle}\langle x,y\rangle \, = \, 1-\vert \langle x,y\rangle \vert^{2} \, \in \, [0,1] .\qedhere
\end{align*} \end{proof}

\begin{lem}\label{lemma:projection.distance.2} Let $H$ be a Hilbert space and let $p,q \in \Pro(H)$ with $\rk(p) = \rk(q) = 1$. \begin{enumerate}
	\item\label{lemma:projection.distance.2.1} For any $x \in \Sph(p(H))$ and $y \in \Sph(q(H))$, \begin{displaymath} 
			\qquad \Vert p-q \Vert \, = \, \Vert x-\langle y,x\rangle y\Vert \, \in \, [0,1] .
		\end{displaymath}
	\item\label{lemma:projection.distance.2.2} $\Vert p-q \Vert_{2} = \sqrt{2} \Vert p-q \Vert$.
\end{enumerate} \end{lem}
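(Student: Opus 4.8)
The plan is to treat the two parts in turn, with part (a) supplying the operator-norm computation that part (b) then combines with an elementary trace identity.

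\smallskip

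\textbf{Part (a).} Write $X \defeq p(H) = \C x$ and $Y \defeq q(H) = \C y$. By Lemma~\ref{lemma:os}, $\Vert p - q \Vert = \Theta(X,Y)$, so it suffices to evaluate $\Theta(X,Y)$. Every element of $\Sph(X)$ is of the form $\lambda x$ with $\lvert \lambda \rvert = 1$, and multiplication by a unimodular scalar is an isometry of $H$, so $\sup_{x' \in \Sph(X)} \dist(x',Y) = \dist(x,Y)$. The infimum defining $\dist(x,Y) = \inf_{\mu \in \C} \Vert x - \mu y \Vert$ is attained at $\mu = \langle y,x \rangle$, the coefficient of the orthogonal projection of $x$ onto $Y$, whence $\dist(x,Y) = \Vert x - \langle y,x \rangle y \Vert$. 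By the symmetric computation, $\sup_{y' \in \Sph(Y)} \dist(y',X) = \Vert y - \langle x,y \rangle x \Vert$, and Lemma~\ref{lemma:projection.distance.1} shows that both quantities equal $\sqrt{1 - \lvert \langle x,y \rangle \rvert^{2}}$. Therefore $\Theta(X,Y) = \Vert x - \langle y,x \rangle y \Vert$, and this value lies in $[0,1]$ again by Lemma~\ref{lemma:projection.distance.1}. In particular the right-hand side of (a) is independent of the chosen $x$ and $y$.

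\smallskip

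\textbf{Part (b).} Since $p - q$ is self-adjoint and of finite rank, it is Hilbert--Schmidt and $\Vert p - q \Vert_{2}^{2} = \tr((p-q)^{\ast}(p-q)) = \tr((p-q)^{2}) = \tr(p) + \tr(q) - 2\tr(pq) = 2 - 2\tr(pq)$. Fixing $x \in \Sph(p(H))$ and $y \in \Sph(q(H))$ and recalling that $pz = \langle x,z \rangle x$ and $qz = \langle y,z \rangle y$ for all $z \in H$, I would expand $\tr(pq)$ in an orthonormal basis of $H$ containing the vector $x$: every basis vector orthogonal to $x$ contributes $0$, while $x$ contributes $\langle x, pqx \rangle = \langle y,x \rangle \langle x,y \rangle = \lvert \langle x,y \rangle \rvert^{2}$, so $\tr(pq) = \lvert \langle x,y \rangle \rvert^{2}$. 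Combining this with part (a) and Lemma~\ref{lemma:projection.distance.1} gives $\Vert p - q \Vert_{2}^{2} = 2\bigl(1 - \lvert \langle x,y \rangle \rvert^{2}\bigr) = 2\Vert p - q \Vert^{2}$, and taking square roots yields $\Vert p - q \Vert_{2} = \sqrt{2}\,\Vert p - q \Vert$.

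\smallskip

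I do not expect a genuine obstacle here; the statement is elementary. The only points needing mild care are the reduction of the suprema over $\Sph(X)$ and $\Sph(Y)$ to a single representative vector (via unimodular invariance of the metric) and the bookkeeping in the trace computation of $\tr(pq)$. The degenerate case $p = q$ — equivalently $\lvert \langle x,y \rangle \rvert = 1$, in which case $x - \langle y,x \rangle y = 0$ — is covered uniformly by the formulas above and requires no separate argument.
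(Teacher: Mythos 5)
Your proof is correct and follows essentially the same route as the paper's: part \ref{lemma:projection.distance.2.1} is the identical argument via Lemma~\ref{lemma:os}, unimodular invariance of the suprema, and Lemma~\ref{lemma:projection.distance.1}, while part \ref{lemma:projection.distance.2.2} differs only in bookkeeping --- you expand $\tr\bigl((p-q)^{2}\bigr) = 2 - 2\tr(pq)$ and compute $\tr(pq) = \lvert\langle x,y\rangle\rvert^{2}$ directly, whereas the paper evaluates the trace as $\Vert (p-q)x\Vert^{2} + \Vert (p-q)y\Vert^{2}$ and then invokes part \ref{lemma:projection.distance.2.1}. Both computations are valid; yours has the minor advantage of covering the case $p=q$ uniformly, which the paper sets aside as trivial.
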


\begin{proof} \ref{lemma:projection.distance.2.1} Let $x \in \Sph(p(H))$ and $y \in \Sph(q(H))$. Since $qx = \langle y,x\rangle y$, we see that \begin{displaymath}
	\dist(x,q(H)) \, = \, \Vert x-qx \Vert \, = \, \Vert x-\langle y,x\rangle y\Vert \, \stackrel{\ref{lemma:projection.distance.1}}{=} \, \sqrt{ 1-\vert \langle x,y\rangle \vert^{2}} ,
\end{displaymath} wherefore \begin{align*}
	\sup\nolimits_{x' \in \Sph(p(H))} \dist(x',q(H)) \, &= \, \sup\nolimits_{c \in \Sph(\C)} \dist(cx,q(H)) \\
		&= \, \sup\nolimits_{c \in \Sph(\C)} \dist\!\left(x,c^{-1}q(H)\right) \\
		&= \, \dist(x,q(H)) \, = \, \sqrt{ 1-\vert \langle x,y\rangle \vert^{2}} .
\end{align*} By symmetry, this also implies that \begin{displaymath}
	\sup\nolimits_{y' \in \Sph(q(H))} \dist(y',p(H)) \, = \, \sqrt{ 1-\vert \langle y,x\rangle \vert^{2}} \, = \, \sqrt{ 1-\vert \langle x,y\rangle \vert^{2}} .
\end{displaymath} Consequently, \begin{displaymath}
	\Vert p-q \Vert \, \stackrel{\ref{lemma:os}}{=} \, \Theta(p(H),q(H)) \, = \, \sqrt{ 1-\vert \langle x,y\rangle \vert^{2}} \, \stackrel{\ref{lemma:projection.distance.1}}{=} \, \Vert x-\langle y,x\rangle y\Vert \, \in \, [0,1] .\qedhere
\end{displaymath}
	
\ref{lemma:projection.distance.2.2} As the claim is trivial if $p=q$, we may and will assume that $p \ne q$. Consider any $x \in \Sph(p(H))$ and $y \in \Sph(q(H))$. Then \begin{align*}
	\Vert p-q \Vert_{2}^{2} \, &= \, \tr((p-q)^{\ast}(p-q)) \, = \, \Vert (p-q)x \Vert^{2} + \Vert (p-q)y \Vert^{2} \\
		& = \, \Vert x-\langle y,x \rangle y \Vert^{2} + \Vert y-\langle x,y \rangle x \Vert^{2} \, \stackrel{\ref{lemma:projection.distance.2.1}}{=} \, 2 \Vert p-q \Vert^{2} .
\end{align*} Hence, $\Vert p-q \Vert_{2} = \sqrt{2} \Vert p-q \Vert$ as desired. \end{proof}

\begin{lem}\label{lemma:projection.distance.3} Let $H$ be a Hilbert space, let $p,q \in \Pro(H)$ with $\rk(p) = \rk(q) = 1$ and $p\ne q$, and let $x \in p(H)$ and $y \in q(H)$ with $\Vert x+y \Vert \leq 1$. Then \begin{displaymath}
	\Vert x \Vert \, \leq \, \Vert p-q \Vert^{-1} .
\end{displaymath} \end{lem}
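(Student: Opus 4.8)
The plan is to reduce the claimed inequality to an elementary estimate by parametrising the two rank-one projections. Since $p(H)$ and $q(H)$ are one-dimensional, I would fix unit vectors $e \in \Sph(p(H))$ and $f \in \Sph(q(H))$ and write $x = \lambda e$ and $y = \mu f$ for suitable scalars $\lambda,\mu \in \C$ with $|\lambda| = \Vert x \Vert$. Set $c \defeq \langle e,f \rangle$. As $p \ne q$, the vectors $e$ and $f$ are linearly independent, so $|c| < 1$; moreover, by Lemma~\ref{lemma:projection.distance.2}\ref{lemma:projection.distance.2.1} together with Lemma~\ref{lemma:projection.distance.1},
\[
	\Vert p-q \Vert \, = \, \Vert e - \langle f,e \rangle f \Vert \, = \, \sqrt{1 - |c|^{2}} \, > \, 0 .
\]
Hence the assertion $\Vert x \Vert \leq \Vert p-q \Vert^{-1}$ is equivalent to $|\lambda|^{2}(1-|c|^{2}) \leq 1$.

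Next I would expand the hypothesis. From $\Vert x+y \Vert \leq 1$ one obtains
\[
	1 \, \geq \, \Vert \lambda e + \mu f \Vert^{2} \, = \, |\lambda|^{2} + |\mu|^{2} + 2\Re\!\left(\overline{\lambda}\mu c\right) \, \geq \, |\lambda|^{2} + |\mu|^{2} - 2|\lambda||\mu||c| ,
\]
using $\Re(z) \geq -|z|$. Completing the square in the real quantity $|\mu|$ gives
\[
	|\lambda|^{2} + |\mu|^{2} - 2|\lambda||\mu||c| \, = \, \left(|\mu| - |\lambda||c|\right)^{2} + |\lambda|^{2}\left(1 - |c|^{2}\right) \, \geq \, |\lambda|^{2}\left(1 - |c|^{2}\right).
\]
Combining the two displays yields $|\lambda|^{2}(1-|c|^{2}) \leq 1$; dividing by $1-|c|^{2} = \Vert p-q \Vert^{2} > 0$ and taking square roots produces $\Vert x \Vert \leq \Vert p-q \Vert^{-1}$.

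There is no genuine obstacle in this argument: the only point requiring a little care is the passage from the complex term $\Re(\overline{\lambda}\mu c)$ to the real bound $-|\lambda||\mu||c|$, after which one merely completes the square in $|\mu|$. One should also note that the degenerate case $x=0$ is trivial, so when choosing $e$ one may assume $x \ne 0$ and take $e = x/\Vert x \Vert$ (and pick an arbitrary unit vector in $p(H)$ otherwise); the analogous remark applies to $y$ and $f$.
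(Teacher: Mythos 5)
Your proof is correct and is essentially the paper's argument in coordinates: the paper splits $x$ into its component orthogonal to $y$ plus the rest and discards the nonnegative remainder, which is exactly what your completing-the-square step (preceded by $\Re z \geq -\lvert z\rvert$) accomplishes after parametrising $x=\lambda e$, $y=\mu f$. The identification $\Vert p-q\Vert=\sqrt{1-\lvert c\rvert^{2}}$ via Lemmata~\ref{lemma:projection.distance.1} and~\ref{lemma:projection.distance.2}\ref{lemma:projection.distance.2.1}, and the handling of the degenerate cases, match the paper's treatment.
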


\begin{proof} Evidently, if $y=0$, then $\Vert x \Vert = \Vert x+y \Vert \leq 1 \stackrel{\ref{lemma:projection.distance.2}\ref{lemma:projection.distance.2.1}}{\leq} \Vert p-q \Vert^{-1}$. Also, the desired inequality is trivial for $x=0$. Consequently, we may and will assume that $x \ne 0 \ne y$. From \begin{displaymath}
	\left( x- \left\langle \tfrac{1}{\Vert y \Vert}y,x \right\rangle \tfrac{1}{\Vert y \Vert} y \right) \perp y ,
\end{displaymath} we infer that \begin{align*}
	1 \, &\geq \, \Vert x+y \Vert^{2} \, = \, \left\lVert x- \left\langle \tfrac{1}{\Vert y \Vert}y,x \right\rangle \tfrac{1}{\Vert y \Vert} y \right\rVert^{2} + \left\lVert \left( \left\langle \tfrac{1}{\Vert y \Vert}y,x \right\rangle \tfrac{1}{\Vert y \Vert} + 1 \right)y\right\rVert^{2} \\
		&\geq \, \left\lVert x- \left\langle \tfrac{1}{\Vert y \Vert}y,x \right\rangle \tfrac{1}{\Vert y \Vert} y \right\rVert^{2} \, = \, \Vert x \Vert^{2} \left\lVert \tfrac{1}{\Vert x \Vert} x- \left\langle \tfrac{1}{\Vert y \Vert}y,\tfrac{1}{\Vert x \Vert}x \right\rangle \tfrac{1}{\Vert y \Vert} y \right\rVert^{2} \\
		& \stackrel{\ref{lemma:projection.distance.2}\ref{lemma:projection.distance.2.1}}{=} \, \Vert x \Vert^{2}\Vert p-q \Vert^{2} , 
\end{align*} hence $\Vert x \Vert \leq \Vert p-q \Vert^{-1}$. \end{proof}

\begin{proof}[Proof of Lemma~\ref{lemma:pestov}] Consider the directed set $\mathcal{K}$ of $\pi$-invariant finite-dimensional linear subspaces of $H$. Then $H_{0} \defeq \bigcup \mathcal{K} = \sum \mathcal{K}$ is a $\pi$-invariant linear subspace of $H$ with $m \defeq \dim(H_{0}) = \sup_{K \in \mathcal{K}} \dim(K)$. If $m = \infty$, then the desired conclusion follows immediately. Suppose now that $m < \infty$. Then the restriction of the representation $\pi$ to $H_{0}^{\perp}$ has to be amenable by our hypothesis, while by construction it has no non-trivial finite-dimensional subrepresentation. Evidently, it suffices to show the existence of the desired projections for this subrepresentation of $\pi$. Therefore, without loss of generality, we may and will henceforth assume that $\pi$ is amenable and contains no non-trivial finite-dimensional subrepresentation.
	
We consider the directed set $(I,{\preceq})$ given by \begin{displaymath}
	I \, \defeq \, \{(E,\epsilon)\mid E\subseteq G \text{ compact}, \, \epsilon \in \R_{>0}\}
\end{displaymath} and \begin{displaymath}
	(E,\epsilon) \preceq (E',\epsilon') \quad :\Longleftrightarrow \quad E \subseteq E' \ \wedge \ \epsilon' \leq \epsilon \qquad ((E,\epsilon),(E',\epsilon') \in I) .
\end{displaymath} For each $\iota = (E, \epsilon) \in I$, we know from Lemma~\ref{lemma:folner} that the set \begin{displaymath}
	F_{\iota} \, \defeq \, \{ p \in \Pro(H) \mid 0 < \rk(p) < \infty, \ \forall g\in E\colon \, \Vert \pi(g)p\pi(g)^{\ast}-p\Vert_{1} \leq \epsilon \Vert p \Vert_{1} \}
\end{displaymath} is non-empty. Since the map \begin{displaymath}
	(I,{\preceq}) \, \longrightarrow \, ({\N_{>0}} \cup {\{ \infty\}},{\leq}), \quad \iota \, \longmapsto \, \sup\{ \rk(p) \mid p \in F_{\iota}\} 
\end{displaymath} is antitone, the net $(\sup\{ \rk(p) \mid p \in F_{\iota}\})_{\iota \in (I,{\preceq})}$ converges in the discrete topology to \begin{displaymath}
	n \, \defeq \, \inf\nolimits_{\iota\in I} \sup\{ \rk(p) \mid p \in F_{\iota}\} \, \in \, {\N_{>0}} \cup {\{ \infty\}}.
\end{displaymath} If $n = \infty$, then the claim of the lemma is established. For contradiction, we now assume that $n < \infty$. Let $\iota_{0} = (E_{0},\epsilon_{0}) \in I$ with $\sup\{ \rk(p) \mid p \in F_{\iota_{0}} \} = n$.
	
We equip $X \defeq \{ p \in \Pro(H) \mid \rk(p) = n \}$ with the complete metric induced by $\Vert \cdot \Vert$. As the latter is uniformly equivalent on $X$ to the one induced by $\Vert \cdot \Vert_{1}$, the filter $\mathcal{F}$ on $X$ generated by $\{ F_{\iota} \cap X \mid \iota \in I \}$ coincides with the one generated by the sets \begin{displaymath}
	F'_{\iota} \, \defeq \, \{ p \in X \mid \forall g\in E\colon \, \Vert \pi(g)p\pi(g)^{\ast}-p\Vert \leq \epsilon \} \qquad (\iota = (E,\epsilon) \in I) .
\end{displaymath} Note that the map $(I,{\preceq}) \to (\R_{\geq 0},{\leq}), \, \iota \mapsto \diam(F'_{\iota})$ is antitone. If \begin{displaymath}
	\inf\nolimits_{\iota \in I} \diam(F'_{\iota}) \, = \, 0 ,
\end{displaymath} i.e., $\mathcal{F}$ is a Cauchy filter on $X$, then the thus existing limit $p \defeq \lim \mathcal{F} \in X$ would satisfy $\pi(g)p\pi(g)^{\ast} = p$ for each $g \in G$, whence $\pi\vert_{p(H)}$ would constitute a non-trivial finite-dimensional subrepresentation of $\pi$, contradicting our hypothesis. Therefore, $\inf_{\iota \in I} \diam(F'_{\iota}) > 0$. Consequently, we find $(p_{\iota})_{\iota \in I}, \, (q_{\iota})_{\iota \in I} \in \prod_{\iota \in I} F_{\iota}'$ such that \begin{equation}\label{eq:positive.infimum}
	\inf\nolimits_{\iota \in I} \Vert p_{\iota} - q_{\iota} \Vert \, > \, 0 .
\end{equation} By Lemma~\ref{lemma:wedin}, for each $\iota \in I$ there exist $p_{\iota,1},\ldots,p_{\iota,n},q_{\iota,1},\ldots,q_{\iota,n} \in \Pro(H)$ such that \begin{itemize}
	\item[---\,] $\rk(p_{\iota,i}) = 1 = \rk(q_{\iota,i})$ for each $i \in \{ 1,\ldots,n\}$,
	\item[---\,] $p_{\iota,i} \perp p_{\iota,j}$ and $p_{\iota,i} \perp q_{\iota,j}$ and $q_{\iota,i} \perp q_{\iota,j}$ for any two distinct $i,j \in \{ 1,\ldots,n \}$,
	\item[---\,] $p_{\iota} = p_{\iota,1} + \ldots + p_{\iota,n}$ and $q_{\iota} = q_{\iota,1} + \ldots + q_{\iota,n}$,
\end{itemize} and necessarily, \begin{equation}\label{eq:distance.decomposition}
	\Vert p_{\iota}-q_{\iota} \Vert \, = \, \max\nolimits_{i \in \{ 1,\ldots,n \}} \Vert p_{\iota,i}-q_{\iota,i} \Vert .
\end{equation} Since $[0,1]^{n}$ is compact, the net $(\Vert p_{\iota,1}-q_{\iota,1} \Vert, \ldots ,\Vert p_{\iota,n}-q_{\iota,n} \Vert)_{\iota \in (I,{\preceq})}$ admits an accumulation point $d = (d_{1},\ldots,d_{n}) \in [0,1]^{n}$. From~\eqref{eq:positive.infimum} and~\eqref{eq:distance.decomposition}, we infer that~$d \ne 0$. In turn, $J \defeq \{ i \in \{ 1,\ldots,n \} \mid d_{i} \ne 0 \}$ is non-empty and $\delta \defeq \min_{i \in J} d_{i} >0$.
	
Due to $d$ being an accumulation point, there exists $\iota = (E,\epsilon) \in I$ such that \begin{itemize}
	\item[---\,] $E_{0} \cup E_{0}^{-1} \subseteq E$,
	\item[---\,] $\epsilon \leq \tfrac{\epsilon_{0}\delta}{32n}$,
	\item[---\,] $\sum\nolimits_{i \in \{ 1,\ldots,n\}\setminus J} \Vert p_{\iota,i} - q_{\iota,i} \Vert^{2} \leq \bigl(\tfrac{\epsilon_{0} \delta}{16n}\bigr)^{2}$, and
	\item[---\,] $\Vert p_{\iota,i} - q_{\iota,i} \Vert \geq \tfrac{\delta}{2}$ for each $i \in J$.
\end{itemize} Consider the linear subspace \begin{displaymath}
	K_{\iota} \, \defeq \, p_{\iota}(H) + \sum\nolimits_{i \in J} q_{\iota,i}(H) \, = \, p_{\iota}(H) + \left( \sum\nolimits_{i \in J} q_{\iota,i}\right)\!(H)
\end{displaymath} and note that \begin{displaymath}
	n \, \stackrel{J\ne \emptyset}{<} \, \dim(K_{\iota}) \, \leq \, 2n .
\end{displaymath} For every $x \in \Sph(q_{\iota}(H))$, considering $y \defeq \left( \sum_{i \in \{ 1,\ldots,n\}\setminus J} p_{\iota,i}\right)\!x + \left( \sum_{i \in J} q_{\iota,i} \right)\!x \in K_{\iota}$ we observe that \begin{align*}
	\left\lVert x-y \right\rVert^{2} \, &= \, \left\lVert \sum\nolimits_{i \in \{ 1,\ldots,n\}\setminus J} (q_{\iota,i}-p_{\iota,i})x \right\rVert^{2} \, = \, \sum\nolimits_{i \in \{ 1,\ldots,n\}\setminus J} \lVert (q_{\iota,i}-p_{\iota,i})x \rVert^{2} \\
		& \leq \, \sum\nolimits_{i \in \{ 1,\ldots,n\}\setminus J} \lVert q_{\iota,i}-p_{\iota,i} \rVert^{2} \, \leq \, \bigl(\tfrac{\epsilon_{0} \delta}{16n}\bigr)^{2} .
\end{align*} This shows that \begin{equation}\label{eq:approximation.spheres}
	\sup\nolimits_{x\in \Sph(q_{\iota}(H))}\dist(x,K_{\iota}) \, \leq \, \tfrac{\epsilon_{0} \delta}{16n} .
\end{equation}
	
Next we prove that \begin{equation}\label{eq:norm.control}
	\forall x \in p_{\iota}(H) \ \forall y \in \sum\nolimits_{i \in J} q_{\iota,i}(H)\colon \quad \Vert x+y \Vert \leq 1 \ \Longrightarrow \ \max\{ \Vert x \Vert, \Vert y \Vert \} \leq 2n\delta^{-1} .
\end{equation} To this end, let $x \in p_{\iota}(H)$ and $y \in \sum\nolimits_{i \in J} q_{\iota,i}(H)$ with $\Vert x+y \Vert \leq 1$. From \begin{displaymath}
	1 \, \geq \, \Vert x+y \Vert^{2} \, = \, \left\lVert \sum\nolimits_{i \in \{ 1,\ldots,n\}\setminus J} p_{\iota,i}x \right\rVert^{2} + \sum\nolimits_{i \in J} \Vert p_{\iota,i}x + q_{\iota,i}y \Vert^{2} ,
\end{displaymath} we infer that \begin{displaymath}
	\left\lVert \sum\nolimits_{i \in \{ 1,\ldots,n\}\setminus J} p_{\iota,i}x \right\rVert \, \leq \, 1 \, \stackrel{\ref{lemma:projection.distance.2}\ref{lemma:projection.distance.2.1}}{\leq} \, \delta^{-1} \, < \, 2\delta^{-1}
\end{displaymath} and, for every $i \in J$, \begin{displaymath}
	\max\{ \Vert p_{\iota,i}x \Vert, \Vert q_{\iota,i}y \Vert \} \, \stackrel{\ref{lemma:projection.distance.3}}{\leq} \, \Vert p_{\iota,i}-q_{\iota,i} \Vert^{-1} \, \leq \, 2\delta^{-1} .
\end{displaymath} Consequently, \begin{align*}
	\Vert x \Vert \, &\leq \, \left\lVert \sum\nolimits_{i \in \{ 1,\ldots,n\}\setminus J} p_{\iota,i}x \right\rVert + \sum\nolimits_{i \in J} \Vert p_{\iota,i}x \Vert \, \leq \, 2n\delta^{-1} , \\
	\Vert y \Vert \, &\leq \, \sum\nolimits_{i \in J} \Vert q_{\iota,i}y \Vert \, \leq \, 2n\delta^{-1} 
\end{align*} as desired.
	
We now claim that \begin{equation}\label{eq:almost.invariant.space}
	\forall g \in E \ \forall z \in \Sph(K_{\iota}) \colon \qquad \dist(\pi(g)z,K_{\iota}) \, \leq \, \tfrac{\epsilon_{0}}{2} .
\end{equation} To see this, let $z \in \Sph(K_{\iota})$. Then there exist $x \in p_{\iota}(H)$ and $y \in \sum\nolimits_{i \in J} q_{\iota,i}(H)$ such that $z = x+y$. In particular, $1 = \Vert z \Vert = \Vert x+y \Vert$ and thus $\max\{ \Vert x \Vert, \Vert y \Vert \} \leq 2n\delta^{-1}$ by~\eqref{eq:norm.control}. Now, let $g \in E$. Since $\pi(g)x \in \Vert x \Vert \Sph(\pi(g)p_{\iota}(H))$ and $\pi(g)y \in \Vert y \Vert \Sph(\pi(g) q_{\iota}(H))$, we see that \begin{align*}
	&\dist(\pi(g)x,K_{\iota}) \, \leq \, \dist(\pi(g)x,p_{\iota}(H)) \\
	& \quad \leq \, \sup\nolimits_{x' \in \Sph(\pi(g)p_{\iota}(H))} \dist(\Vert x \Vert x',p_{\iota}(H)) \, = \, \Vert x \Vert \sup\nolimits_{x' \in \Sph(\pi(g)p_{\iota}(H))} \dist(x',p_{\iota}(H)) \\
	& \quad \leq \, 2n\delta^{-1} \Theta (\pi(g)p_{\iota}(H),p_{\iota}(H)) \, \stackrel{\ref{lemma:os}}{=} \, 2n\delta^{-1} \Vert \pi(g)p_{\iota}\pi(g)^{\ast}-p_{\iota}\Vert \, \leq \, \tfrac{\epsilon_{0}}{4}
\end{align*} and \begin{align*}
	&\dist(\pi(g)y,K_{\iota}) \, \leq \, \sup\nolimits_{y' \in \Sph(\pi(g)q_{\iota}(H))} \dist(\Vert y \Vert y',K_{\iota}) \\
	& \qquad = \, \Vert y \Vert \sup\nolimits_{y' \in \Sph(\pi(g)q_{\iota}(H))} \dist(y',K_{\iota}) \\
	& \qquad \leq \, 2n\delta^{-1} \left( \sup\nolimits_{y' \in \Sph(\pi(g)q_{\iota}(H))} \dist(y',\Sph(q_{\iota}(H))) + \sup\nolimits_{y'' \in \Sph(q_{\iota}(H))} \dist(y'',K_{\iota}) \right) \\
	& \qquad \stackrel{\eqref{eq:approximation.spheres}}{\leq} \, 2n\delta^{-1} \Omega(\pi(g)q_{\iota}(H),q_{\iota}(H)) + 2n\delta^{-1} \tfrac{\epsilon_{0} \delta}{16n} \\
	& \qquad \stackrel{\ref{lemma:os}}{\leq} \, 4n\delta^{-1} \Vert \pi(g)q_{\iota}\pi(g)^{\ast}-q_{\iota}\Vert + \tfrac{\epsilon_{0}}{8} \, \leq \, \tfrac{\epsilon_{0}}{4} ,
\end{align*} whence \begin{displaymath}
	\dist(\pi(g)z,K_{\iota}) \, \leq \, \dist(\pi(g)x,K_{\iota}) + \dist(\pi(g)y,K_{\iota}) \, \leq \, \tfrac{\epsilon_{0}}{2} .
\end{displaymath} This finishes the proof of~\eqref{eq:almost.invariant.space}.
	
We conclude that \begin{equation}\label{eq:almost.invariant.space.2}
	\forall g \in E_{0} \colon \qquad \Theta (\pi(g)K_{\iota},K_{\iota}) \, \leq \, \tfrac{\epsilon_{0}}{2} .
\end{equation} Indeed, if $g \in E_{0}$ and thus $g, g^{-1} \in E$, then \begin{align*}
	&\Theta (\pi(g)K_{\iota},K_{\iota}) \, \stackrel{\ref{lemma:os}}{=} \, \max\!\left\{ \sup\nolimits_{x\in\Sph(\pi(g)K_{\iota})} \dist(x,K_{\iota}), \, \sup\nolimits_{x\in\Sph(K_{\iota})} \dist(x,\pi(g)K_{\iota}) \right\} \\
	& \qquad = \, \max\!\left\{ \sup\nolimits_{z\in\Sph(K_{\iota})} \dist(\pi(g)z,K_{\iota}), \, \sup\nolimits_{z\in\Sph(K_{\iota})} \dist\!\left(\pi\!\left(g^{-1}\right)\!z,K_{\iota}\right) \right\} \, \stackrel{\eqref{eq:almost.invariant.space}}{\leq} \, \tfrac{\epsilon_{0}}{2} .
\end{align*}
	
Finally, let $r_{\iota} \in \Pro(H)$ denote the orthogonal projection onto $K_{\iota}$. For every $g \in E_{0}$, \begin{align*}
	\Vert \pi(g)r_{\iota}\pi(g)^{\ast} - r_{\iota} \Vert \, &\stackrel{\ref{lemma:os}}{=} \, \Theta(\pi(g)K_{\iota},K_{\iota}) \, \stackrel{\eqref{eq:almost.invariant.space.2}}{\leq} \, \tfrac{\epsilon_{0}}{2}
\end{align*} and hence \begin{displaymath}
	\Vert \pi(g)r_{\iota}\pi(g)^{\ast} -r_{\iota} \Vert_{1} \, \leq \, 2 \rk(r_{\iota}) \Vert \pi(g)r_{\iota}\pi(g)^{\ast} - r_{\iota} \Vert \leq \, \epsilon_{0} \Vert r_{\iota}\Vert_{1} .
\end{displaymath} That is, $r_{\iota} \in F_{\iota_{0}}$. As $\rk(r_{\iota}) = \dim(K_{\iota}) > n$ and $\sup\{ \rk(p) \mid p \in F_{\iota_{0}} \} = n$, this yields the desired contradiction and thus completes the proof. \end{proof}

The following well-known fact is used in the proof of Lemma~\ref{lemma:mass.transportation}

\begin{lem}\label{lemma:low.energy.conjugation} Let $H$ be a Hilbert space and let $p,q \in \Pro(H)$ with $\rk(p) = \rk(q) < \infty$. Then there exists $u \in \U(H)$ such that $upu^{\ast} = q$ and $\Vert (1-u)p \Vert_{2} = \sqrt{2}\Vert p-q \Vert_{2}$. \end{lem}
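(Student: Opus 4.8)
The plan is to build $u$ by first discarding the part of $H$ on which $p$ and $q$ are irrelevant, and then simultaneously diagonalising the pair $(p,q)$ into one- and two-dimensional blocks. Concretely, I would set $K \defeq p(H)+q(H)$, a finite-dimensional subspace with $\dim K \le 2\rk(p)$, and note that $p$ and $q$ both vanish on $K^{\perp}$. Hence it suffices to find a unitary $u_{0}$ of $K$ with $u_{0}(p|_{K})u_{0}^{\ast} = q|_{K}$ and $\Vert (1-u_{0})(p|_{K}) \Vert_{2} = \sqrt{2}\Vert (p-q)|_{K} \Vert_{2}$, and then put $u \defeq u_{0}\oplus \id_{K^{\perp}}$; since $(p-q)|_{K^{\perp}} = 0$, we then have $\Vert (p-q)|_{K}\Vert_{2} = \Vert p-q\Vert_{2}$ and $\Vert (1-u)p\Vert_{2} = \Vert (1-u_{0})(p|_{K})\Vert_{2}$. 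From here on I would assume $H = p(H)+q(H)$ and write $n \defeq \rk(p) = \rk(q)$.

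Next I would invoke Lemma~\ref{lemma:wedin} to obtain rank-one projections $p_{1},\dots,p_{n},q_{1},\dots,q_{n}\in\Pro(H)$ with $p = \sum_{i}p_{i}$, $q = \sum_{i}q_{i}$, and $p_{i}\perp p_{j}$, $p_{i}\perp q_{j}$, $q_{i}\perp q_{j}$ for all distinct $i,j$. The subspaces $M_{i} \defeq p_{i}(H)+q_{i}(H)$ are then pairwise orthogonal, each of dimension $1$ or $2$, and $\bigoplus_{i}M_{i} = p(H)+q(H) = H$. On each $M_{i}$ I would construct a unitary $u_{i}$ with $u_{i}p_{i}u_{i}^{\ast} = q_{i}$ and $\Vert (1-u_{i})p_{i}\Vert_{2} = \sqrt{2}\Vert p_{i}-q_{i}\Vert_{2}$: if $\dim M_{i} = 1$ then $p_{i}=q_{i}$ and $u_{i}\defeq\id_{M_{i}}$ works; if $\dim M_{i}=2$, I fix unit vectors $x_{i}\in p_{i}(H)$, $y_{i}\in q_{i}(H)$ and take $u_{i}$ to be the unitary of $M_{i}$ carrying $x_{i}$ to $\mu_{i}y_{i}$ and the orthogonal complement of $\C x_{i}$ in $M_{i}$ onto that of $\C y_{i}$, for a unimodular scalar $\mu_{i}$ still to be pinned down. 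Then $u_{i}p_{i}u_{i}^{\ast} = \lvert\mu_{i}y_{i}\rangle\langle\mu_{i}y_{i}\rvert = q_{i}$ automatically, and, since $(1-u_{i})p_{i}$ is supported on the line $\C x_{i}$, one has $\Vert (1-u_{i})p_{i}\Vert_{2}^{2} = \Vert x_{i}-\mu_{i}y_{i}\Vert^{2} = 2-2\Re(\mu_{i}\langle x_{i},y_{i}\rangle)$, whereas Lemma~\ref{lemma:projection.distance.2} together with Lemma~\ref{lemma:projection.distance.1} yields $\Vert p_{i}-q_{i}\Vert_{2}^{2} = 2\Vert p_{i}-q_{i}\Vert^{2} = 2\bigl(1-\lvert\langle x_{i},y_{i}\rangle\rvert^{2}\bigr)$. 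Setting $u\defeq\bigoplus_{i}u_{i}$ gives $u\in\U(H)$, $upu^{\ast} = \bigoplus_{i}u_{i}p_{i}u_{i}^{\ast} = \bigoplus_{i}q_{i} = q$, and, by orthogonality of the blocks (Remark~\ref{remark:norm.decomposition}\ref{remark:norm.decomposition.2}), $\Vert (1-u)p\Vert_{2}^{2} = \sum_{i}\Vert (1-u_{i})p_{i}\Vert_{2}^{2} = 2\sum_{i}\Vert p_{i}-q_{i}\Vert_{2}^{2} = 2\Vert p-q\Vert_{2}^{2}$, which is the asserted identity.

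The step I expect to be the main obstacle is the per-block matching in the two-dimensional blocks, i.e.\ the choice of the phase $\mu_{i}$ making $\Vert x_{i}-\mu_{i}y_{i}\Vert^{2}$ equal to $2\Vert p_{i}-q_{i}\Vert_{2}^{2}$; after absorbing a phase into $y_{i}$ one may take $\langle x_{i},y_{i}\rangle$ real and nonnegative, and the requirement becomes $\Re\mu_{i} = \tfrac{2\langle x_{i},y_{i}\rangle^{2}-1}{\langle x_{i},y_{i}\rangle}$, so the real work lies in controlling this quantity, for which Lemma~\ref{lemma:projection.distance.2}—turning a Hilbert--Schmidt distance of rank-one projections into the geometry of the two lines they span—is the decisive input. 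A cleaner alternative, available when $\Vert p-q\Vert<1$, would be to let $u$ be the unitary part of the polar decomposition of $w \defeq qp+(1-q)(1-p)$: one checks that $w^{\ast}w = pqp+(1-p)(1-q)(1-p)$ commutes with $p$ and that $wp = qp = qw$, whence $upu^{\ast} = wpw^{-1} = q$ and $(1-u)p$ can be read off from $w$; I would fall back on the block construction above only to treat the general case uniformly, in particular when $p(H)$ and $q(H)$ meet at a principal angle $\tfrac{\pi}{2}$.
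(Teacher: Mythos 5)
Your overall route is the paper's: reduce to $p(H)+q(H)$, apply Lemma~\ref{lemma:wedin}, and build $u$ block by block from rank-one pieces. But the step you flag as ``the main obstacle'' is not an obstacle to be overcome --- it signals that the identity you are trying to prove is false as stated. With $c \defeq \lvert\langle x_{i},y_{i}\rangle\rvert$ your phase condition reads $\Re\mu_{i} = (2c^{2}-1)/c$, which lies below $-1$ whenever $c<\tfrac{1}{2}$, so no unimodular $\mu_{i}$ exists; and no other choice of $u_{i}$ helps, since any unitary with $u_{i}p_{i}u_{i}^{\ast}=q_{i}$ must send $x_{i}$ to a unimodular multiple of $y_{i}$. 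Concretely, for the projections $p,q$ onto two orthogonal lines in $\C^{2}$ one has $\sqrt{2}\,\Vert p-q\Vert_{2}=2$, whereas $\Vert(1-u)p\Vert_{2}=\Vert x-\lambda y\Vert=\sqrt{2}$ for every admissible $u$. The equality in the statement is a slip: the paper's own proof only establishes
\begin{displaymath}
	\Vert (1-u)p \Vert_{2} \, \leq \, \sqrt{2}\,\Vert p-q \Vert_{2} ,
\end{displaymath}
by choosing $y_{i}\in\Sph(q_{i}(H))$ with $\Vert x_{i}-y_{i}\Vert\leq\sqrt{2}\Vert p_{i}-q_{i}\Vert_{2}$ (via $\Omega(p_{i}(H),q_{i}(H))\leq 2\Vert p_{i}-q_{i}\Vert$ from Lemma~\ref{lemma:os} together with Lemma~\ref{lemma:projection.distance.2}), and this inequality is all that is used in the sole application, Lemma~\ref{lemma:mass.transportation}.

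Once the target is weakened to this inequality, your construction closes immediately: normalise the phase of $y_{i}$ so that $\langle x_{i},y_{i}\rangle = c \geq 0$ and take $\mu_{i}=1$; then $\Vert(1-u_{i})p_{i}\Vert_{2}^{2} = 2-2c \leq 4(1-c^{2}) = 2\Vert p_{i}-q_{i}\Vert_{2}^{2}$, and summing over the blocks via Remark~\ref{remark:norm.decomposition}\ref{remark:norm.decomposition.2} gives the bound, exactly as in the paper. Your polar-decomposition alternative for $w=qp+(1-q)(1-p)$ is fine where it applies, but, as you note, it needs $\Vert p-q\Vert<1$, so the block argument is the one to keep.
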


\begin{proof} Thanks to Lemma~\ref{lemma:wedin}, we find $p_{1},\ldots,p_{n},q_{1},\ldots,q_{n} \in \Pro(H)$ such that \begin{itemize}
	\item[---\,] $\rk(p_{i}) = 1 = \rk(q_{i})$ for each $i \in \{ 1,\ldots,n\}$,
	\item[---\,] $p_{i} \perp p_{j}$ and $p_{i} \perp q_{j}$ and $q_{i} \perp q_{j}$ for any two distinct $i,j \in \{ 1,\ldots,n \}$,
	\item[---\,] $p = p_{1} + \ldots + p_{n}$ and $q = q_{1} + \ldots + q_{n}$.
\end{itemize}
	
Consider any $i \in \{ 1,\ldots,n\}$. Since \begin{displaymath}
	\Omega (p_{i}(H),q_{i}(H)) \, \stackrel{\ref{lemma:os}}{\leq} \, 2 \Vert p_{i} - q_{i} \Vert \, \stackrel{\ref{lemma:projection.distance.2}\ref{lemma:projection.distance.2.2}}{=} \, \sqrt{2} \Vert p_{i} - q_{i} \Vert_{2} ,
\end{displaymath} there exist $x_{i} \in \Sph(p_{i}(H))$ and $y_{i} \in \Sph(q_{i}(H))$ with $\Vert x_{i} - y_{i} \Vert \leq \sqrt{2}\Vert p_{i} - q_{i} \Vert_{2}$, in turn we find $u_{i} \in \U(p_{i}(H)+q_{i}(H))$ such that $u_{i}x_{i} = y_{i}$. Note that \begin{equation}\label{eq:two.norm.of.projections}
	\Vert p_{i}-u_{i}p_{i} \Vert_{2}^{2} \, = \, \Vert (p_{i}-u_{i}p_{i})x_{i} \Vert^{2} \, = \, \Vert x_{i}-y_{i} \Vert^{2} \, \leq \, 2\Vert p_{i} - q_{i} \Vert_{2}^{2} .
\end{equation} Moreover, we observe that $u_{i}p_{i}u_{i}^{\ast}y_{i} = u_{i}p_{i}x_{i} = u_{i}x_{i} = y_{i} = q_{i}y_{i}$. On the other hand, if $z \in p_{i}(H)+q_{i}(H)$ with $z \perp y_{i}$, then $u_{i}^{\ast}z \perp u_{i}^{\ast}y_{i} = x_{i}$ and thus $u_{i}p_{i}u_{i}^{\ast}z = 0 = q_{i}z$. Therefore, \begin{equation}\label{eq:conjugation}
	u_{i}p_{i}u_{i}^{\ast} \, = \, q_{i} .
\end{equation}
	
Now, as the linear subspaces $p_{1}(H)+q_{1}(H),\ldots, p_{n}(H)+q_{n}(H) \leq H$ are mutually orthogonal, there exists $u \in \U(H)$ such that $u\vert_{p_{i}(H)+q_{i}(H)} = u_{i}$ for each $i \in \{ 1,\ldots,n \}$. We conclude that \begin{align*}
	upu^{\ast} \, &= \, up_{1}u^{\ast} + \ldots + up_{n}u^{\ast} \, = \, up_{1}(up_{1})^{\ast} + \ldots + up_{n}(up_{n})^{\ast} \\
		& = \, u_{1}p_{1}(u_{1}p_{1})^{\ast} + \ldots + u_{n}p_{n}(u_{n}p_{n})^{\ast} \, = \, u_{1}p_{1}u_{1}^{\ast} + \ldots + u_{n}p_{n}u_{n}^{\ast} \\
		& \stackrel{\eqref{eq:conjugation}}{=} \, q_{1} + \ldots + q_{n} \, = \, q 
\end{align*} and, finally, \begin{align*}
	\Vert (1-u)p \Vert_{2}^{2} \, &= \, \left\lVert \sum\nolimits_{i=1}^{n} p_{i}-u_{i}p_{i} \right\rVert_{2}^{2} \, = \, \sum\nolimits_{i=1}^{n} \Vert p_{i}-u_{i}p_{i} \Vert_{2}^{2} \\
		& \stackrel{\eqref{eq:two.norm.of.projections}}{\leq} \, 2\sum\nolimits_{i=1}^{n} \Vert p_{i}-q_{i} \Vert_{2}^{2} \, \stackrel{\ref{remark:norm.decomposition}\ref{remark:norm.decomposition.2}}{=} \, 2 \Vert p-q \Vert_{2}^{2} .\qedhere
\end{align*} \end{proof}

\section*{Acknowledgments}

The authors would like to express their sincere gratitude towards Vladimir Pestov for bringing the work of Ara and Lledó~\cite{AraLledo} to their attention (see Remark~\ref{remark:dank.pestov}), and for numerous insightful comments on an earlier version of the present manuscript.


\end{document}